\documentclass[oneside, 12pt]{amsart}

\usepackage{amsmath, amssymb, amsthm, hyperref, graphicx,   verbatim, enumerate, xcolor, tikz-cd, stmaryrd, epsfig, xypic}
\usepackage{comment}
\usepackage{array, colortbl}
\usepackage{mathrsfs}  
\usepackage{mathabx}
\usepackage{times}
 
 \DeclareFontFamily{U}{mathc}{}
\DeclareFontShape{U}{mathc}{m}{it}%
{<->s*[1.03] mathc10}{}

\DeclareMathAlphabet{\mathscr}{U}{mathc}{m}{it}

\usepackage[paper=a4paper, marginpar=2.4cm]{geometry}
\usepackage[all]{xy}
\usepackage[utf8]{inputenc}

\usepackage[english]{babel}
 
 \newcommand{\disk}{\mathbb{D}}

\newcommand{\fr}{\partial}

\newcommand{\set}[1]{\left\{#1\right\}}
\newcommand{\norm}[1]{{\left\Vert#1\right\Vert}}
\newcommand{\abs}[1]{\left\vert#1\right\vert}

\newcommand{\rest}[1]{ \arrowvert_{#1}}

\newcommand{\unsur}[1]{\frac{1}{#1}}
\newcommand{\el}{\mathcal{L}}

\newcommand{\rond}{\!\circ\!}
\newcommand{\cst}{\mathrm{C}^\mathrm{st}}
\newcommand{\lrpar}[1]{\left(#1\right)}

\newcommand{\la}{\lambda}
\newcommand{\lo}{{\lambda_0}}

\newcommand{\inv}{^{-1}}

\newcommand{\jstar}{J^\varstar}

\DeclareMathOperator{\supp}{Supp}

\DeclareMathOperator{\tr}{tr}

\DeclareMathOperator{\Crit}{Crit}

\DeclareMathOperator{\jac}{Jac}

\DeclareMathOperator{\dist}{dist}

\DeclareMathOperator{\Per}{Per} 
\DeclareMathOperator{\SPer}{SPer} 
\DeclareMathOperator{\Fix}{Fix}

\DeclareMathOperator{\Trace}{Trace}
\DeclareMathOperator{\Mult}{Mult}
\DeclareMathOperator{\SMult}{SMult}
\DeclareMathOperator{\sm}{sm}

\newcommand{\C}{\mathbf{C}}
\newcommand{\R}{\mathbf{R}}

\newcommand{\Z}{\mathbf{Z}}
\newcommand{\N}{\mathbf{N}}

\newcommand{\bfK}{{\mathbf{K}}}

\newcommand{\bb}{\mathbb{B}}

\newcommand{\A}{\mathbb{A}}

\newcommand{\Aut}{\mathsf{Aut}}

\newcommand{\Sym}{{\sf{Sym}}}

\theoremstyle{plain}

\newtheorem{thm}{Theorem}[section]
\newtheorem{cor}[thm]{Corollary}
\newtheorem{pro}[thm]{Proposition}
\newtheorem{lem}[thm]{Lemma}
\newtheorem{propdef}[thm]{Proposition-Definition}

\newtheorem{mthm}{Theorem}

\newtheorem{que}[thm]{Question}

\theoremstyle{definition}
\newtheorem{defi}[thm]{Definition}

\newtheorem{eg}[thm]{Example}
\newtheorem{rem}[thm]{Remark}

\newtheorem{convention}[thm]{Convention}

\numberwithin{equation}{section}       

\addtocounter{section}{0}             
\numberwithin{equation}{section}       

\title{Multiplier rigidity for complex Hénon maps}

\date{\today}

\author{Serge Cantat}
\address{Serge Cantat, IRMAR, Campus de Beaulieu,
b\^atiments 22-23
263 avenue du G\'en\'eral Leclerc, CS 74205
35042  RENNES C\'edex, France}
\email{serge.cantat@univ-rennes1.fr}
 \author{Romain Dujardin}
\address{Romain Dujardin,  Sorbonne Universit\'e, CNRS, Laboratoire de Probabilit\'es, Statistique  et Mod\'elisation  (LPSM), F-75005 Paris, France}
\email{romain.dujardin@sorbonne-universite.fr}
\thanks{{\footnotesize{The research activities of the authors  are partially funded by the European Research Council (ERC GOAT 101053021) and the  French National Research Agency under the project DynAtrois (ANR-24-CE40-1163). 
}}}

\includecomment{vlongue}
\excludecomment{vcourte}

\begin{document}

\setlength{\parskip}{.2em}
\setlength{\baselineskip}{1.26em}  


\begin{abstract}
We investigate the multiplier rigidity problem for polynomial automorphisms of $\C^2$.  
A first result states   that a 
complex Hénon map of given degree  is determined up to finitely many choices by its multiplier 
spectrum, or more generally by the unstable multipliers of its saddle periodic points. This is the counterpart in this setting of a classical result of McMullen for one-dimensional rational maps.
For compositions of 
Hénon maps, the same rigidity holds provided the multi-degree and the multi-Jacobian are fixed. 
As in McMullen’s theorem, this follows from the nonexistence of stable algebraic families in the corresponding parameter space. This  in turn relies on precise asymptotic bounds for the Lyapunov exponents of the maximal entropy measure along diverging families.
\end{abstract}

\maketitle

\setcounter{tocdepth}{1}
\tableofcontents 

\section{Introduction}

\subsection{Overview of the problem}
The multiplier rigidity problem 
asks whether a dynamical system in a certain class  is determined by the 
multipliers (or eigenvalues) of its periodic orbits. It is part of a more general landscape 
 of spectral rigidity problems, which has many different  incarnations in geometry and dynamics (see 
Wilkinson's  recent monograph~\cite{wilkinson:book} for an overview). 

In the dynamics of one-dimensional rational functions, multipliers are often viewed as algebraic
  functions on moduli spaces and used to describe their geometry; 
  (local) 
  multiplier rigidity means that these functions define (local) coordinates.
     This point of view was 
     popularized by Milnor~\cite{milnor:quadratic, milnor:two_critical} 
 and Silverman~\cite{silverman:book} (the term ``multiplier spectrum'' apparently first appeared in 
 \cite{milnor:lattes}). 
 In this setting, parameter spaces are   finite-dimensional. Therefore, 
 compared to classical spectral rigidity problems  in dynamics, the results and 
 methods have a more algebraic flavor. The  local multiplier rigidity problem
 was settled in an early paper of McMullen~\cite{mcm:algorithms}.
  Still, despite   recent breakthroughs by Ji and Xie~\cite{ji-xie:multipliers, ji-xie:injective} 
  and Huguin~\cite{huguin:polynomials}, the complete characterization of pairs of isospectral 
 rational maps remains open. 

 In this paper, we address similar questions for polynomial automorphisms of $\C^2$. 
 In this context, the multiplier rigidity problem and some of its variants
were already considered in the foundational paper of  Friedland and Milnor~\cite{friedland-milnor};  
and results on the global analytic rigidity of polynomial automorphisms 
 were recently obtained in~\cite{bera-verma, conjugate, rigidity}.

\subsection{Classification of polynomial automorphisms}\label{subs:fm}
In order to state our results, we first need to recall a few basic facts. 
We denote by $\Aut(\C^2)$ the group of polynomial automorphisms of $\C^2$. 
The dynamical degree of $f\in \Aut(\C^2)$ is 
\begin{equation}
\lambda_1(f) :=\lim_{n\to \infty}\deg(f^n)^{1/n}\in[1, \infty).
\end{equation}
This number turns out to be a positive integer   and is invariant under conjugacy.
If $\lambda_1(f)=1$, $f$ is said \emph{elementary} and its dynamics is easy to analyze. 
When $\lambda_1(f)>1$,  we say that $f$ is \emph{loxodromic}. A  
 {\emph{Hénon map}} $h\colon \C^2\to \C^2$ is a polynomial automorphism of the form
 \begin{equation}
 h(x,y) = (a y+p(x), x)
 \end{equation}
 for some $a\in \C^*$ and some $p\in \C[x]$ of degree $\geq 2$. Hénon maps are the primary examples of loxodromic automorphisms: $\lambda_1(h)=\deg(p)$.

According to~\cite{friedland-milnor}, if $f$ is loxodromic there exists  a sequence of integers $(d_k, \ldots , d_1)\in \N_{\geq 2}^k$, a sequence of non-zero complex numbers 
$(a_k, \ldots , a_1)\in (\C^*)^k$ and a sequence of monic and centered polynomials 
$p=(p_k, \ldots , p_1)\in \C[x]^k$ of respective degrees $d_i$ such that $f$ is conjugate to the composition   
$g = h_k\circ \cdots \circ h_1$,
 where 
 $h_i(x,y) = (a_iy+p_i(x), x)$. We refer to the decomposition $g=h_k\circ \cdots \circ h_1$ as a \emph{Friedland-Milnor normal form} of~$f$. 
The Jacobians of $f$ and  $g$ are both equal to $\jac(g) = (-1)^k \prod_{i=1}^k a_i$.  
The degree of $g$ is   $d=\prod_{i=1}^k d_i$, and $\deg(g^n)=\deg(g)^n$, hence $\lambda_1(f)=\lambda_1(g)=d$. 
In particular, if  $\lambda_1(f)$ is a prime number then $k=1$ and  $f$ 
is conjugate to a monic and centered Hénon map.
By definition, 
$\mathbf d = (d_k, \ldots , d_1)$ is the \emph{multidegree} of 
$g$ and $\mathbf a=(a_k, \ldots , a_1)$ is its \emph{multi-Jacobian}.  
We let    $\mathcal H^k_{\mathbf d}$ 
  (resp. $\mathcal H^k_{\mathbf d,\mathbf a}$)
   be the space of    compositions of $k$ monic and centered  Hénon mappings of  
  multidegree $\mathbf d\in \N_{\geq 2}^k$ (resp. and   multi-Jacobian $\mathbf a$). 
As an algebraic variety, $\mathcal H^k_{\mathbf d}$ is isomorphic to $(\C^*)^k\times \C^{d_1-1}\times \ldots \C^{d_k-1}$.    For $k=1$ we   
simply write $\mathcal H^1_{\mathbf d} = \mathcal H^1_d$.

The normal form is not unique, but the defect of uniqueness is well understood: it is unique up to permutation of the factors $h_i$ and a diagonal action of the group 
$\mathbb{U}_{d-1}$ of $(d-1)^{\rm th}$ 
roots of unity (see \S~\ref{subs:M_for_loxodromic} or~\cite{friedland-milnor} for details). 
Thus, coming back to $f$,  
the multidegree and the multi-Jacobian define  conjugacy invariants only up to a finite group action. 

\subsection{Multiplier rigidity}\label{par:into_multiplier_rigidity}

If $z$ is a periodic point of (exact) period $n$ of $f\colon \C^2\to \C^2$, 
the \emph{multipliers} $\lambda_1(z)$, $\lambda_2(z)$ of $z$ 
are the (unordered) eigenvalues of $Df^n_z$. 
A convenient way to encode the multipliers is by the 
 \emph{trace} $\tr(Df^n_z)$ and the Jacobian $\jac(f^n)$,
   which satisfy 
\begin{equation}
\lambda_1+\lambda_2 = \tr(Df^n_z) \; \text{ and } \;  \lambda_1\lambda_2  = \jac(f^n).
\end{equation} 
If $z$ is of saddle type, the eigenvalues can be distinguished and are   denoted by
$\lambda^s(z)$ and  $\lambda^u(z)$, with  
$\abs{\lambda^s(z)}<1<\abs{\lambda^u(z)}$.  
The \emph{multiplier spectrum}
(resp.\  the \emph{trace spectrum}) of $f$ is the set of all multipliers (resp.\  traces) of all periodic cycles of $f$, organized by their periods. The {\emph{unstable multiplier spectrum}} 
is the set of unstable eigenvalues $\lambda^u$ of saddle periodic cycles, organized by their periods.
We refer to \S~\ref{subs:spectra} for precise definitions.

\begin{mthm}\label{mthm:henon_multipliers}
A complex Hénon map $f(x,y)=(ay+p(x),x)$ 
is determined
up to finitely many choices by its trace spectrum (resp. its unstable multiplier spectrum).
\end{mthm}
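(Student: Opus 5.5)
Following McMullen's strategy for rational maps, we reduce the statement to the nonexistence of non-trivial stable algebraic families in $\mathcal H^1_d$. Fix the degree $d$; we may assume $p$ is monic and centered, up to the finite ambiguity coming from affine conjugacy by $\mathbb U_{d-1}$. The trace spectrum determines the Jacobian $a$. Indeed, for a fixed point we have $\tr = p'(x)$ and $\jac = -a$. Better, the saddle multipliers of period $n$ have product $\lambda^s\lambda^u = (-a)^n$, and the growth of $\sum\log\abs{\lambda^u}$ over period-$n$ saddles encodes the Lyapunov exponents via equidistribution (Bedford--Lyubich--Smillie). One then extracts $\log\abs{a}=\chi^++\chi^-$. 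More simply, we can use the algebraic relation: for each saddle point, $\lambda^u$ and the trace $t$ are related by $\lambda^u + (-a)^n/\lambda^u = t$. The plan is therefore to consider the set $X\subset \mathcal H^1_d$ of parameters isospectral to the given $f$. Traces (resp. unstable multipliers, which are algebraic functions over the parameter space) of periodic points of period $n$ are algebraic functions, so $X$ is an algebraic subvariety, namely the intersection over $n$ of the loci where the finite multisets of period-$n$ traces are constant. Since the symmetric functions of these multisets are regular functions on $\mathcal H^1_d$, $X$ is Zariski closed. It suffices to show $\dim X=0$.

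Suppose $X$ contains an irreducible algebraic curve $C$. Along $C$ all multipliers of periodic points are locally constant. Hence no saddle point bifurcates to become non-saddle, and the family is \emph{stable} in the sense of Dujardin--Lyubich: periodic points move holomorphically without collisions and remain saddles, and the Jacobian is constant on $C$. For the unstable multiplier spectrum, a similar conclusion holds once we know that the moduli $\abs{\lambda^u}$ stay constant; the stable multiplier is then $(-a)^n/\lambda^u$, and constancy of $a$ is obtained from the limit $\frac1n\log$ of products, or from an algebraicity argument. In particular the Lyapunov exponents $\chi^\pm$ of the maximal entropy measure are constant along $C$, since by Bedford--Lyubich--Smillie they are limits of averages of $\log\abs{\lambda^u}$ over saddle points of period $n$.

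The key step, and the main obstacle, is to show that $C$, being a non-compact affine curve in $\mathcal H^1_d\simeq \C^*\times\C^{d-1}$ (with $a$ fixed on $C$, so that $C$ escapes to infinity in the coefficients of $p$), must diverge. Along such a divergent branch the Lyapunov exponent $\chi^+$ tends to $+\infty$, contradicting constancy. To prove this divergence estimate, I would establish an asymptotic bound of the form $\chi^+(f_t)\geq \log\max_j\abs{c_j(t)}^{1/j}\cdot\kappa - O(1)$, where the $c_j$ are the coefficients of $p$ and $\kappa>0$. The strategy uses the formula $\chi^+ = \log d + \sum_{c} G^+(c)$ over critical points of the unstable foliation (Bedford--Smillie), or compares with the one-dimensional formula $L(p)=\log d+\sum G_p(c)$. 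This comparison works in the regime where $\abs{a}$ is fixed and the coefficients blow up: after rescaling, $f_t$ behaves like a small perturbation of the one-dimensional polynomial, and the Green function at the critical points grows like $\log$ of the coefficient size. Getting this estimate uniformly, namely controlling $G^+$ on critical points of unstable leaves via a Green-function/filtration argument, is where I expect most of the work. Granting it, $\chi^+\to\infty$ along $C$, so $X$ is finite, and adding the finite ambiguity of normal forms yields the theorem.
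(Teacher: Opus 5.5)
The overall architecture matches the paper's. You reduce, McMullen-style, to the nonexistence of stable algebraic families. You get constancy of the Jacobian and of $\chi^+(\mu_f)$ along such a family. You then contradict this with divergence of $\chi^+(\mu_f)$ as $M(f)\to\infty$ at fixed Jacobian. The problem is that the divergence estimate, which carries essentially all the weight, is exactly what you ``grant'', and the mechanism you sketch does not reach it.

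Relative to the scale $R_f=e^{M(f)}$ the term $ay$ is indeed negligible. But there is no fixed polynomial to perturb from, since $p$ itself diverges in $\mathcal P_d$. Moreover, in the Bedford--Smillie formula $\chi^+(\mu_f)=\log d+\int_{\{A\leq G_f<dA\}}G_f\,\mu_c^-$ the critical points are tangencies between the leaves of $T^-$ and the fibers of $\varphi_f$, not critical points of $p$. Knowing that $G_p(c)=M(p)$ is large says nothing about the \emph{mass} of $\mu_c^-$ at height $\approx dM(f)$, and that mass is the whole difficulty (compare Example~\ref{eg:fafinva}, where the folding mechanism disappears). The paper produces this mass through a chain of ingredients absent from your outline:
\begin{enumerate}[(a)]
\item Böttcher estimates uniform in $f$ at scale $R_f$ when $\abs{a}\leq R_f^{d-1}$ (Propositions~\ref{pro:estimate_varphi} and~\ref{pro:slope}).
\item A folding lemma (Lemma~\ref{lem:folding}). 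It gives a bidisk $\bb_2$ in $(\varphi_f,y)$-coordinates around the critical value $v=p(c)$ such that $f\colon\bb_1\to\bb_2$ is a crossed map unramified over $\fr^v\bb_2$, with a solenoidal component $\Omega$ of degree $q\geq 2$.
\item Unstable disconnectedness, so that $T^-$ is an integral of horizontal disks of finite degree in $\bb_2$.
\item Divisibility of these degrees by $q$ (Proposition-Definition~\ref{propdef:non-ramified}), hence at least $q\ell-1$ vertical tangencies per disk of degree $q\ell$.
\item The slice mass $q/d$ of $T^-$ restricted to $\Omega$.
\end{enumerate}
Together these give $\mu_c^-(\Omega)\geq 1/d$ on a region where $G_f\geq dM(f)+\log\frac34$.

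The reduction also has two further gaps.

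\emph{Trace spectrum.} You never establish that the Jacobian is constant on $C$. For typical saddles $\abs{\tr}\approx\abs{\lambda^u}$, so traces see $\chi^+$ but not $\chi^-$, and the relation $\lambda^u+(-a)^n/\lambda^u=t$ presupposes $a$. Without this, constant traces do not give constant multipliers. So deriving stability from ``all multipliers are locally constant'', and then constancy of $\jac$ from stability, is circular. The paper fills this in two steps (proof of Proposition~\ref{pro:multipliers}):
\begin{enumerate}[(a)]
\item It gets local stability of type (II) from Theorem~\ref{thm:equidistribution_reinforced}: a large constant trace keeps $\abs{\lambda^u}$ large, while local boundedness of the Jacobian keeps $\abs{\lambda^s}<1$.
\item If $\jac$ were non-constant on $C$, it would be unbounded. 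At a parameter where $\log\abs{\jac}\geq\chi^+(\mu_{f_0})+2\alpha$, the bound $\abs{t}\leq e^{n(\chi^+(\mu_{f_0})+\alpha)}+1$ inherited from a density-one set of saddles of $f_0$ forces their continuations to be non-saddles. This contradicts the density of saddles.
\end{enumerate}

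\emph{Unstable multiplier spectrum.} Here $X$ is \emph{not} a priori algebraic. Being a saddle is not an algebraic condition and $\SPer_n(f)$ jumps with $f$, so $X$ is only a countable intersection of semi-algebraic sets, and ``it suffices to show $\dim X=0$'' has no basis. The paper needs a separate local argument showing that every non-isolated point of $X$ lies on a positive-dimensional analytic set that is locally a union of components of an algebraic set and is stable of type (I). This yields only discreteness. Finiteness then requires bounding $\abs{\jac(f)}$ and $\chi^+(\mu_f)$ on $X$ in terms of $\norm{Df_0}_{\jstar_0}$, and invoking Proposition~\ref{pro:lyapunov_henon} to bound $M(f)$.
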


In particular any complex Hénon map $f_0\in \mathcal H^1_d$ 
has the following local multiplier rigidity property: 
any $f$ sufficiently close to $f_0$ with the same trace 
(resp. unstable multiplier) spectrum is equal to $f_0$.  
An interesting    point in the theorem is  that the knowledge  of the Jacobian 
is not necessary for this  spectral characterization. 

Let $S(f_0)$ be the number of maps $f\in \mathcal H^1_d$ with the same multiplier (or trace) spectrum as $f_0$. 
Theorem~\ref{mthm:henon_multipliers} says that $S(f_0)$ is finite. In fact, 
$S$ is uniformly bounded in $\mathcal H^1_d$  and  $S(f_0)=1$ for a generic $f_0$ (see 
Theorem~\ref{thm:noether} and Theorem~\ref{thm:henon_huguin2}). Thus, 
    a generic Hénon map is actually 
determined up to conjugacy 
by its multiplier spectrum.  It is an open problem whether this holds for every loxodromic automorphism
(cf.~\cite[Conjecture 1.7]{ji-xie:injective} for one-dimensional rational maps).

By Noetherianity,
for every $d$ there exists $P=P(d)$ 
such that a complex Hénon map $f$ is determined
up to finitely many choices by the  traces of its  periodic points up to period 
$P$ (see  Theorem~\ref{thm:noether}). Friedland and Milnor proved that for $d=2$ or 3, the knowledge of the 
traces of the  \emph{fixed} points together with the Jacobian determines the 
conjugacy class of $f$ (see~\cite[\S 7]{friedland-milnor}). 
 For the analogous problem of the determination of a polynomial $p:\C\to \C$ by its periodic point multipliers, Huguin showed in~\cite{huguin:polynomials} that $P = 2$ is sufficient. 
 We will give in Section~\ref{sec:huguin} a quick proof of Theorem~\ref{mthm:henon_multipliers} when the 
Jacobian is different from $-1$ (i.e. $a\neq 1$): it is 
  based on Huguin's theorem, using  the   specific formula defining the Hénon map $f$, and it shows that $P=2$ is enough for these maps as well.  
On the other hand the family $(f_\lambda)_{\lambda\in \C}$ defined by 
\begin{equation}\label{eq:jac-1}
f_\lambda (x,y) = (y+ (x^2-\lambda^2)^2, x) \end{equation} 
is a non-trivial family of Hénon maps of Jacobian $-1$
whose periodic data is constant in period 1 and 2 (see Example~\ref{eg:jacobian-1}). 
This shows that Huguin's method cannot be applied  to this setting. 
We will present a more conceptual 
(and  complicated) proof of Theorem~\ref{mthm:henon_multipliers}, which 
works in  Jacobian   $-1$ and  
yields the following result.

\begin{mthm}\label{mthm:multi-Jacobian}
The conjugacy class of a loxodromic automorphism is determined, up to finitely many possibilities, by: (a) its multidegree, (b) its multi-jacobian, and (c)  
its trace spectrum (resp. its unstable multiplier spectrum). 
\end{mthm}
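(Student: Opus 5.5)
The plan follows McMullen's strategy, as the abstract indicates. Fix the multidegree $\mathbf d$ and multi-Jacobian $\mathbf a$, and work in the affine variety $\mathcal H^k_{\mathbf d,\mathbf a}$. Since the normal form is unique up to permutation and the diagonal action of $\mathbb{U}_{d-1}$, it suffices to show that each fibre of the map $g\mapsto$ (trace spectrum of $g$) in $\mathcal H^k_{\mathbf d,\mathbf a}$ is finite. For each period $n$, the traces $\tr(Dg^n_z)$ at periodic points of period $n$ are described by the elementary symmetric functions of these traces, which are regular functions on $\mathcal H^k_{\mathbf d,\mathbf a}$. Hence the set $V(g_0)$ of maps isospectral to $g_0$ is Zariski closed. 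It is an intersection of countably many algebraic subsets, and by Noetherianity it is cut out by finitely many periods. If $V(g_0)$ were infinite, it would contain an irreducible algebraic curve $C$ along which all traces of periodic points are locally constant. Since the Jacobian $\jac(g^n)$ is also fixed, all multipliers are locally constant along $C$. For the unstable multiplier spectrum the argument is the same: $\lambda^u$ is locally constant along a saddle continuation, and so the trace $\lambda^u+\jac^n/\lambda^u$ is too.

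Next I would show that such a curve is a stable family in the sense of Bedford--Lyubich--Smillie / Dujardin--Lyubich. If multipliers are constant along $C$, then no saddle point can bifurcate into an attracting or indifferent point. In particular, there can be no bifurcation where a periodic point changes type, so the saddle periodic points move holomorphically. By the characterization of stability (holomorphic motion of $J^*$, equivalently of saddle points), $C$ is a stable algebraic family. In stable families the Lyapunov exponents of the maximal entropy measure $\mu$ are constant. Indeed, $\chi^u(g)$ can be computed as the limit of averages of $\log|\lambda^u(z)|/n$ over saddle points of period $n$, by equidistribution of saddles (Bedford--Lyubich--Smillie), and these quantities are constant along $C$. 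Alternatively, $\chi^u$ is pluriharmonic on stable families and here it is constant. Either way, $\chi^u$ and $\chi^s=\log|\jac|-\chi^u$ are bounded on $C$.

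The main obstacle, as the abstract announces, is showing that this is impossible because $\chi^u$ must blow up along diverging families. Since $C$ is an affine curve in $\mathcal H^k_{\mathbf d,\mathbf a}\simeq\prod\C^{d_i-1}$ (the Jacobians being fixed), it is non-compact, and some coefficient of some $p_i$ tends to infinity along a branch at infinity. Parametrize this branch by a punctured disk $t\mapsto g_t$ with a pole at $t=0$. I would aim for an asymptotic estimate $\chi^u(g_t)\ge c\log|t|^{-1}-O(1)$ with $c>0$. The approach is to relate $\chi^u$ to the Green function: $\chi^u$ dominates $\log d$ plus an integral of the Green function $G^+$ over critical-type currents (a Bedford--Smillie type formula). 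Then one estimates the escape rate, since the filtration radius $R_t$ grows like a positive power of $|t|^{-1}$. A first attempt is to renormalize by rescaling coordinates by $R_t$, to get a limiting degenerate map, possibly a composition of one-dimensional polynomials, whose dynamics has positive excess exponent. For $k\ge2$ the different factors can degenerate at different rates, and this rescaling must be done factor by factor. I expect this to be the genuinely hard part. Once the estimate is established, it contradicts the constancy of $\chi^u$ on $C$. So $V(g_0)$ is finite, and accounting for the finite ambiguity in the normal form yields the theorem.
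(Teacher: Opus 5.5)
Your overall architecture is the paper's. A positive-dimensional isospectral component gives a stable algebraic family, along which $\jac$ and $\chi^+(\mu_f)$ are constant (Lemma~\ref{lem:jacobian2}). Divergence of $\chi^+$ at infinity then forces the family to be bounded, hence finite. For the trace spectrum with fixed multi-Jacobian, your direct stability argument is correct: constant trace and Jacobian give constant eigenvalues, so saddles never bifurcate. It is also simpler than the first half of Proposition~\ref{pro:multipliers}, which must handle a varying Jacobian.

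\textbf{The main gap.} The step that carries essentially all the weight is left open: the bound $\chi^+(\mu_f)\geq \log d+M(f)-O(1)$ on $\mathcal H^k_{\mathbf d,\mathbf a}$ for large $M(f)$ (Theorem~\ref{mthm:lyapunov}, Proposition~\ref{pro:lyapunov_multi}). The route you sketch does not work as stated.
\begin{enumerate}
\item Conjugating by $(x,y)\mapsto (R_tx,R_ty)$ turns $(ay+p(x),x)$ into $(ay+R_t^{d-1}q_t(x),x)$, with $q_t$ monic with bounded coefficients. So there is no convergent family of automorphisms to pass to the limit in. The natural limit objects are tree-like or non-Archimedean degenerations, and the paper notes these techniques are not yet available for $\Aut(\C^2)$.
\item A positive excess exponent for a limit object is not what you need. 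You need excess growing like $M(f)$, i.e.\ a definite amount of unstable critical mass $\mu_c^-$ at Green height $\approx dM(f)$ in the Bedford--Smillie formula~\eqref{eq:bs5}. In dimension $2$, critical points of the $p_i$ do not directly give mass to $\mu_c^-$.
\end{enumerate}

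The missing idea is the paper's folding argument, which has four ingredients.
\begin{enumerate}
\item Uniform control of $\varphi_f$ and $G_f$ at scale $R_f$ (Propositions~\ref{pro:estimate_varphi}, \ref{pro:estimate_varphi_multi}, \ref{pro:slope}). This requires $\abs{a_i}\leq R_f^{d_i-1}/(400d_i)$, which is exactly where fixing the multi-Jacobian, not just the Jacobian, matters; compare Example~\ref{eg:fafinva}.
\item A bidisk $\bb_2$ in $(\varphi_f,y)$-coordinates around a dominant critical value, such that $h_k\colon \bb_1\to\bb_2$ is an unramified crossed mapping with a solenoidal component $\Omega$ of degree $q\geq 2$ (Lemmas~\ref{lem:folding}, \ref{lem:folding2}).
\item Unstable disconnectedness, so that $T^-$ decomposes into horizontal disks whose degrees in $\Omega$ are multiples of $q$ (Proposition-Definition~\ref{propdef:non-ramified}).
\item The slice-mass identity $\sm(T^-\rest{\Omega})=q/d_k$. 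Via Lemma~\ref{lem:critical_mass} it gives $\mu_c^-(\Omega)\geq 1/d_k$, and $G_f\geq d_kM(f)+\log\frac34$ on $\Omega$.
\end{enumerate}

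\textbf{The unstable-multiplier case is not ``the same argument''.} The condition $\SMult^u(f)=\SMult^u(f_0)$ only constrains saddle points. Being a saddle is not an algebraic condition, $\#\SPer_n(f)$ varies with $f$, and traces at non-saddle periodic points are unconstrained. So you cannot conclude that this isospectral set is Zariski closed, extract an algebraic curve from it, or reduce to the trace case.

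The paper handles this in the second half of Proposition~\ref{pro:multipliers}. Near a non-isolated point, the set is locally a union of components of an algebraic variety along which all saddles persist with constant multipliers. Analytic continuation then yields a stable algebraic family. This gives only discreteness.

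Finiteness needs a separate boundedness argument (Theorem~\ref{thm:u_multipliers}). Saddles of any $f$ in the set satisfy $\chi^u\leq \log\norm{Df_0}_{\jstar_0}$, so $\chi^+(\mu_f)$ is bounded. The lower bound of Theorem~\ref{mthm:lyapunov} then bounds $M(f)$. So the missing Lyapunov estimate is needed twice.
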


Note that in Theorem~\ref{mthm:multi-Jacobian} the multi-Jacobian is fixed. 
We can also address the $C^1$ rigidity problem
 suggested by Friedland-Milnor in~\cite[Thm 7.5]{friedland-milnor}, as well as the global
  real-analytic rigidity result of~\cite[Thm 7.1]{rigidity}. 

\begin{mthm}\label{mthm:C1}
The $C^1$ conjugacy class of any complex Hénon map $f_0$ of given degree  (resp. any composition $f_0$ of Hénon maps 
with given multidegree and multi-Jacobian) is finite.  
In particular in $\mathcal H = \mathcal H^1_d$ or $\mathcal H^k_{\mathbf{d}, \mathbf{a}}$ the following rigidity property holds:  for $f_0\in \mathcal H$,
  if  $f\in \mathcal H$ is $C^1$ conjugate to $f_0$ in a neighborhood of its Julia set 
and is sufficiently close to $f_0$, then $f=f_0$.   
\end{mthm}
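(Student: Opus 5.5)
Theorem~\ref{mthm:C1} will be deduced from Theorems~\ref{mthm:henon_multipliers} and~\ref{mthm:multi-Jacobian}. The idea is to show that a $C^1$ conjugacy preserves enough of the spectral data, in particular the unstable multiplier spectrum, and then invoke those results.

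Let $\phi$ be a $C^1$ conjugacy between $f_0$ and $f$, defined on a neighborhood of the Julia set $J^\star$ (or on $\C^2$), so $\phi\circ f_0=f\circ\phi$. The set $J^\star$ is the closure of the saddle periodic points. Hence $\phi$ maps saddle points of $f_0$ of period $n$ bijectively to saddle points of $f$ of period $n$.

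At such a point $z$ we have $D\phi_{f_0^n(z)}\circ Df_{0,z}^n=Df^n_{\phi(z)}\circ D\phi_z$ with $f_0^n(z)=z$. So $Df^n_{\phi(z)}$ and $Df^n_{0,z}$ are conjugate as real-linear maps of $\R^4$. A complex-linear map $A$ with eigenvalues $\lambda^u,\lambda^s$ has real eigenvalues $\lambda^u,\overline{\lambda^u},\lambda^s,\overline{\lambda^s}$. Real conjugacy therefore determines the unordered pair $\{\lambda^u,\overline{\lambda^u}\}$, using $|\lambda^u|>1>|\lambda^s|$ to separate the two pairs. Thus the unstable multiplier of $\phi(z)$ is $\lambda^u(z)$ or its complex conjugate.

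This is the main obstacle: $\phi$ may be anti-holomorphic along unstable manifolds. I plan to handle it as follows. The unstable lamination of $J^\star$ is preserved by $\phi$, and $D\phi$ restricted to $E^u$ is a continuous field of real-linear isomorphisms between complex lines. On the set where it is $\C$-linear or $\C$-antilinear it is closed. By density of saddle points and the fact that $\lambda^u$ is non-real for a dense set (otherwise handle separately), $D\phi|_{E^u}$ is globally either $\C$-linear or $\C$-antilinear, by a connectedness or continuity argument along the lamination. I would first try to prove this dichotomy using that $J^\star$ is the support of $\mu$ with unstable conformal structure. If it fails, I would simply allow both options.

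In either case the unstable multiplier spectrum of $f$ equals that of $f_0$ or of $\bar f_0$, the map with conjugated coefficients. Theorems~\ref{mthm:henon_multipliers} and~\ref{mthm:multi-Jacobian} (the latter with multidegree and multi-Jacobian fixed) then leave finitely many choices for each option, so finitely many in total, up to the finite normal-form ambiguity. For the local statement, the finitely many candidates other than $f_0$ are at positive distance from $f_0$. Hence any $f\in\mathcal H$ sufficiently close to $f_0$ and $C^1$-conjugate to it equals $f_0$. The conjugates $\bar f_0$-type candidates are also finitely many and are excluded in the same way unless equal to $f_0$.
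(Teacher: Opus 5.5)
\textbf{The gap is the holomorphic/antiholomorphic dichotomy, and your fallback does not repair it.}

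Your sketch of the dichotomy rests on ingredients that are not available. The maps are not uniformly hyperbolic, so $E^u$ is not a continuous line field on $J^\star$; it exists only along unstable manifolds of saddles and $\mu$-a.e. Nothing guarantees that saddles with non-real $\lambda^u$ are dense. And $J^\star$ can be a Cantor set (for horseshoes), so no connectedness argument along the lamination is possible.

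More seriously, if you ``allow both options'', all you know is that $\lambda^u(\phi(z))\in\{\lambda^u(z),\overline{\lambda^u(z)}\}$ independently for each saddle orbit. In other words, $\SMult^u_n(f)\subset E_n:=\SMult^u_n(f_0)\cup\overline{\SMult^u_n(f_0)}$. It does not follow that $\SMult^u(f)$ equals $\SMult^u(f_0)$ or $\overline{\SMult^u(f_0)}$. Infinitely many mixed spectra are compatible with these inclusions. Theorems~\ref{mthm:henon_multipliers} and~\ref{mthm:multi-Jacobian} give finiteness for each fixed spectrum, but say nothing about the union over all of them.

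A smaller point: to know that every saddle of $f$ is the image of a saddle of $f_0$, you need $\phi(J^\star(f_0))=J^\star(f)$. This comes from uniqueness of the measure of maximal entropy, not from density of saddles alone.

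The paper avoids the dichotomy entirely. Proposition~\ref{pro:multipliers} is stated with inclusions $\SMult^u_n(f)\subset E_n$ in arbitrary finite sets, precisely so that this $E_n$ can be used. Together with Theorem~\ref{mthm:stable} it gives discreteness. Finiteness then follows from the boundedness argument of Theorem~\ref{thm:u_multipliers}. That argument uses only $\abs{\lambda^u}$, which is insensitive to complex conjugation, to bound $\abs{\jac(f)}$ and the unstable exponents, and hence $M(f)$ via Propositions~\ref{pro:lyapunov_henon} and~\ref{pro:lyapunov_multi}.

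If you want to keep your route, the dichotomy can be proved, but by a different mechanism:
\begin{enumerate}
\item $\phi$ maps $W^u(q)$ into $W^u(\phi(q))$, and $f_0$, $f$ act holomorphically on these leaves. So the leafwise dilatation $\abs{\bar\partial\phi/\partial\phi}$ is invariant along orbits.
\item Letting backward iterates tend to $q$, this dilatation is constant on $W^u_{\mathrm{loc}}(q)$. When $\lambda^u(q)\notin\R$ its value is $0$ or $\infty$.
\item Any two saddles $p,q$ have transverse intersections between $W^u(q)$ and $W^s(p)$ (Bedford--Lyubich--Smillie). With the $\lambda$-lemma, $W^u(q)$ accumulates in $C^1$ on $W^u_{\mathrm{loc}}(p)$. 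Continuity of $D\phi$ then propagates the linear or antilinear type from one such $q$ to every saddle $p$.
\end{enumerate}
Even then, in $\mathcal H^k_{\mathbf d,\mathbf a}$ the map $\bar f_0$ has multi-Jacobian $\bar{\mathbf a}$. So Theorem~\ref{mthm:multi-Jacobian} must be applied to the data $(\mathbf d,\mathbf a,\overline{\SMult^u(f_0)})$, not with base point $\bar f_0$.
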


\subsection{Stable algebraic families}
Following a beautiful idea of 
McMullen~\cite{mcm:algorithms}, 
the results above
will follow from a global structural stability statement.  

\begin{mthm}\label{mthm:stable}
Any stable irreducible algebraic family of complex Hénon maps is trivial. Likewise, any stable 
irreducible  algebraic family of compositions of Hénon maps with fixed multi-Jacobian is trivial. 
\end{mthm}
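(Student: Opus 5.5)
Following McMullen's strategy, I will argue by contradiction. Suppose $(f_t)_{t\in V}$ is a stable, irreducible algebraic family in $\mathcal H^1_d$ (resp. in $\mathcal H^k_{\mathbf d,\mathbf a}$) that is non-trivial, meaning that the $f_t$ are not all conjugate. Conjugacy classes in these spaces are finite modulo the action of $\mathbb U_{d-1}$ and permutations. So, after replacing $V$ by an irreducible curve and passing to its normalization, I may assume $V$ is a smooth affine curve and that the map $V\to\mathcal H$ is non-constant, hence proper onto its image curve. Since $V$ is affine, this curve has at least one puncture, i.e.\ a place at infinity. Near this place, some coefficient of $p$, or the Jacobian $a$ in the single Hénon case, tends to infinity or to $0$.

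\textbf{Step 1: consequences of stability.} Stability (no bifurcations, equivalently no saddles changing type, equivalently a holomorphic motion of the Julia set $J^\star$ over $V$) implies several facts.
\begin{itemize}
\item Every periodic point stays a saddle, or stays in its type, throughout $V$.
\item For every $n$, the unstable multipliers of the saddle points of period $n$ are algebraic functions on $V$ without zeros or poles on $V$ itself.
\item The Lyapunov exponent $\chi^u(f_t)$ of the maximal entropy measure is a harmonic function of $t$. This follows from the Bedford–Lyubich–Smillie formula $\chi^u = \lim \frac{1}{n d^n}\sum \log|\lambda^u|$ over saddles of period $n$, and from the fact that the bifurcation current $dd^c\chi^u$ vanishes on stable families.
\item Moreover $\chi^u$ is the logarithm of the modulus of a multivalued meromorphic function, namely a limit of averages of $\log|\lambda^u|$ with multipliers algebraic. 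Hence near each puncture $\chi^u(t)=c\log|t|^{-1}+O(1)$ in a local coordinate $t$, with $c$ a rational number, or at least real. Similarly, $\chi^s=\log|a|-\chi^u$ is harmonic.
\end{itemize}

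\textbf{Step 2: asymptotics along diverging families.} This is the main obstacle. I need precise asymptotic bounds for $\chi^u(f_t)$ as $f_t$ leaves every compact subset of the parameter space. The aim is to show that harmonicity on the whole of $V$, together with these growth bounds, is impossible. The expected estimate is
\[
\chi^u(f_t)=\log\max\bigl(1,\|p_t\|^{1/(d-1)}\bigr)\cdot(\text{positive constant})+\text{const}\cdot\log^+|a_t|^{-1}+O(1),
\]
or at least two-sided bounds of this shape. I would obtain them by the following route.
\begin{itemize}
\item Rescale coordinates by the size $R$ of the filtration box, where $R\approx$ the maximal root modulus of $p_t$. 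The rescaled map becomes close to a degenerate map with Jacobian tending to $0$ or $\infty$.
\item The lower bound $\chi^u\ge\log d$ is always available. On the rescaled filtration, the rescaled dynamics has uniform expansion of size $\gtrsim R^{d-1}$ in the horizontal direction. This gives a lower bound of the form $(d-1)\log R$ from the escape/Green function estimates $G^+\le\log^+\|z\|+\log R$, combined with the formula $\chi^u\ge \log d + \int\ldots$.
\item For the matching upper bound, use $\chi^u\le \log\sup\|Df\|$ on $J^\star\subset$ the box of size $R$, which gives $\chi^u\le (d-1)\log R+O(1)$.
\item For compositions, apply the same estimates factor by factor.
\item The case with fixed multi-Jacobian must avoid degeneration of $a$, while for a single Hénon map I must also treat $a\to0,\infty$ with $p$ bounded. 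For the latter I will use $\chi^u+\chi^s=\log|a|$ and $\chi^s\le-\log d$, which give $\chi^u\ge\log d+\log^+|a|$ and similar bounds.
\end{itemize}

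\textbf{Step 3: contradiction.} The harmonic function $\chi^u$ on the compact curve $\overline V$ minus punctures has logarithmic singularities at the punctures. By Step 2, at each puncture the coefficient is $\le 0$ in the sense that $\chi^u\to+\infty$, i.e.\ $\chi^u\sim c_i\log|t|^{-1}$ with $c_i\ge0$. In addition, $c_i>0$ at at least one puncture, because some coefficient truly diverges there and the lower bound is strict. However, the sum of the residues $c_i$ of a harmonic function on a punctured compact Riemann surface is $0$ (Stokes). This contradiction shows that the family is trivial.

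In the single Hénon case, the Jacobian $a$ may vary algebraically. I will then apply the same argument to $\chi^u$ and to $\chi^s$ simultaneously, or to $\chi^u$ alone using the bound above, which also blows up when $|a|\to\infty$. When $a\to 0$, I will use $\chi^s\le -\log d + \log|a|$-type bounds applied to $-\chi^s$. The delicate points are the uniformity of the constants in Step 2 and the case where $a\to0$ while $p$ stays bounded.
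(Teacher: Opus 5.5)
\textbf{A genuine gap at Step~2.} Your architecture matches the paper's: stability makes $\chi^+(\mu_f)$ rigid along the family, and divergence in parameter space should force $\chi^+(\mu_f)\to\infty$. But everything rests on Step~2, and the mechanism you propose there fails.

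There is no uniform horizontal expansion of size $R^{d-1}$ along $\jstar$, and the resulting bound $\chi^u\ge (d-1)\log R-O(1)$ is false. Critical points of $p$ need not escape at the maximal rate, and near a critical point lying in or close to $K_p$ there is no expansion at all. Already for polynomials, the Manning--Przytycki formula $\chi(\mu_p)=\log d+\sum G_p(c)$ gives two counterexamples:
\begin{itemize}
\item for a cubic with one critical point in $K_p$, $\chi(\mu_p)=\log 3+M(p)$;
\item for $p(x)=(x^2-\lambda^2)^2$, $\chi(\mu_p)=\log 4+\frac{3}{2}M(p)$.
\end{itemize}
Both are far below $(d-1)M(p)$. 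The correct general lower bound is only $\chi^+(\mu_f)\ge\log d+M(f)-O(1)$, under a control of the form $\abs{a_i}\lesssim R_f^{d_i-1}$, and your derivative and filtration estimates do not produce it. Writing $\chi^u\ge\log d+\int\cdots$ names the Bedford--Smillie formula but gives no lower bound for the critical measure $\mu_c^-$, which is the whole difficulty. Without this, Step~3 has no puncture with positive residue, and the proof does not close.

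\textbf{What the paper does instead.} The missing estimate is Theorem~\ref{mthm:lyapunov} (Propositions~\ref{pro:lyapunov_henon} and~\ref{pro:lyapunov_multi}), which is the core of the paper. Its proof runs as follows.
\begin{itemize}
\item The Böttcher coordinate $\varphi_f$ and the slopes of its fibers are controlled at the scale $R_f=e^{M(f)}$.
\item A critical point $c$ with $G_p(c)=M(p)$ yields a crossed mapping $\bb_1\to\bb_2$ in $(\varphi_f,y)$ coordinates. This map is unramified over $\fr^v\bb_2$ and has a solenoidal component $\Omega$ of degree $q\ge 2$.
\item Because the dominant critical point escapes, $f$ is unstably disconnected. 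So $T^-$ decomposes into horizontal disks of finite degree, all of degree divisible by $q$ in $\Omega$.
\item Riemann--Hurwitz then gives $\mu_c^-(\Omega)\ge (q-1)/d\ge 1/d$ on a region where $G_f\approx dM(f)$. The Bedford--Smillie formula turns this into $\chi^+(\mu_f)\ge\log d+M(f)-O(1)$.
\end{itemize}
For compositions, this is carried out for the factor realizing $M(f)$, but using $\varphi_f$ of the whole composition and the graph transforms of the other factors, not \emph{factor by factor}. The bound on every $a_i$ is essential; see Example~\ref{eg:fafinva}.

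\textbf{Steps~1 and~3 can be shortened.} Along a persistent saddle, $\lambda^u$ is holomorphic on a finite cover of $V$ with $\abs{\lambda^u}>1$, hence constant. So the Jacobian is constant, which removes the varying-$a$ case entirely. By Theorem~\ref{thm:equidistribution_reinforced}, $\chi^+(\mu_f)$ is then constant as well (Lemma~\ref{lem:jacobian2}). This avoids relying on the claims that no saddle changes type and that bifurcation currents vanish. Neither claim is available for type (III) stability of possibly non-substantial families. Theorem~\ref{mthm:lyapunov} then bounds $M(f)$, and Lemma~\ref{lem:M} gives boundedness, hence finiteness, with no residue argument.
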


 By an \emph{algebraic  family of (compositions of) Hénon maps}, we mean an algebraic variety together with an algebraic map $\Lambda\to \mathcal H^k_{\mathbf d}$  for some $k\geq 1$ and $\mathbf d\in \N^k_{\geq 2}$.
 The family is \emph{trivial} if its image  is reduced to a point. Most of the time, a family can be identified with its image in $\mathcal H^k_{\mathbf d}$. 
To derive Theorems~\ref{mthm:henon_multipliers}, \ref{mthm:multi-Jacobian} and~\ref{mthm:C1} from 
Theorem~\ref{mthm:stable}, we  argue as in~\cite{mcm:algorithms}: 
given $f_0$ in $\mathcal H  = \mathcal H^1_d$ or $\mathcal H^k_{\mathbf d, \mathbf a}$, 
  we consider the set $\Lambda(f_0)\subset \mathcal H$ 
  of mappings with the same trace  (resp.\  unstable multiplier) 
  spectrum and show that it is a stable algebraic family. For the trace spectrum, $\Lambda(f_0)$ is obviously algebraic and 
  the difficulty is to prove the stability, because we do not specify    the Jacobian. 
 Conversely, for the unstable multiplier spectrum, stability is not an issue, what is delicate 
  is that $\Lambda(f_0)$ is only a priori a countable 
 intersection of semi-algebraic subsets. 
  The details are in Proposition~\ref{pro:multipliers}.
Actually for the   unstable multipliers  there is an additional finiteness issue that  
 can only be resolved   after Theorem~\ref{mthm:lyapunov} is established (see \S~\ref{subs:comments_corollaries}). 

Note that the word   `stability' has not been defined yet.
In  first approximation, this refers  to the   notion of 
 weak stability defined by  Lyubich and the second author in~\cite{tangencies}, 
however  a subtlety arises whether we authorize a small set of periodic points to bifurcate or not. This issue is 
  analyzed in \S~\ref{subs:stability}.

\subsection{Divergence  of Lyapunov exponents}\label{par:intro_divergence_of_Lyap}
The starting point of the proof of Theorem~\ref{mthm:stable}  is that a non-trivial stable algebraic family $\Lambda$
is  unbounded in parameter space, so it should exhibit some degeneration phenomena. 
  We analyze  these degenerations by  taking  inspiration from the 
theory of polynomials in one variable. In that case, there is 
a simple  approach to Theorem~\ref{mthm:stable} based on the Lyapunov
exponent of the equilibrium measure. 

  Indeed, let $\mathcal P_d = \mathcal P_d(\C)$ be the space of affine conjugacy classes of polynomials of degree $d$ in one complex variable. 
Branner and Hubbard~\cite{branner-hubbard1} 
have first studied the asymptotic properties of the dynamics when $p$ tends to infinity in 
$\mathcal P_d$ (other relevant sources for us include~\cite{DeMarco-McMullen, huguin:polynomials}). 
Define the \emph{maximal escape rate} of a polynomial $p$ by 
\begin{equation}
\label{eq:maximal_escape_rate_intro}
M(p)= \max\set{G_p(c)\; ; \; \ p'(c)=0},
\end{equation}
where $G_p(z) = \lim_{n\to\infty} d^{-n} \log^+ \abs{p^n(z)}$ is the dynamical Green function.
The quantity $M(p)$ is invariant under conjugacy so it defines a function on $\mathcal P_d$. 
It is shown in~\cite{branner-hubbard1} that $M$ defines a proper map $\mathcal P_d  \to \R_+$  
and provides a good measurement of the amount of degeneracy of $p$.
 Most importantly for us,  the Manning-Przytycki formula  \cite{manning, przytycki} 
  \begin{equation}\label{eq:manning-przytycki}
 \chi(\mu_p)= \log d+  \sum_{ p'(c)=0} G_p(c)
\end{equation}
 for the Lyapunov exponent 
 $\chi(\mu_p)$ 
 of the equilibrium measure $\mu_p=\Delta G_p$ 
     implies that  
\begin{equation}\label{eq:lyap_1D}
\log d+ M(p)\leq \chi(\mu_p)\leq \log d+ (d-1)M(p).
\end{equation}

In a stable family the multipliers of repelling points move holomorphically and take values in $\C\setminus \overline \disk$, thus if  the family is irreducible  and algebraic they must remain constant. On the other hand, by the equidistribution theory of periodic points, 
  for large $n$  the Lyapunov exponent $\unsur{n}\log\abs{\lambda^u}$ of a typical point of period $n$ 
is close to $\chi(\mu_p)$. These two facts and the lower estimate  
in~\eqref{eq:lyap_1D} imply that $M(p)$ is bounded on the family. Since 
$M$ is proper on $\mathcal P_d$, we conclude that a stable algebraic family must be 
bounded in the affine variety $\mathcal P_d$, hence a finite set.

\begin{rem}
Huguin's approach in~\cite{huguin:polynomials} is based on an
  estimate similar to the lower bound in~\eqref{eq:lyap_1D} 
for  the Lyapunov exponents of periodic points of period $1$ and $2$  
(see~\cite[Cor A.1]{huguin:polynomials}). This estimate from one variable dynamics  
 plays an important  role in Section~\ref{sec:huguin}, where we treat Hénon maps with  $a\neq1$.   
The example in~\eqref{eq:jac-1} shows that the corresponding  
result is not true in $\Aut(\C^2)$. 
\end{rem}

  In Section~\ref{sec:lyapunov} we design a two-dimensional version of this argument.
If $f = h_k\circ \cdots \circ h_1$ is a composition of monic and centered Hénon maps as in 
 \S~\ref{subs:fm} (with possibly $k=1$), 
 we define
 \begin{equation}
 \label{eq:escape_rate_for_automorphisms_intro}
 M(f) = \max (M(p_i), i = 1, \ldots , k).
 \end{equation}
 (Because of the $\mathbb U_{d-1}$ action alluded to in \S~\ref{subs:fm}, $M(f)$ is not invariant under  conjugacy, we refer to  \S~\ref{subs:M_for_loxodromic} for a discussion and a remedy.)
According to~\cite{bs3}, $f$ admits a canonical equilibrium measure $\mu_f$, whose Lyapunov exponents 
satisfy 
\begin{equation}
\chi^-(\mu_f)\leq -\log d <0<\log d\leq \chi^+(\mu_f). 
\end{equation}
  In~\cite{bs5}, Bedford and Smillie provide an exact expression for $\chi^\pm(\mu_f)$ which is 
  analogous to  the Manning-Przytycki Formula~\eqref{eq:manning-przytycki} and  involves the delicate
  notion of unstable critical point (see \S~\ref{subsub:conclusion_proof_henon} for a presentation). 
  We use it to obtain the following theorem.

\begin{mthm}\label{mthm:lyapunov}
 Let  $\mathbf d\in \N_{\geq 2}^k$ be a multidegree and $\mathbf a\in (\C^*)^k$ be a multi-Jacobian, for some $k\geq 1$.
There exists  positive constants $C_1$ and $C_2$
 depending only on $(\mathbf d, \mathbf a)$
  such that  
$$ C_1M(f)\leq  \chi^+(\mu_f)\leq  C_2 M(f), $$
for all  $f$    in $ \mathcal H^k_{\mathbf d,\mathbf a}$ with $M(f)\geq 1$.
\end{mthm}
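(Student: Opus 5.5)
The plan is to prove the two inequalities separately. The upper bound comes from Green function estimates. The lower bound uses the Bedford--Smillie formula for $\chi^+(\mu_f)$ together with a one-dimensional reduction. Throughout, $f = h_k\circ\cdots\circ h_1$ with the $a_i$ fixed. We write $G^+_f$ for the Green function and $V^\pm$ for the usual filtration regions, adapted to a radius $R(f)$. By the Branner--Hubbard-type estimates for the $p_i$, this radius satisfies $\log R(f) = M(f) + O(1)$, uniformly on $\mathcal H^k_{\mathbf d,\mathbf a}$. The first step is to record this. Since each $p_i$ is monic and centered, its coefficients are $O(e^{(d_i-1) M(p_i)})$ up to constants. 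Hence the filtration $\{\abs{x}\geq R,\ \abs{x}\geq\abs{y}\}$ is forward invariant under $f$ once $\log R \geq C M(f)+C'$. It follows that $K_f$ is contained in a bidisk of log-radius $O(M(f))$, and that $G^\pm_f \leq \log^+\norm{z} + O(M(f))$.

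For the upper bound, I would use the classical estimate that $\chi^+(\mu_f)$ is at most $\int \log \norm{Df}\, d\mu_f$. Since $\mu_f$ is supported on $K_f$, and $\norm{Df}$ there is bounded by $\max_i \abs{p_i'}$ on the bidisk together with the fixed constants $\abs{a_i}$, we get $\log\norm{Df} \leq C\, M(f) + C''$ on $K_f$. This yields $\chi^+ \leq C_2 M(f)$ when $M(f)\geq 1$. This step should be routine.

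For the lower bound, I would use the Bedford--Smillie formula
$$\chi^+(\mu_f) = \log d + \int G^+_f \, \mu^u_c,$$
where $\mu^u_c$ is the measure on unstable critical points: the critical points of $G^+$ restricted to unstable manifolds, weighted via the unstable transversal $\mu^s$-measure. The goal is to exhibit a definite mass of such critical points at which $G^+ \gtrsim M(f)$. Choose $i$ and a critical point $c$ of $p_i$ with $G_{p_i}(c) = M(f)$, and work near the vertical line $\{x = c\}$ in the appropriate factor coordinates. In the region where $\abs{y}$ is small compared with $e^{M(f)}$, the composition behaves like the one-dimensional polynomial, up to a bounded perturbation coming from the fixed Jacobians. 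Unstable disks of $\mu_f$ crossing this region are then graphs over the $x$-axis. Along them, $G^+$ is a small perturbation of $G_{p_i}(x)$ rescaled by the degrees of the other factors, in the spirit of the degenerate "crossed mappings" analysis. Each such unstable disk therefore carries a critical point of $G^+$ near $c$ where $G^+\geq c_0 M(f)$. Integrating over the transverse measure, whose total mass is normalized, should give $\int G^+ \mu^u_c \geq c_0 M(f)$, hence $\chi^+\geq C_1 M(f)$.

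The main obstacle is this lower bound. Two things must be controlled uniformly in $M(f)\to\infty$. First, a definite proportion, in transverse $\mu^s$-mass, of the unstable lamination must pass through the region near $\{x=c\}$ as graphs of controlled geometry. Second, the critical points of $G^+$ on these leaves must persist with the right count, to be checked via the argument principle applied to $\partial G^+$ along the leaf and compared with $G_{p_i}'$. I would first try a rescaling argument. Conjugate by $(x,y)\mapsto (e^{-M}x, e^{-M}y)$ and pass to a limit of the normalized family. In that limit the Hénon factors degenerate to one-dimensional polynomial dynamics, for which the one-variable lower bound in~\eqref{eq:lyap_1D} holds. Lower semicontinuity of the contribution of unstable critical points would then transfer that bound back to $f$. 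A compactness argument along diverging sequences would supply the uniform constant $C_1$.
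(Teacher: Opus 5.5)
\paragraph{Upper bound.} Your upper bound is correct, and it is a more elementary route than the paper's. The set $\jstar\subset K_f$ lies in a bidisk of radius $e^{O(M(f))}$, and the coefficients of the $h_i$ are $e^{O(M(f))}$. (The constant term of $p_i$ can be of size $R_f^{d_i}$ rather than $R_f^{d_i-1}$, which is harmless.) So $\chi^+(\mu_f)\leq\int\log\norm{Df}\,d\mu_f=O(M(f))$. The paper instead uses the escape-rate criterion of \cite[Thm A.2]{lyapunov} together with Proposition~\ref{pro:estimate_varphi_multi}. A small correction: the Bedford--Smillie formula must be integrated over a fundamental domain $\set{A\leq G^+<dA}$. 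Since $G^+\mu_c^-$ is invariant, its total mass is infinite.

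\paragraph{The lower bound: the rescaling argument fails.} Your rescaling, semicontinuity and compactness plan cannot work. Conjugating $h(x,y)=(ay+p(x),x)$ by $(x,y)\mapsto(e^{M}x,e^{M}y)$ gives $(ay+e^{-M}p(e^{M}x),x)$, whose leading coefficient $e^{(d-1)M}$ blows up. No other conjugation does better: by Lemma~\ref{lem:M}, $M(f_n)\to\infty$ means exactly that $f_n$ leaves every compact subset of $\mathcal H^k_{\mathbf d,\mathbf a}$, even modulo the finite conjugacy action. So there is no limiting H\'enon map or polynomial. The natural limit of such a degeneration is non-Archimedean, and as the paper notes, that theory is not available for $\Aut(\C^2)$. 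Even a limit object with finite escape rate could not yield, by any semicontinuity, a bound growing like $M(f_n)$. Everything therefore rests on your direct heuristic, whose two ``obstacles'' you list but do not resolve. They are the actual content of the proof.

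\paragraph{What the direct argument is missing.} Three ingredients are absent.

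(i) You need $T^-$ to decompose in a bidisk into horizontal disks of finite degree; otherwise ``each leaf carries a critical point'' and ``normalized transverse measure'' have no meaning. This decomposition is \emph{unstable disconnectedness} (Lemma~\ref{lem:disconnected}, via \cite{bs6} and \cite[Thm 2.4]{connex}). Without it the critical measure can vanish.

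(ii) You need a robust topological reason why \emph{every} leaf in the region is tangent to $\set{\varphi_f=\cst}$. The paper shows that $f\colon\bb_1\to\bb_2$ is a crossed mapping, unramified over $\fr^v\bb_2$, with a solenoidal component $\Omega$ of degree $q\geq 2$. By Proposition-Definition~\ref{propdef:non-ramified}, every horizontal disk in $\Omega$ then has degree divisible by $q$, hence at least $q-1$ tangencies. Non-ramification requires choosing the radius of $\bb_2$ so that its vertical boundary avoids the disks swept by the critical values of $p+ay$; this is where the bound $\abs{a}\leq R_f^{d-1}/(400d)$ enters. An argument-principle count near $\set{x=c}$ on leaves assumed to be graphs does not handle higher-degree leaves, leaves with boundary in the region, or nearby critical values of other critical points.

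(iii) You need a lower bound on the mass. Invariance $f_*T^-=dT^-$ gives $T^-\rest{\Omega}$ slice mass exactly $q/d$, hence $\mu_c^-(\Omega)\geq (q-1)/d$ by Lemma~\ref{lem:critical_mass}.

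\paragraph{How the paper assembles these.} The paper works near the critical \emph{value} $v$, in $(\varphi_f,y)$-coordinates. There Propositions~\ref{pro:estimate_varphi_multi} and~\ref{pro:slope} control $\varphi_f$, and $\bb_2\subset\set{A\leq G_f<dA}$ with $A=d_kM(f)+\log\frac34$, where $h_k$ is the dominant factor. Near $c$ itself, $G_f\approx M(f)$ lies below the range where $\varphi_f$ is controlled. Theorem~\ref{mthm:lyapunov} then follows from Proposition~\ref{pro:lyapunov_multi} when $M(f)\geq B$. When $1\leq M(f)\leq B$, it follows from $\chi^+\geq\log d$ together with compactness (Lemma~\ref{lem:M}).
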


From the relation $\chi^+(\mu_f) + \chi^-(\mu_f) = \log\abs{\jac(f)}$, 
  a similar result  holds for  $\chi^-(\mu_f)$. 
Our estimate is actually  much more precise, and      allows the 
multi-Jacobian to diverge at a controlled rate with respect to $M(f)$: see Propositions~\ref{pro:lyapunov_henon}  and~\ref{pro:lyapunov_multi}. 
The proof is perturbative in nature  (even if we allow for pretty large perturbations), 
and ultimately relies on the one-dimensional estimates of~\cite{branner-hubbard1}, together with  
a geometric analysis of unstable critical points in the spirit of \cite{lyapunov}.

To deduce Theorem~\ref{mthm:stable} from Theorem~\ref{mthm:lyapunov},  consider a stable 
  algebraic family $\Lambda$. As for polynomial maps, we observe that 
the multipliers of saddle periodic points must remain constant along each component of  $\Lambda$. 
In particular the Jacobian is constant as well (this explains why there is no assumption on the Jacobian in the Hénon case). 
Then, Theorem~\ref{mthm:lyapunov} implies that $M(f)$ must be bounded along the family  
$\Lambda$, which 
implies that $\Lambda$ is a finite set.

Thus, to drop the assumption on the multi-Jacobian in Theorem~\ref{mthm:stable}, we should prove a version of Theorem~\ref{mthm:lyapunov} in which the `constant multi-Jacobian' assumption is replaced by `constant Jacobian'. 
It is unclear to us whether  such a statement holds. 
What is clear is that our method breaks down 
without such an assumption (see Example~\ref{eg:fafinva}). 
A related issue, which  appears in~\cite{branner-hubbard1}, is  
the compactness of the connectedness locus. 
In $\mathcal P_d$ this follows from the   fact that connectedness of the Julia set is equivalent to $M(p)=0$. 
Likewise,
Theorem~\ref{mthm:lyapunov} together with the results of~\cite{bs6} and~\cite{connex} imply that 
the connectedness locus is compact in $\mathcal H^k_{\mathbf d,\mathbf a}$ for fixed $\mathbf a$
(see Corollary~\ref{cor:connectedness}).  
On the other hand this result is not  true for variable Jacobian  
\begin{vlongue}
(see \S~\ref{subs:eg_fa})
\end{vlongue}
: this outlines a difference between the one-dimensional and the two-dimensional situations.

The asymptotic behavior of the Lyapunov exponent
along algebraic families (or along degenerating sequences) 
is an important topic in the dynamics of rational 
functions (see e.g., \cite{favre:2020,  luo:2022, favre-rivera:2025, favre:2025, favre-gong} and the references therein)
and  groups of Möbius transformations
(see e.g., \cite{morgan-shalen, degenerations_SL2}). This asymptotic behavior can be studied with techniques from non-Archimedean degenerations: such techniques are not yet available for automorphisms of $\C^2$ 
and would constitute a natural continuation of this work. 

\subsection{Logical structure} 
The following diagrams will be helpful to understand the logical relationships between the main results.  

\begin{small}
 \begin{tikzcd}[
    column sep=1.3em, 
    row sep=0.4em,
    nodes={anchor=west},
    arrows={double, Rightarrow}
]
 & &  \text{Thm \ref{thm:traces}} \arrow[r] & \text{Thms \ref{mthm:henon_multipliers} and \ref{mthm:multi-Jacobian} (trace spectrum)} \\
 \text{Prop \ref{pro:multipliers} + Thm \ref{mthm:stable}} 
  \arrow[rru] 
  \arrow[rr] 
  \arrow[rrdd] 
 & &  \text{Thm \ref{thm:C1}} \arrow[r]   & \text{Thm \ref{mthm:C1} (local rigidity)} \\
\\
&   &  \text{Thm \ref{thm:u_multipliers}} \arrow[r] & \text{Thms \ref{mthm:henon_multipliers} and \ref{mthm:multi-Jacobian} (unstable multipliers)} \\
& \text{Thm \ref{mthm:lyapunov}}
\arrow[ru, end anchor=south west]\arrow[r] & \text{Prop \ref{pro:C1_finite}} \arrow[r] & \text{Thm \ref{mthm:C1} (global finiteness)}
\end{tikzcd} 

\smallskip

\begin{tikzcd}[column sep=2em, arrows={double, Rightarrow}]
 \text{Prop \ref{pro:lyapunov_henon}}\arrow[r, dashed] &  \text{Prop \ref{pro:lyapunov_multi}}
\arrow[r] & \text{Thm \ref{mthm:lyapunov}} \arrow[r] & \text{Thm \ref{mthm:stable}}
\end{tikzcd}
\end{small}

\section{Stability theory of loxodromic automorphisms}\label{sec:prel_stab}

\subsection{Dynamics of loxodromic automorphisms}
\label{par:basics_on_loxodromic_automorphisms}
We use the standard dynamical objects associated to a loxodromic automorphism $f\colon \C^2\to \C^2$, 
as summarized  in \cite[\S 1]{bs6} or \cite[\S 2.1]{tangencies},  with the same notation. 
In particular we will consider the Green functions $G^\pm$, the equilibrium measure $\mu$, and the ``small Julia set'' $\jstar = \supp(\mu)$. Most often we work with families, and the  dependence in $f$  is underlined: we write $G^\pm_f$, $\mu_f$, $K_f$, $\ldots$, or $G^\pm_\lambda$, $\mu_\lambda$, $K_\lambda$, $\ldots$, when $f$ depends on a parameter $\lambda$. 

Let $d\geq 2$ be the dynamical degree of $f$. 
We denote by $\Fix (f)$ the set of fixed points of $f$ and by $\Per_n(f)$ the set of periodic points of exact period $n$. 
So $\Fix(f^n)$ is the sets  $\Per_k(f)$ for $k$ dividing $n$. 
It was  shown in~\cite[Thm 3.1]{friedland-milnor} that all periodic points are isolated and $\#\Fix(f^n) = d^n$, counting multiplicities. 
An exact expression for  $\#\Per_n(f)$ can be obtained by the   Möbius inversion formula. 
Since $\Per_n(f) =  \Fix(f^n)\setminus \bigcup_{k\leq n/2} \Fix(f^k)$,  we get  
\[
\#\Per_n(f)\sim d^n 
\] 
as $n\to\infty$.
We denote by  $\SPer_n(f)$ the set of saddle periodic points of exact period $n$ and by $\SPer(f)$ the set of all saddle periodic points.
We already introduced the notation $\lambda^{s/u}(p)$ in \S~\ref{par:into_multiplier_rigidity} 
for the stable/unstable multipliers of $p\in \SPer_n(f)$. 
We will  denote by $\chi^{s/u}$ the corresponding Lyapunov exponents, that is 
 \[
 \chi^{s/u}(p)   = \unsur{n} \log\abs{\lambda^{s/u}(p)}.
 \]

If $\mathrm{P}$ is a set of periodic points, we write $\mathrm{P}_n$  for $ \mathrm{P} \cap \Per_n(f)$. 
 We   say that   $\mathrm{P}$   is of asymptotic density $1$  if $\#\mathrm{P}_n  \sim d^n$ as $n\to\infty$, 
 and of   positive upper density if $\limsup_{n\to\infty} d^{-n}\#\mathrm{P}_ n  >0$.  
 
 The equidistribution theorem obtained by Bedford, Lyubich and Smillie in~\cite{bls2} 
asserts  that $\SPer_n(f)$ is of asymptotic density $1$  and  
that saddle periodic points are asymptotically equidistributed towards the equilibrium measure $\mu_f$. 
The following reinforcement of this result was obtained in~\cite{rigidity}.

 \begin{thm}\label{thm:equidistribution_reinforced}
Let $f\in \Aut(\C^2)$ be a loxodromic automorphism of dynamical degree $d$. 
There exists a set $\SPer^+(f)\subset \SPer(f)$ 
 of saddle periodic points   of asymptotic density 1 such that 
 if $(p_n)$ is an arbitrary sequence of periodic points with $p_n\in \Per_n(f)$, then
\begin{equation}\label{eq:equidistribution_reinforced}
\lim_{n\to\infty} \unsur{n}\sum_{k=0}^{n-1}  \delta_{f^k(p_n)}  = \mu_f
\quad \text{ and } \quad 
\lim_{n\to\infty}   \chi^u(p_n) = \chi^+(\mu_f).
\end{equation}
\end{thm}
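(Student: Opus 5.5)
Since this result is quoted from~\cite{rigidity}, the plan is to rebuild it from the Bedford–Lyubich–Smillie theory together with a diagonal extraction. The first goal is to show that for every $\varepsilon>0$ and every weak-$*$ neighborhood $\mathcal V$ of $\mu_f$, the set $B_n(\varepsilon,\mathcal V)$ of points $p\in\Per_n(f)$ such that $p$ is not a saddle, or $\unsur{n}\sum_{k<n}\delta_{f^k(p)}\notin\mathcal V$, or $\abs{\chi^u(p)-\chi^+(\mu_f)}\geq\varepsilon$, satisfies $\#B_n(\varepsilon,\mathcal V)=o(d^n)$. Fix a countable basis $\mathcal V_j$ of neighborhoods of $\mu_f$ (the measures involved are supported in a fixed compact set containing $K_f$), and set $\varepsilon_j=1/j$. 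Then choose an increasing sequence $N_j$ with $\#B_n(\varepsilon_j,\mathcal V_j)\leq d^n/j$ for all $n\geq N_j$. Define $\SPer^+_n(f)=\SPer_n(f)\setminus B_n(\varepsilon_j,\mathcal V_j)$ for $N_j\leq n<N_{j+1}$. This set has density $1$ because $\#\Per_n\sim d^n$, and any sequence $p_n\in\SPer^+_n(f)$ satisfies~\eqref{eq:equidistribution_reinforced} by construction. (The statement should be read with $p_n\in\SPer^+(f)\cap\Per_n(f)$.)

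The counting claim for the empirical measures comes from~\cite{bls2}. There, saddle points of period $n$ are produced inside Pesin boxes, that is, by shadowing $\mu_f$-generic orbit segments using the product structure of $\mu_f$ given by the laminar currents $T^\pm$. This yields at least $(1-\delta)d^n$ saddle points of period $n$ whose orbits are $\mu_f$-typical: their orbit segments stay in a good Pesin set and their empirical measures are close to $\mu_f$. Alternatively, one could use the $\Per_n$-version of the equidistribution $d^{-n}\sum_{p\in\Per_n}\delta_p\to\mu_f$ together with a large deviation/entropy argument: since $\mu_f$ is the unique measure of maximal entropy $\log d$, at most $e^{n(\log d-\eta)}$ periodic points of period $n$ can have empirical measures outside $\mathcal V$, by an upper semicontinuity-of-entropy counting argument in the spirit of Misiurewicz's proof of the variational principle, using expansivity on $J^*$ (hyperbolicity fails in general, so this would need care). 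I would use the first route.

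The main obstacle is the Lyapunov exponent. Weak convergence of empirical measures does not control $\chi^u(p_n)$, because $\log\norm{Df|_{E^u}}$ is not continuous on the whole space. I would handle this with the \cite{bls2} construction itself. The points obtained by closing orbits in a Pesin block $\mathcal P$ of $\mu_f$ have unstable directions close to $E^u$ along the shadowed orbit, and $\unsur{n}\log\norm{Df^n|_{E^u}}$ along $\mu$-typical segments is within $\varepsilon$ of $\chi^+(\mu_f)$ for most starting points, by Birkhoff and Oseledets uniformly on large-measure sets (Egorov). Cone-field estimates along the segment then transfer this to $\abs{\chi^u(p)-\chi^+(\mu_f)}<2\varepsilon$. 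The counting of such points reaching $(1-\delta)d^n$ is exactly the content of the Bedford–Lyubich–Smillie proof: the intersections of stable and unstable disks within Pesin boxes account for almost all of $\#\Fix(f^n)=d^n$. The remaining points are $o(d^n)$, which concludes the argument.
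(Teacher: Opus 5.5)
The paper does not prove this statement; it imports it from \cite{rigidity}, so your argument has to be judged on its own.

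Your reduction is sound. If, for every $\varepsilon>0$ and every neighborhood $\mathcal V$ of $\mu_f$, the bad set $B_n(\varepsilon,\mathcal V)$ has cardinality $o(d^n)$, then the diagonal extraction (with nested $\mathcal V_j$) yields $\SPer^+(f)$. You are also right that the conclusion must be read for $p_n\in\SPer^+(f)\cap\Per_n(f)$.

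But both counting claims are asserted rather than proved, and they rest on an inaccurate account of \cite{bls2}. Bedford--Lyubich--Smillie locate periodic points at time $0$: they count about $d^n\mu_f(P)$ of them near a Pesin box $P$, through intersections of $f^n$-images of unstable plaques with stable plaques. This gives no control of the orbit between times $0$ and $n$, nor of $\lambda^u$. Moreover, orbit segments do not ``stay in a good Pesin set'': a typical orbit visits a Pesin block only with frequency about its measure.

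For the empirical measures you need no shadowing, and no entropy/expansivity argument (which fails without hyperbolicity). Let $m_p=\unsur{n}\sum_{k<n}\delta_{f^k(p)}$ and $\Theta_n=(\#\Per_n(f))^{-1}\sum_{p\in\Per_n(f)}\delta_{m_p}$, a probability measure on the compact convex set of invariant probability measures on $K_f$. Its barycenter is $(\#\Per_n(f))^{-1}\sum_{p\in\Per_n(f)}\delta_p$, which tends to $\mu_f$ by \cite{bls2}. Hence every limit point of $\Theta_n$ has barycenter $\mu_f$. Since $\mu_f$ is ergodic, hence an extreme point, the only such probability measure is $\delta_{\mu_f}$. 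So the proportion of $p$ with $m_p\notin\mathcal V$ tends to $0$.

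Half of the exponent claim is also free. $\chi^u(p)$ is the top exponent of $m_p$, and $m\mapsto\inf_N\unsur{N}\int\log\norm{Df^N}\,dm$ is upper semicontinuous. So $m_{p_n}\to\mu_f$ forces $\limsup_n\chi^u(p_n)\leq\chi^+(\mu_f)$.

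The real content is the lower bound $\chi^u(p)\geq\chi^+(\mu_f)-\varepsilon$ for all but $o(d^n)$ points, and here the proposal has a genuine gap. There is no cone field with uniform constants along a non-uniformly hyperbolic orbit segment. Katok's closing lemma does work in tempered Lyapunov charts, but it only gives exponential growth of the number of good periodic orbits, not a proportion $1-\delta$ of $d^n$. Likewise, ``most starting points for $\mu_f$'' does not by itself translate into ``all but $o(d^n)$ periodic points''.

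In the Bedford--Lyubich--Smillie picture, the missing bridge has two parts:
\begin{enumerate}[(a)]
\item Each degree-one crossing of $f^n$ over the box contributes one periodic point and carries $\mu_f$-mass comparable to $d^{-n}$, by $f_*T^-=dT^-$ and the product structure.
\item Inside one crossing, $\unsur{n}\log\abs{\lambda^u(p)}$ and $\unsur{n}\log\norm{Df^n_x|_{E^u(x)}}$ differ by $o(1)$ for the $\mu_f$-points $x$ of that crossing. This requires Koebe distortion for the univalent map $f^{-n}$ from the horizontal disks of the crossing onto small pieces of unstable plaques, plus control of the holonomy across the strip.
\end{enumerate}
Given (a) and (b), Oseledets' theorem makes the bad crossings carry total mass $o(1)$, so they number $o(d^n)$.

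Alternatively, it suffices to prove the averaged statement $d^{-n}\sum_{p\in\SPer_n(f)}\chi^u(p)\to\chi^+(\mu_f)$. Combined with the upper bound above and $0<\chi^u(p)\leq\log\max_{K_f}\norm{Df}$, a Chebyshev argument then gives the lower bound for all but $o(d^n)$ points.
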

  
As a consequence, if  $\mathrm{P}$ is a set of saddle periodic points with 
 positive upper density, there is a sequence $(p_{n_j})_{j\geq 0}$ with 
$p_{n_{j}}\in \mathrm{P}\cap \Per_{n_j}(f)$ such that the two properties in~\eqref{eq:equidistribution_reinforced}
 hold for $p_{n_j}$ as $j$ goes to $+\infty$. 

\subsection{$\jstar$-stability}\label{subs:stability}
Let $(f_\lambda)_{\lambda\in \Lambda}$ be a holomorphic family of loxodromic automorphisms of $\C^2$ of dynamical degree $d$.
We assume in this section that $\Lambda$ is connected. 
Abusing notation, we often  identify the parameter in $\Lambda$ with the corresponding map $f_\lambda\in \Aut(\C^2)$ and simply write ``$f\in \Lambda$''. 
When a parameter $\lo\in \Lambda$ is used as a base point we often set $f_0 = f_\lo$.   

A notion of stability was developed for such families in~\cite{tangencies}. 
It can be expressed in several ways, which we now describe.
Recall that by the implicit function theorem,  any saddle periodic point  
can be followed holomorphically along $\Lambda$  until one of its multipliers hits the value $1$. 
 Let us consider the following 6 different types of stability for $(f_\lambda)$:
 \begin{enumerate}[(I)]
 \item there is a parameter $\lo$ such that 
 all saddle periodic points of $f_0$  persist as saddles  throughout $\Lambda$;
 \item  there  is $\lo\in \Lambda$ and a set 
 of saddle periodic points of $f_0$ of asymptotic density $1$ that persist 
 as saddles throughout $\Lambda$;
 \item  there  is $\lo\in \Lambda$ and a set 
 of saddle periodic points of  $f_0$ of positive  upper density 
  that persist as saddles throughout $\Lambda$;
 \item there  is $\lo\in \Lambda$ and a set $\mathrm P_0$
 of saddle periodic points of 
  $f_0$   such that $\overline{\mathrm P_0} = \jstar_0$, that  persist as saddles
  throughout $\Lambda$;
\item There is a locally defined branched holomorphic motion  of $\jstar$ over $\Lambda$. (See \cite{tangencies} for the notion of branched holomorphic motion.)
\item For any $\lambda\in \Lambda$, any saddle point for $f_\lambda$ can be followed holomorphically as a periodic point over $\Lambda$ (but not necessarily as a saddle). 
 \end{enumerate}

One comment is in order about the meaning of ``persist'' in these assertions. This persistence refers to 
the analytic continuation of a saddle or periodic point. It means that the given periodic point can be followed
along any path; but along a homotopically non-trivial loop the periodic point can possibly come back to another periodic point. 
The same comment applies to  condition (V): the branched holomorphic motion of $\jstar$ may have non-trivial monodromy over $\Lambda$.

\begin{pro}\label{pro:stability_conditions}
These stability conditions satisfy $\mathrm{(I)}\Rightarrow \mathrm{(II)} 
\Rightarrow \mathrm{(III)}\ \Rightarrow \mathrm{(IV)}\Rightarrow \mathrm{(V)}\Rightarrow \mathrm{(VI)}$.
\end{pro}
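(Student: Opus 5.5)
The chain of implications is proved link by link. The first two are formal and the last two are imported from the stability theory of \cite{tangencies}; the only step with real content is (III)$\Rightarrow$(IV).

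\emph{(I)$\Rightarrow$(II)$\Rightarrow$(III).} These follow directly from the definitions. By the equidistribution theorem of~\cite{bls2} recalled above, $\SPer(f_0)$ has asymptotic density $1$, which gives the first implication. A set of asymptotic density $1$ has positive upper density, which gives the second.

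\emph{(III)$\Rightarrow$(IV).} Let $\mathrm P$ be the persistent set of positive upper density. I will use Theorem~\ref{thm:equidistribution_reinforced} to extract a sequence $p_{n_j}\in \mathrm P\cap\Per_{n_j}(f_0)$ whose orbit measures converge to $\mu_{f_0}$. Set $\mathrm P_0=\bigcup_j \mathcal O(p_{n_j})$, where $\mathcal O$ denotes the orbit. Each $p_{n_j}$ persists as a saddle, hence so does every point of its orbit, since $f_\lambda$ maps the continuation of $p$ to the continuation of $f_0(p)$. Weak convergence gives $\jstar_0=\supp(\mu_{f_0})\subset \overline{\mathrm P_0}$. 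Conversely, saddle points lie in $\jstar$ (again by~\cite{bls2}), so $\overline{\mathrm P_0}=\jstar_0$.

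\emph{(IV)$\Rightarrow$(V).} I expect this to be the main obstacle, and I would invoke the construction of~\cite{tangencies}. The persistent saddles in $\mathrm P_0$ give holomorphic maps $\lambda\mapsto p(\lambda)$, defined locally near any parameter, that never collide: two continuations that coincide at some $\lambda$ would give a non-simple periodic point, contradicting the saddle condition. These maps form a holomorphic motion of $\mathrm P_0$. To extend it to the closure, I would first note that $\mathrm P_0\subset K_\lambda$, which is locally uniformly bounded in $\lambda$. Montel's theorem then shows that the family of graphs is normal. Limits of these graphs are graphs of holomorphic maps $\gamma$ with $\gamma(\lambda)\in\jstar_\lambda$. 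The delicate points are showing that $\gamma(\lambda)$ lies in $\jstar_\lambda$ rather than merely in $K_\lambda$, and that these limits cover $\jstar_\lambda$ for every $\lambda$ rather than only at $\lambda_0$. Limits of disjoint graphs may intersect, which is why the motion is only \emph{branched}. For the first point, I would use semicontinuity of $\lambda\mapsto\jstar_\lambda$ together with Hurwitz-type arguments, exactly as in~\cite{tangencies}. The second point is where the real work lies: one has to show that at every parameter the points $p(\lambda)$ of $\mathrm P_0$ remain dense in $\jstar_\lambda$, using that persistent saddles equidistribute, or citing the equivalence proved there.

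\emph{(V)$\Rightarrow$(VI).} Given a saddle $q$ of $f_{\lambda_1}$ of period $n$, the branched motion provides a holomorphic graph $\gamma$ through $q$, with $\gamma(\lambda)\in\jstar_\lambda$, defined near $\lambda_1$. The branched holomorphic motion is equivariant, so $f^n_\lambda\circ\gamma$ is again a graph of the motion. Near $\lambda_1$ this graph coincides with the continuation of $q$, by uniqueness of the continuation of the hyperbolic fixed point. Continuing along paths, the graph through a periodic point stays periodic: the periodicity condition $f^n_\lambda(\gamma(\lambda))=\gamma(\lambda)$ is analytic in $\lambda$ and holds on an open set, so it propagates along the path. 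This yields a holomorphic continuation of $q$ as a periodic point along every path, possibly losing the saddle property. That is statement (VI).
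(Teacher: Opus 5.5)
Your arguments for (I)$\Rightarrow$(II)$\Rightarrow$(III) and (III)$\Rightarrow$(IV) are correct and match the paper, which simply invokes Theorem~\ref{thm:equidistribution_reinforced} for the latter. The gap is in (IV)$\Rightarrow$(V), at exactly the step you defer. You never show $\jstar_\lambda\subset j_\lambda(\jstar_0)$, i.e.\ that $\mathrm P_\lambda$ is dense in $\jstar_\lambda$ for $\lambda\neq\lambda_0$.

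The mechanism you suggest, equidistribution of the persistent saddles, is not available under (IV). There $\mathrm P_0$ is only assumed dense in $\jstar_0$ and may have zero density, so nothing says its continuation equidistributes. Your argument would work under (III), but that is not the implication at stake. A blanket appeal to the equivalences of~\cite{tangencies} is not enough either: the full equivalence there is obtained for substantial families, which is precisely what the paper cannot assume. By contrast, the inclusion $j_\lambda(\jstar_0)\subset\jstar_\lambda$, which you also call delicate, is immediate. The limit graphs take values in $\overline{\mathrm P_\lambda}$, and $\mathrm P_\lambda$ consists of saddles of $f_\lambda$, which lie in the closed set $\jstar_\lambda$. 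No semicontinuity is needed, and $\lambda\mapsto\jstar_\lambda$ is only lower semicontinuous anyway.

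The missing idea is to pass from $\mathrm P_0$ to all saddle points of $f_0$. Since $S_0=\SPer(f_0)\subset\jstar_0$, the extended motion carries every point of $S_0$. By~\cite[Lem.~4.10]{tangencies}, the graph of the motion through a saddle point is its continuation as a periodic point. So points of $S_0$ of period $n$ move to periodic points of period dividing $n$, possibly colliding or changing type.

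Now fix $\lambda_1$ and count (this is~\cite[Lem.~4.13]{tangencies}):
\begin{itemize}
\item $f^n_{\lambda_1}$ has $d^n$ fixed points counted with multiplicity.
\item Saddles are simple and, by~\cite{bls2}, account for $d^n-o(d^n)$ of them.
\item $S_0$ contributes $d^n-o(d^n)$ distinct continuations.
\end{itemize}
Hence all but $o(d^n)$ of these continuations are distinct saddle points of $f_{\lambda_1}$. By equidistribution their closure contains $\jstar_{\lambda_1}$, which yields $\jstar_{\lambda_1}\subset j_{\lambda_1}(\jstar_0)$.

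The same lemma is what your (V)$\Rightarrow$(VI) step lacks. In a branched motion several graphs may pass through the saddle $q$. Knowing that $\gamma$ and $f^n_\lambda\circ\gamma$ both pass through $q$ therefore does not give $f^n_\lambda\circ\gamma=\gamma$. Uniqueness of the continuation of a hyperbolic fixed point applies only once you know, as in~\cite[Lem.~4.10]{tangencies}, that the graph through a saddle of $\jstar$ is its continuation.
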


Notice  that for    statements  of the form ``any stable algebraic family of type (X) is trivial'', the implications go in the reverse direction.

\begin{rem}
\label{rem:substantiality}
In~\cite{tangencies}, Definition 4.1, condition (V) is called {\emph{weak $\jstar$-stability}} and it is shown that 
if the family is \emph{substantial}, then  $\mathrm{(VI)}\Rightarrow \mathrm{(I)} $ so all these conditions are equivalent (conditions (II) and (III) are actually not considered in that paper). Substantial means that either the family is dissipative, or that 
the following {\emph{non-degeneracy condition}} is satisfied:
for any periodic point, no relation of the form 
   $\lambda_1^a\lambda_2^b = c$ hold persistently in parameter space, where 
  $\lambda_1$, $\lambda_2$ are its eigenvalues, $a$ and $b$ are some fixed real numbers
  \begin{vlongue}
(\footnote{It is inaccurately stated that $a$ and $b$ could be complex in~\cite{tangencies} but the actual issue is for real $a$ and $b$. Indeed, with notation as in~\cite[Lem. 4.11]{tangencies}, the issue is about the $\R$-linear independence of the $\theta_i$. Furthermore, for specific choices of $a$ and $b$, like $a>0$, $b<0$ and $c=1$, the generic linearizability can still be verified, see e.g.~\ref{subsub:alpha} below.})
\end{vlongue}
\begin{vcourte}
(\footnote{It is inaccurately stated that $a$ and $b$ could be complex in~\cite{tangencies} but the actual issue is for real $a$ and $b$. Indeed, with notation as in~\cite[Lem. 4.11]{tangencies}, the issue is about the $\R$-linear independence of the $\theta_i$.})
\end{vcourte}
and  $c$ is some fixed complex number of modulus $1$. 
 If the family is not substantial, one cannot exclude the possibility that, even if a dense set of saddle points moves holomorphically (condition (IV)), some saddle points bifurcate by having both their multipliers crossing the unit circle without breaking the holomorphic motion.  Conjecturally, such a phenomenon never happens and the substantiality assumption is superfluous.   
\end{rem}

\begin{proof}[Proof of Proposition~\ref{pro:stability_conditions}]
The implications $\mathrm{(I)}\Rightarrow \mathrm{(II)} \Rightarrow \mathrm{(III)}$ are obvious. The  equidistribution theorem of~\cite{bls2} shows that 
$\mathrm{(II)}\Rightarrow \mathrm{(IV)}$ and  Theorem~\ref{thm:equidistribution_reinforced}
shows that $\mathrm{(III)}\Rightarrow \mathrm{(IV)}$.

For $\mathrm{(IV)}\Rightarrow \mathrm{(V)}$, we start with the holomorphic motion of $\mathrm P_0$ above $\Lambda$. 
Locally, this gives a normal family of graphs above $\Lambda$ because the periodic points of $f_\lambda$ are contained in $K_{\lambda}$ and, for any compact subset $B$ of  $\Lambda$, there is a compact $K_B\subset \C^2$ containing $K_{\lambda}$ for any $\lambda\in B$. 
Thus, by Lemma~3.4 of~\cite{tangencies}, the motion $(\mathrm P_\lambda)$ of $\mathrm P_0$ extends to a branched holomorphic motion $(j_\lambda)$ of $\jstar_{0}$. 
Since the points of $\mathrm P_\lambda$ are saddles, we get $ j_\lambda(\jstar_{0}) \subset \jstar_\la$. 
To prove the converse inclusion,  we put $S_0=\SPer(f_0)$ and   remark that by~\cite[Lem. 4.10]{tangencies},
$(j_\lambda)$ induces a motion $(S_\lambda)$ of all saddle periodic points,  because $S_0 \subset J^*_0$. 
By holomorphic continuation, periodic points of period $n$ remain periodic of period dividing $n$, hence $S_\lambda\subset \Per(f_\lambda)$ for every $\lambda\in \Lambda$. But along the motion there might be collisions of periodic points, as well as periodic points changing type. 
Nevertheless, Lemma~4.13 from~\cite{tangencies}
asserts that for any fixed $\lambda_1$, 
$\overline {S_{\lambda_1}}$ contains $\jstar_{\lambda_1}$. The reason is that 
by the equidistribution theorem of~\cite{bls2}, the asymptotic 
density of points in $S_0$ which must also be saddles at $\lambda_1$ tends to $1$ as the period tends to infinity. Thus $ j_\lambda(\jstar_{0}) = \jstar_\la$, as claimed.
A similar  argument also shows that  $\mathrm{(V)}\Rightarrow \mathrm{(VI)}$.
\end{proof}

\begin{vlongue}
\begin{rem}\label{rem:partial_substantiality}
A slightly more precise version of 
Lemma~4.11 from~\cite{tangencies} can be stated as follows: {\emph{if $\mathcal P_\lambda$ is a 
set of periodic points that can be followed holomorphically throughout $\Lambda$ and is 
dense in $\jstar$ for some parameter $\lo$, and  if $q(\lo)$ is any saddle point satisfying the non-degeneracy assumption,  then $q$ persists as a saddle throughout $\Lambda$}}. In this version
 the  substantiality assumption is replaced by an assumption on the specific point $q$, but the proof is the same.  In particular, if instead of substantiality we assume that a set 
 of asymptotic density $1$ (resp. of positive upper density)  
 of saddle points satisfies the non-degeneracy assumption, then 
 (VI)$\Rightarrow$(II)  (resp.   (VI)$\Rightarrow$(III)). 
\end{rem}
\end{vlongue}

\begin{rem}\label{rem:local_stability}
For each of the stability conditions there is a corresponding local version:  a family is \emph{locally 
stable} of type (X) over $\Lambda$ if  every $\lambda\in \Lambda$ admits 
 a neighborhood $\mathcal N$ such that the family is stable of type (X) over $\mathcal N$. 
Conditions (I),  (V) and (VI) are local in the sense that locally stable is equivalent to stable,  
however for   conditions  (II),  (III) and (IV) we obtain a priori  two different notions because 
 the set of persistent saddle points may depend on $\mathcal N$. 
\end{rem}

\begin{convention} 
In the situations that we have to consider, we cannot guarantee 
 that the families of interest are substantial (for instance a family of automorphisms of Jacobian 1 is not substantial).
So we do not care about substantiality, and by convention,  
\begin{center}
\emph{unless otherwise specified   stability means condition (III)}. 
\end{center}
The reason is that our approach to Theorem~\ref{mthm:stable} based on   the divergence of $\chi^+(\mu_f)$  is naturally associated to stability of type (III) (see Lemma~\ref{lem:jacobian2}). 
Still the other conditions are also useful: we will use the implication (III)$\Rightarrow$(VI) several times and    in Section~\ref{sec:huguin}  we have to work with type (I) families.  \end{convention}

\begin{vlongue}
\subsection{A worked out example} \label{subs:eg_fa}  
Consider the family $f_a(x,y) = (ay+x^2, x)$. A first observation is that 
$ f_a\inv(x,y)  = \lrpar{y, a\inv x -a\inv y^2 }$
 is conjugate to 
$(x,y)\mapsto (a\inv y - a\inv x^2, x)$ by $\tau\colon (x,y)\mapsto (y,x)$, and then   to 
$f_{1/a}$ by a further diagonal conjugacy, so it is enough to analyze the family  for $\abs{a}\leq 1$. 
We set $J_a=J_{f_a}$, $\mu_a=\mu_{f_a}$, etc.

\subsubsection{Existence of bifurcations} For small (and hence for large) $\abs{a}$, $f_a$ is uniformly hyperbolic and $f_a\rest{J_a}$  is 
conjugate to the natural extension of $x^d$. In particular the connectedness locus is not compact in this family (it contains all large enough $a\in \C$).  By ~\cite{bs3}, the  Lyapunov exponents satisfy 
$$\chi^+(\mu_a) >0>\chi^-(\mu_a), \quad
 \chi^+(\mu_a) + \chi^-(\mu_a)  = \log\abs{a},  \text{ and} \quad \abs{\chi^\pm(\mu_a)}\geq \log d.$$ Furthermore
  by~\cite{bs6}, we have $\chi^+(\mu_a) = \log d$ 
  for small $a$. By symmetry, $\chi^-(\mu_a) = -\log d$ for large $a$. For $\abs{a}\geq 1$, we obtain
  \begin{equation}
  \chi^+(\mu_a) = \log \abs{a}  - \chi^-(\mu_a) \geq \log d + \log \abs{a},
  \end{equation}
  with equality when $\abs{a}$ is large. Thus, 
  the function $a \in \C^*\mapsto   \chi^+(\mu_a)\in \R $ is  equal to $\log(d)$ when $\abs{a}$ is small and to $\log\abs{a}+\log(d)$ when $\abs{a}$ is large, so it is not 
  harmonic. This non-harmonicity can be considered as a kind of bifurcation. Moreover, 
  since the Jacobian is not constant, Lemmas~\ref{lem:jacobian} and~\ref{lem:jacobian2} below
  imply that bifurcations of types (I), (II), and (III) arise.
  
  Still, this does not guarantee that bifurcations of type (V) occur, that is, it could a priori be the case that $\jstar_a$  still moves holomorphically with $a$. 
  What might happen is the following: when $\abs{a}$ is small  all periodic points, except (0,0), are saddles, by hyperbolicity of the dynamics; then, as $\abs{a}$ increases, periodic points could remain saddle points,  with some of them bifurcating exactly when $\abs{a} = 1$, 
where  both eigenvalues would cross the unit circle at the same time
without destroying the holomorphic motion. This is exactly the issue of substantiality.

\subsubsection{The fixed point $\alpha_a$ and bifurcations of type (V)}\label{subsub:alpha}  Since $f_a$ is quadratic, it has  2 fixed points and one   orbit of period 2. The fixed points are $\alpha_a=(0,0)$ and $\beta_a=(1-a, 1-a)$; they are continuations of the corresponding fixed points of $f_0\simeq  x^2$.

The first one $\alpha_a$ has  eigenvalues $\lambda_1, \lambda_2=\pm\sqrt{a}$ so it is attracting for $\abs{a}<1$ and repelling for $\abs{a}>1$. Suppose $\abs{a}=1$, and $a$ is not a root of unity. Then, there is no resonance between $\lambda_1$ and $\lambda_2$ that is,  no relation of the form 
  $\lambda_i = \lambda_1^k \lambda_2^\ell$ with $k, \ell\in \N$ and $k+\ell\geq 2$. Moreover, 
  \[
  \abs{\lambda_1^k \lambda_2^\ell  - \lambda_i} = \abs{\lambda_1^{k+\ell} \pm\lambda_1} =  \abs{(\sqrt{a})^{k+\ell-1} \pm 1},
  \] so for generic $a\in \mathbb{S}^1$ we get a lower bound 
of 
Diophantine
type (see e.g.~\cite{zehnder}), and the fixed point $\alpha_a$ is linearizable, 
so it creates a Siegel ball. 
  By~\cite[Lem. 4.11]{tangencies}, 
  these Siegel balls cause bifurcations of type (V).  Thus we have shown:

\begin{pro}
The circle $\set{\abs{a}=1}$ is contained in the (type (V)) bifurcation locus. 
\end{pro}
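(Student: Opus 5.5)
The plan is to show that every parameter $a_0$ with $\abs{a_0}=1$ is a limit of parameters $a$ on the unit circle at which $f_a$ has a Siegel ball at $\alpha_a$. Then I apply \cite[Lem.~4.11]{tangencies} to such $a$ to conclude that no neighbourhood of $a_0$ carries a (branched) holomorphic motion of $\jstar$. Since the bifurcation locus of type (V) is closed by definition (it is the complement of the set of parameters admitting a stable neighbourhood, cf. Remark~\ref{rem:local_stability}), it then contains the whole circle.

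\emph{Step 1: linearizability.} The fixed point $\alpha_a=(0,0)$ has derivative $Df_a(0,0)=\begin{pmatrix}0&a\\1&0\end{pmatrix}$, whose eigenvalues are $\lambda_{1,2}=\pm\sqrt a$. When $\abs a=1$, the resonance denominators are $\abs{(\sqrt a)^{m}\pm1}$ with $m=k+\ell-1\ge1$. If $\sqrt a=e^{2\pi i\theta}$ with $\theta$ Diophantine, these denominators are bounded below by $C m^{-\tau}$, because $(\sqrt a)^m=\pm1$ is equivalent to $2m\theta\in\Z$. This is a Siegel-type (Brjuno) condition, so by the Siegel–Sternberg theorem \cite{zehnder} the map $f_a$ is holomorphically linearizable near $\alpha_a$. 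The linear model is a unitary diagonal map, so $\alpha_a$ has an invariant neighbourhood, a Siegel ball, on which $f_a$ is conjugate to an irrational rotation. Diophantine $\theta$ form a full-measure, hence dense, subset of the circle.

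\emph{Step 2: this contradicts stability.} Suppose some neighbourhood $\mathcal N$ of such an $a$ admitted a branched motion of $\jstar$. By the implication (V)$\Rightarrow$(VI), all saddle points would then move holomorphically as periodic points over $\mathcal N$. I follow the argument of \cite[Lem.~4.11]{tangencies}. Saddle points accumulate, via the Julia set $J^+$, on the boundary of the Siegel domain. For $\abs{a}<1$ the point $\alpha_a$ is attracting, and for $\abs a>1$ it is repelling, in which case periodic saddles accumulate near $\alpha_a$. The non-resonance at $a$ then forces a saddle point to change type nearby, which the persistence given by (VI) together with the density in $\jstar$ forbids.

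The main obstacle is justifying the use of \cite[Lem.~4.11]{tangencies} here, since the family $f_a$ is not substantial in the sense of Remark~\ref{rem:substantiality}. The relevant relation for $\alpha_a$ is $\lambda_1=-\lambda_2$, that is, $\lambda_1\lambda_2^{-1}=-1$, which holds persistently. I would therefore use the pointwise version of Remark~\ref{rem:partial_substantiality}. In the proof of Lemma~4.11, what is actually needed is that the rotation at the Siegel point is generic, namely the $\R$-linear independence of the $\theta_i$ in the notation of that lemma. Here this reduces to the irrationality of $\theta$, which Step 1 provides.
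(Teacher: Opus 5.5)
Your skeleton is the paper's: the eigenvalues of $\alpha_a$ are $\pm\sqrt a$, the small divisors are $\abs{(\sqrt a)^{k+\ell-1}\pm 1}$, so for a dense set of $a$ on the unit circle $\alpha_a$ is linearizable with a Siegel ball, and the conclusion is then drawn from \cite[Lem.~4.11]{tangencies}. Your Step~1 and the density/closedness argument are fine. The paper does not unpack the last step beyond that citation. Your attempt to unpack it in Step~2 is where things go wrong.

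\textbf{Problems with Step~2.}

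First, the claim that for $\abs a>1$ saddles accumulate near the repelling point $\alpha_a$ is false in dimension two. There $\alpha_a$ is an attracting fixed point of $f_a^{-1}$, so it has a neighbourhood in $\mathrm{int}\,K^-_a$, which is disjoint from $J^-_a\supset\jstar_a$. Similarly, for $\abs a<1$ it lies in an attracting basin, and for Diophantine $\abs a=1$ it lies in the Siegel ball. So at these parameters $\alpha_a$ is never a saddle and never in $\jstar_a$. Your accumulation claim on $\partial\mathcal S$ also conflates stable manifolds, which are dense in $J^+$, with saddle points, which are only dense in $\jstar$. You would need $\partial\mathcal S\cap\jstar\neq\emptyset$, which you do not justify.

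Second, condition (VI) explicitly allows followed saddles to change type, so it forbids nothing by itself. The mechanism behind \cite[Lem.~4.11]{tangencies} is different: the continuation of a saddle stays in $\jstar$ under the motion, whereas a linearizable neutral point lies in the Fatou set.

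Third, Remark~\ref{rem:partial_substantiality} concerns saddle points satisfying the non-degeneracy assumption. $\alpha_a$ is never a saddle and is exactly the degenerate point, so that remark cannot be invoked here.

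\textbf{How the paper handles your ``main obstacle''.} The footnote to Remark~\ref{rem:substantiality} notes that for relations such as $\lambda_1^2\lambda_2^{-2}=1$ the generic linearizability used in \cite[Lem.~4.11]{tangencies} can still be checked by hand. That check is precisely your Step~1. Note that $\theta$ and $\theta+\frac12$ are not $\R$-independent. What saves you is the Diophantine bound on the small divisors, not irrationality of $\theta$ alone.

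So either state Step~2 as a direct appeal to the lemma once this is verified, as the paper does, or supply a correct link between the Siegel ball at $\alpha_a$ and the part of $\jstar$ carried by the motion. Your present sketch does not provide such a link, and taken literally it would fail.
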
  

\begin{rem}
Since the eigenvalues of $\alpha_a$ are opposite, they satisfy the persistent relation $\lambda_1^2\lambda_2^{-2}=1$. Thus, the family is not substantial! But it is almost so. Indeed, suppose $\gamma$ is some saddle periodic point  with eigenvalues satisfying a persistent relation  $\lambda_1^s\lambda_2^t =c$ (with $\abs{c}=1$  and $s,t\in \R$, not both $0$). Any such  periodic point can be followed holomorphically for small 
$\abs{a}$, and as $a$ goes to $0$, one of the eigenvalues tends to $0$ while the other tends to a complex number of modulus $>1$, in contradiction  with $\abs{c}=1$.
\end{rem}

\subsubsection{Bifurcations in the unit disk}  A natural question is whether the bifurcation locus is larger  than   $\set{\abs{a}=1}$. 
For this, it is enough to work with $\abs{a}<1$, 
and we  need to find some   saddle periodic point  
 bifurcating  to a sink. 
 
The first candidate is the second fixed point $\beta_a= (1-a,1-a)$. 
 Its eigenvalues are solutions of 
 the equation $\lambda^2 - 2(1-a) \lambda -a=0$. If some eigenvalue crosses the unit circle then $e^{i\theta}$ must be solution of this equation, which easily shows that $\abs{a}=1$ (in fact, $a$ must be on an arc of the unit circle of angle $2\pi/3$). Thus, $\beta_a$ is a saddle for all $a$ in the complement of the unit circle. 
 
 It follows from the calculations of Proposition~\ref{pro:huguin}, 
in particular from Equations~\eqref{eq:huguin_period1} and~\eqref{eq:huguin_period2},  that the period $2$ orbit has exactly the same behavior. Indeed, since all repelling   period-2 points of $p(x) = x^2/(1-a)$  have multipliers $d^2$ (including the repelling fixed one), 
the period-2 points of  $f_a$ have the same multipliers, except for $\alpha_a$  (\footnote{A more delicate result is that both $\beta_a$ and the period 2 orbit are neutral precisely for $a  = e^{i\rho}$, $ \rho\in [-\pi/3,  \pi/3]$, and non-persistently resonant, so they also simultaneously create Siegel balls in that interval. }).    So we must go to  period at least 3, which does not seem 
to be   tractable by hand.

\begin{pro}
The bifurcation locus is not contained in the circle $\set{\abs{a} = 1}$. In particular there exist parameters with   $\abs{a}<1$ with disconnected Julia sets. 
\end{pro}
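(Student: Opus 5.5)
Assume by contradiction that the type (V) bifurcation locus of the family $(f_a)$ in $\C^*$ is contained in $\{\abs{a}=1\}$. Then the punctured disk $\disk^*=\{0<\abs{a}<1\}$ is a connected open set on which the family is weakly $\jstar$-stable. By Proposition~\ref{pro:stability_conditions}, condition (VI) then holds on $\disk^*$. For small $\abs{a}$ the map is uniformly hyperbolic and all periodic points except $\alpha_a$ are saddles. Here the remark above gives a weak form of substantiality: every saddle point $\gamma\neq\alpha_a$ has one eigenvalue tending to $0$ as $a\to 0$, so no persistent relation $\lambda_1^s\lambda_2^t=c$ with $\abs{c}=1$ can hold for it. Remark~\ref{rem:partial_substantiality} then upgrades (VI) to the statement that every saddle point of $f_{a_0}$, for $a_0$ small, persists as a saddle throughout $\disk^*$. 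So condition (I) holds on $\disk^*$, except possibly for $\alpha_a$, which is a sink there.

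Next I will derive a contradiction from the Lyapunov exponent. Under (I) on $\disk^*$, the unstable multiplier $\lambda^u(p_a)$ of each persistent saddle is a holomorphic function on $\disk^*$ that never vanishes, since $\abs{\lambda^u}>1$. Hence $\frac1n\log\abs{\lambda^u(p_a)}$ is harmonic. For small $a$ it equals $\log d$ plus a small error; more precisely, for $p_a$ near the Julia set of $x^2$, $\lambda^u$ tends to the one-dimensional multiplier $2^n$ up to a bounded factor. Following these branches along loops and taking products over the finite monodromy orbits, we obtain single-valued harmonic functions on $\disk^*$ which are bounded near $0$ and extend harmonically across $0$. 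Averaging over periodic points and using equidistribution (Theorem~\ref{thm:equidistribution_reinforced}), the function $a\mapsto\chi^+(\mu_a)$ is then a locally uniform limit of harmonic functions, hence harmonic on $\disk$. It equals $\log d$ near $0$, so $\chi^+(\mu_a)=\log d$ on all of $\disk$.

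To conclude, I will use the symmetry $f_a^{-1}\sim f_{1/a}$. We have $\chi^-(\mu_a)=\log\abs a-\log d$, and by symmetry the same harmonicity argument applies on $\{\abs a>1\}$. This is consistent, so no contradiction is obtained yet: the exponent alone is not enough. The real input will instead be the characterization of~\cite{bs6}: $\chi^+(\mu_a)=\log d$ together with dissipativity $\abs a<1$ implies that $J_a$ is connected, and in fact that the unstable critical locus is empty. Conversely, if the Julia set were connected for every $a\in\disk^*$, then the whole disk would lie in the connectedness locus. The decisive step, and the main obstacle, is therefore to exhibit a single parameter $a$ with $\abs a<1$ for which $f_a$ has an attracting cycle other than $\alpha_a$. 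Such an $a$ would contradict (I), since a saddle from the hyperbolic regime would have turned into a sink. I would try to find it perturbatively from the one-dimensional family: take $a$ small and nonzero, and use the fact that $f_a$ is close to the map $x\mapsto x^2$ restricted to a degenerate family. This does not work directly, since $f_a\to x^2$ has no free critical point. So I would instead take $\abs a$ close to $1$ and look for period-$3$ orbits, solving the period-$3$ equations numerically (or with certified interval arithmetic) to locate a parameter with $\abs a<1$ where a period-$3$ cycle has $\abs{\tr Df^3}<1+\abs{a}^3$, that is, where both eigenvalues lie inside the unit disk. Any such parameter lies in the bifurcation locus, since at $a\to0$ that cycle is a saddle. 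For the second claim, if $J_a$ were connected for all $\abs a<1$, then \cite{bs6} would force $\chi^+(\mu_a)=\log d$ on $\disk$, hence stability of type (III) by harmonic rigidity; this contradicts the sink just found. So there are disconnected Julia sets in $\disk$.
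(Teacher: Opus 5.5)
Your strategy is the paper's, but the proposal stops where the proof has to happen. Everything reduces to exhibiting one parameter $a_0$ with $\abs{a_0}<1$ at which $f_{a_0}$ has a sink other than $\alpha_{a_0}$. On $\{0<\abs{a}<1\}$ the family is dissipative, hence substantial, so weak stability there gives type (I) directly. The partial-substantiality remark can therefore be dropped, and so can the harmonicity detour on $\chi^+$, which you note yields nothing. Under (I), the six period-$3$ points, all saddles for small $a$, would persist as six distinct saddles. Together with the two fixed points they exhaust the $2^3$ fixed points of $f_{a_0}^3$, leaving no room for an attracting $3$-cycle.

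You do not produce such an $a_0$; the paper does. The parameter $a=-0.669+0.73i$ ($\abs{a}\simeq 0.990$) has an attracting $3$-cycle through $(x,y)\simeq(0.111236-0.069787i,\,-0.017098+0.856247i)$.

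Moreover, the test you propose for certifying a sink is incorrect. For a complex $2\times 2$ matrix with trace $t$ and determinant $\delta$, $\abs{t}<1+\abs{\delta}$ is necessary but not sufficient for both eigenvalues to lie in the unit disk. For instance, eigenvalues $3/2$ and $-\tfrac23\abs{a}^3$ pass it once $\abs{a}^3>3/10$. With $\abs{\delta}=\abs{a}^3\approx 0.97$ the test is nearly vacuous and would flag saddles. The correct Schur--Cohn condition is $\abs{\delta}<1$ and $\abs{t-\delta\bar t}<1-\abs{\delta}^2$, here with $\delta=-a^3$; alternatively, compute the eigenvalues directly.

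Also, a parameter with a sink is typically not itself in the bifurcation locus, since having a sink is an open condition. What the argument gives, via connectedness of $\{0<\abs{a}<1\}$, is that the bifurcation locus meets that punctured disk.

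For the second assertion, the step ``$\chi^+(\mu_a)\equiv\log d$, hence stability by harmonic rigidity'' is the converse of what your detour establishes. The detour shows that stability forces $\chi^+$ to be harmonic; nothing in the paper gives that constancy of $\chi^+$ implies stability. That converse is a genuine theorem of Dujardin--Lyubich. The paper invokes it in the form: a stably connected dissipative family is stable (\cite{tangencies}, Thm.~5.7).

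You also need dissipativity once more to pass from weak stability to type (I). Type (III) alone does not prevent a particular saddle from turning into a sink, so it does not contradict the $3$-cycle. With that citation in place of ``harmonic rigidity'', disconnected Julia sets with $\abs{a}<1$ follow from the first assertion exactly as in the paper.
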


\begin{proof}
Numerical calculations show that  the parameter $a = -0.669+0.73 i $  admits an attracting cycle of period 3 containing $(x,y)\simeq(0.111236 -0.069787i , -0.017098+ 0.856247i)$: see Figure~\ref{fig:slice}, and note that since $\abs{a}\simeq 0.990$, attraction is very slow. The second assertion follows from the fact that a stably connected dissipative family is stable (see~\cite[Thm. 5.7]{tangencies}). 
\end{proof}

\begin{figure}[h]
\includegraphics[width=\textwidth]{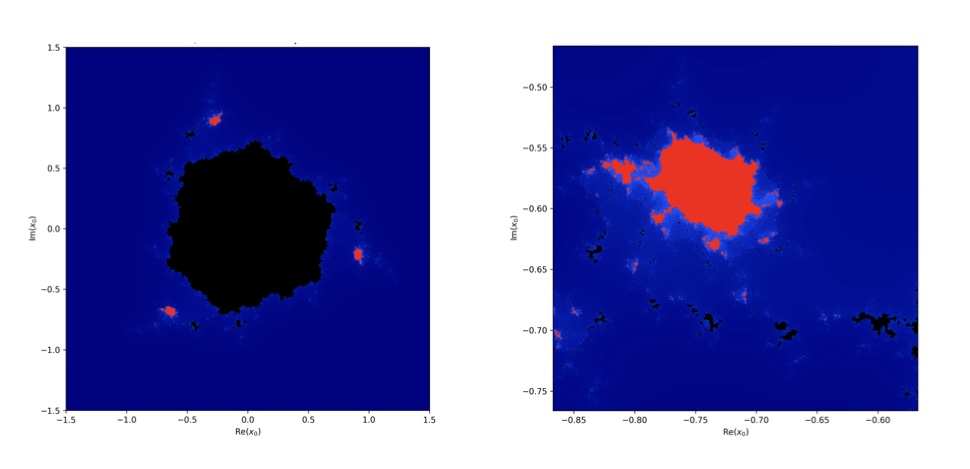}
\caption{{\small Slice of the filled Julia set of $f_{-0.669+0.73 i}$ by the line $\set{y=0}$. The basin of $(0,0)$ is colored black, and the basin of the period-3 cycle is in red. Detail of the picture on the right.}}\label{fig:slice}
\end{figure}

Here are some comments on this numerical calculation. We have explored the parameter space 
by testing 500 different parameters for each value of $\abs{a}$  (incrementing by  steps of 0.01),
looking for  orbits on the slice $\set{y=0}$ 
neither converging to (0,0) nor escaping to infinity (10,000  different initial conditions). 
Note that any attracting basin must intersect this slice. 
The first example arises for $\abs{a}=0.98$ (which is the continuation of the one of Figure~\ref{fig:slice}), and for 
$\abs{a} = 0.99$ we obtain 12 such parameters (in conjugate pairs).  
This suggests that the stable  component   containing the origin    almost fills the whole unit disk.  It  may be 
qualified as the  ``main hyperbolic component'' of the family (it is indeed a hyperbolic  component  by~\cite[Thm C]{berger-dujardin}).

\begin{que}
Does the closure of the main hyperbolic component  touch the unit circle? 
\end{que}

A positive answer to this question would provide the first known example of 
 a conservative automorphism  with connected Julia set.    
\end{vlongue}

\section{Periodic points and multiplier spectra}\label{sec:spectra}

\subsection{Update on periodic points}

Let $f\in \Aut(\C^2)$ be   loxodromic. We conjugate $f$ so 
that $f$ becomes a  composition of monic and centered Hénon maps of multidegree $\mathbf d=(d_k, \ldots , d_1)$. 
We work in the parameterized family $ (f_\lambda)_{\lambda\in \mathcal H^k_{\mathbf d}}$, so that $f = f_{\lambda}$ for some $\lambda$, 
and we identify a map $f$ with the corresponding parameter $\lambda$.   
Define
$$\widehat\Fix_n    = \set{(f, z)\in \mathcal H^k_{\mathbf d}\times \C^2 \; ; \;  f^n(z) = z}, $$ 
which is  an algebraic  subset of $\mathcal H^k_{\mathbf d}\times \C^2$. 
The projection $\widehat\Fix_n \to \mathcal H^k_{\mathbf d}$ is surjective and finite to one.
Defining the corresponding fibered version of $\Per_n(f)$ as an analytic subset of $\mathcal H^k_{\mathbf d}\times \C^2$ 
 is not obvious, because periodic points of period $n$ may degenerate to points of lower period. 
As a preliminary remark, suppose that $f\in \mathcal H^k_{\mathbf d}$ is close to the parameter $((0, \ldots , 0), (x^{d_k}, \ldots , x^{d_1}))$. 
Then, as a consequence of  uniform hyperbolicity, all periodic points of $f$ are simple.
It follows that {\emph{all irreducible components of $\widehat\Fix_n$ have multiplicity $1$}}.

We define the algebraic variety $\widehat \Per_n\subset \mathcal H^k_{\mathbf d}\times \C^2$ by
$$\widehat \Per_n = \overline{ \widehat\Fix_n\setminus \bigcup_{k\vert n, k<n} \widehat\Fix_k},$$  which is a union of irreducible components of $\widehat \Fix_n$ 
(\footnote{Note that in this definition we can take the closure for the euclidean or the (analytic or algebraic) Zariski topology.
}). 
By construction, at the generic point 
$(f,z)$ of $\widehat \Per_n$, $z$ has exact period $n$. 
For every 
$f\in \mathcal H^k_{\mathbf d}$, we set 
$$\Per_n^*(f)=\widehat \Per_n \cap (\set{f}\times \C^2).$$ 
By definition,  
$\Per_n^*(f)$ is the set of points with \emph{formal period}
$n$, a terminology due to Milnor (note that a periodic point can have several formal periods).
By construction $\widehat \Per_n$ is contained in $\widehat \Fix_n$, so  $\Per_n^*(f)$ is made of points of period dividing $n$. 
Each $z\in \Per_n^*(f)$ comes equipped with a multiplicity, 
which is the intersection multiplicity of $\widehat \Per_n$ with $\set{f}\times \C^2$ at $z$. 
The number $p_n$ of points in $\Per_n^*(f)$ counted with multiplicities is the cardinality of $\Per_n$ at the generic parameter. 
This sequence $(p_n)$  is determined inductively by
$d^n  = \sum_{k\vert n} p_k.$
 
\begin{pro}
\label{pro:basics_on_per*_vs_fix}
For every $n\geq 1$ we have the following properties
\begin{enumerate}[\rm (1)]
\item $\Per_n(f)\subset \Per_n^*(f)$, that is exact period $n$ implies formal period $n$; 
\item  $\displaystyle \widehat\Fix_n  = \bigcup_{k\vert n} \widehat \Per_k$;
\item if $z\in \Per_n^*(f) $ is a simple periodic point (as a point of period $n$), 
then $z\in \Per_n(f)$. 
\end{enumerate}
\end{pro}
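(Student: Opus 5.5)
We treat the three assertions in the order (1), (2), (3). All three rest on two facts. First, $\widehat\Fix_n$ is a finite union of irreducible components $V$, each of multiplicity one and each dominating $\mathcal H^k_{\mathbf d}$, since the projection is finite and surjective. Second, on each such $V$ the exact period of $z$ at a generic point $(f,z)$ is some well-defined divisor $m$ of $n$. Indeed, the set where $z\in\Fix(f^m)$ is Zariski closed in $V$, so the minimal $m$ for which it is all of $V$ is attained generically.

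For (1), let $z\in\Per_n(f)$. Then $(f,z)\in\widehat\Fix_n$ but $(f,z)\notin\widehat\Fix_k$ for every proper divisor $k$ of $n$. Hence $(f,z)$ lies in the set whose closure defines $\widehat\Per_n$, and so $z\in\Per^*_n(f)$.

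For (2), the inclusion $\supset$ is immediate: $\widehat\Per_k\subset\widehat\Fix_k\subset\widehat\Fix_n$ whenever $k\mid n$. For the reverse inclusion, take an irreducible component $V$ of $\widehat\Fix_n$ and let $m$ be the generic exact period on $V$.
\begin{itemize}
\item If $m=n$, then $V\setminus\bigcup_{k\mid n,\,k<n}\widehat\Fix_k$ is a nonempty Zariski open subset of $V$, hence dense in $V$, so $V\subset\widehat\Per_n$.
\item If $m<n$, then $V\subset\widehat\Fix_m$, and $V$ is an irreducible component of $\widehat\Fix_m$. This holds because $\widehat\Fix_m\subset\widehat\Fix_n$ and $V$ is maximal there; equivalently, both are pure of dimension $\dim\mathcal H^k_{\mathbf d}$. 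On $V$ the generic exact period is $m$, so the first case applied with $n$ replaced by $m$ gives $V\subset\widehat\Per_m$.
\end{itemize}

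For (3), let $z\in\Per^*_n(f)$ and suppose $z$ is simple as a fixed point of $f^n$, that is, $1$ is not an eigenvalue of $Df^n_z$. We argue by contradiction and assume the exact period of $z$ is $m<n$, with $m\mid n$.
\begin{itemize}
\item The fiber $\widehat\Fix_m\cap(\{f\}\times\C^2)$ passes through $z$. Since $1$ is not an eigenvalue of $Df^m_z$ either (it would otherwise be one for $Df^n_z$), the implicit function theorem shows that $\widehat\Fix_m$ is locally near $(f,z)$ a single holomorphic graph $\Gamma$ over parameter space.
\item Simplicity of $z$ for $f^n$ means $\widehat\Fix_n$ is also a smooth single-sheeted graph near $(f,z)$, of local intersection multiplicity one with the fiber. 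Hence $\widehat\Fix_n$ coincides with $\Gamma$ near $(f,z)$.
\item Every point of $\Gamma$ has period dividing $m$. So near $(f,z)$ no point of $\widehat\Fix_n\setminus\bigcup_{k<n,\,k\mid n}\widehat\Fix_k$ exists, and $(f,z)$ is not in the closure defining $\widehat\Per_n$. This contradicts $z\in\Per^*_n(f)$.
\end{itemize}

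The main care point is the local-graph argument in (3), namely that simplicity for $f^n$ forces the germ of $\widehat\Fix_n$ at $(f,z)$ to be a single smooth sheet equal to $\Gamma$. It follows from the implicit function theorem applied to the map $(f,z)\mapsto f^n(z)-z$.
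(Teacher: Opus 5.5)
Your proof is correct. Parts (1) and (3) follow the paper's argument. In (3) you also supply a step the paper leaves implicit: why the unique local branch of $\widehat\Fix_n$ through $(f,z)$ must lie in $\widehat\Fix_m$. Since $Df^n_z=(Df^m_z)^{n/m}$, the point $z$ is also simple for $f^m$. The implicit function theorem then gives a branch of $\widehat\Fix_m\subset\widehat\Fix_n$ through $(f,z)$, and by uniqueness it is that branch.

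For the direct inclusion in (2) your route is genuinely different. The paper argues pointwise, by induction on $n$. A point of $\widehat\Fix_n$ of exact period $n$ lies in $\widehat\Per_n$ by (1). Otherwise it lies in some $\widehat\Fix_k$ with $k$ a proper divisor of $n$, and one uses the induction hypothesis. In fact, applying (1) to the exact period $m\mid n$ of the point gives the conclusion at once.

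You instead work component by component, using the generic exact period of each irreducible component $V$ of $\widehat\Fix_n$. This needs a bit more algebraic geometry than the pointwise argument: density of nonempty Zariski open subsets of an irreducible variety, and maximality of $V$ inside $\widehat\Fix_m\subset\widehat\Fix_n$. In return it gives a finer statement. Each irreducible component of $\widehat\Fix_n$ is an irreducible component of $\widehat\Per_m$ for exactly one $m\mid n$, namely its generic exact period, because every component of $\widehat\Per_{m'}$ has generic exact period $m'$. This component-level disjointness, together with multiplicity one, is what underlies the counting relation $d^n=\sum_{k\mid n}p_k$ stated just before the proposition; the set-theoretic equality alone does not directly give it.
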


If $1$ is an eigenvalue of $z\in \Per_n^*(f)$, the period of $z$ can be a proper divisor of~$n$. 
The corresponding situation in dimension $1$ is completely understood, thanks to the 
theory of dynatomic polynomials (see \S 4.1 in~\cite{silverman:book}, but be aware that our notation is not completely consistent with Silverman's choices). For $\Aut(\C^2)$, the specifics still need to be worked out. 

\begin{proof}
By construction, $\widehat \Per_n \subset \widehat \Fix_n$. If $z\in \Per_n(f)$, then 
$z\notin \Fix_k(f)$ for any $k<n$, thus 
$(f,z)\in \widehat\Fix_n\setminus \bigcup_{k\vert n, k<n} \widehat\Fix_k$,   in particular
$z $ belongs to $\Per_n^*(f)$. This proves~(1). 

For  Assertion~(2), note that $\widehat \Per_k \subset \widehat \Fix_k$ and $\widehat \Fix_k\subset \widehat \Fix_n$ if $k\vert n$, which proves the reverse inclusion.  We prove the direct inclusion by induction on $n$. The case $n=1$ holds by definition. Now take  $(f,z) \in \widehat\Fix_n$. If $z \in \Per_n(f)$ then $(f,z) \in \widehat \Per_n$ by~(1). Otherwise $(f,z)$ belongs to $\Fix_k$ for some proper divisor $k$ of $n$, and the induction hypothesis shows that $(f,z)\in \widehat \Per_\ell$ for some $\ell$ dividing $k$, as desired.
 
Finally, let  $z\in  \Per_n^*(f) $ be a simple periodic point. Since $z$ is simple, 
it admits a unique local continuation as a solution of $f^n(z)  =z$, 
so there is a  unique branch of $\widehat \Fix_n$ in a 
euclidean neighborhood of  $(z,f)$. 
If the period of $z$ were a proper divisor $k$ of~$n$, 
this branch would be contained in  $\widehat \Fix_k$, and $z$ would not belong to the closure of 
$\widehat\Fix_n\setminus \bigcup_{k\vert n, k<n} \widehat\Fix_k$. Thus $z\in \Per_n(f)$ and we are done. 
\end{proof}

\subsection{Trace and multiplier spectra} \label{subs:spectra}
Let $q$ be a positive integer and let  $\C^{(q)}$ denote the symmetric product of $q$ copies of $\C$ that is, $\C^q/\Sym_q$ where the symmetric group acts by permutations. Points of $\C^{(q)}$ will be called {\emph{multisets}} (of length $q$). Thus, a multiset is a finite set $\set{x_1, \ldots x_q}$ in which the elements $x_i\in \C$ are repeated according to their multiplicity; equivalently, it is a divisor $\sum_j n_j z_j$ supported by some points $z_j\in \C$ with multiplicities $n_j\geq 1$ such that $\sum_j n_j=q$.
The elementary symmetric functions provide an isomorphism $\iota\colon \C^{(q)}\to \C^q$. 

Let us fix some integer $n\geq 1$. On the variety  $\mathcal H^k_{\mathbf d}\times \C^2$, the function 
$$
\tr_n\colon (f, z)\mapsto \tr(D_zf^n)
$$
is regular. By restriction, it induces a regular function $\tr_n\colon \widehat \Per_n\to \C$, and we can 
then define a regular map $\Trace_n\colon \mathcal H^k_{\mathbf d} \to \C^{(p_n)}$ such that 
$$ 
\Trace_n(f)=\set{\tr(D_zf^n) \; ; \;  z\in \Per_n^*(f)},$$
viewed as a multiset of length $p_n$.  
We define the {\emph{trace spectrum}} and the {\emph{multiplier spectrum}}  of $f$ by
\begin{align}
\Trace(f) &=  \lrpar{\Trace_n(f)}_{n\geq 1} \in \prod_{n\geq 1} \C^{(p_n)} \\
\Mult(f)  &= ( \jac(f),  \Trace(f)) \in \C^*\times  \prod_{n\geq 1} \C^{(p_n)}.
\end{align}
Since $\jac(f)^n=\det(D_zf^n)$ for every fixed point of $f^n$, the multiplier spectrum encodes the conjugacy invariants $(\tr, \det)$ of $D_zf^n$, whose  knowledge is equivalent to that of the eigenvalues $\{\lambda_1, \lambda_2\}\in \C^{(2)}$,  at all points of formal period $n$, for every $n\geq 1$. The use of the formal period instead of the exact period is meant to guarantee good analytic and algebraic behavior for these quantities. 

For saddle points, we directly consider the unstable multiplier and define
 $$\SMult^u_n(f)  = \set{\lambda^u(z) \; ; \; z\in \SPer_n(f)} \in \C^{\#\SPer_n(f))},$$
\begin{equation*}\SMult^u(f) = \lrpar{\SMult^u_n(f)}_{ n\geq 1} \text{ and }\SMult(f)  =  ({ \jac(f)}, {\SMult^u(f)}).\end{equation*}
(Since $\#\SPer_n(f)$ depends sharply on $f$, we do not claim that $\SMult^u_n(f)$ depends in a regular or analytic way on $f$.)

\begin{rem} \label{rem:determined_degree}
Since the number of  periodic (resp.\   saddle periodic)
points  of period $n$ grows like $d^n$, 
 the dynamical degree  of a loxodromic automorphism is tautologically determined by 
its  multiplier (resp. unstable multiplier) spectrum.
\end{rem}

\subsection{From    Theorem~\ref{mthm:stable} to rigidity}

\begin{pro}
\label{pro:multipliers}
Let $(f_\lambda)_{\lambda\in \Lambda}$ be an algebraic family of loxodromic 
polynomial automorphisms of $\C^2$, in which any stable irreducible 
algebraic subfamily  
 is trivial.  For every $n\geq 1$, consider a finite subset $E_n\subset \C$. Then the set
   $$\set{f\in \Lambda \; ; \;  \forall n\geq 1, \Trace_n(f) \subset E_n} $$ is finite. Likewise  the set 
$$\set{f\in \Lambda \; ; \; \forall n\geq 1, \SMult^u_n(f) \subset E_n}$$ is discrete. 
\end{pro}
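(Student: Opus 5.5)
Call the first set $\Lambda_T$ and the second $\Lambda_M$. The plan is to show that each is an algebraic (or locally analytic) subset of $\Lambda$ on which the family is stable in the sense of condition (III). The hypothesis that stable irreducible algebraic subfamilies are trivial then yields finiteness, respectively discreteness.

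\emph{Trace spectrum.} Since $\Trace_n\colon \Lambda\to\C^{(p_n)}$ is regular and $E_n$ is finite, the condition $\Trace_n(f)\subset E_n$ cuts out a Zariski closed subset $Z_n\subset\Lambda$: it is the preimage of the finitely many multisets with entries in $E_n$. By Noetherianity, $\Lambda_T=\bigcap_n Z_n$ is Zariski closed, so it is a finite union of irreducible components $V$. It therefore suffices to show that each $V$ is stable.

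Along $V$, every function $\tr_n$ on $\widehat\Per_n$ over $V$ takes values in the finite set $E_n$, hence is locally constant. The Jacobian is not fixed, and handling it is the main obstacle. To control it, take a saddle point $z$ of $f_0$, of period $n$, followed along a path as long as it stays a saddle. The trace $t=\lambda^s+\lambda^u$ is constant, and so is the trace $t_m$ of $Df^{mn}$, which equals $(\lambda^s)^m+(\lambda^u)^m$ for every $m$. The power sums are then all constant, so the eigenvalues $\{\lambda^s,\lambda^u\}$ are constant. Consequently $\jac(f)^n=\lambda^s\lambda^u$ is constant, and $\jac$ is constant on $V$.

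It remains to check that saddles cannot stop being saddles. Fix the saddle of $f_0$ and follow it. The values of $\tr_{mn}$ at the continuation lie in the finite sets $E_{mn}$ and vary continuously, so $(\lambda_1^m+\lambda_2^m)_m$ stays fixed, and the unordered eigenvalue pair stays equal to $\{\lambda^s,\lambda^u\}$. In particular, no eigenvalue equal to $1$ can appear. The implicit function theorem then lets the point continue along every path in $V$ as a saddle with constant multipliers. This gives stability of type (I), hence of type (III) by Proposition~\ref{pro:stability_conditions}. One must be careful with formal versus exact period, using Proposition~\ref{pro:basics_on_per*_vs_fix}(3) because simple points have exact period. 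By hypothesis $V$ is then trivial, so $\Lambda_T$ is finite.

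\emph{Unstable multipliers.} Here stability is immediate, but algebraicity fails. I would work locally near $f_0\in\Lambda_M$. Take a small connected neighborhood $\mathcal N$ of $f_0$ in $\Lambda$ on which a positive density set of saddles persists; uniform bounds give such a set, for instance periods up to some $N$, though a positive-density statement needs more care. Each persisting saddle has a holomorphic unstable multiplier. The set $\Lambda_M\cap\mathcal N$ is contained in the analytic set where each of these multipliers equals its value at $f_0$, because the values lie in finite sets and the continuation of each point from $f_0$ takes discrete values.

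The issue is to upgrade this to an algebraic stable family. I would use $\lambda^u$ algebraic over $\C(\Lambda)$, via the characteristic polynomial $\lambda^2-\tr_n\lambda+\jac^n$. The locus where a given branch of $\lambda^u$ takes a value $e$ is the projection of an algebraic set. Intersecting over saddles gives an algebraic set $W$ containing the germ of $\Lambda_M$ at $f_0$, on whose component through $f_0$ all these saddles keep constant multipliers. Constant moduli with $|\lambda^s|<1<|\lambda^u|$ keep them saddles along paths in that component, since $\jac$ is also constant, by the previous argument. That component is then stable of type (III) and hence trivial, so $f_0$ is isolated in $\Lambda_M$, which is discreteness.

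The hardest point is ensuring that the persisting set has positive upper density, uniformly. I would take all saddles of periods $n$ at $f_0$, noting that the equations $\lambda^u\in E_n$ force continuation without collision, so that this set is all of $\SPer(f_0)$ and has density one by~\cite{bls2}.
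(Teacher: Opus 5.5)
\textbf{The trace argument has a genuine gap at the step that carries all the weight.} You assert that for a saddle $z$ of exact period $n$, the traces $t_m=\tr(D_zf^{mn})=(\lambda^s)^m+(\lambda^u)^m$ are constrained by $\Trace_{mn}(f)\subset E_{mn}$. They are not. $\Trace_{mn}(f)$ records traces only at points of $\Per^*_{mn}(f)$, that is, points of formal period $mn$. Since $z$ is a saddle, $1$ is not an eigenvalue of $D_zf^{mn}$, so $z$ is a simple fixed point of $f^{mn}$. Proposition~\ref{pro:basics_on_per*_vs_fix}(3), the very statement you invoke, then gives $z\notin\Per^*_{mn}(f)$ for $m\geq 2$. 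The only constrained quantity attached to $z$ is $t_1$.

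Worse, $t_2=t_1^2-2\jac(f)^n$. So constancy of $t_2$ is equivalent to constancy of $\jac$ on $V$, which is exactly what has to be proved: the step is circular. Once $\jac$ is known to be constant on $V$, constancy of $t_1$ and $\jac^n$ does give constant eigenvalues and type (I) stability, with no power sums needed. As the paper stresses, the whole difficulty for the trace spectrum is that the Jacobian is not prescribed.

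\textbf{The missing ingredient is dynamical, and the paper uses it in both halves.} One combines $\chi^+(\mu_{f_0})\geq\log^+\abs{\jac(f_0)}+\log d$ with Theorem~\ref{thm:equidistribution_reinforced}. For $z_0$ in the density-one set $\SPer^+(f_0)$ we have $\chi^u(z_0)\approx\chi^+(\mu_{f_0})$. So the constant trace has modulus about $e^{n\chi^+(\mu_{f_0})}$, which dominates $\abs{\jac(f)}^n$ for $f$ near $f_0$. Hence $\abs{\lambda^u}\geq\abs{t_1}-1$ stays large and $\abs{\lambda^s}=\abs{\jac(f)}^n/\abs{\lambda^u}$ stays small. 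This gives local type (II) stability, hence global type (VI).

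Now suppose $\jac$ were nonconstant, hence unbounded, on $V$. Take $\lambda_1$ with $\log\abs{\jac(f_{\lambda_1})}\geq\chi^+(\mu_{f_0})+2\alpha$. The continuations of points of $\SPer^+(f_0)$ still have trace at most $e^{n(\chi^+(\mu_{f_0})+\alpha)}+1$. They cannot be saddles at $\lambda_1$, because a saddle satisfies $\abs{\lambda^u}\geq\abs{\jac}^n$. This contradicts the fact that non-saddle periodic points of $f_{\lambda_1}$ have density zero.

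\textbf{Your unstable-multiplier half depends on the same circular step and has a further hole.} It uses ``$\jac$ is also constant, by the previous argument''. In addition, the condition $\SMult^u_n(f)\subset E_n$ only constrains points that are saddles \emph{for $f$}. It therefore cannot by itself force all of $\SPer(f_0)$ to continue without collision. Likewise, the algebraic locus you intersect only says that \emph{some} point of period $n$ carries the eigenvalue $e$, not that the continuation of your particular $z$ does.

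The paper handles this with the same trick. It restricts to $z_0\in\SPer^+(f_0)$, so that $\frac1n\log\abs{e}\geq\chi^+(\mu_{f_0})-\alpha$. It works on a neighbourhood where $\log\abs{\jac}<\log^+\abs{\jac(f_0)}+\alpha/4$. There, any periodic point carrying such an eigenvalue is automatically a saddle, which yields a type (II) family. Only after that are all saddles followed, and constancy of their unstable eigenvalues, of modulus $>1$, upgrades the family to type (I).
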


\begin{proof}[Proof for the trace spectrum]
For $n\geq 1$, set 
\begin{equation}
\Sigma_n =   \set{\lambda \in \Lambda \; ; \;  \Trace_n(f_\lambda) \subset E_n} \, \text{ and } \, 
\Sigma = \bigcap_{n\geq 1} \Sigma_n. 
\end{equation}
These are algebraic subsets of $\Lambda$ because each of the maps $\Trace_n$ is regular.
To conclude that $\Sigma$ is finite, we fix an irreducible component of $\Sigma$, still denoted by $\Sigma$ for simplicity, and we show that $(f_\lambda)_{\lambda\in \Sigma}$  is  stable. 
We follow the same strategy as for~\cite[Lem. 8.5]{rigidity}, 
except that we work with traces instead of multipliers. 

Fix $\lambda_0\in\Sigma$ and set $f_0 = f_{\lambda_0}$. Fix $\alpha>0$   such that $\chi^+(\mu_{f_0})\geq \log^+\abs{\jac(f_0)}+2\alpha$. Let $\mathcal N$ be an open  ball
around $\lambda_0$ in which $ \log \abs{\jac(f_\lambda)}   - \log^+\abs{\jac(f_0)} < \alpha/4$.
Theorem~\ref{thm:equidistribution_reinforced} provides
a  set $\SPer^+(f_0)$ of  saddle points  of asymptotic density 1, such that $\chi^u(z_0)\geq \chi^+(\mu_{f_0}) - \alpha$ for 
every $z_0\in \SPer^+(f_0)$.  
Let us show  that points in 
$\SPer^+(f_0)$ of sufficiently large   period 
persist as saddle points for all parameters in $\mathcal N\cap \Sigma$. 

Fix $z_0\in \SPer^+(f_0)$, of period $n$. We have 
 \begin{equation}
\abs{\tr(D_{z_0}f_0^n)} \geq \abs{\lambda^u(z_0)}- 1 \geq \exp\lrpar{n(\log^+\abs{\jac(f_0)}+\alpha)}-1. \end{equation}
Let  $z_\lambda$ be the analytic continuation of $z_0$ as a saddle point, for $\lambda$ in some connected 
neighborhood $\mathcal N(z_0)$ of $\lambda_0$ in $\Sigma$ (that  depends a priori on $z_0$). 
Along $\Sigma$, 
$\Trace_n(f_\lambda)$ is contained in the finite set $E_n$, so 
 $\lambda \mapsto \tr(D_{z_\lambda}f_\lambda^n)$ is constant on
 $\mathcal{N}(z_0)$, equal to $\tr(D_{z_0}f_0^n)$. Thus for $\lambda\in \mathcal N(z_0)$ we have $\abs{\lambda^u(z_\lambda) - \tr(D_{z_0}f_0^n)}<1$, and then
\begin{equation}\abs{\lambda^u(z_\lambda)}\geq \abs{\tr(D_{z_0}f_0^n)} -1 \geq 
\exp\lrpar{n(\log^+\abs{\jac(f_0)}+\alpha)}-2.\end{equation} 
Fix  $n_0$ so large  that for   $n\geq n_0$,
\begin{equation} \label{eq:apriorichiu}
\unsur{n}\log \lrpar{\exp\lrpar{n(\log^+\abs{\jac(f_0)}+\alpha)}-2}\geq\log^+\abs{\jac(f_0)} + \alpha/2.
\end{equation} 
Then $ \chi^u(z_\lambda)\geq   \alpha/2$ for all $z_0\in \SPer^+(f_0)$ of period $\geq n_0$, and  the definition of $\mathcal N$ gives
\begin{equation}
\chi^s(z_\lambda)  = \log \abs{\jac(f_\lambda)}  - \chi^u(z_\lambda)\leq 
 \log \abs{\jac(f_\lambda)}   - \log^+\abs{\jac(f_0)} - \alpha/2  \leq - \alpha/4. 
\end{equation}
Thus, the analytic continuation $z_\lambda$ of $z_0$ remains a saddle periodic point when 
 $\lambda\in \mathcal N \cap  \Sigma$, with the uniform estimates
$  \chi^s(z_\lambda) \leq  -\alpha/4 \text{ and } \chi^u(z_\lambda)\geq   \alpha/2. $ 
Thus, no bifurcation is possible and we can take  $ \mathcal N(z_0)= \mathcal N\cap \Sigma$, as was to be shown.

This argument shows that $(f_\lambda)_{\la\in \Sigma}$ is  locally stable of type (II),
hence also of type (III). To enhance this to global stability, 
we follow an idea used for \cite[Prop. 8.6]{rigidity}.
We will show that the Jacobian must be constant on $\Sigma$, so that we can choose $\mathcal N  = \Sigma$.  
By Remark~\ref{rem:local_stability},  $(f_\lambda)_{\la\in \Sigma}$ is globally stable of type (VI):
 all periodic points can be followed, but not necessarily as saddles. By way of contradiction, assume that 
 $\lambda \mapsto \jac(f_\lambda)$ is not constant on $\Sigma$ and fix a parameter $\lambda_1\in \Sigma$ and 
 $\alpha>0$ such that $\log \abs{\jac(f_{\lambda_1})} \geq \log \abs{\jac(f_{\lambda_0})} + 2\alpha$, 
 as well as a path $\gamma$  in $\Sigma$ joining $\lambda_0$ to $\lambda_1$. 
 Note that $\chi^+(\mu_{f_0})\leq \log \abs{\jac(f_{\lambda_0})}$, hence 
 $\log \abs{\jac(f_{\lambda_1})} \geq \chi^+(\mu_{f_0})+2\alpha$. Fix 
 a  set $\SPer^+(f_0)$ of  saddle points  of asymptotic density $1$, such that $\chi^u(z_0)\leq \chi^+(\mu_{f_0}) + \alpha$ for 
every $z_0\in \SPer^+(f_0)$. 
For such a $z_0$ we have 
\begin{equation}\label{eq:borne_sup_trace}
\abs{\tr(D_{z_0} f_{\lambda_0}^n)} = \abs{\lambda^u(z_0) +\lambda^s(z_0)}
 \leq  \abs{\lambda^u(z_0) }+ 1 \leq \exp\lrpar{n (\chi^+(\mu_{f_0}) + \alpha)} +1.
\end{equation}
Let us follow $z_0$ along the path $\gamma$ up to some periodic point $z_1$. By assumption the trace stays constant so $\abs{\tr(D_{z_1} f_{\lambda_1}^n)}$ satisfies the inequality~\eqref{eq:borne_sup_trace}. If $z_1$ is a saddle point for $f_{\lambda_1}$
 we infer 
\begin{align}
\abs{\jac(f_{\lambda_1})} ^n\leq  \abs{\lambda^u(z_1) } &\leq \abs{\tr(D_{z_1} f_{\lambda_1}^n)} +1\\
&\notag =\abs{\tr(D_{z_0} f_{\lambda_0}^n)} +1 \leq \exp\lrpar{n (\chi^+(\mu_{f_0}) + \alpha)} +2.
\end{align}
But  $\log \abs{\jac(f_{\lambda_1})} \geq \chi^+(\mu_{f_0})+2\alpha$ so if $n$ is large enough we get a contradiction. Therefore, as we follow any point of $\SPer^+(f_0)$ of sufficiently large period along $\gamma$, we obtain a non-saddle point for $f_1$, which contradicts 
 the fact that $\SPer^+(f_0)$ has asymptotic density $1$. So 
 the Jacobian is constant on $\Sigma$, and we are done. 
 \end{proof}

\begin{proof}[Proof for the unstable  multiplier spectrum] 

This is more delicate because the sets
\begin{equation}\label{eq:Sigmaun}
\Sigma^u_n :=   \set{\lambda \in \Lambda \; ; \;  \SMult^u_n(f_\lambda) \subset E_n} \text{ and } \; \Sigma^u := \bigcap_{n\geq 1} \Sigma_n^u  
\end{equation}
are not  naturally algebraic, for being a saddle point is not an algebraic property. 
Fix $f_0\in \Sigma^u$ 
and assume that $f_0$ is not an isolated point of $\Sigma^u$. 
We will show that $\Sigma^u$ contains a positive dimensional stable algebraic family through $f_0$, which is a contradiction. 

Let $V_n$ be a  neighborhood of $f_0$ in $\Lambda$ 
in which all saddle points up to period $n$ persist. Since 
  $f_0$ is not an isolated point of $\Sigma_n^u$, 
 $\Sigma^u_n\cap V_n$ is an analytic subset of $V_n$ 
of positive dimension,  which is a union 
 of irreducible components (as analytic subsets in $V_n$)
 of the  restriction to $V_n$ of an algebraic subvariety of dimension $\geq 1$ at $f_0$. 
 It may have several local 
 irreducible components at $f_0$, but reducing $V_n$ if necessary, we can assume that $\Sigma^u_n\cap V_n$ admits a unique connected component in $V_n$, 
 which contains   $\Sigma^u\cap V_n$. Let $n_1$ be such that 
 for $n\geq n_1$ the germ of $\Sigma^u_n$ at $f_0$ is constant (i.e.\ does not depend on $n$). 
   
Since $f_0$ belongs to $\Sigma^u$, $\Sigma^u$ is not empty, thus for every  
  $0<\alpha<\chi^+(\mu_{f_0})/10$ 
and   large enough $n$, we can find points $t\in E_n$  such that 
$\unsur{n}\log\abs{t}\geq \chi^+(\mu_{f_0})-\alpha$. 
As in the first part of the proof, fix $\alpha>0$   such that 
$\chi^+(\mu_{f_0})\geq \log^+\abs{\jac(f_0)}+2\alpha$, 
and a neighborhood $\mathcal N  = \mathcal N(\alpha)$ in which 
$\log\abs{\jac(f)}< \log^+\abs{\jac(f_0)} + \alpha/4$. 
For   $f\in \mathcal N$, every periodic point with an eigenvalue $t$ such that 
$\unsur{n}\log\abs{t}\geq \chi^+(\mu_{f_0})-\alpha$ is automatically a saddle. 

Theorem~\ref{thm:equidistribution_reinforced} provides a set $\SPer^+(f_0)$ of saddle points  of asymptotic density $1$  
such that $\chi^u(z_0)\geq \chi^+(\mu_{f_0}) - \alpha$ for    $z_0\in \SPer^+(f_0)$. 
Fix   $z_0\in \SPer^+_n(f_0)$, with $n\geq n_1$. It can be followed holomorphically as $z =z(f)$ along $\Sigma^u_n\cap V_n$, 
and $\lambda^u(z)$ stays constant there. 
Consider the subset of $\mathcal N$,  defined by the condition that  $\Mult_n(f)$  contains $\lambda^u(z_0)$; 
it is the intersection of $\mathcal N$ with the algebraic variety of automorphisms $f$ such that $\lambda^u(z_0)$
is an eigenvalue of $Df^n$ at some point of period $n$;
it contains $\Sigma^u_n$ as a germ at $f_0$;  
we let   $S(z_0)$ be the union of its irreducible
  components (as analytic subsets in $\mathcal N$)
containing $\Sigma^u_n\cap V_n$. 
As in the first part of the proof, $z_0$ can be followed holomorphically along   
$S(z_0)$. Set $S^+  = \bigcap_{z_0\in \SPer^+(f_0)} S(z_0)$. This is an 
analytic subset
 in $\mathcal N$ containing the germ of $\Sigma^u_n$ for every $n\geq n_1$, so 
it has positive dimension (possibly greater than that of  $\Sigma_n^u\cap V_n$). By construction all points in $\SPer^+(f_0)$ can be followed holomorphically as saddles 
along $S^+$, so $(f_\lambda)_{\lambda\in S^+}$ is a stable family of type (II). 

Now we work inside this family $S^+$. By the implication (II)$\Rightarrow$(VI) of 
Proposition~\ref{pro:stability_conditions}, all saddle periodic points of $f_0$ 
can be followed holomorphically as periodic points 
in $S^+\cap \mathcal N$, so we can define  
an analytic subset $S\subset S^+$ in $\mathcal N$
 by the condition that the eigenvalue corresponding to the unstable multiplier at $z_0$ remains constant for \emph{every} $z_0\in \SPer(f_0)$. 
In modulus this eigenvalue stays larger than $1$ along $S$, so the exceptional phenomenon described in  
Remark~\ref{rem:substantiality} cannot occur and we conclude that $S$ is a stable family of type (I), which must coincide with $\Sigma^u\cap V_n$ for any $n\geq n_1$.

At this point  we have shown that $\Sigma^u$ is an analytic subset of $\Lambda$. 
Let $f_0$ be a non-isolated point of $\Sigma^u$: its is contained in a complex analytic  component $W$ of $\Sigma^u$ of dimension $k\geq 1$. 
We constructed a neighborhood $\mathcal N  = \mathcal N(f_0)$ such that 
$\Sigma^u\cap \mathcal N$ coincides with a union of components of $W'\cap \mathcal N$, for some algebraic subvariety $W'$ of $\Lambda$ of dimension $k$. 
By analytic continuation, $W$ is an irreducible component of $W'$ in the analytic sense, hence it coincides with an algebraic component of $W'$, and this component determines
a stable algebraic subfamily of type (I) in $\Lambda$, which  contradicts our assumptions.  
Therefore $\Sigma^u$ is a discrete set, as announced. 
\end{proof}

\begin{rem}\label{rem:type1}~
\begin{enumerate}[\rm (1)]
\item 
 The proof shows that
 the non-existence of  
stable algebraic families of type (II) (resp. type (I))  is enough to get the conclusion about the trace spectrum (resp. unstable multipliers).  
\item On the other hand, using the full strength of the type (III) assumption, we can relax  the condition $\Trace_n(f) \subset E_n$   (resp. $\SMult^u_n(f) \subset E_n$) by requiring that this property holds 
only along  some  subsequence $(n_j)$. 
  Thus, in the spectral rigidity theorems, it is enough to use the information on periodic points along a subsequence $(n_j)$. 
  The verification is left to the reader.
    \end{enumerate}
\end{rem}

 Recall from Remark~\ref{rem:determined_degree} that the degree is automatically determined by the spectrum. Hence 
the following result contains the statements of  Theorems~\ref{mthm:henon_multipliers} and~\ref{mthm:multi-Jacobian} relative to the trace spectrum.  
 
\begin{thm}\label{thm:traces}
If $\mathcal H$ is either $\mathcal H^1_d$ for some $d\geq 2$ or $\mathcal H^k_{\mathbf d,\mathbf a}$ for some $a\in (\C^*)^k$ and $\mathbf d\in \N^k_{\geq 2}$, then 
for any $f_0 \in \mathcal H$, the isospectral subset 
 $\set{f \in \mathcal H\; ; \;   \Trace(f ) = \Trace(f_0)}$
is finite.
\end{thm}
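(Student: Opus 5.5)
The plan is to deduce Theorem~\ref{thm:traces} from Proposition~\ref{pro:multipliers} (trace part), taking Theorem~\ref{mthm:stable} as given. We apply the proposition with $\Lambda = \mathcal H$, which is itself an algebraic family: $\mathcal H^1_d$, resp.\ $\mathcal H^k_{\mathbf d,\mathbf a}$, an affine subvariety of $\mathcal H^k_{\mathbf d}$ cut out by fixing the multi-Jacobian. The hypothesis of the proposition is that every stable irreducible algebraic subfamily of $\Lambda$ is trivial. A subfamily $\Lambda'\to\mathcal H$ is an algebraic family of Hénon maps in the first case, and in the second case a family of compositions with fixed multi-Jacobian $\mathbf a$. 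So Theorem~\ref{mthm:stable} applies in both cases.

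Next we choose the finite sets. For each $n\geq 1$ set $E_n$ to be the underlying set of the multiset $\Trace_n(f_0)$. It has at most $p_n$ elements, so it is finite. If $\Trace(f)=\Trace(f_0)$, then $\Trace_n(f)=\Trace_n(f_0)$ as multisets in $\C^{(p_n)}$, so in particular $\Trace_n(f)\subset E_n$ for all $n$. Hence the isospectral set is contained in $\set{f\in \mathcal H \; ; \; \forall n,\ \Trace_n(f)\subset E_n}$, which is finite by Proposition~\ref{pro:multipliers}. The isospectral set is then also algebraic, being the intersection of the fibers of the regular maps $\Trace_n$, though finiteness is all we need.

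The one point to check is that the stability notion produced in the proof of Proposition~\ref{pro:multipliers} matches the one in Theorem~\ref{mthm:stable}. The proposition yields a family that is globally stable of type (II), hence of type (III), which is the paper's convention for ``stable''. So the match holds. The only other subtlety is that in the Hénon case the Jacobian is not fixed in $\mathcal H^1_d$. This is harmless: the proof of the proposition shows that the Jacobian is constant on any irreducible component of $\Sigma$, and in any case Theorem~\ref{mthm:stable} for single Hénon maps imposes no Jacobian condition.

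All the real difficulty lies in Theorem~\ref{mthm:stable}, which rests on Theorem~\ref{mthm:lyapunov}. Given those results, this deduction is formal.
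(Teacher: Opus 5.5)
Your proposal is correct and matches the paper's proof, which likewise just combines the trace part of Proposition~\ref{pro:multipliers} (applied to $\Lambda=\mathcal H$, with $E_n$ the support of $\Trace_n(f_0)$) with Theorem~\ref{mthm:stable}. Your remarks on the type (II)/(III) stability convention and on the non-fixed Jacobian in $\mathcal H^1_d$ correctly address the only points that need checking.
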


\begin{proof}
It suffices to apply  the first part of 
 Proposition~\ref{pro:multipliers} together with  Theorem~\ref{mthm:stable}.
 \end{proof}
 
Now, let us prove the $C^1$ rigidity assertion in  Theorem~\ref{mthm:C1}. 
 
\begin{thm}
\label{thm:C1}
Let $\mathcal H$ be either  $\mathcal H^1_d$ for some $d\geq 2$ or $\mathcal H^k_{\mathbf d,\mathbf a}$ for some $\mathbf a\in (\C^*)^k$ and $\mathbf d\in \N^k_{\geq 2}$. If $f_0$ is an element of $\mathcal H$, there is a discrete set $F\subset \mathcal H$ with the following property: if $f\in \mathcal H$ is   $C^1$ conjugate to $f_0$ on some neighborhood of $\jstar$, then $f\in F$. \end{thm}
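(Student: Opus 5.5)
The strategy is to reduce to the second part of Proposition~\ref{pro:multipliers}, applied to the unstable multiplier spectrum, combined with Theorem~\ref{mthm:stable}. Suppose $f\in\mathcal H$ is $C^1$ conjugate to $f_0$ by a $C^1$ diffeomorphism $\phi$ defined on a neighborhood of $\jstar_{f_0}$, with $\phi\circ f_0 = f\circ\phi$ and $\phi(\jstar_{f_0})=\jstar_f$. The first step is to check that $\phi$ maps $\SPer_n(f_0)$ bijectively onto $\SPer_n(f)$. Every saddle point lies in $\jstar$, since saddles are contained in $\supp(\mu)$ by~\cite{bls2}, and conjugacy preserves exact periods. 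Hyperbolicity of a saddle is also preserved, because the derivative of $f^n$ at $\phi(z)$ is conjugate to $D_zf_0^n$ as a real linear map.

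The second step is to control the multipliers. At a saddle $z$ of period $n$, the real-linear map $D_z\phi$ conjugates $D_zf_0^n$ to $D_{\phi(z)}f^n$. Both maps are complex linear with eigenvalues $\lambda^s,\lambda^u$, and $D_z\phi$ sends the unstable line to the unstable line. On this real $2$-plane, $D_z\phi$ therefore conjugates multiplication by $\lambda^u(z)$ to multiplication by $\lambda^u(\phi(z))$. A real-linear conjugacy between two complex scalar multiplications forces the scalars to be equal or complex conjugate, so $\lambda^u(\phi(z))\in\{\lambda^u(z),\overline{\lambda^u(z)}\}$. Hence if we set
\[
E_n=\{\lambda^u(z),\overline{\lambda^u(z)}\;;\;z\in\SPer_n(f_0)\},
\]
each $E_n$ is finite and $\SMult^u_n(f)\subset E_n$ for all $n$.

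The third step is to apply Proposition~\ref{pro:multipliers} with $\Lambda=\mathcal H$. Its hypothesis, that every stable irreducible algebraic subfamily is trivial, is exactly Theorem~\ref{mthm:stable}. For $\mathcal H=\mathcal H^1_d$ one uses the Hénon statement; for $\mathcal H^k_{\mathbf d,\mathbf a}$ one uses the fixed multi-Jacobian statement, since any subfamily of $\mathcal H^k_{\mathbf d,\mathbf a}$ still has fixed multi-Jacobian. We then take $F=\{f\in\mathcal H\;;\;\forall n,\ \SMult^u_n(f)\subset E_n\}$, which is discrete and contains every $f$ that is $C^1$ conjugate to $f_0$ near $\jstar$.

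The main obstacle I expect is the real-linear versus complex-linear issue in the second step. A $C^1$ conjugacy need not be holomorphic, and could even be orientation-reversing on unstable directions, which is why complex conjugates are included in $E_n$. One must also justify that $D_z\phi$ preserves the invariant splitting. This follows because the stable and unstable subspaces are characterized dynamically, by contraction or expansion under iterates, and a real conjugacy preserves these characterizations, so the argument goes through.
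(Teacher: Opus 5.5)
Your proof is correct and is essentially the paper's. The paper uses the same sets $E_n$ (unstable multipliers of $f_0$ together with their complex conjugates) and concludes via Proposition~\ref{pro:multipliers} and Theorem~\ref{mthm:stable}, citing Friedland--Milnor for the eigenvalue relation that you prove directly. You take $\phi(\jstar_{f_0})=\jstar_f$ for granted, but it is needed so that every saddle of $f$ lies in the image of $\phi$, and hence $\SMult^u_n(f)\subset E_n$; the paper derives it from the uniqueness of the measure of maximal entropy \cite{bls}, which forces $\phi_*\mu_{f_0}=\mu_f$.
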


 By ``$C^1$ conjugate in a neighborhood of $\jstar$'' we mean that there exists 
 a $C^1$ conjugacy $\varphi$ 
 between $f_0$ and $f$ whose domain $\mathcal U$
 contains a neighborhood of $\jstar(f_0)$.
  The uniqueness of the measure of maximal entropy, obtained in~\cite{bls}, implies 
  that   $\varphi_*\mu_{f_0} = \mu_f$, so  
 $\varphi(\jstar(f_0)) = \jstar(f)$.  

\begin{proof}
If $f_0$ and $f$ are $C^1$ conjugate in a neighborhood of $\jstar$, then for every saddle point $z_0$
of $f_0$, $\varphi(z)$ is a saddle point of $f$, with $\lambda^u(z)\in \set{\lambda^u(z_0), \overline{\lambda^u(z_0)}}$ and $\lambda^s(z)\in \set{\lambda^s(z_0), \overline{\lambda^s(z_0)}}$ 
(see the discussion preceding~\cite[Thm 7.5]{friedland-milnor}). So if we define
\begin{equation}\label{eq:En_C1}
E_n = \set{\lambda^u(z_0) \; ;\;  z_0\in \SMult^u_n(f_0)} \cup \set{  \overline {\lambda^u(z_0)} \; ; \;   z_0\in \SMult^u_n(f_0)}, \end{equation} we see that 
$\SMult^u_n(f)\subset E_n$ for every $n\geq 1$, and we conclude by Proposition~\ref{pro:multipliers}.
\end{proof}

\begin{vcourte}
\begin{rem} 
A question that arises naturally in many rigidity issues is whether there may exist a map with all unstable multipliers  equal  to $\kappa^n$ for some   $\kappa\in \C\setminus\overline\disk$, where $n$ is the period (see e.g.~\cite{BK1} or~\cite[\S 2]{rigidity}). By adapting the proof of Proposition~\ref{pro:multipliers} it can be shown  that the set of  maps of this type is discrete in $\mathcal H^1_d$ or $\mathcal H^k_{\mathbf d, \mathbf a}$.   
\end{rem}
\end{vcourte}

\subsection{Further results: uniformity and arbitrary fields}

\begin{thm}\label{thm:noether}
Let $\mathcal H$ be either of 
$\mathcal H^1_d$ or ${\mathcal H}_{\mathbf d, \mathbf a}^k$ for some
 $\mathbf d\in \N^k_{\geq 2}$ and $\mathbf{a}\in (\C^*)^k$. There exists $P$ and $N$ depending only on $\mathbf d$ such that for every $f_0\in \mathcal H$, the set 
$$\set{f\in \mathcal H  \; ; \; \  \Trace_n(g)=\Trace_n(f_0) \, \text{ for every }n\leq P}$$ has cardinality at most $N$. 
Moreover, this result holds for polynomial automorphisms of $\mathbb{A}^2(\mathbf K)$, for any algebraically closed field $\mathbf K$ of characteristic zero. 
\end{thm}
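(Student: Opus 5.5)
The plan is a Noetherianity argument applied to the algebraic correspondence defined by the traces, with the finiteness of fibres supplied by Theorem~\ref{thm:traces}. Consider the regular map $\Trace_{\le n}:=(\Trace_1,\dots,\Trace_n)\colon \mathcal H\to \prod_{j\le n}\C^{(p_j)}$ and the algebraic subvarieties
\[
Z_n=\set{(f,g)\in \mathcal H\times\mathcal H \; ; \; \Trace_j(f)=\Trace_j(g) \text{ for all } j\le n}.
\]
This is a decreasing sequence of Zariski closed subsets of the affine variety $\mathcal H\times \mathcal H$, defined over $\mathbf Q(\mathbf a)$ (or simply over the prime field together with $\mathbf a$) since $\Trace_n$ is given by universal polynomial formulas: the elementary symmetric functions of traces over $\widehat\Per_n$ are polynomial in the coefficients. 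By Noetherianity it stabilizes: there is $P$ with $Z_n=Z_P$ for all $n\ge P$. Hence $Z_P=Z_\infty:=\bigcap_n Z_n$, and the fibre of the first projection $Z_P\to\mathcal H$ over $f_0$ is exactly the isospectral set of Theorem~\ref{thm:traces}, which is finite. So $\mathrm{pr}_1\colon Z_P\to \mathcal H$ is quasi-finite. A quasi-finite morphism of varieties has uniformly bounded fibre cardinality: stratify $\mathcal H$ into finitely many locally closed pieces over which the map is finite étale onto its image (generic flatness plus induction on dimension), and take $N$ to be the maximum of the degrees. This gives the uniform bound $N$.

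For the dependence only on $\mathbf d$ in the case $\mathcal H^k_{\mathbf d,\mathbf a}$, I would run the same argument with $\mathbf a$ as an extra variable. Replace $\mathcal H\times\mathcal H$ by the fibre product $\mathcal H^k_{\mathbf d}\times_{(\C^*)^k}\mathcal H^k_{\mathbf d}$, consisting of pairs with the same multi-Jacobian, and let $Z_n$ be the analogous subvarieties. Noetherianity again yields a single $P$. Each fibre over $f_0$ is then the isospectral set inside $\mathcal H^k_{\mathbf d,\mathbf a(f_0)}$, which is finite by Theorem~\ref{thm:traces}, so the same constructibility argument gives a uniform $N$. For $\mathcal H^1_d$ no Jacobian constraint is needed.

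For an arbitrary algebraically closed field $\mathbf K$ of characteristic zero, everything is defined over $\mathbf Q$ (respectively over $\mathbf Q$ with $\mathbf a$ as coordinates). The equalities $Z_n=Z_P$ for $n\geq P$, and the statement that the fibres of $\mathrm{pr}_1$ have at most $N$ geometric points, are first-order and constructible conditions. They therefore transfer from $\C$ to any algebraically closed field of characteristic $0$, either by the Lefschetz principle or because such statements about schemes over $\mathbf Q$ are insensitive to the choice of algebraically closed base field. A single automorphism involves only finitely many coefficients, so it lives over a finitely generated subfield; this field embeds in $\C$, which gives an alternative direct reduction.

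The main obstacle is the passage from $\bigcap_n Z_n$ to a finite-level $Z_P$ with the same fibres, together with the uniformity of fibre cardinality. Noetherianity handles the first point and the constructibility of quasi-finite maps handles the second. A remaining subtlety to verify is that $\Trace_n$ is genuinely regular on all of $\mathcal H$, including at parameters where points of formal period $n$ collide or have lower exact period; this was built into the definition via $\widehat\Per_n$.
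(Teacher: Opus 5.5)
Your proposal is correct and follows essentially the same route as the paper. The paper also takes the decreasing sequence of pair varieties with the extra constraint $\mathbf a(f)=\mathbf a(g)$ (exactly your fibre product), stabilizes it by Noetherianity, and uses Theorem~\ref{thm:traces} to get finite fibres of the first projection and hence a uniform $N$. It then passes to $\mathbf K$ by embedding the algebraic closure of the field generated by the coefficients into $\C$, which is your ``alternative direct reduction''. Your justification of the uniform fibre bound and your Lefschetz-principle variant are valid additions, but they go beyond what the paper needs.
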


\begin{proof}
We first work over $\C$. Assume    $k\geq 2$, and 
denote by $\mathbf a(f)$ the multi-Jacobian of $f\in \mathcal H^k_{\mathbf d}$.  
 Introduce a   sequence of algebraic varieties
$$S_n  = \set{(f,g ) \in \mathcal H^k_{\mathbf d} \times \mathcal H^k_{\mathbf d}\;  ; \; 
 \forall \ell\leq n, \ \Trace_\ell(f)=\Trace_\ell(g) 
 \text{ and } 
\mathbf a(f)= \mathbf a(g) } .$$ Since this sequence is non-increasing ($S_{n+1}\subset S_n$)
there is an index $P\geq 1$ such that $S_n=S_P$ for all $n\geq P$. 
The first projection $\pi\colon S_P\to \mathcal H$ is a surjective morphism and by Theorem~\ref{thm:traces} all its fibers are finite. Thus, there is an integer $N$ such that every fiber of $\pi$ contains at most $N$ elements, as was to be shown. 

For $k=1$ the argument is the same by considering 
$$S_n  = \set{ (f,g) \in \mathcal H_d^1\times \mathcal H_d^1\;  ; \; \forall \ell\leq n, \ 
\Trace_\ell(f)=\Trace_\ell(g)}.$$

Now we work over 
$\mathbf K$.
The Friedland-Milnor classification holds in this case as well, 
and we consider    $\mathcal H (\mathbf K)$ to be either of 
$\mathcal H_d^1(\mathbf K) $ or   ${\mathcal H}_{\mathbf d, \mathbf a}^k(\mathbf K)$.
Let $N$ and $P$ be the integers defined in the previous paragraph (for the field $\C$). 
Pick  distinct elements   $ f_1, \ldots, f_M$ of  $\mathcal H(\mathbf K)$ 
such that $\Trace_n(f_m)=\Trace_n(f_1)$ for all~$m\leq M$ and $n\leq P$. 
Let $\mathbf L_0\subset \mathbf K$ be the 
field generated by the coefficients of the formulas defining the $f_n$
and  let $\mathbf L\subset \mathbf K$ be its algebraic closure. Since $\mathbf L$ is countable and of characteristic $0$, there is an embedding $\iota \colon \mathbf L \to \C$. 
Applying $\iota$ to the coefficients of the $f_m$, we get distinct 
 elements $f_m^\iota \in \mathcal H(\C)$ such that $\Trace_n(f_m^\iota)=\Trace_n(f_1^\iota)$
 for every $n\leq P$. Therefore $M\leq N$, and the proof is complete. 
\end{proof}

\begin{eg}
\label{eg:charac_p}
Let $\bfK$ be an algebraically closed field of characteristic $p > 0$, 
such as  $\overline{{\mathbf{F}}_p}$. 
Consider the additive automorphism of the affine plane $\A^2_\bfK$ defined by 
\begin{equation}
f(x,y)=(a y + sx^p, x)
\end{equation}
where $a\in \bfK^*$ is fixed and $s\in \bfK^*$ is a parameter. 
Then, the differential $Df_{(x,y)}$ does not depend on $(x,y)$, 
its trace is $0$ and its determinant is $-a$. If $\alpha$ is a square root of $a$, then $\tr(Df^n_{(x,y)})=\alpha^n+(-\alpha)^n$, 
so that the trace spectrum of $f$ does not depend on the parameter $s$. 
This shows that Theorems~\ref{mthm:henon_multipliers} and~\ref{mthm:multi-Jacobian}
do  not hold in characteristic~$p$. These examples are somehow similar to Lattès maps: Lattès maps come from endomorphisms of
abelian groups (elliptic curves) while additive automorphisms are automorphisms of the commutative group ${\mathbb{G}}_a \times {\mathbb{G}}_a$.
\end{eg}

\section{Hénon maps and Huguin's theorem}\label{sec:huguin}

This section provides a direct proof, based on~\cite[Thm A]{huguin:polynomials}, of the next  two theorems.

\begin{thm}\label{thm:henon_huguin1}
Let  $(f_\lambda)_{\lambda\in \Lambda}$ be an irreducible  algebraic family 
of complex Hénon maps, 
 such that    $\jac(f_\lambda)\not \equiv -1$. If it is stable of type (I), then it  is trivial.
\end{thm}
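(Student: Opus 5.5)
The plan is to reduce to Huguin's theorem on one-dimensional polynomials. The point is that when $a\neq 1$, the fixed points and period-$2$ points of $f(x,y)=(ay+p(x),x)$ are governed by the one-variable polynomial $q(x)=p(x)/(1-a)$.

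\emph{Step 1: the Jacobian is constant.} Restricting to a curve in $\Lambda$ if necessary, I may assume $\Lambda$ is an irreducible algebraic curve. Fix a saddle point $z_0$ of $f_{\lambda_0}$. By type (I) stability it persists as a saddle over all of $\Lambda$. Its stable multiplier $\lambda^s$ is therefore an algebraic function on a finite cover of $\Lambda$, and it satisfies $\abs{\lambda^s}<1$ there. Pass to the normalization and its smooth compactification. A bounded meromorphic function extends holomorphically across the punctures, so $\lambda^s$ is a holomorphic function on a compact curve, hence constant. The same argument applies to $1/\lambda^u$, so $\lambda^u$ is constant too. Therefore $\jac(f_\lambda)^n=\lambda^s\lambda^u$ is constant, and $\jac(f_\lambda)=-a$ is constant. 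By hypothesis $a\neq1$, and more generally the traces of all persistent saddle points are constant.

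\emph{Step 2: translating to $q$.} A fixed point of $f$ has the form $(x,x)$ with $p(x)=(1-a)x$, that is, $x\in\Fix(q)$. Its trace is $p'(x)=(1-a)\,q'(x)$. A period-$2$ point $(x,u)$ satisfies $p(x)=(1-a)u$ and $p(u)=(1-a)x$, so $\{x,u\}$ is a $2$-cycle (or fixed point) of $q$. The trace at this point is
\[
\tr(Df^2)=p'(x)p'(u)+2a=(1-a)^2(q^2)'(x)+2a .
\]
Since $a$ is constant, the multisets $\Trace_1(f_\lambda)$ and $\Trace_2(f_\lambda)$ determine, and are determined by, the multisets of multipliers of $q_\lambda$ in periods $1$ and $2$, counted with formal period. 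Huguin's theorem~\cite[Thm A]{huguin:polynomials} then says that if these multisets are constant along $\Lambda$, the affine conjugacy class of $q_\lambda$ takes only finitely many values. The polynomial $q_\lambda$ is centered with fixed leading coefficient $1/(1-a)$, so each affine class contains only finitely many such $q$ (they differ by rotations by roots of unity). Hence $p_\lambda$ takes finitely many values. Since $\Lambda$ is irreducible, $p_\lambda$ is constant, and together with $a$ constant the family is trivial.

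\emph{Main obstacle: constancy of the full multisets $\Trace_1$ and $\Trace_2$.} Step 1 only controls saddle points, while some fixed or period-$2$ points may be non-saddle. The approach I would try first is the following. Each element of $\Trace_n(f_\lambda)$, for $n\le2$, is an algebraic function on a finite cover of the curve $\Lambda$. If one of these branches is nonconstant, it is an open map, so it takes values making the point a saddle on a nonempty open set of parameters. Since $\abs{a}$ is fixed, the saddle condition is an explicit open condition on the trace. So this point is a saddle for some parameter $\lambda_1$.

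It then remains to use type (I) stability to show that a point which is a saddle at one parameter stays a saddle everywhere. Type (I) implies (VI), so the point can be followed over all of $\Lambda$. The set of parameters where it is a saddle is open, and at a boundary parameter some multiplier would cross the unit circle. I would argue, for instance using type (I) stability based at $\lambda_1$ via Remark~\ref{rem:local_stability}, that such a crossing is excluded. This would make the trace constant, a contradiction. Hence all traces in periods $1$ and $2$ are constant, and Huguin's theorem applies.
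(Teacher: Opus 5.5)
The step you call the main obstacle is a genuine gap, and the fix you sketch does not work as stated. Type (I) is anchored at a single parameter $\lambda_0$. It says nothing about a point of period $\leq 2$ that is not a saddle at $\lambda_0$ but is one at some $\lambda_1$, and Remark~\ref{rem:local_stability} gives no way of moving the base point to $\lambda_1$. Remark~\ref{rem:substantiality} warns that exactly this can go wrong. Once the Jacobian is constant, every point of period $n$ satisfies the persistent relation $\mu_1\mu_2=(-a)^n$ between its eigenvalues. This relation is of the forbidden type exactly when $\abs{a}=1$, for instance $a=-1$ (Jacobian $1$), which your hypothesis allows. In that case a periodic point may switch between saddle and non-saddle type, with both multipliers crossing the unit circle, without breaking the holomorphic motion.

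For $\abs{a}\neq 1$ the step can be closed. A persistent relation $\abs{\mu_1}^s\abs{\mu_2}^t=1$, together with $\abs{\mu_1\mu_2}=\abs{a}^n\neq 1$, forces the multipliers to be constant. So a saddle at $\lambda_1$ either has constant multipliers or is non-degenerate. Apply Remark~\ref{rem:partial_substantiality} at $\lambda_1$, where the continued saddles of $f_{\lambda_0}$ are still dense in $J^*_{\lambda_1}$; it makes that point a saddle throughout $\Lambda$.

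A smaller point: ``nonconstant, hence open'' does not by itself reach the saddle region when $\abs{a}\neq 1$, because the non-saddle traces then form a compact set with interior. Use instead that a nonconstant algebraic function on an affine curve is unbounded.

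The paper never asks for the whole multisets $\Trace_1,\Trace_2$ to be constant. It uses Huguin's Theorem A as a properness statement (Proposition~\ref{pro:huguin}). A positive-dimensional $\Lambda$ is unbounded in $\mathcal H^1_d$, so along a diverging sequence some trace in period $1$ or $2$ blows up. With the Jacobian fixed, that point is eventually a saddle with $\abs{\lambda^u}\to\infty$, contradicting Lemma~\ref{lem:jacobian}.

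That contradiction rests on Lemma~\ref{lem:jacobian} being stated for all saddle points of the family; its proof fixes an arbitrary $\lambda_0$. In other words, the paper reads (I) as saying that no saddle, at any parameter, ever ceases to be a saddle. This is exactly the input you are missing. Granting it, a nonconstant trace branch is unbounded, hence a saddle at some $\lambda_1$, hence of constant trace by your Step~1 run at $\lambda_1$. Your finite-fibre use of Huguin's theorem then finishes, and your proof becomes a repackaging of the paper's, with finiteness of fibres in place of properness.

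If you keep the literal base-pointed definition of (I), you must supply this persistence yourself. The argument above does it for $\abs{a}\neq 1$. For $\abs{a}=1$ nothing in this section provides it, and the cleanest way out is Theorem~\ref{mthm:stable}, since (I) implies (III).
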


This is really a result about type (I) stability since, as we will see, it  deals specifically with  points of period $1$ and $2$.  
So even for Hénon maps with  Jacobian different from $ -1$,  this result is weaker than 
Theorem~\ref{mthm:stable}  (see the comment about the chain of implications after Proposition~\ref{pro:stability_conditions}). 
On the other hand, by Remark~\ref{rem:type1}\,(1), for Hénon maps of Jacobian  different from $-1$,  Theorem~\ref{thm:henon_huguin1} can be used  instead of   Theorem~\ref{mthm:stable}   to get  Theorem~\ref{mthm:C1} and the unstable multiplier version of 
   Theorem~\ref{mthm:henon_multipliers}. An amusing consequence is that  global considerations about periodic points of 
   periods $1$ and $2$ in $\mathcal H^1_d$ lead to local unstable multiplier rigidity (resp.\ $C^1$ rigidity  on $\jstar$), 
   even for maps whose points of periods 1 and 2 are all attracting.
   
\begin{thm}\label{thm:henon_huguin2}
A complex Hénon map $f$ of given degree
 whose Jacobian is different from $-1$ is determined up to finitely many choices by 
the multipliers of its periodic points of period $1$ and $2$ (or equivalently by $\jac(f)$, $\Trace_1(f)$ and $\Trace_2(f)$). That is, for any $f_0\in \mathcal H^1_d$ such that $\jac(f_0)\neq -1$ the set 
$$ \Sigma_2(f_0)= \set{f\in \mathcal H^1_d\; ; \;  \jac(f)=\jac(f_0),  \Trace_1(f)=\Trace_1(f_0), \ 
\Trace_2(f)=\Trace_2(f_0)}$$ is finite.  
Furthermore, for  $f_0$ in a dense Zariski open subset of $\mathcal H^1_d$, $\Sigma_2(f_0)   = \set{f_0}$. 
\end{thm}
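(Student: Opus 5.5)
The plan is to derive Theorem~\ref{thm:henon_huguin2} from Theorem~\ref{thm:henon_huguin1} by a Noetherian argument, combined with an explicit link between the period-$1$ and period-$2$ data of $f(x,y)=(ay+p(x),x)$ and those of a one-variable polynomial, so that \cite[Thm A]{huguin:polynomials} applies.

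\textbf{Step 1: reduction to a polynomial in one variable.} Fixed points of $f$ are the points $(x,x)$ with $p(x)+(a-1)x=0$. At such a point
\[
Df=\begin{pmatrix} p'(x) & a\\ 1 & 0\end{pmatrix},
\qquad\text{so}\qquad \tr Df=p'(x).
\]
When $a\neq 1$, set $q(x)=p(x)/(1-a)$. The fixed points of $f$ then correspond to those of $q$, and the traces are $(1-a)q'(x)$. For period $2$ the points are $(x,y)$ with $p(x)+ax=(1-a)\,\cdot$ (a symmetric expression). A computation in the spirit of Equations~\eqref{eq:huguin_period1}--\eqref{eq:huguin_period2}, referred to in \S~\ref{subs:eg_fa}, should show that $\Trace_2(f)$ is determined by $a$ and the period-$2$ multipliers of $q$, namely $(1-a)^2q'(x)q'(y)$ plus a term depending only on $a$.

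Note that $\jac(f)=-a$, so the hypothesis $\jac\neq-1$ is exactly $a\neq1$, which is what makes $q$ well defined. Hence on $\Sigma_2(f_0)$, where $a$ is fixed, the multiplier data of $q$ in periods $1$ and $2$ are fixed. Huguin's theorem \cite[Thm A]{huguin:polynomials} then says that $q$ is determined up to finitely many affine conjugacy classes. The normalization of $p$ as monic and centered leaves only finitely many choices of $p$, because the conjugacies preserving the normal form are multiplication by roots of unity. So $\Sigma_2(f_0)$ is finite. This gives finiteness directly, without passing through stability.

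\textbf{Step 2: generic injectivity.} I will work in the incidence variety
\[
S=\set{(f,g)\in \mathcal H^1_d\times \mathcal H^1_d \; ; \; \jac f=\jac g,\ \Trace_{1,2}(f)=\Trace_{1,2}(g)},
\]
restricted to the locus $a\neq1$. By Step 1 the projection to the first factor is finite on this locus. It remains to show that the diagonal is the only component of $S$ that dominates $\mathcal H^1_d$. Once that is known, the fiber equals $\set{f_0}$ for $f_0$ outside a proper Zariski closed set.

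To rule out an extra dominant component, I plan to use Huguin's generic statement: a generic polynomial is determined up to conjugacy by its multipliers in periods $1$ and $2$. Since the map $f\mapsto(a,q)$ is dominant and generically finite, a non-diagonal dominant component of $S$ would produce, for generic $q$, a non-conjugate $q'$ with the same multipliers, which contradicts Huguin. The main obstacle is the residual symmetry: I must check that distinct monic centered $p$ lying over the same affine class of $q$ genuinely give different Hénon maps in $\mathcal H^1_d$. They differ by the $\mathbb U_{d-1}$ action, and this action corresponds to conjugate Hénon maps, which the statement presumably identifies modulo that action. If that identification fails, I would instead show directly that a generic $p$ has no nontrivial root-of-unity symmetry compatible with the trace data.

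The delicate computational point is the period-$2$ trace formula in Step 1. I would derive it by eliminating $y$ from the system $y=p(x)+ay$-type relations defining period-$2$ points, carefully separating the fixed points, which also have formal period $2$.
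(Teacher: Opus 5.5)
Your proposal is correct and follows essentially the paper's route. The period-$1$ and period-$2$ trace formulas reduce the data to the multipliers of $q=p/(1-a)$, and Huguin's finiteness theorem then gives finiteness of $\Sigma_2(f_0)$; you apply it directly to fibers, whereas the paper argues that an infinite algebraic $\Sigma_2(f_0)$ would be unbounded, contradicting Proposition~\ref{pro:huguin}. Huguin's generic injectivity, together with the $\mathbb U_{d-1}$-ambiguity in recovering $p$ from $[q]$, gives the generic statement, which, exactly as in the paper's proof, holds only modulo conjugacy.

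Your fallback of excluding the root-of-unity symmetry cannot work. For generic $p_0$ and $d\geq 3$, the maps $(a,\alpha^{-1}p_0(\alpha x))$ with $\alpha\in\mathbb U_{d-1}$ are distinct points of $\mathcal H^1_d$ that are conjugate to $f_0$, so they have the same $\jac$, $\Trace_1$ and $\Trace_2$. Hence the graphs of this action are dominant components of your incidence variety $S$ besides the diagonal, and the identification modulo $\mathbb U_{d-1}$ is forced rather than presumed.
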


These two theorems stem from a computational miracle which does not hold for  
Jacobian $-1$. 
The following example shows that 
Theorem~\ref{thm:henon_huguin2} does not carry over to that case.

\begin{eg}\label{eg:jacobian-1}
The general (reduced) form of a Hénon map of Jacobian $-1$ is 
$f(x,y) = (y+p(x), x)$, with $p$ monic and centered. The equation 
$(y_0+p(x_0), x_0)  = (x_0, y_0)$ for fixed points  is equivalent to   
\begin{equation}
x_0=y_0 \;  {\text{ and }} \; p(x_0) = 0 
\end{equation}
and the trace of the differential at such a fixed point equals $p'(x_0)$. For fixed points of $f^2$ we obtain 
\begin{equation}
\begin{cases} x_0+p(y_0 + p(x_0) ) = x_0 \\
 y_0+p(x_0) = y_0
 \end{cases}  
{\text{ or equivalently }} 
 \begin{cases}  p(x_0) = 0 \\ p(y_0) = 0\end{cases} .
\end{equation}
Among these solutions, points of period 2 are those for which $x_0\neq y_0$. The computation of the differential gives
\begin{equation}
\tr (D_{(x_0, y_0)}f^2 )= p'(x_0)p'(y_0)+2.
\end{equation}
 Let us choose for instance $p_\lambda(x) = (x -\lambda)^2(x+\lambda)^2$. Then all solutions of 
 $p_\lambda (x_0) = 0$ satisfy 
   $p'_\lambda(x_0)=0$, and we conclude that the family 
   $f_\lambda(x,y) = (y+ (x^2-\lambda^2)^2, x)$ 
   violates the conclusions of Theorem~\ref{thm:henon_huguin2}. Note that 
Theorem~7.1 of~\cite{friedland-milnor} asserts that Hénon maps of degree $2$ and $3$ are determined by the multipliers of their fixed points, therefore  $4$ is the smallest possible degree for such an example.   \qed 
\end{eg}

We now proceed to the proof of Theorems~\ref{thm:henon_huguin1} and~\ref{thm:henon_huguin2}. 

\begin{lem}\label{lem:jacobian}
Let $(f_\lambda)_{\lambda\in \Lambda}$ be an algebraic family  of loxodromic automorphisms, with $\Lambda$ irreducible. 
If it is stable of type (I), then 
the stable and unstable multipliers of all saddle periodic points, as well as the jacobian
 $\jac(f_\lambda)$, are constant on $\Lambda$.  
\end{lem}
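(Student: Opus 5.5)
Under type (I) stability there is a base parameter $\lambda_0$ such that every saddle periodic point $z_0$ of $f_0=f_{\lambda_0}$, say of period $n$, continues along every path in $\Lambda$ as a saddle point $z_\lambda$ of period $n$. The plan is to show that the multipliers of these points are constant, and then to read off the Jacobian from one of them.

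First I pass to a desingularization, or simply work on the smooth locus of the irreducible variety $\Lambda$. That locus is connected, and by continuity it suffices to prove constancy there. Along any path, and in particular in a small neighborhood of a given parameter, the continuation $z_\lambda$ is holomorphic. Because it stays a saddle, its eigenvalues never collide and never cross the unit circle, so $\lambda^u(z_\lambda)$ and $\lambda^s(z_\lambda)$ are well-defined holomorphic functions locally. Their global continuation may have monodromy: going around a loop can send $z_\lambda$ to another saddle point. To deal with this, I consider the finitely many saddle points of period $n$ and the unordered multiset of their unstable multipliers. The elementary symmetric functions of this multiset are single-valued holomorphic functions on $\Lambda$. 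Equivalently, one can pass to the finite cover given by the relevant components of $\widehat\Per_n$ over $\Lambda$, which is again an irreducible, or at least equidimensional, algebraic curve or variety. On that cover, $\lambda^u$ is a single-valued function which is algebraic, being a root of $t^2-\tr_n t+\jac^n$.

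The key step is this: a non-constant regular, or algebraic, function on an irreducible affine variety that omits the closed disk $\overline\disk$ is impossible. Apply this to $\lambda^u$, which satisfies $|\lambda^u|>1$, on the irreducible component $\widetilde\Lambda$ of the cover through $(f_0,z_0)$. Restrict to an irreducible algebraic curve in $\widetilde\Lambda$ passing through two given points and normalize it, obtaining a compact Riemann surface minus finitely many punctures. The function $\lambda^u$ extends meromorphically to the compactification since it is algebraic. If it were non-constant, it would be a surjective map onto $\mathbf P^1$ and would miss only finitely many values on the punctured curve, so it could not omit all of $\overline\disk$. Hence $\lambda^u$ is constant. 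Alternatively, $1/\lambda^u$ is a bounded holomorphic function on the punctured curve, so it extends holomorphically over the punctures and is constant by the maximum principle. The same reasoning applies to $1/\lambda^s$, which has modulus $>1$. Since the continuations along paths exhaust all saddle points of all $f_\lambda$ (type (I) from $\lambda_0$, where paths can start), the multipliers of all saddle periodic points are constant on $\Lambda$.

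Finally, $\jac(f_\lambda)$ is a regular function on $\Lambda$. For any saddle point $z_\lambda$ of period $n$ we have $\jac(f_\lambda)^n=\lambda^u(z_\lambda)\lambda^s(z_\lambda)$, and this is constant. Saddle points exist by \cite{bls2}, so $\jac^n$ is constant. The regular function $\jac$ is then locally constant on the connected set $\Lambda$, hence constant. The main obstacle is the bookkeeping of monodromy and the algebraicity of $\lambda^u$ on the cover. I would handle it through $\widehat\Per_n$ together with the identity $\lambda^u=(\tr_n\pm\sqrt{\tr_n^2-4\jac^n})/2$, which exhibits $\lambda^u$ as algebraic over the function field of $\Lambda$.
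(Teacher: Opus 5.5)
Your proof is correct and follows the paper's route: reduce to a smooth curve, pass to a finite cover on which the saddle points of a given period are single-valued, note that $1/\lambda^u$ and $\lambda^s$ are bounded holomorphic functions on a finite-type Riemann surface and hence constant, and read off $\jac(f_\lambda)^n=\lambda^u\lambda^s$. The only differences are that you realize the cover inside $\widehat\Per_n$ where the paper takes an abstract finite cover of the punctured Riemann surface, and one caveat: your parenthetical claim that continuations from $\lambda_0$ exhaust the saddle points of every $f_\lambda$ is not justified by type (I) at a single base point, since a non-saddle point of $f_{\lambda_0}$ turning into a saddle elsewhere is exactly the kind of phenomenon discussed in Remark~\ref{rem:substantiality}; the lemma does not need this claim, because it concerns the persistent saddles and one saddle suffices for the Jacobian.
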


\begin{proof}
We can assume that $\dim(\Lambda)=1$. 
We replace $(f_\lambda)_{\lambda\in \Lambda}$ by 
$(f_{\pi(\lambda)})_{\lambda\in \hat \Lambda}$, where $\pi:\hat\Lambda\to \Lambda$ is a desingularization, and assume $\Lambda$ smooth: 
$\Lambda$  is a compact  Riemann surface punctured in finitely many points. 
By assumption, all saddle periodic points of $(f_\lambda)$ can be locally
 followed holomorphically along $\Lambda$. Fix $\lo\in \Lambda$, and let 
$n$ be such that $\SPer_n(f_\lo)$ is non-empty. There is a finite cover $\Lambda'$ of $\Lambda$, which 
is also a Riemann surface of finite type,  
such that   all points in $\SPer_n(f_\lo)$ can be  followed   globally along~$\Lambda'$. For 
$z_{\lo}\in \SPer_n(f_\lo)$ with continuation $z_\lambda$, the function 
$\lambda\in \Lambda' \mapsto \lambda^u(z_\lambda)$ is   holomorphic and takes values in 
$\C\setminus \overline\disk$, hence it is constant. Likewise, the stable multipliers are constant. It follows that 
$\jac(f_\lambda)$ is constant on $\Lambda'$, hence on $\Lambda$.  
\end{proof}

For $f\in \Aut(\C^2)$, define $M_k(f)$ by 
\begin{equation}
M_k(f) = \max_{z\in \Per_k(f)} \lrpar{\unsur{k} \log\abs{\tr D_z f^k}}. 
\end{equation}

\begin{pro}\label{pro:huguin}
Let $(f_n)\in (\mathcal H^1_d)^\N$ be a diverging sequence of complex Hénon maps with fixed Jacobian different from $-1$. Then 
$$\lim_{n\to\infty}\max\lrpar{M_1(f_n), M_2(f_n)}  = +\infty.$$
\end{pro}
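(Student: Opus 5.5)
The plan is to reduce the fixed points and $2$-periodic points of $f(x,y)=(ay+p(x),x)$ to those of a one-variable polynomial, and then to quote Huguin's one-dimensional estimate~\cite[Cor A.1]{huguin:polynomials}. Since $\jac(f)=-a\neq -1$, we have $a\neq 1$, and we may set
\[
q(x)=\frac{p(x)}{1-a},
\]
which is a polynomial of degree $d$. Note that $a$ is the same for all $f_n$, because the Jacobian is fixed.

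\emph{Period $1$.} A fixed point satisfies $x=y$ and $ax+p(x)=x$, that is $q(x)=x$. So fixed points of $f$ correspond bijectively to fixed points $x$ of $q$. The differential is $Df_{(x,y)}=\begin{pmatrix} p'(x) & a\\ 1 & 0\end{pmatrix}$, hence
\[
\tr D_{(x,x)}f = p'(x) = (1-a)\,q'(x).
\]

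\emph{Period $2$.} Write $f(x,y)=(u,x)$ with $u=ay+p(x)$. The equation $f^2(x,y)=(x,y)$ becomes $f(u,x)=(ax+p(u),u)=(x,y)$. This gives $y=u$, and then $p(x)=(1-a)u$ and $p(u)=(1-a)x$, i.e. $q(x)=u$ and $q(u)=x$. So $\{(x,u),(u,x)\}$ is a $2$-cycle of $f$ exactly when $\{x,u\}$ is a $2$-cycle of $q$, with $x\neq u$ in both cases. Multiplying the two Jacobian matrices gives
\[
\tr D f^2 = p'(x)p'(u)+2a = (1-a)^2 q'(x)q'(u)+2a .
\]
Thus, up to the fixed factors $(1-a)$, $(1-a)^2$ and the bounded shift $2a$, the traces in $M_1(f)$ and $M_2(f)$ are the multipliers of the exact period-$1$ and period-$2$ cycles of $q$. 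Consequently $\max(M_1(f),M_2(f))\geq \max$ over these cycles of $\tfrac1k\log|\text{multiplier}|$ minus a constant depending only on $a$.

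\emph{Divergence of $q$ and application of Huguin.} If $f_n$ diverges in $\mathcal H^1_d$ with $a$ fixed, then $p_n$ diverges among monic centered polynomials. Conjugating $q_n$ by a scaling $x\mapsto cx$ with $c^{d-1}=(1-a)^{-1}$ makes it monic and centered, so its class in $\mathcal P_d$ is determined by $p_n$ up to the finite $\mathbb U_{d-1}$ action. Hence $[q_n]\to\infty$ in $\mathcal P_d$, i.e. $M(q_n)\to\infty$ by properness~\cite{branner-hubbard1}. Huguin's estimate, the analogue of the lower bound in~\eqref{eq:lyap_1D} for cycles of period $1$ and $2$, then says that the largest $\tfrac1k\log|\text{multiplier}|$ over exact period-$1$ and period-$2$ cycles of $q_n$ tends to $+\infty$. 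Combining this with the previous paragraph gives the proposition.

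\emph{Main obstacle.} The delicate step is extracting the correct form of Huguin's result: it must give divergence of the maximum over cycles of \emph{exact} period $1$ or $2$, to match the definition of $M_k$ via $\Per_k$. Since our correspondence is exact in both periods (fixed points to fixed points, $2$-cycles with $x\neq u$ to $2$-cycles), this should transfer directly. I would also check that the only place $a\neq1$ is used is the division by $1-a$. That is consistent with Example~\ref{eg:jacobian-1}: when $a=1$ the equations collapse to $p(x_0)=0$, and the traces no longer see the dynamics of any polynomial.
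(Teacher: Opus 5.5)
Your proof is correct and follows the paper's argument: you reduce the period-$1$ and period-$2$ points of $f$ to those of $\frac{1}{1-a}p$, obtain the same trace formulas, and then invoke Huguin (the paper cites \cite[Thm A]{huguin:polynomials}, that divergence of $\frac{1}{1-a}p_n$ forces some multiplier of period $1$ or $2$ to blow up, rather than Cor A.1, which makes no difference). One small imprecision: your inequality ``$\max(M_1,M_2)\geq \max\frac1k\log\abs{\text{multiplier}}-C(a)$'' cannot hold uniformly because of the shift $2a$ (the trace $(1-a)^2\lambda+2a$ may even vanish), but it does hold once $\abs{\lambda}\geq 4\abs{a}/\abs{1-a}^2$, and that is the only regime your argument uses.
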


\begin{proof}
This relies on an explicit calculation. Write 
$f (x,y)= (ay + p(x), x)$, with $\jac(f)  =-a\neq -1$. 
The equation for fixed points is $(ay_0  +p(x_0) , x_0) = (x_0,y_0)$, or equivalently 
\begin{equation}\label{eq:huguin_period1}
   \unsur{1-a}p(x_0) = x_0    \; {\text{ and }} \; y_0 = x_0,
 \end{equation}
while the trace at such a fixed point is 
\begin{equation}
\tr(D_{(x_0, y_0)}f) = p'(x_0) = (1-a) \lrpar{\unsur{1-a}p}'(x_0).
\end{equation}
For points of period $2$ we obtain the following equivalent systems
\begin{equation*}
\begin{cases}
ax_0  +p(ay_0+p(x_0))=x_0 \\
ay_0+p(x_0) = y_0
\end{cases} 
 \Leftrightarrow \begin{cases}   y_0 = \unsur{1-a}p(x_0) \\  x_0 = \unsur{1-a}p(y_0)  \end{cases}
\Leftrightarrow \begin{cases}   
y_0 = \unsur{1-a}p(x_0) \\ x_0= \lrpar{\unsur{1-a}p}^{\circ 2}(x_0)   
\end{cases}
\end{equation*}
and the corresponding trace satisfies
\begin{align}\label{eq:huguin_period2}
\tr \lrpar{D_{(x_0, y_0)}f^2} &= p'(ay_0+p(x_0))p'(x_0)+2a \\
&= p'\lrpar{\unsur{1-a}p(x_0)} p'(x_0)+2a \notag \\ 
&= \notag
(1-a)^2 \lrpar{\lrpar{\unsur{1-a}p}^{\circ 2}}'(x_0) + 2a.
\end{align}

Now suppose that $a\neq 1$ is fixed and the sequence $f_n \colon (x,y)\mapsto (ay+p_n(x), x)$ diverges in $\mathcal H^1_d$ as $n$ goes to $+\infty$. Then $(p_n)$ diverges in $\mathcal P_d$, 
and $\lrpar{\unsur{1-a}p_n}$ diverges as well. By~\cite[Thm A]{huguin:polynomials}  
some multiplier  of period $1$ or $2$ of $\unsur{1-a}p_n$ must tend to infinity. The above calculations then 
imply that the  trace  of the corresponding periodic point  of periods $1$ or $2$ of $f_n$ 
must tend to infinity as well, and we are done. 
\end{proof}

\begin{rem}
The correspondence between periodic points of $f$ and $\unsur{1-a}p$ does not   survive for higher periods, 
so we are fortunate that period $2$ is enough in~\cite{huguin:polynomials}. 
\end{rem}

\begin{proof}[Proof of Theorem~\ref{thm:henon_huguin1}]
If $(f_\lambda)_{\lambda\in \lambda}$ is an irreducible stable (of type (I)) algebraic family of complex Hénon maps, 
then by Lemma~\ref{lem:jacobian}, $\jac(f_\lambda)$ is constant, as well as the unstable 
multipliers of saddle points. On the other hand, if  $\dim(\Lambda)\geq 1$, $\Lambda$ is unbounded in 
$\mathcal H^1_d$, so by Proposition~\ref{pro:huguin},   we can find a periodic point of period $\leq 2$ with 
an arbitrary large trace. Such a point must eventually be a saddle, with unstable multiplier tending to infinity. This contradiction finishes the proof. 
\end{proof}

\begin{proof}[Proof of Theorem~\ref{thm:henon_huguin2}] 
Let us start with a preliminary remark:  two Hénon maps $f_1$ and $f_2$ defined by 
$f_i(x,y)=(ay+p_i(x),x)$ are conjugate in $\Aut(\C^2)$ if and only if the affine conjugacy classes of the $p_i$ coincide: $[p_1] = [p_2]$.
Indeed, $f_1$ and $f_2$ must be  conjugate by a diagonal linear map of the form $(x,y)\mapsto (\beta x, \beta y)$ with $\beta\in \mathcal U_{d-1}$, which corresponds to linear conjugacy of the (monic, centered) polynomials $p_i$.

To prove Theorem~\ref{thm:henon_huguin2}, we write $f(x,y) = (ay+p(x), x)$ and apply 
the proof of  Proposition~\ref{pro:huguin}: it shows that, given  $a\neq 1$,  
$\Trace_1(f)$ and $\Trace_2(f)$ determine the multipliers of periodic points of period at most $2$ of 
$\unsur{1-a}p$. By \cite[Thm C]{huguin:polynomials}, there exists a Zariski open set $\mathcal U\subset \mathcal P_d$ such that a class $[q]\in \mathcal U$  is uniquely 
determined in $\mathcal P_d$ by the multipliers of its periodic points of period $1$ and  $2$. 
Also, if $p_1$ and $p_2$ are monic and centered polynomials   such that 
$\left[\unsur{1-a}p_1\right]= \left[\unsur{1-a}p_2\right]$, then $[p_1] = [p_2]$. 
Thus, if $p$ is monic and centered and 
 $\left[\unsur{1-a}p\right]\in \mathcal U$, then  $[p]$ is determined by $a$, 
$\Trace_1(f)$ and $\Trace_2(f)$.  
Define a Zariski open subset $\mathcal V\subset \mathcal H^1_d$ by  
$\mathcal V= \set{(a,p)\in \C^*\times \mathcal P_d^{\rm cm}\; ; \;  \left[\unsur{1-a}p\right]\in \mathcal U,  \ a\neq 1}$  (where the superscript `cm' stands for monic and centered, see \S~\ref{subs:polynomials}).
The preliminary remark implies that  any $f \in \mathcal V$  is determined up to conjugacy by $a$, 
$\Trace_1(f)$ and $\Trace_2(f)$, and the proof is complete.

Now, being algebraic, if  $\Sigma_2(f_0)$ is infinite  
it must admit  a component  of dimension $\geq 1$, so it is unbounded in $\mathcal H^1_d$, and as before we get a contradiction. 
\end{proof}

\section{Some  preliminary facts for Theorem~\ref{mthm:lyapunov}}\label{sec:more_prel}

\subsection{Crossed and Hénon-like mappings} \label{subs:hl}

\subsubsection{Definitions}
In this paper by definition a bidisk is a domain $\bb\subset \C^2$ that is 
biholomorphic to a standard open bidisk $D_1\times D_2$. It comes equipped with two projections $\pi_1:\bb\to D_1$ and $\pi_2:\bb\to D_2$, respectively called vertical and horizontal. 
 We always assume that the biholomorphism, hence also the projections extend holomorphically to some neighborhood of $\overline\bb$ in $\C^2$.
The vertical and horizontal boundaries are $\fr^v\bb = \pi_1\inv(\fr D_1)$  and $\fr^h\bb = \pi_2\inv(\fr D_2)$, respectively. 
An object (subvariety, subdomain, current) is \emph{vertical} (resp. \emph{horizontal}) if it
is disjoint from a neighborhood of $\fr^v\bb$ (resp. $\fr^h\bb$). 
We use a slightly different convention for lines:  by definition, a \emph{horizontal line} (resp. \emph{vertical line}) is a fiber of $\pi_2$ 
(resp. $\pi_1$). Since the $\pi_i$ are typically not linear, those lines are usually not lines in  the usual sense in  $\C^2$.  

Any horizontal submanifold $V$ in $\bb$ has a \emph{degree} $\deg(V)$, 
defined as the degree of the branched covering $\pi_1:V\to D_1$. 
Likewise, any horizontal closed positive current $T$ has a \emph{slice mass} $\sm(T)$, 
which is the   mass of the positive measure $T\wedge [L^v]$,  where $L^v$ is any vertical line. 
If $T = [V]$ is the integration current on a horizontal manifold, then $\sm(T)  =\deg(V)$. 
Finally, if $S$ (resp. $T$) is a horizontal (resp. vertical) closed positive current, 
the measure $T\wedge S$ is well-defined and its total mass satisfies $\norm{ T \wedge S} = \sm(T)\sm(S)$.  
We refer to~\cite{henonl} for these results. By convention, a horizontal disk $\Delta$ is a horizontal submanifold biholomorphic to a disk; in particular $\fr\Delta\subset \fr^v\bb$. 

If $\bb$ and $\bb'$ are two bidisks, a \emph{crossed mapping} $(f, \bb, \bb')$ is the data of an injective holomorphic map defined in a neighborhood of $\overline \bb$, such that (i) $f(\bb)\cap \bb'\neq\emptyset$, 
(ii) $f(\fr^v\bb)\cap \overline {\bb'} = \emptyset$, and (iii) $f(\overline \bb)\cap \fr\bb'\subset\fr^v\bb'$. 
We abuse notation and  simply write 
 $f\colon\bb\to \bb'$ even if by definition $f(\bb)$ is not contained in $\bb'$. 
 Crossed mappings were introduced in \cite{HOV2}, however this definition is from~\cite{henonl}. The degree of a crossed mapping is the degree of the horizontal curve $f(L^h)\cap \bb'$, where $L^h$ is any horizontal line in $\bb$. 
 If $\bb= \bb'$ we say that $f$ is a \emph{Hénon-like map}.
 
\subsubsection{Unramified maps}
The following considerations  are new (see   Ishii~\cite{ishii:non_planar} for related ideas). 
\begin{defi}
\label{defi:non_ramified}
A crossed mapping  $f\colon\mathbb{B}\to \mathbb{B}'$ is \emph{unramified over $\fr^v\bb'$} if
for every horizontal line $L^h$ in $\bb$,
\begin{enumerate}[(i)]
\item   $f(L^h)$ is transverse to $\fr^v \bb'$, and
\item $f(L^h)\cap \bb'$ is a disjoint
union of holomorphic disks.
\end{enumerate}
\end{defi}
An equivalent formulation of (i) is that 
there is a neighborhood $\mathcal N$ 
of $\fr^v\bb'$ such that for every $L^h$, $\pi_1\rest{f(L^h)\cap \mathcal N}$ 
is an unramified covering onto its image, hence the terminology. 

\begin{rem}\label{rem:non_ramified}
In~\cite{Wermer:1980} Wermer constructed a domain $\Omega\subset \C^2$ such that $\Omega$ is biholomorphic to a bidisk and $\Omega \cap (\C\times \set{0})$ is not polynomially convex. 
This suggest that there might exist crossed maps $(f,\bb, \bb')$
 with a horizontal line that satisfies (i) but fails to satisfy (ii).
An easily checkable sufficient  condition for~(ii) is the following: 
\emph{the natural projection $\pi_2':\bb\to D_2'$ extends holomorphically to $f(\bb)$. }
It holds for the unit bidisk and more generally 
when  $\pi_2'$ extends to a globally defined holomorphic map $\C^2\to \C$. 
 Indeed in this case $\set{z\in L^h, \  {\pi_2'\circ f(z)}\in D'_2}$ is a 
 union of disks by the maximum principle,
 and $L^h\cap f\inv(\bb')$ is the   union of the disks among those whose image intersects $\bb'$. 
 For completeness let us mention a more intrinsic version. 
\end{rem}

\begin{lem} \label{lem:runge}
Let $\Omega\subset \C^2$ be a Runge domain. If 
$D\subset \C^2$ is a  holomorphic disk   transverse to $\fr\Omega$ 
and $D\cap\Omega\Subset D$, then $D\cap \Omega$ is a disjoint union of disks.
Thus, Condition~(i) implies Condition~(ii) in Definition~\ref{defi:non_ramified} when  
  $\bb'$ is a Runge domain.
  \end{lem}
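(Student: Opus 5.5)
The plan is to argue by contradiction with a topological argument on $D$, using the polynomial convexity of Runge domains through the maximum principle. Let $U = D\cap\Omega$, an open subset of $D$ that is relatively compact in $D$ by hypothesis. Transversality of $D$ to $\fr\Omega$ means that $\fr\Omega\cap D$ is a smooth real curve in $D$, and it is compact because $U\Subset D$. So $\fr_D U$ is a finite union of disjoint smooth Jordan curves, and $U$ has finitely many connected components. Each component $U_j$ is a bounded domain in $D\simeq\disk$ with smooth boundary, so it is a disk as soon as it is simply connected, that is, as soon as its complement in the Riemann sphere is connected. Hence it suffices to show that no component $U_j$ has a ``hole'': there is no relatively compact component $H$ of $D\setminus \overline{U_j}$ whose boundary lies in $\fr U_j$.

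Suppose such a hole $H$ exists. Then $\fr H\subset \fr_D U_j\subset\fr\Omega$, and $\fr H$ is a compact curve $\gamma$ contained in $\overline{U_j}\subset\overline\Omega$. Since $H$ is a complementary component of $U_j$, just inside $\fr H$ the points of $H$ lie outside $\overline\Omega$; by transversality, $\Omega$ lies locally on one side of $\fr\Omega$ along $D$. Pick a point $w\in H\setminus\overline\Omega$. Since $D$ is a holomorphic disk, every holomorphic function $P$ on $\C^2$ restricts to a holomorphic function on $D$. The maximum principle on the domain $H\subset D$ then gives $\abs{P(w)}\leq \max_{\gamma}\abs{P}$. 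So $w$ lies in the polynomial hull $\widehat{\gamma}$ of $\gamma$, and with it in the hull of the compact set $\overline{U_j}$.

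Next we use the Runge property. For a Runge domain $\Omega$, the $\mathcal O(\C^2)$-hull (equivalently the polynomial hull) of any compact subset $K\subset\Omega$ is contained in $\Omega$. The difficulty is that $\gamma$ sits on $\fr\Omega$, not inside $\Omega$, and this is the step I expect to be the main obstacle. The fix I would try first is to push $\gamma$ slightly into $\Omega$ along $D$. Take a nearby curve $\gamma_\epsilon\subset U_j\subset \Omega$, parallel to $\gamma$, which bounds in $D$ a slightly larger region $H_\epsilon\supset H\ni w$. The maximum principle on $H_\epsilon$ then puts $w$ in $\widehat{\gamma_\epsilon}$, which is contained in $\Omega$. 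This contradicts $w\notin\overline\Omega$. Transversality is what guarantees that the collar $\gamma_\epsilon$ lies in $\Omega$, and that $H_\epsilon\setminus H$ is a thin annulus, so that $w$ is still enclosed. We conclude that each component of $D\cap\Omega$ is a disk.

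For the final assertion of the lemma, apply this with $D = f(L^h)$. This is a holomorphic disk because $f$ is injective and holomorphic near $\overline{L^h}$. Condition~(i) gives transversality to $\fr^v\bb'$. Since $f(\overline\bb)\cap\fr\bb'\subset\fr^v\bb'$ and $f(\fr^v\bb)\cap\overline{\bb'}=\emptyset$, the disk meets $\fr\bb'$ only along $\fr^v\bb'$, and $f(L^h)\cap\bb'\Subset f(L^h)$. If $\fr\bb'$ is not smooth, one works near its smooth part $\fr^v\bb'$, which is the only part that is met. The first part of the lemma then yields Condition~(ii).
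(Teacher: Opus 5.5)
Your argument is correct and is essentially the paper's. In both, the maximum principle on $D$ puts the region of $D$ enclosed by a boundary curve $\gamma\subset D\cap\partial\Omega$ into the polynomial hull of $\gamma$, and the Runge property then forces that region into $\Omega$. The paper phrases this as showing that each Jordan domain $D_i$ bounded by $\gamma_i$ lies in $\Omega$, and then that the rest of $D$ lies outside $\Omega$; you phrase it as ruling out holes. The other differences are minor: the paper cites Wermer's theorem to get $\widehat{\gamma_i}=\overline{D_i}$, which your direct maximum-principle argument shows is not needed. Your push-off to a curve $\gamma_\epsilon\subset\Omega$ also makes explicit a step the paper leaves implicit, since $\gamma_i$ lies on $\partial\Omega$ rather than in $\Omega$.
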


\begin{proof} 
Write $D\cap \fr\Omega =  \bigcup  \gamma_i$, where the $\gamma_i$ are smooth Jordan curves. 
Let  $D_i$ be the Jordan domain in $D$ bounded by $\gamma_i$.
We claim that the $D_i$ are disjoint and $D\cap \Omega = \bigcup D_i$. 
To prove this, observe that by Wermer's theorem~\cite{wermer:1958}, the polynomial hull $\widehat \gamma_i$ is either equal to $\gamma_i$ or a Riemann surface bordered by $\gamma_i$. But  $D_i\subset \widehat \gamma_i$, so $\widehat \gamma_i = \overline {D_i}$, and 
 the Runge property gives $D_i\subset \Omega$.  Therefore  the $D_i$ are disjoint and 
 $\bigcup D_i\subset D\cap \Omega$. 
 Finally, $D\setminus \bigcup \overline {D_i}$ is connected, is disjoint from $\fr \Omega$, and contains points outside $\Omega$ (near the $\gamma_i$, by transversality),  so it is entirely  outside $\Omega$. We conclude that  $D\cap \Omega = \bigcup D_i$, as claimed. 
 \end{proof}

\begin{propdef}\label{propdef:non-ramified}
Let $f:\bb\to \bb'$ be a crossed mapping that is unramified over $\fr^v\bb'$.
Let $\Omega$ be a component of $f(\bb)\cap \bb'$. 
Let $L$ be any horizontal line in $\bb$ and set $V = f(L)\cap \Omega$. 
Then $V$ is non-empty and connected, and its horizontal degree is independent of $L$. 
By definition, $\deg(V)$ is the \emph{degree} of the component $\Omega$.
 
Moreover, if  $W$ is any horizontal disk  contained in $\Omega$, $\deg(W)$ is a multiple of $\deg(V)$.  
\end{propdef}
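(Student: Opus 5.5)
The idea is to deform one horizontal line into another inside $\bb$ and watch how the curve $f(L)\cap\Omega$ changes. The family of horizontal lines $L_t=\pi_2^{-1}(t)$, $t\in D_2$, is connected, and the unramified hypothesis makes $f(L_t)\cap \overline{\bb'}$ vary as a family of bordered Riemann surfaces. I will show that the intersection with $\Omega$ is locally constant in a strong sense, and then read off degrees by intersecting with vertical lines.

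\emph{Step 1 (local structure).} Fix $t_0$. By (i), $f(L_{t_0})$ is transverse to $\fr^v\bb'$, and by condition (iii) of crossed mappings it does not meet $\fr^h\bb'$ inside $f(\overline\bb)$. So for $t$ near $t_0$ there is an isotopy of $\overline{\bb'}$, supported near $\fr^v\bb'$ and preserving $\overline{\bb'}$, carrying $f(L_{t_0})\cap\overline{\bb'}$ onto $f(L_t)\cap\overline{\bb'}$. By (ii) each piece is a disjoint union of closed disks, and these disks move continuously with $t$; there are finitely many of them by compactness. Write $f(L_t)\cap\bb' = \bigsqcup_j \Delta_j(t)$.

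\emph{Step 2 (components and nonemptiness).} Set $U=f(\bb)\cap\bb'$. This set is foliated by the curves $f(L_t)\cap\bb'$, $t\in D_2$, whose leaves are the disks $\Delta_j(t)$. By Step 1, the union of the disks $\Delta_j(t)$ over $t$ near $t_0$ that lie in a fixed component is open. Hence the set of parameters $t$ for which some $\Delta_j(t)$ lies in $\Omega$ is both open and closed in $D_2$. It is also nonempty, since $\Omega\neq\emptyset$. Because $D_2$ is connected, $V=f(L)\cap\Omega$ is nonempty for every $L$.

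\emph{Step 2 (connectedness).} This is the expected obstacle, so I would handle it with some care. Consider the space $\mathcal D$ of pairs $(t,j)$, i.e.\ of disks; by Step 1 it is a finite covering of $D_2$. Since $D_2$ is simply connected, $\mathcal D$ is a trivial covering, so the disks form global continuous families $\Delta_j(t)$, $t\in D_2$. Each set $\bigcup_t\Delta_j(t)$ is then connected and open, and these sets are pairwise disjoint, so they are exactly the components of $U$. Therefore $\Omega$ contains exactly one disk from each $f(L_t)$, and $V$ is a single disk.

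\emph{Step 3 (degree).} For a horizontal disk $\Delta$, the degree of $\pi_1|_\Delta$ is locally constant under the isotopy of Step 1, since boundaries stay in $\fr^v\bb'$ and properness is preserved. Hence $\deg(V)$ does not depend on $L$.

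\emph{Step 4 (horizontal disks in $\Omega$).} Now let $W\subset\Omega$ be a horizontal disk. Pull back by $f$: the set $f^{-1}(W)$ is a curve in $\bb$, and $\pi_2\circ f^{-1}|_W\colon W\to D_2$ is holomorphic. I would argue that this map is proper, because $\fr W\subset \fr^v\bb'$ and the vertical boundary of $\bb$ maps outside $\overline{\bb'}$. So it is a branched cover of some degree $m$. Fix a vertical line $L'^v$ in $\bb'$. Then
\[
\deg W=\#(W\cap L'^v)=\int_{D_2}\#\bigl(W\cap f(L_t)\cap L'^v\bigr)\,(\cdots).
\]
A cleaner route is to count $\#\bigl(W\cap L'^v\bigr)=\sum_{t}\#\bigl(W\cap V_t\cap L'^v\bigr)$. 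Alternatively, consider the map $\Phi=(\pi_1', \pi_2\circ f^{-1})\colon\Omega\to D_1'\times D_2$. By Steps 2 and 3 this is a proper unbranched-in-structure map, and $\pi_1'\colon\Omega\to D_1'$ restricted to each leaf $V_t$ has degree $\deg V$. So $\Phi$ is a proper holomorphic map of degree $\deg V$ onto the bidisk. Then $W$ maps to a horizontal curve $\Phi(W)$ in $D_1'\times D_2$, and one computes
\[
\deg W=\deg(W\to\Phi(W))\cdot\deg\Phi(W)\cdot\deg V/\deg V\cdots
\]
This needs care: since $\Phi$ is injective on fibers only up to $\deg V$, I expect the cleanest justification to be that $\Phi$ is a covering over the bidisk (it is a local biholomorphism wherever $\pi_1'|_{V_t}$ is unramified). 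For the divisibility, I would instead use that $\pi_2\circ f^{-1}$ identifies $\Omega$ with a bundle of disks $V_t$ over $D_2$, trivialized as $D_2\times\disk$, with $\pi_1'$ acting as a fiberwise degree-$\deg V$ proper map. A horizontal disk $W$ meets each vertical line $L'^v$ in $\deg W$ points. Summing over the fibers of $\pi_1'$ on $L'^v\cap\Omega$, which form a $\deg V$-to-one cover of a curve, should give $\deg W=\deg V\cdot k$, with $k$ the degree of $W$ over the quotient.

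Making this final fibration and divisibility count rigorous is the delicate point.
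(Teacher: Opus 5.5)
Your Steps 1--3 are essentially correct and close to the paper. Following the disks $\Delta_j(t)$ over the simply connected parameter disk $D_2$ is how the paper proves that $V$ is connected: it follows the Jordan curve $f\inv(\fr V')$ as $L$ varies. Your continuity argument for the independence of $\deg(V)$ is an acceptable substitute for the paper's count of the $k$ transverse intersection points of $f(L)$ with a fixed vertical line close to $\fr^v\bb'$. There is one wording issue in Step 1. Since $f$ is injective, the curves $f(L_t)$ are pairwise disjoint, so no isotopy supported near $\fr^v\bb'$ carries one onto another. What you actually use, and what (i) provides, is that $t\mapsto f(L_t)\cap\overline{\bb'}$ is a locally trivial family of compact bordered surfaces.

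The genuine gap is Step 4, the divisibility $\deg(V)\mid\deg(W)$. You do not prove it, and the justifications you sketch are wrong.
\begin{enumerate}[(a)]
\item $(\pi_2\circ f\inv)|_W$ is not proper in general: for $W=V_t$ it is constant.
\item $\Phi=(\pi_1',\pi_2\circ f\inv)\colon\Omega\to D_1'\times D_2$ is a proper branched covering of degree $k=\deg(V)$, but it is \emph{not} unramified when $k\geq 2$. By Riemann--Hurwitz each leaf $V_t$ carries $k-1$ critical points of $\pi_1'$, and a connected unramified cover of the simply connected bidisk has degree $1$. So your ``cleanest justification'' fails exactly in the solenoidal case, which is the one that matters.
\item Your closing count, a factor $\deg(V)$ times a ``degree over the quotient'', assumes that the $\deg(W)$ points of $W$ on a vertical line are equidistributed among the $\deg(V)$ local sheets of $\Omega$. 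A priori $W$ could meet different sheets different numbers of times, and nothing in your argument rules this out.
\end{enumerate}

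The missing ingredient is a monodromy argument localized near $\fr^v\bb'$. Use (i) in its uniform form: there is a neighborhood $\mathcal N$ of $\fr^v\bb'$ in which $\pi_1'|_{f(L^h)}$ is unramified for every $L^h$. This gives a thin annulus $A\subset D_1'$ adjacent to $\fr D_1'$ over which $\Phi$ is an unramified $k$-sheeted covering of $A\times D_2$. Since $V_t$ is a disk with a single boundary circle (this is where connectedness of $V$ enters), $V_t\cap(\pi_1')\inv(A)$ is a single annulus. Hence the monodromy around the core of $A$ permutes the sheets transitively.

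The paper concludes as follows. For $x\in A$, $\Omega\cap(\pi_1')\inv(x)$ is the disjoint union of $k$ disks $\Delta_1,\dots,\Delta_k$, traced by the $k$ points of $V_t\cap(\pi_1')\inv(x)$ as $t$ ranges over $D_2$. The number $\ell_i$ of points of $W\cap\Delta_i$, counted with multiplicity, is locally constant in $x$. Transitivity of the monodromy forces all $\ell_i$ to equal a common value $\ell$, hence $\deg(W)=k\ell$.

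Equivalently, shrink $A$ so that it avoids the finitely many critical values of $\pi_1'|_W$. Then $W\cap(\pi_1')\inv(A)$ is an annulus, and $\Omega\cap(\pi_1')\inv(A)$ is a connected $k$-fold cover of $A\times D_2\simeq S^1$. On fundamental groups, $\pi_1'|_W$ factors as $\Z\to\Z\xrightarrow{\times k}\Z$ with composite equal to multiplication by $\deg(W)$.
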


 If $\deg(V)\geq 2$, the component  $\Omega$ is  said to be \emph{solenoidal}. 
 It will be clear from the proof that for the conclusion to hold 
 it is enough that 
 the non-ramification property holds   in $\Omega$, that is, we only need that 
$\pi_1\rest{f(L^h)\cap\Omega}$ is a  covering  near the boundary. 

\begin{proof}
  The first claim is that for every horizontal line $L$ in $\bb$, 
  $f(L)$ intersects $\Omega$. Indeed, if $L^v_1$ is any vertical line in $\bb'$
   intersecting $\Omega$, by the crossed mapping property 
  $f\inv(L^v_1\cap \Omega)$ is a vertical submanifold in $\bb$, so it must intersect $L$ and the claim follows. 

Now, assume that $L^v\subset \bb'$ is a vertical line so close  to  $\fr^v\bb'$
that for every horizontal line $L\subset \bb$,  $f(L)$ is transverse to $L^v$. 
  Let $V = f(L)\cap \Omega$ and $k =\deg(V)$. Then 
  $V\cap L^v$ is made of $k$ distinct points $a_1(L), \ldots , a_k(L)$, and since there are no tangencies between $V$ and $L^v$, these points can be followed holomorphically with $L$.   
  If $a_i(L_1) = a_j(L_2)$, 
 we must  have $L_1 = L_2$ because $f$ is injective, and then $i = j$ because there are no collisions 
 between the $a_i(L)$. Thus, 
 when $L$ ranges over the set of horizontal lines in $\bb$ (which is a disk $D_2$, each $a_i(L)$ describes a 
 topological disk $\Delta_i$, and these disks are disjoint.
 In addition, if $\Delta$ is a component of $\Omega\cap L^v$, then $f\inv(\Delta)$ is a vertical submanifold in $\bb$, so it intersects $L$. 
 This shows that $\Omega\cap L^v = \bigcup_{i=1}^k \Delta _i$ and   
that $k$ is independent of $L$.

Since $V$ is a union of components of $f(L)\cap \bb'$, by Property~(ii) of Definition~\ref{defi:non_ramified} it is  a union of holomorphic disks. 
Let us show that $V$ is connected.  
Pick a component   $V'$ of $V$. 
The preimage $f\inv(\fr V')$ is a Jordan curve in $L$, and since $f(L)$ is transverse to $\fr^v\bb$ for every $L$, this Jordan curve can be followed holomorphically with $L$, cutting out 
 a connected vertical open subset $U$  that is a component of $f\inv(\bb')\cap \bb$. Therefore 
$f(U)$ is a component of $f(\bb)\cap \bb'$ contained in $\Omega$, hence  $f(U) = \Omega$. 
This discussion shows that $f\inv(\Omega)\cap L = f\inv(V')$, but 
$f\inv(\Omega)\cap L = f\inv(V)$ by definition 
hence $V=V'$ and $V$ is connected. 
 
If we vary $L^v\subset \bb'$ by turning around $\fr^v\bb'$, the points $ {a_1(L), \ldots , a_{k}(L)}$ are permuted by holonomy. 
Since  $V$ is a topological disk, it has a unique boundary component, 
 so the holonomy acts transitively on the $a_i$ (resp. on the disks $\Delta_i$). 
Let now $W$ be an arbitrary horizontal disk contained in $\Omega$ 
(which may have vertical tangencies near the boundary). 
 When varying $L^v$ in a small neighborhood $\mathcal N$ of $\fr^v\bb'$  in $\bb'$ (\footnote{We   say that $\mathcal N$ is a neighborhood of $\fr^v\bb$ in $\bb$ if $\mathcal N =\widetilde {\mathcal N}\cap \bb$ for some neighborhood of $\fr^v\bb$ in $\C^2$. })
 the disks $\Delta_i$ form a topological fibration of $\mathcal N\cap \Omega$ over $ \mathcal N\cap V$. 
 The number of points in $W\cap \Delta_i$, counted with multiplicity, is locally constant as $L^v$ varies, 
 and since the holonomy is transitive,  this number does not depend on $i$. 
Denoting it by $\ell$,  we conclude that $\deg(W) = k\ell$, and the proof is complete. 
\end{proof}

\subsubsection{Iteration vs. cut-off iteration}\label{par:iteration_henon_like}
When considering a Hénon-like map $f\colon \bb\to \bb$, it is understood in~\cite{henonl}  that 
iteration means cut-off iteration, that is, 
 the only orbits that we consider are  those that never leave $\bb$. In other words, we  only iterate $f\rest{f\inv(\bb)}$ (often denoted $f\rest{\bb}$). 
 Now suppose that the Hénon-like map $f$ is the restriction of a global holomorphic diffeomorphism $\tilde f$ of~$\C^2$. Then, some orbits of $\tilde f$ might leave $\bb$ before entering it again. 
 We say that 
 $f$ \emph{is iterated properly} if ${\tilde f}^k({\tilde f}(\bb)\setminus \bb) \cap \bb= \emptyset$  for every $k\geq 0$. This implies the corresponding property for negative iterates. For instance, if $\tilde f$ is a Hénon map and $\bb = \disk(0, R)^2$   is a sufficiently large bidisk,  orbits  leaving $\bb$ must escape to infinity, and this property holds. 
 
Likewise, if $f_1$ and $f_2$ are two Hénon-like maps in $\bb$, which are restrictions of globally defined mappings $\tilde f_i$, their composition in the sense of Hénon-like maps is $f_2\rest{\bb}\circ f_1\rest{\bb}$, which  may differ from $(\tilde f_2\circ \tilde f_1)\rest{\bb}$. 
This happens precisely when $\tilde f_2(\tilde f_1(\bb)\setminus\bb)\cap \bb\neq \emptyset$. 
As above, we say that $f_2$ and $f_1$ \emph{are composed properly} if  
$\tilde f_2(\tilde f_1(\bb)\setminus\bb)\cap \bb= \emptyset$, in which case $f_2\rest{\bb}\circ f_1\rest{\bb}=(\tilde f_2\circ \tilde f_1)\rest{\bb}$. 
This definition can be generalized to compositions of arbitrary length. 

In what follows, we will use the same notation for $f$ and its extension $\tilde f$, when such an extension exists.

\subsection{Estimates for polynomials in one variable}\label{subs:polynomials}
Recall that  $\mathcal P_d = \mathcal P_d(\C)$ is the complex variety of conjugacy classes of polynomials of degree $d$ in one  complex variable under the action of the affine group. 
We   view it as   the quotient of the space 
$\mathcal P_d^{\rm cm}\simeq \C^{d-1}$ of monic and centered polynomials $p(z)=z^d+ \sum_{j=0}^{d-2} a_j z^j$ by the group of $(d-1)^{\rm th}$ roots of unity, acting by 
 $p\mapsto \beta\inv p(\beta z)$. 
For us, most of the time,  it will be enough to work on $\mathcal P_d^{\rm cm}$. 

\subsubsection{The maximal escape rate} Recall from Equation~\eqref{eq:maximal_escape_rate_intro} that  the  {maximal escape rate} is defined by
$
 M(p)= \max\set{G_p(c) \; ;\; p'(c)=0}.
$
We will be interested in the regime where  $M(p)$ is large.
As explained in~\cite{branner-hubbard1},  the asymptotic geometry of $p$ depends on $M(p)$. To see this, 
we define 
\begin{equation}
R_p =  e^{M(p)};
\end{equation}
as we will see, it provides a good measure of the size (or norm) of $p\in \mathcal P_d^{\rm cm}$ and of the characteristic scale of its Julia set.
For instance, the sublevel sets $\set{G_p<t}$ are connected precisely for $t>M(p)$.

\subsubsection{The Böttcher coordinate} 
The Böttcher coordinate $\varphi_p$ is defined on the open set 
\begin{equation}
{\mathcal{U}}_p=\set{z\in \C; \; G_p(z)> M(p)}
\end{equation}
and provides a holomorphic diffeomorphism from ${\mathcal{U}}_p$ to $\C\setminus\overline{\disk_{R_p}}$
such that 
\begin{equation}
G_p(z)=\log\abs{\varphi_p(z)}\quad \forall z\in {\mathcal{U}}_p.
\end{equation}  
The reciprocal diffeomorphism will be denoted $\psi_p\colon \C\setminus\overline{\disk_{R_p}}\to {\mathcal{U}}_p$. 
Both $\varphi_p$ and $\psi_p$ are tangent to the identity at infinity: $\varphi_p(z) = z+ O(1/z)$ 
and $\psi_p(w) = w+O(1/w)$.

Recall the distorsion estimates of Koebe: {\textit{if $g\colon \disk_\rho \to \C$ is a univalent function such that $g(0)=0$ and $g'(0)=1$, then 
\begin{equation}
\frac{\abs{u}}{(1+\abs{u}/\rho)^2} \leq \abs{g(u)} \leq 
\frac{\abs{u}}{(1-\abs{u}/\rho)^2}.
\end{equation}
}}
Applying these estimates to $u\mapsto 1/\psi_p(1/u)$ and making
 the change of variable $\psi_p(w)=z$, we obtain
\begin{equation}
\frac{1}{(1+R_p/\abs{\varphi_p(z)})^2}
\leq
\frac{\abs{\varphi_p(z)}}{\abs{z}}
\leq
\frac{1}{(1-R_p/\abs{\varphi_p(z)})^2}.
\end{equation}
This proves the following lemma. 

\begin{lem}
\label{lem:basic_estimates_bottcher}
The Böttcher coordinate $\varphi_p\colon {\mathcal{U}}_p\to \C\setminus{\overline{\disk_{R_p}}}$ satisfies 
\[
\abs{z}-3R_p \leq \abs{z}-2R_p -\frac{R_p^2}{\abs{\varphi_p(z)}} \leq \abs{\varphi_p(z)} 
\leq  \abs{z} + 2R_p -\frac{R_p^2}{\abs{\varphi_p(z)}} \leq \abs{z} + 2R_p.
\]
In particular, for $r>R_p$,
the curve 
\[
S_r=\set{G_p=\log(r)} = \set{\abs{\varphi}=r}=\set{\psi(\partial\disk_r)}
\]
is contained in $\set{r-2R_p\leq \abs{z}\leq r+3R_p}$, and this set is an annulus when $r>2R_p$.
\end{lem}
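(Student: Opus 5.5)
The plan is to read everything off the Koebe-type inequality displayed just before the lemma,
\[
\frac{1}{(1+R_p/\abs{\varphi_p(z)})^2}\leq \frac{\abs{\varphi_p(z)}}{\abs{z}}\leq \frac{1}{(1-R_p/\abs{\varphi_p(z)})^2},
\]
valid on $\mathcal U_p$. Set $w=\varphi_p(z)$, so that $\abs{w}>R_p$, and $t=R_p/\abs{w}\in(0,1)$. Rewrite the two bounds as
\[
\abs{z}(1-t)^2\leq \abs{w}\leq \abs{z}(1+t)^2 ,
\]
keeping track of which bound belongs to which side.

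First take the right inequality $\abs{w}\leq\abs{z}(1+t)^2$, that is $\abs{z}\geq \abs{w}(1+R_p/\abs{w})^{-2}$. The convenient form is to expand
\[
\abs{w}(1+t)^2=\abs{w}+2R_p+\frac{R_p^2}{\abs{w}}.
\]
From the displayed inequality read the other way, $\abs{z}\leq \abs{w}(1+t)^2$, we get $\abs{z}\leq \abs{w}+2R_p+R_p^2/\abs{w}$, which rearranges to $\abs{w}\geq \abs{z}-2R_p-R_p^2/\abs{w}$. Since $R_p^2/\abs{w}<R_p$, this yields $\abs{w}\geq\abs{z}-3R_p$.

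Symmetrically, the inequality $\abs{z}\geq\abs{w}(1-t)^2=\abs{w}-2R_p+R_p^2/\abs{w}$ gives $\abs{w}\leq\abs{z}+2R_p-R_p^2/\abs{w}\leq\abs{z}+2R_p$. The main point of care is to check which direction of the Koebe bounds each expansion uses; this is routine once written with $g(u)=1/\psi_p(1/u)$, which is univalent on $\disk_{1/R_p}$ and normalized, taking $u=1/w$ and $\rho=1/R_p$.

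For the statement about $S_r$, set $\abs{w}=r$ in the two inequalities. This gives $r-2R_p\leq \abs{z}+R_p^2/r-\ldots$, or more simply $\abs{z}\leq r+3R_p$ and $\abs{z}\geq r-2R_p$, which places $S_r$ inside the stated region. When $r>2R_p$ the inner radius $r-2R_p$ is positive, so the region is a genuine annulus.
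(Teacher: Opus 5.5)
Your argument is correct and follows the paper's own route: apply the Koebe distortion bounds to $u\mapsto 1/\psi_p(1/u)$ on $\disk_{1/R_p}$, expand $\abs{\varphi_p(z)}\,(1\pm R_p/\abs{\varphi_p(z)})^2$, and use $R_p^2/\abs{\varphi_p(z)}<R_p$. One slip: your first rewrite $\abs{z}(1-t)^2\leq\abs{w}\leq\abs{z}(1+t)^2$ (and the ``right inequality'' $\abs{w}\leq\abs{z}(1+t)^2$) has $z$ and $w$ interchanged, since the Koebe bound actually gives $\abs{w}(1-t)^2\leq\abs{z}\leq\abs{w}(1+t)^2$; these are exactly the inequalities you then use, so the derivation and the inclusion $S_r\subset\set{r-2R_p\leq\abs{z}\leq r+3R_p}$ stand.
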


Consequently,  $\set{G_p<M(p)}$ is 
contained in $\overline \disk_{4R_p}$; in fact, Proposition~3.5 of~\cite{branner-hubbard1} gives
\begin{equation}\label{eq:bh-inclusion}
\set{G_p<M(p)}\subset \overline \disk_{2R_p}.
\end{equation}

\subsubsection{Estimates on the coefficients} 
If
$
p(z)=z^d+\sum_{j=0}^{d-2} a_jz^j
$ is a monic centered polynomial of degree $d$, 
its derivative $p'$ is also centered and can be written 
$p'(z)=d(z-c_1)\cdots (z-c_{d-1})$,
where the critical points $c_i$ of $p$ satisfy $\sum_i c_i=0$. For $j< d$ we obtain
\begin{align}
a_{d-j} &= (-1)^j\frac{d}{d-j}\sigma_j(c_1, \ldots, c_{d-1})  
\end{align} 
where $\sigma_j(c_1, \ldots, c_{d-1})=\sum_{1\leq i_1 < \ldots < i_j \leq d-1} c_{i_1}\cdots c_{i_j}$ is the $j$-th symmetric function, while $a_0\in \C$ is not determined by the $c_i$. This gives the estimate 
\begin{equation}
\label{eq:a_j_from_c_j}
\abs{a_{d-j}}\leq {d \choose j} \max\set{\abs{c_1}, \ldots, \abs{c_k}}^j
\end{equation}
for $j< d$.
From the inclusion~\eqref{eq:bh-inclusion}, 
we immediately get 
\begin{equation}
\label{eq:cj_vs_Rp}\abs{c_j}\leq 2 R_p, 
\end{equation}
which  gives $\abs{a_{d-j}}\leq {d \choose j} (2R_p)^{j}$ for $j< d$, and 
Proposition~3.6 of~\cite{branner-hubbard1} asserts that 
\begin{equation}
\label{eq:aj_vs_Rp}\abs{a_0}\leq 2 (4R_p)^d.
\end{equation}
 Moreover, on $\mathcal P_d^{\rm cm}$ the functions $M\colon p\mapsto M(p)$ and $R\colon p\mapsto R_p$ are  continuous. Thus, $M$ and $R$ are continuous and proper: a subset $\set{ p_s; \; s\in S }$ of $\mathcal P_d^{\rm cm}$ is bounded if and only if the set $\set{ M(p_s); \; s\in S}$ is bounded. 

Set 
$A=\max\{\abs{c_1}, \ldots, \abs{c_{d-1}}, \abs{a_0}^{1/d}\}$. By~\eqref{eq:a_j_from_c_j}, 
$\abs{p(z)}\leq 2^d\abs{z}^d$ as soon as $\abs{z}\geq A$ and, by the maximum principle, $\abs{p(z)}\leq 2^d A^d$ for $\abs{z}\leq A$. By induction, we obtain
\begin{equation}
\frac{1}{d^n}\log^+\abs{p^n(z)} \leq (1+ \frac{1}{d}+ \cdots +\frac{1}{d^{n-1}})\log(2) + \log^+\abs{z}
\end{equation}
when none of the $\abs{z}$, $\ldots$, $\abs{p^{n-1}(z)}$ is smaller than $ A$, and 
\begin{equation}
\frac{1}{d^n}\log^+\abs{p^n(z)} \leq (1+ \frac{1}{d}+ \cdots +\frac{1}{d^{n-1}})\log(2) + \log^+(A)
\end{equation}
otherwise. Thus, $G_p(z)\leq \frac{d}{d-1}\log(2)+ \log^+\max\{A,\abs{z}\}$. This implies that 
$M(p)\leq \frac{d}{d-1}\log(2)+ \log^+(A)$.
With the inequalities~\eqref{eq:cj_vs_Rp} and~\eqref{eq:aj_vs_Rp}, we obtain
\begin{equation}
\label{eq:comparaison_Mp_cj}
M(p)-2\log(2) \leq  \log^+\max\{\abs{c_1}, \ldots, \abs{c_{d-1}}, \abs{a_0}^{1/d}\} \leq M(p)+ 3\log(2)  .
\end{equation}

\subsubsection{Asymptotic estimates}
\begin{vlongue}
\begin{lem}
If $\abs{z}\geq R_p$ (resp. $\geq 2R_p$), then  
\[
\abs{ \frac{p(z)}{z^d} -1 } \leq 4^{d+1} \frac{R_p^2}{\abs{z}^2} \quad \left(\text{resp. }\;  \leq 2^{d+4}  \frac{R_p^2}{\abs{z}^2} \right).
\]
\end{lem}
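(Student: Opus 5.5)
The natural route is to factor $p$ through its roots and reduce the claimed bound to a bound on each root, which we get from the Branner--Hubbard estimates recalled above. Write
\[
\frac{p(z)}{z^d} = 1 + \sum_{j=0}^{d-2} a_j z^{j-d},
\]
so that
\[
\abs{\frac{p(z)}{z^d}-1} \leq \sum_{j=0}^{d-2}\abs{a_j}\abs{z}^{j-d}.
\]
Note that the linear-in-$1/z$ term is absent because $p$ is centered. This is exactly why the error is of order $R_p^2/\abs{z}^2$ rather than $R_p/\abs{z}$.

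We then insert the coefficient bounds obtained just above. For $1\leq j\leq d-2$, writing $j=d-i$ with $2\leq i\leq d-1$, the bound from \eqref{eq:a_j_from_c_j} and \eqref{eq:cj_vs_Rp} gives
\[
\abs{a_{d-i}}\leq {d\choose i}(2R_p)^i.
\]
For the constant term, \eqref{eq:aj_vs_Rp} gives $\abs{a_0}\leq 2(4R_p)^d$. Hence
\[
\abs{\frac{p(z)}{z^d}-1}\leq \sum_{i=2}^{d-1}{d\choose i}\lrpar{\frac{2R_p}{\abs z}}^i + 2\lrpar{\frac{4R_p}{\abs z}}^d .
\]

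Now use $\abs z\geq R_p$, so that $R_p/\abs z\leq 1$. Every term with exponent $i\geq 2$ is then at most its coefficient times $(R_p/\abs z)^2$. This yields the bound
\[
\frac{R_p^2}{\abs z^2}\lrpar{\sum_i {d\choose i}2^i + 2\cdot 4^d}\leq \frac{R_p^2}{\abs z^2}\lrpar{3^d+2\cdot 4^d}\leq 4^{d+1}\frac{R_p^2}{\abs z^2}.
\]

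In the regime $\abs z\geq 2R_p$ we have $2R_p/\abs z\leq 1$ and $4R_p/\abs z\leq 2$. The middle terms are then bounded by
\[
4\lrpar{\frac{R_p}{\abs z}}^2 2^{d}
\]
(using $\sum_i {d\choose i}\leq 2^d$). The last term is bounded by
\[
2\cdot 2^{d-2}\cdot 16\,\frac{R_p^2}{\abs z^2} = 2^{d+3}\frac{R_p^2}{\abs z^2}.
\]
Summing gives at most $(2^{d+2}+2^{d+3})R_p^2/\abs z^2\leq 2^{d+4}R_p^2/\abs z^2$.

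The only step needing care is this constant bookkeeping, in particular the $a_0$ term, which carries the largest power. All the substantive input is the Branner--Hubbard coefficient bounds already quoted, so I expect no real obstacle beyond checking that the numerical constants fit.
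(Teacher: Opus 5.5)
Your proof is correct and is the same as the paper's: you expand $p(z)/z^d-1$ in the coefficients (no $1/z$ term since $p$ is centered), insert the bounds $\abs{a_{d-i}}\leq \binom{d}{i}(2R_p)^i$ and $\abs{a_0}\leq 2(4R_p)^d$, and reduce every power to the square using $R_p/\abs{z}\leq 1$ (resp. $2R_p/\abs{z}\leq 1$). You also carry out the constant bookkeeping for the case $\abs{z}\geq 2R_p$, which the paper only calls ``similar''; your opening sentence about factoring through the roots does not match what you actually do, but this is harmless.
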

\begin{proof}
Write $p(z)=z^d+a_{d-2}z^{d-2} +\cdots + a_1z+a_0$ and use the above estimates to get 
\[
\abs{ \frac{p(z)}{z^d} -1 } \leq {d \choose 2} \left(\frac{2R_p}{\abs{z}}\right)^2 +\cdots +  {d \choose d-1} \left(\frac{2R_p}{\abs{z}}\right)^{d-1} + 2\left(\frac{4R_p}{\abs{z}}\right)^{d}. 
\]
If $\abs{z}\geq R_p$ we obtain 
\[
\abs{ \frac{p(z)}{z^d} -1 } \leq \left(\frac{R_p}{\abs{z}}\right)^2 \left( (1+2)^d + 2\cdot 4^d\right) \leq 4^{d+1} \left(\frac{R_p}{\abs{z}}\right)^2.
\]
The case $\abs{z}\geq 2R_p$ is similar.  
\end{proof}
Here is a sharper upper bound, using another strategy of proof. 
\end{vlongue}

\begin{lem}\label{lem:basic_estimate_on_p/zd}~ 
\begin{enumerate}[\rm (1)]
\item For $\abs{z}\geq 6R_p$, we have the estimate 
$\displaystyle \abs{\frac{\varphi_p(z)}{z}-1}\leq \frac{6R_p^2}{{\abs{z}^2}}$. 
\item  For $\abs{z}\geq 6\sqrt{d-1} R_p$,  we have  
$\displaystyle
\abs{ \frac{p(z)}{z^d} -1 } \leq (12d+2)\frac{R^2}{\abs{z}^2}$.
\end{enumerate}
\end{lem}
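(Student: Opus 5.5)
For (1), I will work with the inverse Böttcher map $\psi_p(w)=w+O(1/w)$ and its Laurent expansion at infinity. The function $u\mapsto 1/\psi_p(1/u)$ is univalent on $\disk_{1/R_p}$ and normalized. Because $\psi_p(w)=w+b_1/w+\cdots$ has no constant term, the function $h(w)=\psi_p(w)-w$ is holomorphic on $\abs{w}>R_p$ and is $O(1/w)$ there. Grönwall's area theorem applied to $\psi_p$ gives $\sum_n n\abs{b_n}R_p^{-2n-2}\cdot R_p^{2}\leq R_p^2$, i.e. $\sum n\abs{b_n}R_p^{-2n}\le 1$ after rescaling. From this, Cauchy–Schwarz gives
\[
\abs{\psi_p(w)-w}\leq \frac{R_p^2}{\abs{w}}\Bigl(1-\frac{R_p^2}{\abs{w}^2}\Bigr)^{-1/2}.
\]

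Next I will transfer this bound to $\varphi_p$. Put $w=\varphi_p(z)$. By Lemma~\ref{lem:basic_estimates_bottcher}, $\abs{z}\geq 6R_p$ implies $\abs{w}\geq \abs{z}-3R_p\geq \abs{z}/2\geq 3R_p$. Then
\[
\abs{z-w}\leq \frac{R_p^2}{\abs w}\sqrt{9/8}\leq \frac{2R_p^2}{\abs z}\cdot 1.07 ,
\]
and dividing by $\abs z$ gives $\abs{w/z-1}\leq 3R_p^2/\abs z^2$, which is below the claimed $6R_p^2/\abs z^2$. If the area theorem feels too heavy, the fallback is Koebe-type coefficient bounds applied to $1/\psi_p(1/u)$ together with the Schwarz lemma on $h$ over $\abs{w}>R_p$. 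The constant $6$ leaves room for this cruder route.

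For (2), I use the functional equation $\varphi_p(p(z))=\varphi_p(z)^d$. When $\abs z\geq 6\sqrt{d-1}R_p$, part (1) gives $\varphi_p(z)=z(1+\epsilon)$ with $\abs\epsilon\leq 6R_p^2/\abs z^2\leq 1/(6(d-1))$. This yields $\abs{\varphi_p(z)^d/z^d-1}\leq (1+\abs\epsilon)^d-1\leq d\abs\epsilon e^{d\abs\epsilon}$, which is at most roughly $6d\cdot 1.2\,R_p^2/\abs z^2$. The point $p(z)=\psi_p(\varphi_p(z)^d)$ has very large modulus, at least $\sim\abs z^d\gg R_p$. Applying the bound on $\psi_p(w)-w$ from the first step with $w=\varphi_p(z)^d$ then gives $\abs{p(z)-\varphi_p(z)^d}\leq 2R_p^2/\abs{w}$. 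After dividing by $\abs z^d$, this error is of order $R_p^2/\abs z^{2d}\le R_p^2/\abs z^2$, with a small constant since $\abs z\geq 6R_p$.

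Adding the two contributions gives $\abs{p(z)/z^d-1}\leq (12d+2)R_p^2/\abs z^2$ with room to spare. The main obstacle is obtaining a clean $R_p^2/\abs w$ bound for $\psi_p(w)-w$ in the first step. Once that bound is in hand, everything else is bookkeeping of constants.
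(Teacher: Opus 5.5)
Your proof is correct and follows the same route as the paper's. Both arguments bound $\abs{\psi_p(w)-w}\lesssim R_p^2/\abs{w}$, transfer this to $\varphi_p$ via $\abs{\varphi_p(z)}\geq \abs{z}/2$ from Lemma~\ref{lem:basic_estimates_bottcher}, and then use $\varphi_p\circ p=\varphi_p^d$ for (2). The only differences are that you derive the bound on $\psi_p(w)-w$ from Gr\"onwall's area theorem and Cauchy--Schwarz instead of citing Branner--Hubbard's $\abs{\psi_p(w)/w-1}\leq 3R_p^2/\abs{w}^2$, and that in (2) you add the two errors rather than using $\abs{ab-1}\leq \abs{a-1}+\abs{b-1}+\abs{a-1}\abs{b-1}$.

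Two small fixes. Your displayed area inequality should read $\sum_{n\geq 1} n\abs{b_n}^2R_p^{-2n-2}\leq 1$; this is exactly what your Cauchy--Schwarz bound needs, via $-\log(1-t)\leq t/(1-t)$. Also, as in the paper, absorbing the $R_p^2/\abs{z}^{2d}$ term uses $R_p=e^{M(p)}\geq 1$, which you should state.
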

\begin{proof} Let us write $\varphi$  for $\varphi_p$, and 
$\psi$ for the inverse of $\varphi$.
From Proposition~4.1 of \cite{branner-hubbard1}, we get  $\abs{\psi(w)/w - 1} \leq 3 R_p^2/\abs{w}^2  $ for $\abs{w}>R_p$; thus, for  $z\in \mathcal{U}_p$ we have
\begin{equation} \label{eq:varphip1}
\abs{z-\varphi(z)} \leq 3\frac{R_p^2}{\abs{\varphi(z)}}.
\end{equation} 
If  $\abs{z}\geq 2R_p$ then $\abs{z}+2R_p\leq 2\abs{z}$ and $\abs{\varphi(z)}\leq 2\abs{z}$ by Lemma~\ref{lem:basic_estimates_bottcher}.
If  $\abs{z}\geq 6R_p$
then $\abs{z}\leq 2(\abs{z}-3R_p)$ hence  $\abs{z}\leq 2\abs{\varphi(z)}$, again by Lemma~\ref{lem:basic_estimates_bottcher}. 
Thus for $\abs{z}\geq 6R_p$ we have $\abs{ \varphi(z)-z}\leq 3R_p^2/\abs{\varphi(z)}\leq 6R_p^2/\abs{z}$, which
proves the first inequality.

Now, suppose that $\abs{z}\geq 6 \sqrt{d-1} R_p$, in particular $\abs{z}\geq 6R_p$. 
Note that $\log(2)>1/6$.
This implies that $\log((1+t)^{d-1})\leq (d-1) t \leq \log(2)$
as soon as $0\leq t \leq 6R_p^2/\abs{z}^2$. Thus, the derivative of $t\mapsto (1+t)^d$ is bounded from above by $2d$ on the interval $[0,6R_p^2/\abs{z}^2]$. This implies that 
\begin{equation}\label{eq:phid_zd}
\abs{\frac{\varphi(z)^d}{z^d}-1} \leq 12d\frac{R_p^2}{\abs{z}^2}
\end{equation}
when  $\abs{z}\geq 6 \sqrt{d-1} R_p$. Now, if we apply Equation~\eqref{eq:varphip1} to $p(z)$ and use $\varphi\circ p=\varphi^d$ we get 
\begin{equation}\label{eq:p_phid}
\abs{\frac{p(z)}{\varphi(z)^d}-1}\leq 3 \frac{R_p^2}{\abs{\varphi(z)}^{2d}}.
\end{equation}
 Put $a= \varphi(z)^d/z^d$ and $b   = p(z)/\varphi(z)^d$. From~\eqref{eq:phid_zd} 
we get $\vert \varphi(z)^d/{z}^d -1\vert\leq 2/3$ hence 
$ \abs{\varphi (z)}^d   \geq \abs{z}^d/3$. Plugging this into~\eqref{eq:p_phid} and using $\abs{z}\geq 6$ gives 
\begin{equation}
\abs{b-1} \leq \frac{3 R_p ^2}{\abs{\varphi(z)}^{2d}} \leq \frac{27 R_p^2}{\abs{z}^{2d}} \leq \frac{27}{\abs{z}^{2d-2}} \cdot \frac{R_p^2}{\abs{z}^2} \leq \frac{3}{4} \cdot \frac{R_p^2}{\abs{z}^2} . 
\end{equation}
Then, from the elementary inequality $\abs{ab-1}\leq \abs{a - 1}+  \abs{b-1}+\abs{a-1}\cdot \abs{b-1}$
we obtain 
\begin{align}
\abs{\frac{p(z)}{z^d}- 1 }&\leq 12d \frac{R_p^2}{\abs{z}^2} + \frac{3}{4} \cdot \frac{R_p^2}{\abs{z}^2} + 12d \frac{R_p^2}{\abs{z}^2} \cdot \frac{3}{4}\cdot \frac{1}{36(d-1)} \\ \notag
& \leq \lrpar{12d + \frac34 + \frac14\cdot \frac{d}{d-1}} \frac{R_p^2}{\abs{z}^2} \leq (12d+2) \frac{R_p^2}{\abs{z}^2},
\end{align}
as was to be shown. 

%
%
\end{proof}

\subsection{Compositions of Hénon maps and the quantity $M(f)$}\label{subs:M_for_loxodromic} 
Let us complete the discussion of \S~\ref{subs:fm}  by describing the lack of uniqueness in the Friedland-Milnor normal form. 
Let $\mathbf d =(d_k, \ldots, d_1)$ be a multidegree and set $d = \prod_{i=1}^k d_i$.
Let $g = h_k\circ \cdots \circ h_1$ be a composition of monic and centered Hénon maps $h_i(x,y) = (a_i y+ p_i(x), x)$ 
of respective degrees  $d_i=\deg(p_i)$.  

First, conjugating $g$ by  $h_j\circ\cdots \circ h_1$ permutes the factors $h_i$, 
which defines an action of $\Z/k\Z$ on normal forms by cyclic permutations. 
Second, the group $\mathbb{U}_{d-1}=\{\alpha\in \C^* \; ; \; \alpha^{d-1}=1\}$ acts on the normal form as follows.
For  $\alpha\in \mathbb{U}_{d-1}$, consider the finite sequence $(\alpha_i)_{1\leq i\leq k}$  defined  by $\alpha_1 = \alpha$ and 
$\alpha_{i+1} = \alpha_i^{d_i}$  (so that $\alpha_k^{d_k} = \alpha_1$),
and set $s_i(x,y) = (\alpha_ix , \alpha_{i-1}y)$ where the  index $i$ is taken modulo $k$. 
Then, define
\begin{align}\label{eq:alphaf}
\alpha\cdot g &:=  
  s_1 \inv\circ g \circ s_1 = \tilde h_k\circ \cdots \circ \tilde h_1, \text{ with }  \tilde h_i = s_{i+1}\inv \circ  h_i \circ s_i.
\end{align}
This is another composition of monic and centered Hénon maps that is conjugate to $g$. 
On the parameters $a_j\in \C^*$ and $p_j=x^{d_j}+\sum b_{j,m}x^m$, this action of $\mathbb{U}_{d-1}$ is given by the diagonal transformations
\begin{equation}
\label{eq:alpha_conjugation}
\alpha\cdot a_j  = (\alpha_{j+1}\inv\alpha_{j-1}) a_j \; \text{ and } \; 
\alpha\cdot p_j =x^{d_j}+\sum_{m=0}^{d_j-2}  (\alpha_{j+1}\inv\alpha_j^m)b_{j,m}x^m.
\end{equation}
Altogether, we obtain an action of $\Z/k\Z\ltimes   \mathbb{U}_{d-1}$ on $\mathcal H^k_{\mathbf d}$. 
Friedland and Milnor proved that an element $g'$ of $\mathcal H^k_{\mathbf d}\simeq (\C^*)^k\times   \C^{d_1-1}\times \cdots \times \C^{d_k-1}$
is conjugate to $g$ if and only if $g'$ is in the orbit of $g$ under this action. 


Thus, the moduli space of polynomial automorphisms of $\C^2$ of multidegree 
$\mathbf d$ is the quotient of $\mathcal H^k_{\mathbf d}$ by $\Z/k\Z\ltimes   \mathbb{U}_{d-1}$. 
In conclusion, two conjugacy invariants are attached to a loxodromic automorphism $f$:
\begin{equation}
[\mathbf{d}(f)] \in \N_{\geq 2}^k/\Z/k\Z \;\, {\text{ and }} \;\, [\mathbf{a}(f)]\in (\C^*)^k / (\Z/k\Z\ltimes   \mathbb{U}_{d-1}).
\end{equation}
So, when talking about the multidegree or multi-Jacobian of an automorphism, there is an abuse of notation since this quantity should be considered in the above quotient space.

As in 
the introduction we define
\begin{equation}
M(g) = \max (M(p_i)), \quad \text{and} \quad R_g = e^{M(g)}
\end{equation}
The radius $R_g$  (sometimes abbreviated into $R$) will be the key geometric scale below. 

\begin{rem} {\emph{The quantity $M(g)$ is not a conjugacy invariant, 
is varies under the $\mathbb{U}_{d-1}$ action.}}
To see this, for  an arbitrary integer $e\geq 2$ and  $c\in \C$, let  $g_c=h'\circ h_c$, 
where $h'(x,y)=(y+x^e, x)$ and $h_c(x,y)=(y+x^2+c,x)$, 
so that  $d=2e$.  For $c\in \C$, set $p_c(z)=z^2+c$ and 
\[
G_{\mathcal M}(c)=\lim_{n\to +\infty} 2^{-n} \log^+\abs{p_c^n(c)} = G_{p_c}(c)=M(p_c),
\]
which is the Green function of the Mandelbrot set $\mathcal M$, so that 
 \begin{equation}
 M(g)=\max\{M(p_0), M(p_c)\}=M(p_c)
 \end{equation} 
 for every $c\in \C$. 
Now, fix $\alpha\in \mathbb{U}_{d-1}$ and 
suppose by contradiction that $M(\alpha\cdot g_c)=M(g_c)$ for     any $c$. 
Formula~\eqref{eq:alpha_conjugation} shows that 
 $\alpha\cdot g_c=h'\circ h_{\alpha^{-2}c}$, so  we obtain 
$G_{\mathcal M}(c)=G_{\mathcal M}(\alpha^{-2}c)$ for some   root of unity, 
hence $G_{\mathcal M}$ is invariant under a non-trivial rotation, and so is $\mathcal M$, which is the desired 
  contradiction. 
  \end{rem}

To get a function that depends only on the conjugacy class of $g$ in $\mathcal H^k_{\mathbf d}$, we set
 \begin{equation}\overline M(g) := \max(M(\alpha\cdot g), \ \alpha\in \mathbb U_{d-1}),\end{equation} where $\alpha\cdot g$ is as in~\eqref{eq:alphaf}. 
 Then, when $f\in \Aut(\C^2)$ is loxodromic   we define 
 \begin{equation}
 \overline M(f) = \overline M([f]) = \overline M(g),
 \end{equation} 
 where $g$ is any Friedland-Milnor normal form of $f$. The following lemma justifies 
  that this definition is well-behaved. 

\begin{lem}\label{lem:M}
Let $(f_n)$ be a sequence of elements of $\mathcal H_{\mathbf d}^k$.  Then
\begin{enumerate}[\rm (1)]
\item $M(f_n)\to \infty$ if and only if $\overline M(f_n)\to \infty$;  for fixed $\mathbf{a}$ this holds if and only if $f_n$ tends to infinity in $\mathcal H_{\mathbf d, \mathbf a}^k$; 

\item   $M(f_n)\sim \overline M(f_n)$ as $n\to \infty$;

\item if furthermore the multi-Jacobian of $f_n$ does not depend on $n$, then 
$\overline M(f_n)\to\infty$ if and only if $\overline M(f_n\inv )\to \infty$. 
\end{enumerate}
\end{lem}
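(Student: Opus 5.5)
The plan is to replace $M(p)$, up to a bounded error, by a quantity read directly off the coefficients, and then check that this quantity behaves well under the $\mathbb U_{d-1}$ action and under inversion. For $p(z)=z^{e}+\sum_{m=0}^{e-2} b_m z^m$ monic and centered, set
\[
N(p)=\max_{0\le m\le e-2}\abs{b_m}^{1/(e-m)} .
\]

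\emph{Step 1: $M(p)=\log^+N(p)+O(1)$, with the $O(1)$ depending only on $e$.}
For the upper bound on $N$, the estimates $\abs{a_{e-j}}\le \binom{e}{j}(2R_p)^j$ and~\eqref{eq:aj_vs_Rp} give $N(p)\le C_e R_p$.
For the other direction, the Cauchy bound on the roots of $p'$ gives $\abs{c_i}\le C_e N(p)$. Moreover $\abs{a_0}^{1/e}\le N(p)$. Plugging these into the right-hand inequality of~\eqref{eq:comparaison_Mp_cj} gives $M(p)\le \log^+N(p)+C'_e$.
Properness of $N$ on $\mathcal P^{\rm cm}_e\simeq\C^{e-1}$ is clear: a set is bounded if and only if $N$ is bounded on it.

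\emph{Step 2: items (1) and (2).}
By~\eqref{eq:alpha_conjugation}, the action of $\alpha$ multiplies each coefficient $b_{j,m}$ of $p_j$ by a root of unity, so $N(\alpha\cdot p_j)=N(p_j)$. By Step 1 this gives $\abs{M(\alpha\cdot g)-M(g)}\le C_{\mathbf d}$ for all $\alpha$ and $g$, hence
\[
M(g)\le \overline M(g)\le M(g)+C_{\mathbf d} .
\]
This yields the first equivalence in (1), and also (2) in the stronger form of a bounded difference, once $M\to\infty$.
For the second part of (1), fix $\mathbf a$. Then $\mathcal H^k_{\mathbf d,\mathbf a}\simeq \prod_j\C^{d_j-1}$ through the coefficients. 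So $f_n\to\infty$ if and only if some coefficient is unbounded, if and only if $\max_j N(p_{j,n})\to\infty$, if and only if $M(f_n)\to\infty$ by Step 1.

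\emph{Step 3: item (3).}
For a single factor, let $\tau(x,y)=(y,x)$. A direct computation gives
\[
\tau\circ h_j\inv\circ\tau(x,y)=\bigl(a_j\inv y-a_j\inv p_j(x),\,x\bigr).
\]
Hence $\tau f\inv\tau$ is a composition $\hat h_1\circ\cdots\circ\hat h_k$ of non-monic Hénon maps, with factors in reversed order.
Next, conjugate by diagonal maps $s_i(x,y)=(\beta_i x,\beta_{i-1}y)$, exactly as in~\eqref{eq:alphaf}, to make every factor monic. This amounts to solving a multiplicative system $\beta_i^{d_i}/\beta_{i'}=\gamma_i(\mathbf a)$ around the cycle. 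It is solvable because the cyclic product of the exponents is $d\neq1$. The $\beta_i$ can be chosen from a finite set depending only on $\mathbf a$, hence bounded above and below uniformly in $n$.
Each new polynomial has the form $q_j(x)=\lambda_j p_j(\mu_j x)$, with $\lambda_j,\mu_j$ in a compact subset of $\C^*$ depending only on $\mathbf a$. Centeredness is preserved because no translation is involved. Therefore $N(q_j)\asymp N(p_j)$ with constants depending on $\mathbf a$. By Step 1 this gives $M(\text{normal form of }f_n\inv)=M(f_n)+O_{\mathbf a}(1)$, and Step 2 converts this into the statement about $\overline M$. The same bound applied to $f_n\inv$ gives the converse.

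The main obstacle is the bookkeeping in Step 3. One must verify that the diagonal normalisation exists and that its scalings are controlled only by the fixed multi-Jacobian; this is exactly where the hypothesis of constant $\mathbf a$ enters. Step 1 is routine given the Branner–Hubbard estimates already recalled above.
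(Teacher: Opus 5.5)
Your proposal is correct and follows the paper's proof. A bounded-error proxy for $M(p)$ that is unchanged by the unimodular rescalings of \eqref{eq:alpha_conjugation} gives (1) and (2), indeed with $0\le \overline M-M\le C_{\mathbf d}$; and (3) comes from the same $\tau$-conjugation followed by a diagonal normalisation whose scalings depend only on $\mathbf a$. The only variation is that you use $N(p)=\max_m\abs{b_m}^{1/(e-m)}$ where the paper directly uses $\max\{\abs{c_i},\abs{a_0}^{1/d}\}$ from \eqref{eq:comparaison_Mp_cj}, which is why you need Step 1; two small corrections there: the bound you invoke is the left-hand inequality of \eqref{eq:comparaison_Mp_cj}, not the right-hand one, and the root bound must be the homogeneous Fujiwara form $\abs{c_i}\le 2\max_m(m\abs{b_m}/e)^{1/(e-m)}$ rather than the inhomogeneous $1+\max$ version.
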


The third assertion does not hold when the multi-Jacobian is not fixed:
indeed if $f(x,y) = (ay + x^2+c, x)$ then $f\inv$ is conjugate to 
$(x,y)\mapsto (a\inv y + x^2+ ca^{-2}, x)$, so if $c\to \infty$ and $\abs{a}^2 \gg \abs{c}$, then 
$M(f)\to\infty$ while 
$M(f\inv) \to 0$.  

\begin{proof} 
Set $\mathbf d = (d_k, \ldots, d_1)$ and pick $f=h_k\circ \ldots \circ h_1$ in $\mathcal H_{\mathbf d}^k$. 
Then, with notation as in Equation~\eqref{eq:alphaf}, we have  $\alpha\cdot f = \tilde h_k\circ \cdots\circ \tilde h_1$
with $\tilde h_i   (x,y) = (\tilde a_i y+\tilde p_i(x), x)$,  
where $\tilde a_i = \alpha\cdot a_i =\alpha_{i+1}\inv\alpha_{i-1} a_i$ and 
$\tilde p_i(x)   = \alpha\cdot p_i=\alpha_{i+1} \inv p_i(\alpha_ix)$ as in Equation~\eqref{eq:alpha_conjugation}. 
Note that $\tilde p_i$ is not conjugate to $p_i$ when 
 $\alpha_i$ is not a $(d_i-1)^{\rm th}$-root of unity.

For a sequence $(f_n)$, $M(f_n)$ is unbounded if and only if $M(p_{n,i})$ is unbounded for at least one of the factors $h_{n,i}$ of the decomposition $f_n=h_{n,k}\circ \cdots \circ h_{n,1}$. 
And $M(p_{n,i})$ is unbounded if and only if 
$M(\beta\inv p_{n,i}(\alpha x))$ is unbounded, for any   pair of complex numbers $(\alpha, \beta)$ of modulus $1$ . Thus, $M(f_n)$ is unbounded if and only if $\overline M(\alpha\cdot f_n)$
is unbounded. This proves the first assertion. 
 
For the second assertion, we can apply directly the Inequality~\eqref{eq:comparaison_Mp_cj}. 
 
 For the third assertion, we will show that the Friedland-Milnor normal form $g_n$ of 
  $f_n\inv$  has a    multi-Jacobian determined by that of $f_n$ (up to some roots of unity) 
  and  
$M(g_n)  \to \infty$. 
For this we write 
 $f\inv= h_1\inv \circ\cdots \circ h_k\inv$ with $h_i\inv(x,y)   =  ({y,  {a_i}\inv(x-p_i(y)} )$, so that
 the involution $\tau: (x,y) \mapsto  (y,x)$ conjugates 
 $f\inv$ to $  f^- =   h_1^-  \circ\cdots \circ   h_k^-$, 
 where $  h_i^- (x,y) =  ({a_i}\inv y - {a_i}\inv p(x), x  )$. The polynomials 
 $-p_i/a_i$ are not monic.   
 To get to a normal form, we follow the method of~\cite{friedland-milnor} and  
 conjugate  $  f^-$ to 
\begin{equation}
\ell_k\inv\circ f\circ \ell_k
= \ell_k\inv \circ \tilde  h_1 \circ  \ell_1 \circ  \cdots \circ h_{k-1} \circ  \ell_{k-1}  \rond \ell_{k-1}\inv  \circ  h_k\rond \ell_k 
\end{equation}
where $\ell_i(x,y)= (\gamma_i x, \delta_i y)$. 
The $\ell_i$ must be chosen so that $\ell_{i}\inv \tilde h_{i+1}\ell_{i+1}$ is monic and centered, which corresponds to the  system of equations: 
\begin{equation}
\begin{cases}
\delta_{k-1}  \inv \gamma_k = 1 &, \quad   -a_k\inv \gamma_k^{d_k}= \gamma_{k-1} \\
 \quad \vdots & \quad \quad \vdots  \\
\delta_k\inv \gamma_{1} = 1 &, \quad    -a_1\inv\gamma_{1}^{d_1} = \gamma_k 
\end{cases}
\end{equation}
So the $\delta_i$ are determined by the $\gamma_i$ and the 
$\gamma_i$ are determined by  $\mathbf a$, up to some choice of $(d-1)^{\rm th }$ root of unity.   
 In this way we obtain a new automorphism
$g = g_1\circ \cdots \circ  g_k$, with $g_i(x,y) = (b_i y+ q_i(x), x)$ with 
\begin{equation}
b_i = \gamma_{i-1}\inv\gamma_{i+1} a_i\inv \quad \text{ and } \quad 
q_i (x)= -\gamma_{i-1}a_i\inv p_i (\gamma_ix).
\end{equation}   
We conclude that $f\inv$ is conjugate to an element 
  $g\in \mathcal H_{(b_1, \ldots b_k), (d_1, \ldots , d_k)}$, 
  with fixed $(b_1, \ldots b_k)$ 
and that $M(f_n)\to \infty$ implies that $\overline M(g_n)\to \infty$
 \end{proof}

\section{Divergence of  Lyapunov exponents}\label{sec:lyapunov}

Our first goal in this section is 
  Proposition~\ref{pro:lyapunov_multi}, which gives 
precise asymptotic bounds for the Lyapunov exponents of compositions of Hénon maps.
We  first deal with  Hénon maps in \S~\ref{subs:lyapunov_henon} 
and use this for the general case in  \S~\ref{subs:lyapunov_multi}. 
 In \S~\ref{subs:comments_corollaries} we derive Theorem~\ref{mthm:lyapunov}, and then 
 Theorem~\ref{mthm:stable}, and complete the proofs of  Theorems~\ref{mthm:henon_multipliers}, \ref{mthm:multi-Jacobian} and~\ref{mthm:C1}. We also 
 discuss the compactness of the connectedness locus.

\subsection{The case of  Hénon maps}\label{subs:lyapunov_henon}

In this section we establish    the following precise version of 
Theorem~\ref{mthm:lyapunov} for complex Hénon maps.   

\begin{pro}\label{pro:lyapunov_henon} 
Let $f\colon (x,y)\mapsto (ay+p(x), x)$ with   $\abs{a}\leq     R_f^{d-1}$. Then 
\begin{equation}\label{eq:lyap_estimate_sup}
\chi^+(\mu_f)\leq  \log d +  d    M(f)  + d \log(15d).
 \end{equation}
If furthermore $\abs{a}\leq     R_f^{d-1}/(400d)$ and $M(f)$ is   sufficiently large then 
\begin{equation}\label{eq:lyap_estimate_inf}
  \chi^+(\mu_f)\geq     \log d +M(f) - \unsur{d} \log\frac43.
 \end{equation}
\end{pro}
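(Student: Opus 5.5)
We use the Bedford–Smillie formula for $\chi^+(\mu_f)$ from \cite{bs5}, the analogue of the Manning–Przytycki formula~\eqref{eq:manning-przytycki}. It expresses $\chi^+(\mu_f)-\log d$ as the integral of $G^+_f$ against a positive measure $\mu^c$ carried by the unstable critical points. These are the tangencies between unstable manifolds $W^u$ and the foliation of $\{G^+<\text{const}\}$ by level curves of the Böttcher-type function $\varphi^+$ near infinity. Both inequalities then come from geometric control, in terms of the scale $R=R_f$, of where these critical points lie and of their total mass on each unstable leaf.

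\textbf{Upper bound.} First we localize the dynamics. By~\eqref{eq:bh-inclusion}, Lemma~\ref{lem:basic_estimates_bottcher} and Lemma~\ref{lem:basic_estimate_on_p/zd}, and using $\abs{a}\le R^{d-1}$, the bidisk $\bb=\disk_{r}^2$ with $r\simeq 15d\,R$ gives a Hénon-like map of degree $d$ that is iterated properly. On the region $\abs{x}\ge r$, $\abs{x}\ge \abs{y}$ we have $\abs{p(x)}\ge \abs{x}^d/2\gg \abs{a}\abs{y}$, so orbits escape, and similarly for $f^{-1}$. Hence $K_f\subset\bb$ and $G^+\le \log^+\max(\abs x,\abs y)+O(1)$ on $\bb$. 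In particular $G^+\le \log(15d)+M(f)$ on $\bb$, up to harmless constants.

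Next we bound the total mass of critical points. On a horizontal-like piece of unstable manifold, normalized so that $\sm(T^+)=1$, the number of tangencies of a degree-$d$ horizontal disk with the horizontal foliation is at most $d-1$ by Riemann–Hurwitz. This yields $\norm{\mu^c}\le d-1\le d$ once everything is normalized via the slice-mass identities of \cite{henonl}. Integrating $G^+\le M(f)+\log(15d)$ against $\mu^c$ gives~\eqref{eq:lyap_estimate_sup}. The loss from $d-1$ to $d$ leaves room for the constants.

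\textbf{Lower bound.} This is the main difficulty. The goal is to exhibit unstable critical points of mass at least $1/d$ (after normalization) at height $G^+\ge d\,M(f)-\log\frac43$, or equivalently mass $1$ at height $\ge M(f)$ after one pullback.

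Let $c$ be a critical point of $p$ with $G_p(c)=M(p)$. Under the extra assumption $\abs{a}\le R^{d-1}/(400d)$, the map $f$ is a small perturbation of $(x,y)\mapsto(p(x),x)$ on the vertical strip $\{\abs{x}\le 2R\}$. There $\abs{a y}$ is small compared with the scale $R^d$ of $p$ near $\{G_p\simeq M\}$, so $G^+(x,y)\approx G_p(x)$ up to an error of order $\log(4/3)$, which we control via Lemma~\ref{lem:basic_estimate_on_p/zd}.

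Consider the vertical-like domain around $x$ in the component of $\{G_p<M+\epsilon\}$ that contains $c$. Each unstable horizontal disk, of degree $d$, crossing it is mapped by $f$ onto a curve that folds near $x=c$. Using Proposition-Definition~\ref{propdef:non-ramified}, we would show that the image crosses the level set $\{G^+ = dM\}$ non-trivially, and that this forces a tangency with the level foliation. This produces at least one unstable critical point per leaf, carrying at least $1/d$ of the slice mass, at $G^+\ge dM - \log\frac43$.

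The main obstacle is making this folding argument rigorous. One must show that the curve $f(\Delta)$ really is ramified over the foliation of $\{G^+>M\}$ near $f(c,\cdot)$, i.e.\ that the perturbation $ay$ cannot resolve the critical value. We would try to prove this by a degree count: restrict to the component $\Omega$ of $f(\bb)\cap\{G^+<dM+\log 2\}$ and show that it has degree $1$ but contains a disk of degree at least $2$ coming from the critical fold. If instead there were no tangency, the covering would be unramified and Proposition-Definition~\ref{propdef:non-ramified} would force a contradictory degree relation. Summing $G^+$ against this portion of $\mu^c$ and dropping the nonnegative rest then gives~\eqref{eq:lyap_estimate_inf} for $M(f)$ large.
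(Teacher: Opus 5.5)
Your framework matches the paper's: the Bedford--Smillie formula~\eqref{eq:bs5}, and for the lower bound a critical mass $\ge 1/d$ located where $G_f\ge dM(f)+\log\frac34$. However, the lower bound is left open at exactly the decisive step, and the mechanism you sketch would not close it. Three ingredients are missing.

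(i) Unstable disconnectedness. Only then does $T^-$ decompose in a bidisk as $\sum_k T^-_k$, with $T^-_k$ an integral of horizontal disks of degree $k$ \cite[Thm 2.4]{connex}, so that Riemann--Hurwitz applies at all. This is not automatic: for $\abs{a}\le 1$, unstable connectedness would force $\chi^+(\mu_f)=\log d$ by \cite{bs6}. The paper gets it in Lemma~\ref{lem:disconnected} from the disconnectedness of $f(\bb_1)\cap\bb_1$.

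(ii) ``One critical point per leaf'' yields no mass bound. A degree-$k$ disk carries transverse measure $\sm(T^-_k)/k$. You therefore need all $k-1$ of its tangencies and, crucially, $k\ge 2$ for \emph{every} disk in the region; this gives $\sum_k\frac{k-1}{k}\sm(T^-_k)$ (Lemma~\ref{lem:critical_mass}).

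(iii) Forcing $k\ge2$ requires the opposite of your degree count. A degree-$1$ component containing a disk of degree $\ge2$ contradicts nothing in Proposition-Definition~\ref{propdef:non-ramified}. What is needed is a component of degree $q\ge 2$, all of whose disks then have degree in $q\N$. This is Lemma~\ref{lem:folding}:
the critical values of $x\mapsto p(x)+ay$, $\abs{y}\le 10R$, sweep disks of radius $\le R^d/(40d)$ (this is where $\abs{a}\le R^{d-1}/(400d)$ enters), so one of $2d$ concentric annuli around $v=p(c)$ avoids them all. Taking $\bb_2$ to be a bidisk in the $(\varphi_f,y)$ coordinates with $\fr^v\bb_2$ in that annulus gives $\abs{p'}\gtrsim R^{d-1}$ there. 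With Proposition~\ref{pro:slope}, the crossed map $f\colon\bb_1\to\bb_2$ is then unramified over $\fr^v\bb_2$, and the component $\Omega$ through $f(U\times\set{0})$ has degree $q=\deg(p\rest{U})\ge 2$. Invariance then gives $\sm(T^-\rest{\Omega})=q/d$, hence $\mu^-_c(\Omega)\ge (q-1)/d\ge 1/d$.

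For the upper bound, your tangency count is misplaced. Critical points are tangencies with the fibers of $\varphi_f$, which are vertical, not horizontal. The disks of $T^-$ in $\bb$ have no reason to be of degree $d$. Moreover, inside $\bb$ at heights below $M(f)+\log(15d)$, $\varphi_f$ is in general not defined near $K^-$, so there is no product structure to count against, and $\mu^-_c$ restricted to $\bb$ generally has infinite mass. The paper instead uses \cite[Thm A.2]{lyapunov}: if $\varphi_f$ is defined near $K^-\cap\set{G_f>t}$, then $\chi^+(\mu_f)\le\log d+dt$. Your estimate $G_f\le M(f)+\log(15d)$ on the bidisk is the right one, but it must be combined with a statement about $K^-$, not $K_f$. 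Escape from $\bb_0=\disk(0,12dR_f)^2$ can only happen through $V_0^+$ since $\pi_2\circ f=\pi_1$, so $K^-\subset\bb_0\cup V_0^+$. Also $G_f\le\log(12dR_f)+\frac{1}{6d}$ on $\bb_0$ by Proposition~\ref{pro:estimate_varphi} and the maximum principle. Hence $K^-\cap\set{G_f>M(f)+\log(15d)}\subset V_0^+$, where $\varphi_f$ is defined.
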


 \subsubsection{Discussion}\label{subsub:comments}
The Manning-Przytycki formula, for the polynomial $p$ provides the estimate $\log d+M(p)\leq \chi(\mu_p)\leq \log d + (d-1)M(p)$, 
so it is natural to wonder whether the additional constants in the estimates are necessary 
(no real attempt  is made in the proof  to    optimize these  constants).

It will  be clear from the proof (\footnote{Indeed  in  Lemma~\ref{lem:folding} we can choose the horizontal radius of $\bb_2$ to be $o(R^d)$, which improves the lower bound on $G_f$ in~\eqref{eq:lower_lyap}.}) that if we strengthen the assumption on $a$  to 
 $\abs{a}\leq h(R^{d-1})$, with $h(t)  =o(t)$ for $t\to\infty$, the lower bound becomes  
 $\chi^+(\mu_f)\geq  \log  d+     M + o(1)$. 
 For the upper estimate, the question whether   we can achieve the bound 
$\chi^+(f)\leq  \log  d+     dM + o(1)$  is open, even   for bounded $a$. 
 A crude bound on how large must $M(f)$ be for  the estimate~\eqref{eq:lyap_estimate_inf} to hold is given in Remark~\ref{rem:bound_M}. 
 
 By Equation~\eqref{eq:comparaison_Mp_cj}
 the coefficients of $p$ are of order of magnitude  $O(R^d)$, 
 so the assumption on $\abs{a}$  means that the Jacobian is not too large with respect to   the coefficients of $p$. Actually, allowing large $\abs{a}$ in Proposition~\ref{pro:lyapunov_henon} is not really meaningful: 
we only do it because it will be useful later in Proposition~\ref{pro:lyapunov_multi}. Indeed, 
 a Hénon map with large Jacobian is conjugate to the inverse of a Hénon map with small Jacobian, which together with the formula   $\chi^+(\mu_f) + \chi^-(\mu_f)  = \log \abs{a}$ 
 leads to an   estimate  for $\chi^+(\mu_f)$.

\begin{vlongue}
Let us develop this idea. 
First, the relation $\chi^+(\mu_f) + \chi^-(\mu_f)  = \log \abs{a}$ gives  $\chi^+(\mu_f)\geq \log\abs{a}$. 
If  $\abs{a}\geq c R^\beta$ with $\beta>1$, this gives $\chi^+(\mu_f)\geq \beta\log M + O(1)$ which is better than the estimate~\eqref{eq:lyap_estimate_inf}. 

Next, writing  
$\chi^+(\mu_f)  = \log\abs{a}  + \abs{\chi^-(\mu_f)}   = \log\abs{a}  + \chi^+(\mu_{f\inv})$ 
reduces the problem to finding good estimates for the Lyapunov exponent $\chi^+(\mu_{f\inv})$ of $f^{-1}$ 
with respect to $\mu_{f\inv} = \mu_f$.
But $f\inv(x,y) = (y, \unsur{a}(x-p(y)))$ is conjugate to the normal form 
$(\unsur{a} y  + q(x), x)$,  where $q(x) =  - \unsur{a\alpha}p(\alpha x)$ for $\alpha^{d-1} = - a$. 
This automorphism $(x,y)\mapsto (\unsur{a} y  + q(x), x)$ is a small, invertible perturbation of the one-variable map $x\mapsto q(x)$. 
As such, it is eligible to the techniques of \cite[\S 3]{lyapunov}. 
In this paper $q$ is considered  to be fixed, 
but  the argument can easily be adapted to the case of a variable polynomial $q$
and yields $\chi^+(f) = \log\abs{a} + \chi^+(q) +o(1)$. 
Depending on the order of magnitude of $\abs{a}$, $q$ diverges in $\mathcal P_d$ or not. 
For instance, suppose $\abs{a}\geq c R_p^\beta$ and $\beta$ is large enough  ($\beta >(d-1)^2/2$ suffices).  Then from the bounds~\eqref{eq:a_j_from_c_j} and~\eqref{eq:comparaison_Mp_cj} 
it is easy to see that $q(x)\to x^d$ as $R\to \infty$. Then Theorem 1.3 in \cite{lyapunov} implies that $\chi^+(\mu_{f\inv})\to \log d$, hence 
$\chi^+(\mu_f)  = \log\abs{a}  +\log d+o(1)$. 

With these ingredients in hand, we can hope for a complete description of the behavior of $\chi^\pm(\mu_f)$ at infinity in $\mathcal H^1_d$. 
\end{vlongue}

\subsubsection{Estimates on $G_f$ and $\varphi_f$}\label{subsub:estimates_varphi}
Since  in the following we only work with the forward Green  and Böttcher functions $G^+_f$ 
and $\varphi^+_f$, for notational lightness we remove the superscript $+$.
It is well known that when $f$ is fixed, $\abs{x}$ is large, and $\abs{y}\leq \abs{x}$, then $G_f(x,y) = \log \abs{x} + O(1)$ and 
$\varphi_f(x,y) = x+O(1)$. 
We need to understand how large $x$ must  be  to get such an estimate when $f$ varies in the space of Hénon maps. 
 Under an appropriate  bound on the Jacobian, it 
  turns out that the right scale is determined by $R_f$.

\begin{pro}\label{pro:estimate_varphi}
 Suppose
  $\abs{a}\leq R_f^{d-1 }$. Then $\varphi_f$ is well-defined in the domain defined by  $\abs{y}\leq \abs{x}$ and $\abs{x}\geq   
12dR_f$,  and in this domain we have  
$$\abs{\frac{\varphi_f(x,y)}{x} - 1} <  2 \frac{R_f}{\abs{x}} \quad \text{ and }  \quad \abs{G_f(x,y) - \log \abs{x}}<    2 \frac{R_f}{\abs{x}} .$$
\end{pro}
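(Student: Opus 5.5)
We will prove the estimate by controlling the forward orbit of a point in the region $V=\set{\abs{y}\leq \abs{x},\ \abs{x}\geq 12dR}$, where $R=R_f$, and by using the product formula for the Böttcher function,
\[
\varphi_f(x,y)=\lim_{n\to\infty}\bigl(x_n\bigr)^{1/d^n}
= x\prod_{n\geq 0}\lrpar{\frac{x_{n+1}}{x_n^d}}^{1/d^{n+1}},
\qquad (x_n,y_n)=f^n(x,y),
\]
with the branches of the $d^{n+1}$-th roots chosen close to $1$. So the whole proof reduces to an estimate of the form $\abs{x_{n+1}/x_n^d-1}\leq \epsilon_n$, with $\epsilon_n$ summable after division by $d^{n+1}$, together with invariance of $V$.

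\textbf{Step 1: one-step estimate and invariance.} For $(x,y)\in V$ we write $x_1=p(x)+ay$, so that
\[
\abs{\frac{x_1}{x^d}-1}\leq \abs{\frac{p(x)}{x^d}-1}+\frac{\abs{a}\abs{y}}{\abs{x}^d}.
\]
Since $12d\geq 6\sqrt{d-1}$, Lemma~\ref{lem:basic_estimate_on_p/zd}(2) bounds the first term by $(12d+2)R^2/\abs{x}^2\leq (12d+2)R/(12d\abs{x})$. The second term is at most $R^{d-1}/\abs{x}^{d-1}\leq R/\abs{x}\cdot (12d)^{-(d-2)}$, using $\abs{a}\leq R^{d-1}$ and $\abs{y}\leq\abs{x}$. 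Altogether this gives
\[
\abs{\frac{x_1}{x^d}-1}\leq \eta\,\frac{R}{\abs{x}},
\]
with an explicit $\eta$ close to $1$; in fact $\eta\leq 1+\tfrac{1}{6d}+(12d)^{-(d-2)}$, and one can check that $\eta<3/2$. In particular $\abs{x_1}\geq \abs{x}^d(1-\eta/(12d))\geq \abs{x}$, and $y_1=x$, so $\abs{y_1}\leq\abs{x_1}$ and $f(V)\subset V$. Iterating, $\abs{x_n}\geq \abs{x}^{d^n}/2^{\,\cdot}$, which grows superexponentially. More simply, $\abs{x_{n}}\geq \abs{x}\,(12d)^{d^n-1}$ is crude but enough, and it yields $R/\abs{x_n}\leq R/\abs{x}$ with geometric decay in $n$.

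\textbf{Step 2: summation.} Taking logarithms, we obtain
\[
\abs{\log\frac{\varphi_f}{x}}\leq \sum_{n\geq 0}\frac{1}{d^{n+1}}\abs{\log\lrpar{1+u_n}},
\qquad \abs{u_n}\leq \eta\frac{R}{\abs{x_n}}\leq \frac{\eta}{12d}.
\]
With $\abs{\log(1+u)}\leq c\abs{u}$ for small $\abs{u}$, and since $R/\abs{x_n}\leq R/\abs{x}$, the right-hand side is bounded by
\[
c\,\eta\,\frac{R}{\abs{x}}\sum_n d^{-(n+1)}\leq c\,\eta\,\frac{R}{(d-1)\abs{x}}.
\]
For $d\geq 2$ this bound is at most about $1.1\,R/\abs{x}$. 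This gives $\abs{G_f-\log\abs{x}}<2R/\abs{x}$ directly, because $G_f=\log\abs{\varphi_f}$. Convergence of the product, and hence the fact that $\varphi_f$ is well-defined and holomorphic on $V$, follows from the same uniform bounds. The branch choice is consistent, because $V$ is connected and each factor stays in a disk around $1$ avoiding $0$.

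Finally, from $\abs{\log w}\leq \delta\leq 1.1R/\abs{x}$ with $\delta\leq 0.1$, we get $\abs{w-1}\leq e^\delta-1<2R/\abs{x}$. This is the claimed Böttcher estimate.

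The main obstacle is bookkeeping of constants. The statement asks for the clean constant $2$, so in Step 1 we must use Lemma~\ref{lem:basic_estimate_on_p/zd}(2) together with the threshold $12dR$ in order to turn $R^2/\abs{x}^2$ into a small multiple of $R/\abs{x}$. The case $d=2$ is the tightest: there the Jacobian term $\abs{a}\abs{y}/\abs{x}^2\leq R/\abs{x}$ is not small relative to $R/\abs{x}$. For this case we rely on the factor $1/d=1/2$ in the first term of the sum, and on the geometric decay of $R/\abs{x_n}$ to keep the total below $2$.
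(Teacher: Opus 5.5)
Your strategy is the same as the paper's. You bound $\theta=x_1/x^d-1$ on $\set{\abs{y}\leq\abs{x},\ \abs{x}\geq 12dR_f}$ using Lemma~\ref{lem:basic_estimate_on_p/zd}(2) and $\abs{a}\leq R_f^{d-1}$. You then deduce forward invariance and growth of $\abs{x_n}$, and sum the logarithms of the factors of the telescoping product $\varphi_f=x\prod_n (x_{n+1}/x_n^d)^{1/d^{n+1}}$.

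There is, however, a step that fails as written.

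\emph{The case $d=2$.} Your own bound gives $\eta\leq 1+\frac{1}{12}+1=\frac{25}{12}$ when $d=2$, so the claim $\eta<3/2$ is false there. Step 2 uses only $R_f/\abs{x_n}\leq R_f/\abs{x}$ and $\sum_n d^{-(n+1)}=1/(d-1)$. For $d=2$ this yields roughly $1.05\cdot\frac{25}{12}\approx 2.2\,R_f/\abs{x}$, which does not give the constant $2$. The ``about $1.1$'' you state silently assumes $\eta\approx 1$.

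\emph{The fix.} It is the one you allude to in your last paragraph, and it is exactly what the paper does: use the geometric growth of $\abs{x_n}$ rather than the crude bound. From $\abs{x_1}\geq (1-\eta/(12d))\abs{x}^d\geq \frac{10}{11}\abs{x}^d$ and $\abs{x}\geq 12dR_f\geq 12d$, you get $\abs{x_1}\geq 10d\abs{x}$. Note that $R_f=e^{M(f)}\geq 1$ is used here, and your invariance claim $\abs{x_1}\geq\abs{x}$ needs it too. Iterating gives $\abs{x_n}\geq (10d)^n\abs{x}$.

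With this, the $n=0$ term dominates the sum. The total is at most $\frac{c\,\eta}{d}\cdot\frac{1}{1-1/(10d^2)}\cdot\frac{R_f}{\abs{x}}$, which is about $1.12\,R_f/\abs{x}$ for $d=2$. The paper does the same with $\eta=2+\frac{1}{6d}$ for every $d$ and obtains $\frac{8}{5}$.

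\emph{A minor point.} Your stated growth $\abs{x_n}\geq \abs{x}(12d)^{d^n-1}$ is not quite justified, because the factor $1-\eta/(12d)$ is dropped. Any geometric lower bound suffices, though.

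Once this is fixed, deriving both estimates from a bound on $\abs{\log(\varphi_f/x)}$ is fine.
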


\begin{proof}
 Denote $(x_n, y_n) = f^n(x,y)$. We come back to the construction of $\varphi_f$:  recall that   
  $\varphi_f(x,y)= \lim_{n\to\infty}  (\pi_1\circ f^n(x,y))^{1/d^n} =\lim_{n\to\infty}   x_n^{1/d^n}$ as $n$ goes to $+\infty$, for an appropriate choice of $(d^n)^{\rm th}$-root. For later use in Proposition~\ref{pro:estimate_varphi_multi} we split the proof in two steps. 
 
\noindent{\bf Step 1.--} Estimation of $x_k$.   

Consider
 \begin{equation}\label{eq:deftheta}
\theta(x,y) = \frac{x_1}{x^d} - 1 = \frac{\pi_1\circ f(x,y)}{x^d} - 1 = \frac{p(x)+ay}{x^d} -1.
\end{equation}
Since $\abs{x}\geq 12dR_f$ and $\abs{a}\leq R_f^{d-1}$, Lemma~\ref{lem:basic_estimate_on_p/zd} implies that 
 \begin{align}\label{eq:theta}
\abs{\theta(x,y)}  &\leq \lrpar{12d+2}\frac{R_f^{2}}{\abs{x}^2} + \frac{R_f^{d-1}}{\abs{x}^{d-1}} \\
&\notag \leq 
\lrpar{(12d+2) \frac{R_f}{\abs{x}}+1}\frac{R_f}{\abs{x}} 
\notag \leq
 \lrpar{  2+\frac{1}{6d}  }\frac{R_f}{\abs{x}}.
\end{align}
In particular, $\abs{\theta(x,y)}\leq 1/11$, so that 
\begin{equation}\label{eq:10d}
\abs{x_1}\geq (10/11) \abs{x}^d\geq 10d \abs{x} \geq 20 \abs{x}.\end{equation} 
This implies that the domain defined by $\abs{y}\leq \abs{x}$ and $\abs{x}\geq 12d R_f$ is stable, 
that $\abs{x_k}\geq (10d)^k   \abs{x}\geq 20^k \abs{x}$,  and that $\abs{\theta(x_k,y_k)}\leq 1/11$ for every $k\geq 0$.

\noindent{\bf Step 2.--} Estimation of $\varphi_f$.  From now on we only use the bound    $\abs{\theta(x,y)}\leq 1/6$. 

Set $\varphi_0(x,y) = x$ and for $n\geq 1$,
 \begin{equation}\label{eq:varphin}
 \varphi_n(x,y) = x \prod_{k=0}^{n-1} \lrpar{1+ \theta( f^k (x,y))}^{1/d^{k+1}} 
 \end{equation}
where $(\cdot)^{1/d^n}$ is the principal branch of the $(d^n)$-th root. 
 This makes sense because 
$\abs{\theta( f^k (x,y))}\leq 1/6$ for all $k\geq 0$.   
Therefore the infinite product converges and $\varphi_f (x,y) = \lim_{n\to \infty} \varphi_n(x,y) $ is well-defined. 
To estimate this product we will use the following elementary inequalities, where $\alpha = 1.1$
\begin{align}
\label{eq:elementary_inequalityI}
\abs{\exp(v) - 1} & \leq \alpha \abs{v} \text{ for } \abs{v}\leq 1/6, \\ 
\label{eq:elementary_inequalityII}\abs{\log(1+u)} & \leq \alpha \abs{u} \text{ for } \abs{u}\leq 1/6. 
\end{align}
(To prove these upper bounds, one simply expands $\exp(v)$ and $\log(1-u)$ in power series). Let us write
\[
\lrpar{1+ \theta( f^k (x,y))}^{1/d^{k+1}} = 1+u_k.
\]
Then, the Inequalities~\eqref{eq:elementary_inequalityI} and~\eqref{eq:elementary_inequalityII} give
\begin{align}
\abs{u_k} = \abs{\exp\lrpar{\unsur{d^{k+1}}\log\lrpar{1+ \theta( f^k (x,y))}} - 1}  \leq 
\frac{\alpha^2}{d^{k+1}}  \abs{\theta\circ f^k(x,y)};
\end{align}
and from Equation~\eqref{eq:theta} and the lower bound $\abs{x_k}\geq 20^k\abs{x}$, we derive
\begin{align}
\abs{u_k}  \leq \frac{\alpha^2}{d} \lrpar{2 +\frac{1}{6d}} \frac{1}{(20d)^k}\frac{R_f}{\abs{x} }.
\end{align} 
Now, the modulus of $\sum_{k=0}^{n-1} \log(1+u_k)$ is bounded from above by $\alpha \sum_{k=0}^{n-1} \abs{u_k}$, and from the last upper bound this is less than $1/6$.
Thus,   inequality~\eqref{eq:elementary_inequalityI}  and the definition of $\varphi_n$ in~\eqref{eq:varphin} give
\begin{align}
\label{eq:varphin2}
\abs{\frac{\varphi_n(x,y)}{ x}-1} &=
\abs{\exp\lrpar{\sum_{k=0}^{n-1} \log(1+u_k)} - 1} \leq \alpha^2 \sum_{k=0}^{n-1} \abs{u_k}\\ 
&\label{eq:varphin3}
\leq \frac{\alpha^4}{d}\lrpar{2 +\frac{1}{6d}} \sum_{k=0}^{n-1} \lrpar{ \frac{1}{(20d)^k}  } \frac{R_f}{\abs{x}} \leq \frac{8}{5} \frac{R_f}{\abs{x}}. \end{align}
The desired bound on $\varphi_f$ follows by letting $n$ go to $\infty$. 
 The corresponding estimate for $G_f$ is immediate by writing 
 \begin{equation}\label{eq:varphin4}
 \abs{G_f(x,y)  - \log\abs{x}} = \abs{\log \abs{\frac{\varphi_f(x,y)}{x}}}\leq  { \max (\abs{\log(1+u)} ,  \abs{\log(1-u)})}  ,
 \end{equation}
 where $u =  \abs{ {\varphi_f(x,y)}/{x} - 1}$   is bounded by $(8/5)(1/12d) \leq 1/15$
 by~\eqref{eq:varphin3}, hence  
 $\max (\abs{\log(1\pm u)}) \leq \alpha {u}$, and the inequality $8\alpha/5 <2$ completes the proof. 
\end{proof}

\begin{pro}\label{pro:slope}
Keep  notation as above, and suppose as before  that $\abs{a}\leq R_f^{d-1 }$. 
Let  
\[
V^+ = \set{(x,y)\; ; \;   \abs{x}> \max(\abs{y}, 12dR_f)+4R_f}.
\]
Then for every 
$(x_0, y_0)\in V^+$, the subvariety $\set{\varphi_f(x,y) = \varphi_f(x_0,y_0)}$ is a vertical graph in $V^+$, whose slope over the disk
$\set{\abs{y}\leq \abs{x_0}/2}$
is bounded by 
$\frac{5R_f}{\abs{x_0}}$.
Furthermore, $\Phi: (x,y)\in V^+\mapsto (\varphi_f(x,y), y)$ is a biholomorphism onto its image, which contains 
$\set{(x,y) \; ; \;   \abs{x}> \max(\abs{y}, 12dR_f)+6R_f}$.
\end{pro}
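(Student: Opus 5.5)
The plan is to deduce everything from Proposition~\ref{pro:estimate_varphi} by Cauchy estimates. Set $\Delta=\set{\abs{y}\leq \abs{x},\ \abs{x}\geq 12dR_f}$ and $R=R_f$. On $\Delta$ we know $\varphi_f$ is holomorphic and $\abs{\varphi_f(x,y)-x}< 2R$.

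\emph{Step 1: derivative bounds.} Put $u(x,y)=\varphi_f(x,y)-x$, so $\abs{u}<2R$ on $\Delta$. Take $(x,y)\in V^+$. The polydisk centered at $(x,y)$ with radii $r_1=r_2=4R$ is contained in $\Delta$. Indeed, any $(x',y')$ in it satisfies
\[
\abs{x'}\geq \abs{x}-4R>\max(\abs{y},12dR)\geq 12dR
\]
and $\abs{y'}\leq \abs{y}+4R<\abs{x}\ldots$.

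This last inequality is the one place to be careful, because we also need $\abs{y'}\leq\abs{x'}$. We have $\abs{x'}\geq\abs{x}-4R>\abs{y}$, but $\abs{y'}$ can be as large as $\abs{y}+4R$, so the equal radii do not obviously work. I will therefore use unequal radii, horizontal $2R$ and vertical $2R$, with margin split as $\abs{x}-2R>\abs{y}+2R$, which holds since $\abs{x}>\abs{y}+4R$.

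The Cauchy estimates then give $\abs{\partial u/\partial x}\leq 1$ and $\abs{\partial u/\partial y}\leq 1$ on $V^+$. That is too crude for the slope, so the slope bound must come from a larger polydisk.

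\emph{Step 2: slope over $\set{\abs{y}\leq\abs{x_0}/2}$.} For a point with $\abs{y}\leq \abs{x}/2$, we can take vertical radius $\abs{x}/2-2R$ or so, keeping $\abs{x'}$ close to $\abs{x}$ with horizontal radius $2R$. With $\abs{u}<2R\cdot$ (and in fact $<2R\cdot R/\abs{x'}\cdot\abs{x'}$, i.e.\ the bound $\abs{u}<2R$), Cauchy gives $\abs{\partial_y\varphi_f}\lesssim 2R/(\abs{x}/2)=4R/\abs{x}$. Similarly, using a horizontal radius of order $\abs{x}/4$ where allowed, $\abs{\partial_x\varphi_f-1}$ is small.

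The level set $\set{\varphi_f=c}$ is then locally a graph $x=g(y)$ with
\[
\abs{g'}=\abs{\partial_y\varphi_f}/\abs{\partial_x\varphi_f}.
\]
Combining the bounds gives at most $5R/\abs{x_0}$ once we check that $\abs{x}\approx\abs{x_0}$ along the level set. This check follows from $\abs{\varphi_f-x}<2R$: since $\abs{x-x_0}<4R$ along the level set, the ratio $\abs{x}/\abs{x_0}$ is near $1$ because $\abs{x_0}\geq 12dR$.

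\emph{Step 3: vertical graph and biholomorphism.} For fixed $y$, the map $x\mapsto\varphi_f(x,y)$ on $\set{\abs{x}>\max(\abs{y},12dR)+4R}$ differs from the identity by less than $2R$. By Rouché on circles, each value $w$ with $\abs{w}>\max(\abs{y},12dR)+6R$ is taken exactly once. Moreover, any value taken lies within $2R$ of $x$, and injectivity follows from $\abs{\partial_x\varphi_f-1}<1$ on the convex-ish region, or again from Rouché on large disks. This yields both that $\Phi$ is injective with the stated image, and that level sets are graphs over the $y$-disk where they stay in $V^+$.

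I expect the main obstacle to be the constant bookkeeping in Step 2, namely getting exactly $5R/\abs{x_0}$, which requires choosing the polydisk radii to be compatible with $\Delta$.
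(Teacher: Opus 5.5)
Your Step 3 is sound and is essentially the paper's argument. Rouché on $\disk(w,2R_f)$, using $\abs{\varphi_f(x,y)-x}<2R_f$, gives injectivity of $\varphi_f(\cdot,y)$ on the slices of $V^+$, the description of the image of $\Phi$, and the graph property. You should drop the alternative via $\abs{\partial_x\varphi_f-1}<1$ on a ``convex-ish region'': Step 1 only yields $\le 1$, and the region is not convex.

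The genuine gap is Step 2, and it is not just constant bookkeeping. Writing the slope as $\abs{\partial_y\varphi_f}/\abs{\partial_x\varphi_f}$ forces you to bound $\abs{\partial_x\varphi_f}$ from below. That means making $\abs{\partial_x(\varphi_f-x)}\le 2R_f/r_1$ small, with a horizontal Cauchy radius $r_1$ much larger than $2R_f$. Along the level set you only know $\abs{x-x_0}<4R_f$, so $\abs{x}>\abs{x_0}-4R_f$. Since $\abs{x_0}$ may be arbitrarily close to $12dR_f+4R_f$ in $V^+$, the admissible radius $r_1\le\abs{x}-12dR_f$ has no positive lower bound. Your estimates therefore give no control of $\partial_x\varphi_f$ there, and ``where allowed'' excludes exactly this region.

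Even the numerator is too weak. The largest vertical disk at a level-set point $(x,y)$ inside $\set{\abs{y}\le\abs{x}}$ has radius $\abs{x}-\abs{y}$. For $\abs{y}\le\abs{x_0}/2$ this can be as small as about $\abs{x_0}/2-4R_f$, as far as your estimates can tell. Cauchy then gives only $\abs{\partial_y\varphi_f}\le 4R_f/(\abs{x_0}-8R_f)$. For $d=2$ and $\abs{x_0}$ near $28R_f$ this is about $5.6R_f/\abs{x_0}$, already above the target even if $\partial_x\varphi_f$ were identically $1$.

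The missing idea is to avoid differentiating $\varphi_f$ altogether and to estimate the graph function directly, on a disk of radius comparable to $\abs{x_0}$. Put $x^*=\varphi_f(x_0,y_0)$ and $R^*=\abs{x^*}-2R_f\ge\abs{x_0}-4R_f$. For every $y_1\in\disk(0,R^*)$, the closed disk $\overline{\disk(x^*,2R_f)}\times\set{y_1}$ lies in $\set{\abs{x}\ge\max(\abs{y},12dR_f)}$, where Proposition~\ref{pro:estimate_varphi} applies. It need not lie in $V^+$, so you must leave $V^+$ at this point. Your Rouché argument then yields a unique $\gamma(y_1)\in\disk(x^*,2R_f)$ with $\varphi_f(\gamma(y_1),y_1)=x^*$.

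Thus $\gamma-x^*$ is a holomorphic map from $\disk(0,R^*)$ to $\disk(0,2R_f)$. The Schwarz--Pick lemma gives $\abs{\gamma'(y)}\le\frac{9}{5}\cdot\frac{2R_f}{R^*}$ for $\abs{y}\le\frac23R^*$. Since $\abs{x_0}>28R_f$, one has $R^*\ge\frac67\abs{x_0}$, hence $\frac23R^*\ge\abs{x_0}/2$. This gives $\abs{\gamma'(y)}\le\frac{21}{5}\frac{R_f}{\abs{x_0}}<\frac{5R_f}{\abs{x_0}}$ on $\set{\abs{y}\le\abs{x_0}/2}$. A plain Cauchy estimate for $\gamma$ on $\disk(y,R^*-\abs{y})$ would again give only about $5.6R_f/\abs{x_0}$, so the Schwarz--Pick factor is what produces the constant $5$.
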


\begin{proof}
By Proposition~\ref{pro:estimate_varphi}, we have 
$\abs{\varphi_f(x,y)- x}< 2R_f$ when $\abs{x}\geq \max(\abs{y}, 12dR_f)$.  
Let $(x_0, y_0)$ be an element of $V^+$ and set $x^\varstar =  \varphi_f(x_0,y_0)$, 
which satisfies $\abs{x^\varstar} \geq \abs{x_0} - 2R_f > (12d+2)R_f$. 
 Set $R^\varstar=  \abs{x^\varstar}-2R_f$.
The disk $\disk(0,R^\varstar)$
contains $\disk(0, 12dR_f)$.
Fix $y_1$ in $\disk(0, R^\varstar)$  
and consider the function 
$z\mapsto \varphi_f(z, y_1) - z$ for  
$z\in \disk(x^\varstar, 2R_f)$.
For such a $z$, we have $\abs{z}\geq \abs{y_1}$ and $\abs{z}\geq 12dR_f$, so that Proposition~\ref{pro:estimate_varphi} can be applied to $(z,y_1)$. 
 The functions $h(z)=\varphi_f(z, y_1) - x^\varstar$ and $g(z)=z - x^\varstar$ are holomorphic on $\disk(x^\varstar, 2R_f)$ 
 and on $\partial\disk(x^\varstar, 2R_f)$ they satisfy  $\abs{h-g}=\abs{\varphi_f(z, y_1) - z}<2R_f=\abs{g(z)}$.
 By Rouché's theorem, there exists a unique $x_1\in \disk(x^\varstar, 2R_f)$
 such that  $\varphi_f(x_1,y_1) = x^\varstar$. Hence 
 $\set{\varphi_f(x,y) = \varphi_f(x_0,y_0)}$ is a vertical graph $y\mapsto (\gamma(y),y)$ for some holomorphic function $\gamma$
 defined on $\disk(0, R^\varstar)$ such that $\gamma-x^\varstar$ takes its
 values in $\disk(0, 2R_f)$.  The Schwarz-Pick lemma guarantees that for $y\in \disk(0, (2/3)R^\varstar)$,
 \[
\abs{\gamma'(y)} \leq \frac{1}{1-(2/3)^2} \frac{2R_f}{R^\varstar} = \frac{18}{5}  \frac{R_f}{R^\varstar}. 
\] 
From $R^\varstar \geq \abs{x_0}- 4R_f$, we obtain $R^\varstar\geq (6/7)\abs{x_0}$, 
hence also $(2/3)R^\varstar\geq (1/2)\abs{x_0}$. 
Since $(18/5)(7/6)< 5$, we get
$\abs{\gamma'(y)} < 5 \frac{R_f}{\abs{x_0}}$ for $y$ in $\disk(0, \abs{x_0}/2)$.

 The same argument says that $ \varphi_f(\cdot, y_1)$ is injective 
 for $\abs{x}> \max(\abs{y}, 12dR_f) + 4R_f$, which together with the estimate of 
 Proposition~\ref{pro:estimate_varphi}  implies the last assertion.
\end{proof}

\subsubsection{Crossed mappings}\label{subsub:crossed} 
Let $\bb_1 = \disk(0, 10 R_f)^2$.  
It is an easy consequence of Lemma~\ref{lem:basic_estimates_bottcher}
that for $\abs{a}\leq R_f^{d-1}$,  $f$ is a Hénon-like map in $\bb_1$. Here is a  sharper result:

\begin{lem}\label{lem:crossed}
With notation as above, if $\abs{a}\leq R_f^{d-1}$ then $f$ is a crossed map of degree $d$ from 
$\bb_1$ to $\bb_3:= \disk(0, 6^dR_f^d)\times \disk(0, 10R_f)$. 
\end{lem}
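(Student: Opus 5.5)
We must check the three crossed-map conditions for $f(x,y)=(ay+p(x),x)$ from $\bb_1=\disk(0,10R_f)^2$ to $\bb_3=\disk(0,6^dR_f^d)\times\disk(0,10R_f)$, and then compute the degree. Write $R=R_f$. The second coordinate of $f(x,y)$ is $x$, so the horizontal projection of $f(x,y)$ in $\bb_3$ is $x$. Hence $f(\overline{\bb_1})$ lies in $\{\abs{y'}\leq 10R\}$. This gives two things at once: $f(\overline{\bb_1})$ can meet $\fr\bb_3$ only where $\abs{x'}=6^dR^d$, i.e.\ only in $\fr^v\bb_3$ (condition (iii)), except along $\abs{x}=10R$. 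But that set is $\fr^v\bb_1$, and it is handled by (ii). Condition (i) is immediate, since for instance $f(0,0)=(p(0),0)$ and $\abs{p(0)}$ is small compared with $6^dR^d$ by~\eqref{eq:aj_vs_Rp}; alternatively one can use any point of $K_f$. Strictly speaking, (iii) also requires the horizontal boundary of $\bb_3$ to be avoided, which holds because the image satisfies $\abs{y'}=\abs{x}$, and this is $<10R$ off $\fr^v\bb_1$.

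The main step is (ii): for $\abs{x}=10R$ and $\abs{y}\leq 10R$ we need $\abs{ay+p(x)}>6^dR^d$ or at least $\geq$, so that the image avoids $\overline{\bb_3}$; we need strict inequality. Since $10R\geq 6\sqrt{d-1}R$ fails for large $d$, I would not use Lemma~\ref{lem:basic_estimate_on_p/zd}(2) directly. Instead I would use Lemma~\ref{lem:basic_estimates_bottcher} and the functional equation $\varphi_p\circ p=\varphi_p^d$. For $\abs{x}=10R$ we have $\abs{\varphi_p(x)}\geq 10R-3R=7R$, so $\abs{\varphi_p(p(x))}\geq (7R)^d$. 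Then Lemma~\ref{lem:basic_estimates_bottcher} again gives $\abs{p(x)}\geq (7R)^d-2R$. Meanwhile $\abs{ay}\leq R^{d-1}\cdot 10R=10R^d$. So
\[
\abs{ay+p(x)}\geq 7^dR^d-2R-10R^d>6^dR^d,
\]
using $7^d-6^d\geq 13$ for $d\geq2$ and $R\geq1$. We may assume $R\geq 1$; otherwise the $R$ term needs care. If $M(f)$ is small this is harmless, or one uses $R^d$ vs $R$ scaling. I expect this bookkeeping of constants, including the case of small $R$, to be the only delicate point. If needed, one can fall back on~\eqref{eq:bh-inclusion} together with $G_p(x)\geq \log\abs{\varphi_p(x)}$.

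For the degree, take a horizontal line $L^h=\{y=c\}$ in $\bb_1$. Its image is $\{(ac+p(x),x)\}$, and intersecting with $\bb_3$ and projecting to the first coordinate, the degree is the number of solutions $x\in\disk(0,10R)$ of $p(x)+ac=w$ for $\abs{w}<6^dR^d$. By Rouché on $\abs{x}=10R$, comparing with $p(x)$ via the inequality just proved, this count equals the number of zeros of $p$ in $\disk(0,10R)$. That number is $d$, since all zeros of $p$ lie in $\{G_p\leq M\}\subset\overline\disk_{2R}$ by~\eqref{eq:bh-inclusion}. Hence the crossed map has degree $d$.
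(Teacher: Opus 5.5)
Your proof is correct and follows the paper's argument: the same Böttcher-coordinate estimate $\abs{ay+p(x)}\geq (7R)^d-2R-10R^d>6^dR^d$ on $\fr^v\bb_1$, the observation that $\pi_2\circ f(x,y)=x$ to locate $f(\overline{\bb_1})\cap\fr\bb_3$, and a preimage count for the degree.

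Two minor points. First, no small-$R$ case ever arises, since $R_f=e^{M(f)}\geq 1$ (because $G_p\geq 0$). Second, your claim that the zeros of $p$ lie in $\set{G_p\leq M}$ is not justified, as it would require $G_p(0)\leq dM$. Instead, use the same estimate $\abs{p(x)}\geq (\abs{x}-3R)^d-2R>\abs{w-ac}$ for $\abs{x}\geq 10R$; it shows directly that all $d$ solutions of $p(x)=w-ac$ lie in $\disk(0,10R)$.
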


\begin{rem}
\label{rem:largerR}
As the proof will show, the conclusion of the lemma hold if $R_f$ is replaced by any larger radius $R$. This will be used in Proposition~\ref{pro:lyapunov_multi}.
\end{rem}

\begin{proof}
We have to show that  
 (i) $f(\fr^v\bb_1)\cap \overline{\bb_3} = \emptyset$, 
(ii) $f(\bb_1) \cap \fr\bb_3\subset \fr^v(\bb_3)$, 
and (iii)  $f(\bb_1)\cap  {\bb_3} \neq\emptyset$. Recall that  $R_f = R_p$.

For the first condition, we observe that $(x,y)\in \fr^v\bb_1$ means 
$\abs{y}\leq 10R_f=\abs{x}$. 
The lower bound of Lemma~\ref{lem:basic_estimates_bottcher} gives  
$\abs{\varphi_p(x)}\geq 7 R_f$, so  $\abs{\varphi_p(p(x))}\geq (7R_f)^d$; then, the upper bound gives
$\abs{p(x)}\geq (7R_f)^d- 2R_f $. Therefore 
\begin{equation}
\abs{p(x) +ay}\geq (7R_f)^d - 2R_f - 10R_f^d \geq (7^d- 12)R_f^d > 6^dR_f^d.
\end{equation} 

To establish property (ii), suppose $(x,y)\in \bb_1$ is such that  $f(x,y) \in \fr\bb_3$. Property (i) implies $\abs{x}< 10R_f$, 
so that  $\abs{ \pi_2\circ f (x,y) }=\abs{x}< 10R_f$, that is, $f(x,y)\in \fr^v\bb_3$. 

For  property (iii), consider the horizontal line $L=\{(x,0)\; ; \;  \abs{x}\leq 10 R_f\}\subset \bb_1$.
Its image by $f$ is parametrized by $x\mapsto (p(x),x)$, and the estimate obtained for $p(x)$ above in proving~(i) shows that 
the first projection $x\mapsto p(x)$  realizes a branched covering of degree $d$ from $\disk(0, 10R_f)$ 
to a domain containing $\disk(0, 6^d R_f^d)$. This proves~(iii) and the fact that the crossed mapping 
 $f$ is of degree $d$. 
\end{proof}

 \begin{proof}[Proof of the upper estimate~\eqref{eq:lyap_estimate_sup}]
 It is shown in~\cite[Thm A.2]{lyapunov} that if $\varphi_f$ is well defined on a neighborhood of $K^-\cap \set{G_f>t}$, then $\chi^+(\mu_f)\leq   \log d +  dt$ (the infimum of such $t$ is by definition the fastest escape rate $G_{\mathrm{max}}(f)$). 
 We will show that for $\abs{a}\leq R_f^{d-1}$, this is true for  $t= M(f)+\log(15d)$. 
 
Set $\bb_0=\disk(0, 12dR_f)^2$ and $V_0^+ = \set{(x,y)\; ; \;  \abs{y}\leq \abs{x}, \ \abs{x}\geq 12dR_f}$. 
Lemma~\ref{lem:crossed} and Remark~\ref{rem:largerR} 
show that $f$ is a Hénon-like map of degree $d$ in $\bb_0$. 
In particular, $K^-\cap \bb_0$ is horizontally contained in this bidisk.  Furthermore, since 
$\pi_2\circ f(x,y) = x$, we see that $f(\bb_0)\subset \bb_0\cup V_0^+$, that is 
every point escaping $\bb_0$ escapes through $V_0^+$. 
So if $(x,y)\notin \bb_0\cup V_0^+$, we cannot have $f^{-n}(x,y)\in \bb_0$, which 
  implies that $K^-\subset  \bb_0\cup V_0^+$. 

Proposition~\ref{pro:estimate_varphi} gives $G_f\leq \log(12dR_f)+1/(6d)$
 along the vertical boundary of $\bb_0$. 
Applying the maximum principle on horizontal lines, we obtain the same estimate in $\bb_0$. 
We deduce that if $(x,y)$ belongs to  
 $K^- \cap \set{G_f> \log(12dR_f)+1/(6d)}$, then 
 it belongs to $V_0^+$, so 
 Proposition~\ref{pro:estimate_varphi} applies and $\varphi_f$ is well defined near $(x,y)$.  
 Thus, the inequality $\log(12)+1/(6d)\leq \log (15)$ concludes the proof. 
\end{proof}

\subsubsection{Folding}
\label{subsub:folding}
We now study how a dominant critical point of $p$  folds $f(\bb_1)$ in $\bb_3$.  
Let $c$ be a critical point of $p$ with $G_p(c) =M(p)$ and let $v= p(c)$ be the corresponding critical value. 
Note that $v$ can be the image of several critical points. 
The next statement uses the vocabulary introduced in \S~\ref{subs:hl}. 

\begin{lem}
\label{lem:folding}
Assume that $\abs{a}\leq     R_f^{d-1}/(400d)$. 
Then, if $M(f)$ is large enough with respect to $d$, there exists $s\in [1, 2]$ such 
that  the set 
\[
\bb_2:= \varphi_f\inv (\disk(v, sR_f^d/8)) \cap \pi_2\inv (\disk(0, 10 R_f))
\] 
is a vertical sub-bidisk
in $\bb_3$  in 
the $(\varphi_f, y)$ coordinates, 
and $f$ realizes a crossed mapping of degree $d$ from $\bb_1$ to $\bb_2$ that is unramified over $\fr^v\bb_2$, and admits a solenoidal component 
 (relative to the projection $\varphi_f$).   
\end{lem}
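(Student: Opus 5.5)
The plan is to work throughout in the coordinates $\Phi=(\varphi_f,y)$ given by Proposition~\ref{pro:slope}. First I check that $\bb_2$ lies in the region where $\Phi$ is a biholomorphism. Write $R=R_f$. Since $G_p(v)=dM(p)$, we have $\abs{\varphi_p(v)}=R^d$, so Lemma~\ref{lem:basic_estimates_bottcher} gives $\abs{v}\geq R^d-3R$. Hence every point of $\disk(v,sR^d/8)$ with $s\leq 2$ has modulus at least $\frac34R^d-3R$. This is much larger than $\max(10R,12dR)+6R$ once $M(f)$ is large. So $\Phi^{-1}(\disk(v,sR^d/8)\times\disk(0,10R))$ is well defined and is a bidisk in the sense of \S\ref{subs:hl}. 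Its vertical fibers are the graphs of Proposition~\ref{pro:slope}, whose slope is $O(R^{1-d})$. By Proposition~\ref{pro:estimate_varphi} it sits inside $\disk(v,sR^d/8+2R)\times\disk(0,10R)\subset\bb_3$, and it is vertical there.

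Next I establish the crossed-mapping property. A point of $\bb_1$ has image $f(x,y)=(p(x)+ay,x)$ with $\abs{x}<10R$. Proposition~\ref{pro:estimate_varphi} gives $\varphi_f(f(x,y))=p(x)+ay+O(R)$, and $\abs{ay}\leq 10R^d/(400d)=R^d/(40d)$. For points of $\fr^v\bb_1$ we already know from Lemma~\ref{lem:crossed} that $\abs{p(x)}\geq(7^d-2)R^d$, which is far from $\disk(v,R^d/4)$; this gives condition~(ii). Condition~(iii) is automatic, because the second coordinate of $f(x,y)$ is $x\in\disk(0,10R)$. Condition~(i) holds because the horizontal line through $y=0$ contains $c$, and $f(c,0)=(v,c)$. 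Along a horizontal line $L_y=\set{\abs{x}<10R}\times\set{y}$, the image is the graph $x\mapsto(\varphi_f(p(x)+ay,x),x)$. Its $\varphi_f$-projection $F_y(x):=\varphi_f(p(x)+ay,x)$ is a perturbation of size at most $R^d/(40d)+O(R)$ of $p$, and on the vertical boundary of $\bb_1$ the degree $d$ is read off from $p$. So the crossed mapping has degree $d$.

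The key step, and the main obstacle, is non-ramification together with the existence of a solenoidal component. This is where $s\in[1,2]$ is chosen. Condition~(i) of Definition~\ref{defi:non_ramified} says that for each $y$ the circle $\set{\abs{w-v}=sR^d/8}$ avoids the critical values of $F_y$ restricted to $\set{\abs{x}<10R}$. Property~(ii) then follows from Remark~\ref{rem:non_ramified}, because the projection $\pi_2$ is global. I would locate the critical values as follows. By the one-variable estimates~\eqref{eq:cj_vs_Rp} and the Green function picture, the critical values of $p$ lie at distance from $v$ that is either small (at most $O(R)$, for critical points escaping at the same rate with the same image, up to the perturbation) or comparable to $R^d$. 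There are at most $d-1$ of them. Then I would move to $F_y$. Since $F_y-p$ is holomorphic and bounded by $R^d/(40d)+O(R)$ on $\disk(0,10R)$, Cauchy estimates on a slightly smaller disk give $\abs{F_y'-p'}\lesssim R^{d-1}/(40d)$. Rouché applied to $F_y'$ versus $p'$ then shows that the critical points of $F_y$ stay near those of $p$, up to a controlled error. Consequently, as $y$ varies, the critical values of $F_y$ move inside disks of radius about $R^d/(40d)+O(R)$ around the critical values of $p$. The radii $\abs{w-v}$ of these disks therefore cover at most $d-1$ intervals of total length about $2(d-1)R^d/(40d)<R^d/16$, whose normalized length is less than $1/8\cdot$(the length of $[1,2]$). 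By pigeonhole, some $s\in[1,2]$ has its circle avoiding all of them uniformly in $y$. I expect the hardest part to be making the Rouché comparison of critical points rigorous when critical points of $p$ cluster; I would handle that by counting critical values in annuli rather than tracking them individually.

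Finally I show there is a solenoidal component. Take the component $\Omega$ of $f(\bb_1)\cap\bb_2$ containing $f(c,0)$. On $L_0$, the preimage under $F_0$ of $\disk(v,sR^d/8)$ contains the critical point (near) $c$ of $F_0$. Its component $U$ through that point is a disk mapped by $F_0$ onto $\disk(v,sR^d/8)$ with at least one critical point and no critical values on the boundary circle. By Riemann--Hurwitz, $\deg(F_0\rvert_U)\geq2$. By Proposition-Definition~\ref{propdef:non-ramified} the degree of $\Omega$ is independent of the line, so $\Omega$ is solenoidal.
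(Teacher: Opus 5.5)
Your architecture is the right one: work in $\Phi=(\varphi_f,y)$, choose $s\in[1,2]$ by a pigeonhole over radii so that the circle $\set{\abs{w-v}=sR^d/8}$ avoids all critical values of $F_y=\varphi_f\circ f(\cdot,y)$ uniformly in $y$, and get solenoidality from the fold at $c$. But the step that locates these critical values does not work as written.

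First, $F_y$ is not a holomorphic function on $\disk(0,10R)$. By Proposition~\ref{pro:estimate_varphi}, $\varphi_f$ is only defined and controlled where the first coordinate $p(x)+ay$ is large. So $F_y$ is meaningless near the zeros of $p(\cdot)+ay$, or near critical points of $p$ with small critical value. Your Cauchy estimates on $\disk(0,10R)$, the global Rouch\'e count of critical points, and the degree count ``read off'' on $\fr^v\bb_1$ all need to be localized.

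Second, and more seriously, the bound $\abs{F_y'-p'}\lesssim R^{d-1}/(40d)$ is too weak. It comes from putting the translation $ay$ into the perturbation, and the Cauchy estimate does not see that this term is constant in $x$. Since $\abs{p'}$ is itself of order $R^{d-1}$ at distance $\asymp R$ from $\Crit(p)$, this bound only confines the critical points of $F_y$ to the sublevel set $\set{\abs{p'}\lesssim R^{d-1}/(40d)}$. That set can extend a fixed fraction of $R$ away from $\Crit(p)$. On it, $p$ can differ from the nearest critical value by an amount of order $R^d$, with a $d$-dependent constant you do not control, not $O(R)$. So the confinement of the critical values of $F_y$ to disks of radius about $R^d/(40d)+O(R)$ does not follow. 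Neither does the pigeonhole in the window of radii $[R^d/8,R^d/4]$, which has width only $R^d/8$. Nor does your claim that $F_0$ has a critical point in the component $U$ through $c$. (Separately, the dichotomy you state for the critical values of $p$, namely distance $O(R)$ or $\asymp R^d$ from $v$, is false in general, though you do not really use it.)

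The missing idea is to treat the two perturbations separately. For every $y$, the map $x\mapsto p(x)+ay$ has exactly the critical points of $p$, and its critical values $p(c_j)+ay$ sweep disks of radius $10R\abs{a}\leq R^d/(40d)$. Only the $\varphi_f$-correction should be viewed as perturbing the derivative, and Proposition~\ref{pro:slope} controls it. If $w\mapsto(\gamma(w),w)$ parametrizes the $\varphi_f$-fiber through $f(x,y)$, then $F_y'(x)=\partial_1\varphi_f\cdot\lrpar{p'(x)-\gamma'(x)}$, with $\abs{\gamma'}\leq 10R^{1-d}$ in the relevant region. Hence critical points of $F_y$ satisfy $\abs{p'}\leq 10R^{1-d}$, and their critical values lie within $R^d/(40d)+O(R^{d-1})$ of $p(\Crit(p))$. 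With this, your pigeonhole closes.

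The paper does the same in the opposite order. It pigeonholes on the explicit disks $\Delta_j$ to build the genuine bidisk $\bb_2'$. It proves $\abs{p'(t)}\geq dc_d^{d-1}R^{d-1}$ (for an explicit $c_d>0$) whenever $p(t)+ay$ lies in a thin annulus $A'$ near $\fr^v\bb_2'$. It then shrinks $s$ by $1/(100d)$ so that a neighborhood of $\fr^v\bb_2$ lies over $A'$, where this lower bound beats the fiber slope $10/R^{d-1}$. The remaining properties come as follows:
\begin{itemize}
\item The degree $d$ comes from composing the crossed map $\bb_1\to\bb_3$ of Lemma~\ref{lem:crossed} with the vertical sub-bidisk $\bb_2\subset\bb_3$.
\item Condition (ii) comes from $\bb_2'$ plus the maximum principle for $\varphi_f$. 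The projection cutting out $\fr^v\bb_2$ is $\varphi_f$, which is defined only near $\overline{\bb_2}$, so ``$\pi_2$ is global'' is not the operative point.
\item Solenoidality comes from transferring the degree $q\geq2$ of $p$ on the component of $p\inv(\disk(v,s^\varstar R^d/8))$ containing $c$ to $\varphi_f\circ f(\cdot,0)$ via non-ramification, without locating critical points of $F_0$.
\end{itemize}
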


\begin{proof} For notational lightness we write $R=R_f = R_p$. We  assume in a first stage that $R$ satisfies the requirements of Proposition~\ref{pro:estimate_varphi} and increase it further if necessary.

\noindent\textbf {Step 1.-- } Construction  of a bidisk $\bb_2'$ in the $(x,y)$ coordinates. 

Counted with multiplicities,  $p$ has $(d-1)$ critical values, one of them being $v$. 
When $y$ ranges in $\disk(0, 10R)$, the critical values of $x\mapsto p(x)+ay$ describe  disks $\Delta_j$, each of which of radius $10 R \abs{a}\leq  R^d/(40d)$.  
Subdivide the interval $\left[ {R^d}/{8},  {R^d}/{4}\right]$ into $2d$ sub-intervals of the form
$\left[ s_{k-1}{R^d}/{8}, s_{k}  {R^d}/{8}\right]$, with $s_k = 1+k/(2d)$, $1\leq k\leq 2d$, and 
 consider the corresponding   family of $2d$ concentric annuli 
$A_k:= \disk(v, s_{k} R^d/8 )\setminus \disk(v, s_{k-1} R^d/8)$.  
Since a disk $\Delta_j$ can intersect at most two consecutive annuli and one of them is centered at $v$, there is $k^\varstar$ such that  $A_{k^\varstar}$   is disjoint from all the $\Delta_j$.

We define $\bb_2'=\disk(v,  s^\varstar R^d/8)\times \disk(0, 10 R)$, where $s^\varstar=  \unsur{2}(s_{k^\varstar-1}+s_{k^\varstar})$.
We want to prove that the crossed map $f:\bb_1\to \bb_2'$ is unramified over $\fr^v\bb_2'$. For this,  
we introduce  the annulus $A'\subset A_{k^\varstar}$ defined by
\[
A' =  \disk(v,  s^\varstar R^d/8)\setminus \disk(v,  (s^\varstar - 1/10d) R^d/8),
\] 
so that $A'\times \disk(0, 10 R)$ is a neighborhood 
of $\fr^v\bb_2'$ in $\bb_2'$.

For $\abs{y}\leq 10R$ let us  estimate the slope 
of $f(\disk(0, 10R)\times \set{y})$   
over $A'$. This disk is parametrized by $t\mapsto (ay+p(t), t)$, with $\abs{t}\leq 10R$.
So, 
we fix $t$ such that $ay+p(t)\in A'$, 
and our goal is to bound the derivative $\abs{p'(t)}$ from below. 

Let $\delta = \dist(t, \Crit(p))$. 
Writing $p'(t)   = d \prod_{j=1}^{d-1} (t- c_j)$ we see that $\abs{p'(t)}\geq d \delta^{d-1}$. 
On the other hand, $\dist(p(t), p(\Crit(p)))\leq \delta \norm{p'}_{\disk(0, 10R)}$, and since by~\eqref{eq:bh-inclusion}
all critical points of $p$ are contained in $\disk(0, 2R)$ we see that $\norm{p'}_{\disk(0, 10R)} \leq d(12R)^{d-1}$. Since 
$ay+p(t) \in A'$ and $A_{k^\varstar}$ is disjoint from the disks $\Delta_j$, 
the distance between $p(t)$ and $p(\Crit(p))$  is $\geq (1/10d)R^d/8$. So we infer that 
\begin{equation}
\frac{R^d}{80d}  \leq \dist(p(t), p(\Crit(p)))\leq \delta d(12R)^{d-1},   
\end{equation}
hence  $\delta \geq c_dR$ with  
\begin{equation}
  c_d= \unsur{80 d^2 (12)^{d-1}}, 
\end{equation}
and we conclude that 
\begin{equation}\label{eq:p't}
\abs{p'(t)}\geq d c_d^{d-1} R^{d-1}.
\end{equation}
Therefore, the complex tangent vector $(p'(t), 1)$ to $f(\disk(0, 10R)\times \set{y})$ at $f(t,y)$  
is far from
the vertical direction, and condition~(i) of Definition~\ref{defi:non_ramified} is satisfied. By Remark~\ref{rem:non_ramified}, since $\bb_2'$ is a genuine  bidisk in $\C^2$, condition~(ii) is automatically satisfied
 and if follows that 
  $f:\bb_1\to \bb_2'$ is unramified over $\fr\bb_2'$. 

\begin{figure}
\includegraphics[width=\textwidth]{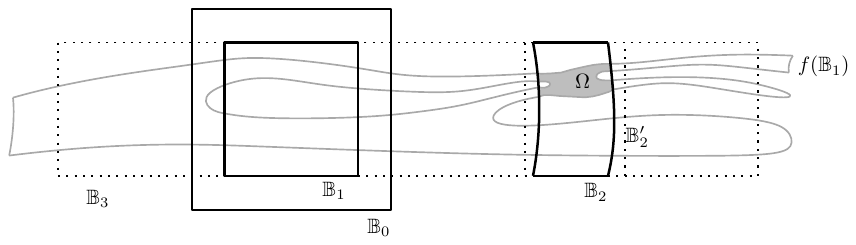}
\caption{\small Proof of Lemma~\ref{lem:folding}. The boundaries of the bidisks $\bb'_2$ and $\bb_3$ are dotted and  the solenoidal component of $f(\bb_1)\cap \bb_2$ is shaded.}\label{fig:bidisks}
\end{figure}

\noindent \textbf{Step 2.--} Perturbation to the $(\varphi_f, y)$ coordinates, and construction of $\bb_2$.

We set
$s = s^\varstar - 1/(100d)$ and $\bb_2 = \Phi\inv(\disk(v,  s R^d/8) \times \disk(0, 10 R))$, where $\Phi(x,y) = (\varphi_f(x,y),y)$. 
Here, we want $\disk(v,  s R^d/8) \times \disk(0, 10 R)$ to be contained in the image of $\Phi$, i.e.\ in the open set
$\set{(x,y) \; ; \;   \abs{x}> \max(\abs{y}, 12dR_f)+6R_f}$ from Proposition~\ref{pro:slope}. 
To get this inclusion, we rely on the estimates of Lemma~\ref{lem:basic_estimates_bottcher}: it suffices that $R^d\geq 40dR$.
Then, by construction $\bb_2$ is a bidisk in the $(\varphi_f, y)$ coordinates. Moreover, 
 it is a vertical sub-bidisk of  $\bb_3$, in the sense that the identity map defines a crossed mapping of degree $1$
  from $\bb_3$ to $\bb_2$ (see Figure~\ref{fig:bidisks}). 
   Then, by composition $(f, \bb_1, \bb_2)$  defines a crossed map of degree $d$.
  
From  our assumption on $R$ the conclusions of Propositions~\ref{pro:estimate_varphi} and~\ref{pro:slope} hold in $\bb_2$. 
So, if furthermore  $4R<\abs{s - s^\varstar} R^d/8$, that is $R^{d-1}>3200d$, we get 
\begin{equation} 
\disk(v,  (s^\varstar - 2/(100d)) R^d/8)\times \disk(0, 10 R)\subset \bb_2\subset \bb'_2.
 \end{equation}
  Likewise if we define
 \begin{equation}
 A =  \disk(v,  s  R^d/8)\setminus \disk(v,  (s - 98/(100d)) R^d/8),
 \end{equation} 
 then  $\Phi\inv (A\times \disk(0, 10 R))$ is a neighborhood of $\fr^v\bb_2$ that is contained  in $A'\times \disk(0, 10 R)$. 

We can now prove that the crossed mapping $f\colon \bb_1 \to \bb_2$ is unramified
over $\fr^v\bb_2$.
The $x$-coordinate in $A'$ is bounded below by $R^d/2$, hence 
 by Proposition~\ref{pro:slope},
  the slope of the $\varphi_f$-fibers over the second coordinate in  $\Phi\inv (A\times \disk(0, 10 R))$
  is bounded by $5R/(R^d/2) = 10 /R^{d-1}$. 
  It follows from  the lower bound~\eqref{eq:p't}   that 
  if $10/R^{d-1}< d c_d^{d-1} R^{d-1}$  (so $R> 30 d (12)^{(d-1)/2}$ is
  enough), 
the image of any horizontal line $L$ in $\bb_1$ is unramified over $\fr^v\bb_2$, 
  that is, condition~(i) of Definition~\ref{defi:non_ramified} holds. 
   Let us check condition~(ii). Since the non-ramification property extends to $\bb'_2$, 
   any component $\Delta'$ of $f(L)\cap \bb'_2$ is of the form $\Delta\cap \bb'_2$, 
   where $\Delta$ is a component of $f(L)\cap \bb_2$. As observed in Step 1, $\Delta$ is a holomorphic disk  
 because $\bb'_2$ is defined by global projections. 
 In addition, $\varphi_f$ is defined in a neighborhood of $\bb_2$, so  $\Delta'= \Delta\cap \varphi_f\inv(\disk(v, sR^d/8))$, which is a disk by the maximum principle. From this, we conclude that $(f,\bb_1, \bb_2)$ is unramified over $\fr^v\bb_2$.

Finally,  let us  show that  $f(\bb_1)\cap \bb_2$ admits a solenoidal 
component $\Omega$. Indeed,   look at the 
component $U\subset \C$ of 
$p\inv(\disk(v,s^\varstar R^d/8))$ containing $c$ 
(it may contain other critical points (\footnote{It is a feature of our proof that for $f$ we cannot distinguish $v$ from any other critical value located at a distance $o(R^d)$ from it.})).
 By the non-ramification property, if $\abs{y}\leq 10R$ then
 $x\mapsto \varphi_f\circ f (x, y)$ has no critical point in a neighborhood of $\fr U$. 
Let $\Omega$ be the component of $f(\bb_1)\cap \bb_2$ containing 
$f(U\times \set{0})\cap \bb_2$. 
Since  $p: U\to \disk(v,s^\varstar R^d/8)$ is a branched covering of degree $q\geq 2$, 
the above analysis shows that 
  $\varphi_f\circ f (\cdot, 0)$  is also
 of degree $q$ in  $U\cap (\varphi_f\circ f)\inv(\disk(v,s R^d/8)$.  
This proves that 
$\Omega$ has degree $q$, so it is solenoidal. 
 \end{proof}

\begin{rem}\label{rem:bound_M}
The proof requires   $R_f\geq \max ((3200d)^{1/(d-1)},  30 d (12)^{(d-1)/2})$, 
which also guarantees the assumption   
$\abs{a}\leq R_f^{d-1}/(400d)$  for any given $\abs{a}\leq 1$, say. 
For $d\leq 4$, a sufficient condition  is  $M(f)\geq 9$.
\end{rem}

\begin{lem}\label{lem:disconnected}
Under the assumptions of Lemma~\ref{lem:folding}, $f(\bb_1)\cap \bb_1$ is disconnected, 
$f$ has a disconnected Julia set, and $f$ is unstably disconnected. 
\end{lem}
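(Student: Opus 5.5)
Under the assumptions of Lemma~\ref{lem:folding}, $f(\bb_1)\cap\bb_2$ has a solenoidal component $\Omega$ of degree $q\geq 2$. The plan is to show that $f(\bb_1)\cap\bb_1$ has at least two components, and then transfer this disconnection to the unstable slices and to $J$.

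\textbf{Step 1: two components in $f(\bb_1)\cap\bb_1$.} The region $\bb_2$ sits well inside $\bb_1$: its $x$-coordinates have modulus of order $R^d$, which is much larger than $10R$ when $R$ is large. So I will not use $\bb_2$ directly. Instead I restrict attention to the vertical strip of $\bb_1$ over which $f(\bb_1)$ crosses. Concretely, the horizontal line $L_0=\disk(0,10R)\times\{0\}$ maps to the curve $t\mapsto(p(t),t)$. Its intersection with $\bb_1$ consists of the pieces $p^{-1}(\disk(0,10R))$. Since $G_p(c)=M(p)$ is maximal for the dominant critical point $c$, and $\abs{p(c)}\approx R^d\gg 10R$, the set $p^{-1}(\disk(0,10R))$ (a subset of $\{G_p<\log 10R\}$, which is small compared with the level $dM$) has at least two components. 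This follows from Branner--Hubbard: the level $\{G_p=M(p)\}$ is a figure eight pinched at $c$, so $\{G_p<M(p)\}$ has at least two components, each of which contains part of the filled Julia set.

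Choosing the disk $\disk(0,10R)$ compatibly with these level sets (using Lemma~\ref{lem:basic_estimates_bottcher} and the slope control of Proposition~\ref{pro:slope} to replace $x$ by $\varphi_f$), I get that $f(L)\cap\bb_1$ is disconnected for every horizontal line $L$, and that its components vary continuously with $L$. Following the argument of Proposition-Definition~\ref{propdef:non-ramified}, these components fill out distinct components of $f(\bb_1)\cap\bb_1$. The bound $\abs{a}\leq R^{d-1}/(400d)$ keeps the perturbation term $ay$ of size $o(R^d)$, so it cannot reconnect the pieces.

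\textbf{Step 2: from disconnection to the Julia set.} Each component of $f(\bb_1)\cap\bb_1$ must contain points of $K^-$, since periodic points equidistribute on $J^*\subset\bb_1$ and have codings visiting both pieces. Hence $J^+\cap W^u(z)$ is disconnected for a saddle $z$, which gives unstable disconnectedness. I would then invoke the results of~\cite{bs6}: connectedness of $J$ is equivalent to the absence of unstable disconnectedness, so $J$ is disconnected. An alternative for this step is to use the solenoidal component $\Omega$: an unstable slice crossing $\Omega$ with degree $q\geq2$ contradicts the degree-$1$ slices required by unstable connectivity~\cite{bs6}.

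\textbf{Main obstacle.} The hardest part is Step 1: choosing the radius so that the one-variable splitting of $p^{-1}(\disk)$ survives the $ay$ perturbation uniformly in $y\in\disk(0,10R)$. I would first try annulus pigeonholing, exactly as in Step 1 of the proof of Lemma~\ref{lem:folding}.
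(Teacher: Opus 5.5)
Your Step~1 is fine in substance, and easier than you fear. For $\abs{y}<10R$ one has $\abs{ay}<R^d/(40d)$, so Lemma~\ref{lem:basic_estimates_bottcher} gives $p^{-1}(\disk(-ay,10R))\subset\set{G_p<M(f)}$. Since the second coordinate of $f(x,y)$ is $x$, the set $f(\bb_1)\cap\bb_1$ is covered by the disjoint open sets $\disk(0,10R)\times U$, with $U$ a component of $\set{G_p<M(f)}$, and it meets each of them. This is the paper's crossed mapping $U\times\disk(0,10R_f)\to\bb_1$. You need neither pigeonholing nor $\varphi_f$, which is not even defined on $\bb_1$.

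The genuine gap is Step~2, whose logic runs in the wrong direction.

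\emph{First}, knowing that each piece contains points of $K^-$ does not give unstable disconnectedness. That fact is easy anyway, since every vertical submanifold of $\bb_1$ meets $J^-$. In \cite{bs6}, unstable disconnectedness is about compact components of $W^u(z)\cap K^+$, equivalently critical points of $G^+\rest{W^u(z)}$. It is not about $J^+\cap W^u(z)$ being disconnected. Moreover, components of $W^u(z)\cap\bb_1$ need not be finite-degree horizontal disks, so cutting $\bb_1$ into vertical pieces does not by itself produce such compact components.

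\emph{Second}, and decisively, the equivalence between connectedness of $J$ and unstable connectedness \cite[Thm 0.2]{bs6} requires $\abs{\jac(f)}=\abs{a}\le1$. The hypotheses of Lemma~\ref{lem:folding} allow $1<\abs{a}\le R_f^{d-1}/(400d)$. For $\abs{a}>1$ every Hénon map is unstably disconnected \cite[Cor. 7.4]{bs6}, while $J$ may still be connected (e.g.\ $(ay+x^2,x)$ with $\abs{a}$ large; see the discussion following Corollary~\ref{cor:connectedness}). So in that regime your route never yields the disconnection of $J$.

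The missing idea is to show directly that $J^*$ meets every component $\Omega'$ of $f(\bb_1)\cap\bb_1$. This gives $J$ disconnected. Unstable disconnectedness then follows from \cite[Thm 0.2]{bs6} when $\abs{a}\le 1$, and is automatic when $\abs{a}>1$.

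The paper does this with currents. If $L^v$ is a vertical line meeting $\Omega'$, then $f^{-1}(\Omega'\cap L^v)$ is a vertical submanifold of $\bb_1$, so it has positive intersection with the horizontal current $T^-$. Pushing forward with $f_*T^-=dT^-$ gives $T^-\wedge[\Omega'\cap L^v]\neq0$. Hence $T^-\rest{\Omega'}$ is a nonzero horizontal current, and $\mu=T^+\wedge T^-$ charges $\Omega'$.

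Equidistribution of periodic points cannot replace this: it says nothing about which pieces $\mu$ charges, and you have not built any coding. Your solenoidal alternative needs the same positive-mass input, to know that unstable leaves enter $\Omega$. It also still says nothing about $J$ when $\abs{a}>1$.
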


\begin{proof}
Equivalently, let us show  that $f\inv(\bb_1)\cap \bb_1$ is disconnected. 
For this, we claim that for every component $U$ of $\set{G_p<M(f)}$, 
$f\inv(\bb_1)\cap \bb_1$ admits a 
vertical component contained in  $U\times \disk(0, 10R_f)$. 
Indeed, $p$ sends $\fr U$ onto the Jordan curve $\set{G_p=dM(f)}$. 
Thus, applying exactly the same estimates as for Lemma~\ref{lem:crossed}, we see that 
$f$ defines a crossed mapping from $U\times \disk(0, 10R_f)$ to $\bb_1$ (and even $\bb_3$)  and 
the claim follows. 

To prove that  $J_f$ is disconnected,  it is enough to show that  
 every component of $f(\bb_1)\cap \bb_1$ intersects $J_f$ (more precisely $J_f^\varstar$). 
Indeed,  a component $\Omega$ of $f(\bb_1)\cap \bb_1$ is a horizontal open subset of $\bb_1$. If
 $L^v$ is  any vertical line intersecting $\Omega$, 
  $f\inv(\Omega\cap L^v)$ is a vertical submanifold in $\bb_1$, so it intersects every horizontal  positive closed current, in particular $T^-$. 
  Pushing forward by $f$, and using the 
 invariance relation $f_*T^- = dT^-$, we deduce that    $T^-$ 
 intersects $\Omega\cap L^v$. Therefore $T^-$
 admits a   component of positive mass in $\Omega$, 
 which must intersect $T^+$, and  finally $T^+\wedge  T^-$ gives positive mass to 
 $\Omega$, as desired. 

Finally, if $\abs{a}\leq 1$, Theorem 0.2 in \cite{bs6} implies that $f$ is unstably disconnected, while 
if   $\abs{a}> 1$ this is automatic (see~\cite[Cor. 7.4]{bs6}).
\end{proof}

\subsubsection{Conclusion of the proof of Proposition~\ref{pro:lyapunov_henon}}\label{subsub:conclusion_proof_henon}

The argument  is based on the formula for $\chi^+$ given in~\cite{bs5}, 
and uses ideas from~\cite{lyapunov}.  
The critical measure $\mu_c^-$ from~\cite{bs5} is obtained by putting a point mass at every tangency point between the ``unstable lamination''  
and the foliation induced by the fibers of $\varphi_f$ in $\C^2\setminus K^+$, and integrating with respect to the transverse measure (see \cite{bs5} or \cite[\S 2.4]{lyapunov} for a formal presentation).
The positive measure $G_f\mu_c^-$ is invariant, and
 the formula of~\cite{bs5} reads
\begin{equation}\label{eq:bs5}
\chi^+(\mu_f)= \log d + \int_{\set{A\leq G_f< dA}} G_f\mu_c^-,
\end{equation}
 where $A>0$ is arbitrary.

By Lemma~\ref{lem:disconnected},  $f$ is unstably disconnected and we will take advantage of this property to control the critical measure. 
  By \cite[Thm 2.4]{connex}, in every bidisk $\bb$ in which $T^-$ is horizontal, 
  $T^-\rest{\bb}$ admits a decomposition 
  \begin{equation} T^-\rest{\bb}  = \sum_{k=1}^\infty T^-_k,  \label{eq:decomposition}
  \end{equation} where 
$T^-_k$ is an integral of horizontal disks   of degree $k$. If in addition $\bb$ is a bidisk in the coordinates $(\varphi_f, y)$, the critical points are the points of tangency 
between the vertical direction (i.e. the fibers of $\varphi_f$)  and  the leaves of $T^-_k$.  
By the Riemann-Hurwitz formula, any horizontal disk of degree $k$ admits $k-1$ vertical tangencies.
Furthermore, the total transverse measure of $T^-_k$ is $1/k$ times the mass
of the vertical slices of $T^-_k$: indeed, intuitively,
the contribution of   a horizontal disk of degree $k$  to the slice mass is $k$ times the transverse measure of one local plaque.
 We    have thus established the following result (see \cite[Prop. 2.15]{lyapunov} for details)

\begin{lem}\label{lem:critical_mass}
Assume that $f$ is unstably disconnected, and let $\bb$ be a bidisk in the  $(\varphi_f, y)$ coordinates in  which $T^-\rest{\bb}$ is horizontal. Write the decomposition 
 $T^-\rest{\bb}  = \sum_{k=1}^\infty T^-_k$  as above. 
Then $$\mu_c^-(\bb) = \sum_{k=1}^\infty \frac{k-1}{k} \sm(T^-_k),$$ where $\sm(\cdot)$ is the slice mass. 
\end{lem}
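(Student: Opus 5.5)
The plan is to localize the critical measure $\mu_c^-$ inside $\bb$ and compute it leaf by leaf, using the decomposition~\eqref{eq:decomposition} of $T^-\rest{\bb}$ provided by \cite[Thm 2.4]{connex}. Since $f$ is unstably disconnected, we may write $T^-\rest{\bb}=\sum_k T^-_k$ with $T^-_k=\int [\Delta]\,d\nu_k(\Delta)$. Here $\nu_k$ is a positive measure, the transverse measure, on a family of horizontal holomorphic disks of degree $k$ in $\bb$. The first step is to recall from \cite{bs5} (or \cite[\S 2.4]{lyapunov}) that, in the region where $\varphi_f$ is defined, $\mu_c^-$ is obtained by putting a Dirac mass at each tangency point between a leaf of the unstable lamination and a fiber of $\varphi_f$, counted with multiplicity, and then integrating against the transverse measure. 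Because $\bb$ is a bidisk in the $(\varphi_f,y)$ coordinates, these fibers are exactly the vertical lines of $\bb$. Consequently, inside $\bb$ we get
\[
\mu_c^-\rest{\bb}=\sum_k\int \mathrm{Crit}(\pi_1\rest{\Delta})\,d\nu_k(\Delta),
\]
where $\mathrm{Crit}(\pi_1\rest{\Delta})$ denotes the divisor of vertical tangencies of $\Delta$. The main point to check here is that the local plaques of the lamination inside $\bb$ are precisely these disks $\Delta$, counted with the right weight; that is, the transverse measure of the lamination restricted to $\bb$ agrees with $\nu_k$.

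The second step is Riemann--Hurwitz. The map $\pi_1\rest{\Delta}\colon\Delta\to D_1$ is a proper branched covering of degree $k$ between disks, since horizontality gives $\partial\Delta\subset\fr^v\bb$. Euler characteristics then give $1=k\cdot 1-\#\mathrm{Crit}$, so $\Delta$ has exactly $k-1$ vertical tangencies counted with multiplicity. This yields $\mu_c^-(\bb)=\sum_k (k-1)\,\|\nu_k\|$.

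The third step relates the transverse mass to the slice mass. For a vertical line $L^v$, each degree-$k$ disk meets $L^v$ in $k$ points counted with multiplicity, so
\[
\sm(T^-_k)=\|T^-_k\wedge[L^v]\|=\int k\,d\nu_k=k\|\nu_k\|.
\]
Substituting $\|\nu_k\|=\sm(T^-_k)/k$ gives the formula. I expect the main obstacle to be the first step, namely justifying that the disks in the decomposition of \cite{connex} carry exactly the transverse measure used in the definition of $\mu_c^-$, with no extra mass on non-laminar parts or on limits of leaves. To handle it I would invoke \cite[Prop. 2.15]{lyapunov}, which carries out precisely this identification. Countable additivity in $k$ is harmless, since all terms are positive and the total slice mass $\sm(T^-\rest{\bb})$ is finite.
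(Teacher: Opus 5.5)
Your proposal is correct and follows essentially the same route as the paper. The paper also takes the decomposition from \cite{connex} and identifies critical points with vertical tangencies in the $(\varphi_f,y)$ bidisk. It then uses Riemann--Hurwitz to get $k-1$ tangencies per degree-$k$ disk and the relation (transverse mass of $T^-_k$) $=\sm(T^-_k)/k$, and likewise defers the identification of the transverse measures to \cite[Prop.~2.15]{lyapunov}.
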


We are now ready to prove    the lower estimate~\eqref{eq:lyap_estimate_inf}. 
 We resume the notation from \S~\ref{subsub:crossed}-\ref{subsub:folding} in particular  the bidisks $\bb_1$, $\bb_2$. 
Since by Lemma~\ref{lem:disconnected}
$f$ is unstably disconnected,  
  $T^-\rest{\bb_2}$ 
admits a decomposition of the form~\eqref{eq:decomposition}. 
Let $\Omega$ be the solenoidal component of $f(\bb_1)\cap \bb_2$ constructed in Lemma~\ref{lem:folding}, and $q\geq 2$ be its degree. 
By Proposition~\ref{propdef:non-ramified} the degree of every horizontal disk 
in the decomposition of   $T^-\rest{\bb_2}$ is a multiple of $q$, so  we may write 
   $T^-\rest{\Omega} = \sum_{\ell=1}^\infty T^-_{q\ell}$. 
We claim that the slice mass of the horizontal current 
$T^-\rest{\Omega}$ equals $q/d$. Indeed, if $L^v$ is any vertical line in $\bb_2$ 
(that is, a fiber of $\varphi_f$), 
\begin{equation}\label{eq:Lomega}
\norm{(T^-\rest {\Omega})\wedge [L^v]}  = \norm{(T^-\wedge[L^v])\rest{\Omega}}= 
\norm{T^-\wedge [L^v\cap\Omega]}
 \end{equation}
is independent of $L^v$ (where $\norm{\cdot}$ denotes the mass of a positive measure)
and is precisely the slice mass of $T^-\rest{\Omega}$.
By definition of the degree of a solenoidal component, 
$f\inv(L^v\cap \Omega)$ is a vertical submanifold of degree $q$: indeed 
 for any horizontal line $L^h$ in $\bb_1$, 
 \[
 \# (L^h\cap f\inv(L^v\cap \Omega)) = \# (f(L^h)\rest{ \Omega}\cap L^v) = q.
 \] Hence 
by    invariance we get  
\begin{equation}\label{eq:muc}
\norm{T^-\wedge [L^v\cap\Omega]} = \norm{f^*(T^-\wedge [L^v\cap \Omega])}
= \norm{\lrpar{\unsur{d}T^- \wedge [f\inv(L^v\cap \Omega)]}} = \frac{q}{d}.
\end{equation}
Applying Lemma~\ref{lem:critical_mass} to $T^-\rest{\Omega}$, 
we estimate the critical mass of $\Omega$:
\begin{align}
 \mu^-_c(\Omega) & = 
  \sum_{\ell=1}^\infty \frac{q\ell-1}{q\ell} \sm(T^-_{q\ell})  
  \geq \frac{q-1}{q}  \sum_{\ell=1}^\infty \sm(T^-_{q\ell}) 
 \\
  &\notag =  \frac{q-1}{q} \sm (T^-\rest{\Omega})  = \frac{q-1}{d}\geq \frac1d.  
\end{align}
 
We claim that if $R_f$ is chosen so large that    Lemma~\ref{lem:folding} applies, then
  \begin{equation}\label{eq:3/4-5/4} \frac34 R_f^d \leq \abs{\varphi_f}\leq \frac54R_f^d \end{equation} 
  on $\bb_2$. Indeed in this region we have $\abs{\varphi_f(x,y) - x}\leq 2R_f$ and 
for $(x,y)\in \bb_2$ we have $\varphi_f(x,y)\in \disk(v, sR^d/8)$, where $s\leq 2 - 1/(100d)$. We leave the reader check that the condition $R^{d-1}\geq 3200d$ suffices to ensure the desired estimate.

  Set $A = dM(f)+ \log\frac34$. 
  Then   the fundamental domain $\set{A\leq G_f< dA}$ contains $\bb_2$ as soon as  $(d^2-d)M(f)> d\log(4/3)+\log (5/4)$.
  Thus, by the Bedford-Smillie 
 formula~\eqref{eq:bs5} and the bound~\eqref{eq:muc}
we get 
\begin{equation} \label{eq:lower_lyap}
\chi^+(\mu_f)\geq   \log d + \int_{\Omega} G_f\mu_c^- \geq \log d+ \frac{A}{d} 
 = \log d +M(f) + \unsur{d} \log\frac34,
\end{equation}
thereby completing the proof.  
\qed
 

\subsection{Compositions of Hénon maps}\label{subs:lyapunov_multi}
Recall that for a composition $f = h_k\circ \cdots \circ h_1$, we defined 
$M(f) = \max (M(h_i)) = \max (M(p_i))$. 

\begin{pro}\label{pro:lyapunov_multi} 
Let $f = h_k\circ \cdots \circ h_1$, with $h_i(x,y) = (a_i y+ p_i(x), x)$ be a composition of 
monic and centered Hénon maps,  with  $\abs{a_i}\leq  R_f^{d_i-1}$ for every $i$. Then  
\begin{equation}
\chi^+(\mu_f) \leq \log d+   dM(f) + d \log (15d).
\end{equation}
If furthermore
$\abs{a_i}\leq  R_f^{d_i-1}/(400 d_i)$ for every $i$  and 
 $M(f)$ is sufficiently large, we have the estimate
\begin{equation}\label{eq:lyapunov_multi}
   \chi^+(\mu_f) \geq \log d +   M(f) -  \unsur{\min(d_i)}\log \frac43 .
\end{equation}
\end{pro}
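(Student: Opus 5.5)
We mimic the proof of Proposition~\ref{pro:lyapunov_henon}, replacing the single Hénon map by the composition $f=h_k\circ\cdots\circ h_1$ and working with the common scale $R=R_f=e^{M(f)}$. By Remark~\ref{rem:largerR}, every estimate established for $h_i$ at scale $R_{p_i}$ still holds at the larger scale $R\geq R_{p_i}$. The hypothesis $\abs{a_i}\leq R^{d_i-1}$ is exactly what is needed to apply these estimates factor by factor.

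\textbf{Step 1: estimates on the Böttcher coordinate of $f$.} First we prove an analogue of Proposition~\ref{pro:estimate_varphi} (the forthcoming Proposition~\ref{pro:estimate_varphi_multi}). Step~1 of that proof applies to each factor: on $\set{\abs{y}\leq\abs{x},\ \abs{x}\geq 12d_iR}$ we have $\abs{\pi_1 h_i(x,y)/x^{d_i}-1}\leq (2+1/(6d_i))R/\abs{x}$. Moreover this domain, taken with threshold $12\max(d_i)R$, is mapped by $h_i$ into the corresponding domain for $h_{i+1}$, since $\abs{x_1}\geq 10d_i\abs{x}$. Composing the factors, $\pi_1 f(x,y)=x^d(1+\theta)$ with $\abs{\theta}$ bounded by a constant times $R/\abs{x}$; the constant stays small because the successive errors decay geometrically. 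Step~2 of the earlier proof then applies verbatim and gives $\abs{\varphi_f/x-1}\lesssim R/\abs{x}$ and $\abs{G_f-\log\abs{x}}\lesssim R/\abs{x}$ for $\abs{x}\geq 12dR$. The slope estimate of Proposition~\ref{pro:slope} follows by the same Rouché and Schwarz--Pick argument.

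\textbf{Step 2: the upper bound.} Take $\bb_0=\disk(0,12dR)^2$. Each $h_i$ is Hénon-like there, by Lemma~\ref{lem:crossed} and Remark~\ref{rem:largerR}, and these factors are composed properly, so $f$ is Hénon-like of degree $d$ with $K^-\subset\bb_0\cup V_0^+$. The maximum principle gives $G_f\leq\log(12dR)+o(1)$ on $\bb_0$. Hence $\varphi_f$ is defined near $K^-\cap\set{G_f>M(f)+\log(15d)}$, and \cite[Thm A.2]{lyapunov} yields $\chi^+\leq\log d+dM(f)+d\log(15d)$.

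\textbf{Step 3: the lower bound.} Pick $i_0$ with $M(p_{i_0})=M(f)$. By cyclically conjugating, which preserves $\mu_f$ up to pushforward and hence $\chi^+$, we may assume $i_0=k$, so that the folding factor is applied last. Set $g=h_{k-1}\circ\cdots\circ h_1$. By Step~1 applied to $g$, and by Lemma~\ref{lem:crossed} applied to each factor, $g$ is a crossed map of degree $d/d_k$ from $\bb_1=\disk(0,10R)^2$ into a bidisk containing $\bb_1$ as a vertical sub-bidisk. Lemma~\ref{lem:folding} for $h_k$ works with $\varphi_{p_k}$ replaced by $\varphi_f$, the coordinate in which the critical measure of $f$ is defined; its Step~2 perturbation only uses $\abs{\varphi_f-x}\lesssim R$ from Step~1, together with $\abs{a_k}\leq R^{d_k-1}/(400d_k)$. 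This gives a crossed map $f\colon\bb_1\to\bb_2$, unramified over $\fr^v\bb_2$, with a solenoidal component $\Omega$ of degree $q\geq2$, and on $\bb_2$ we have $\abs{\varphi_f}\simeq R^{d_k}$ up to the factors $3/4$ and $5/4$. The disconnection argument of Lemma~\ref{lem:disconnected} carries over, so $f$ is unstably disconnected. Lemma~\ref{lem:critical_mass} and the invariance computation~\eqref{eq:muc} then give $\mu_c^-(\Omega)\geq (q-1)/d\geq 1/d$. Finally, we choose $A=d_kM(f)+\log\frac34$ so that $\set{A\leq G_f<dA}\supset\bb_2$, and formula~\eqref{eq:bs5} gives $\chi^+\geq\log d+A/d$.

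This last step is the main obstacle. It yields only $\log d+(d_k/d)M(f)$ rather than the claimed $\log d+M(f)-\frac1{\min d_i}\log\frac43$. The fix I would try is to exploit invariance of $G_f\mu_c^-$ more carefully. Instead of computing the integral on $\bb_2$ alone, I would count the critical mass created at each of the $d/d_k$ preimages along the fibers of $g$, or equivalently place the fundamental domain at the level $G_f\approx d_kM$ of $h_k$ and integrate $G_f$ against all critical mass in $\set{A\leq G_f<dA}$. The critical mass available there should be about $d/d_k$ times larger, which compensates the factor $d_k/d$ and gives $\log d+M(f)-\frac1{d_k}\log\frac43$. Proving this bookkeeping rigorously, including the slice mass computation for the intermediate crossed maps, is where I expect the real difficulty.
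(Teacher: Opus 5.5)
Your Steps 1 and 2 are essentially the paper's argument for the Böttcher estimates and for the upper bound. To land exactly on $d\log(15d)$ you should make the bound on $\theta$ explicit; the paper gets $\abs{\theta}\leq R_f/\abs{x}$ when $k\geq 2$, so Step~2 of Proposition~\ref{pro:estimate_varphi} runs with the same constants. The lower bound, however, is not proved. As you say yourself, your argument only gives $\chi^+(\mu_f)\geq \log d+\frac{d_k}{d}M(f)+O(1)$, and the suggested repair is not an argument.

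The defect is a miscount of the critical mass inside $\Omega$, not a problem with where that mass sits. The solenoidal component supplied by Lemma~\ref{lem:folding} is a component of $h_k(\bb_1)\cap\bb_2$, of degree $q$ relative to $h_k$. In applying~\eqref{eq:muc} you treated $f^{-1}(L^v\cap\Omega)$ as a vertical manifold of degree $q$. But the factors are composed properly, so $f^{-1}(L^v\cap\Omega)\cap\bb_1=g^{-1}\bigl(h_k^{-1}(L^v\cap\Omega)\bigr)\cap\bb_1$. Since $g$ is Hénon-like of degree $d/d_k$ in $\bb_1$, this manifold has degree $qd/d_k$. Hence $\sm(T^-\rest{\Omega})=\frac1d\cdot\frac{qd}{d_k}=\frac{q}{d_k}$: the factor $d/d_k$ you were looking for is already in $\Omega$.

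The paper organizes this by using $h_k$, not $f$, as the crossed map $\bb_1\to\bb_2$. Proper composition gives $T^-\rest{\bb_3}=d_k^{-1}(h_k)_*S$, with $S=\el_{k-1}\cdots\el_1(T^-\rest{\bb_1})$ and $\el_i=d_i^{-1}1_{\bb_1}(h_i)_*$. Each graph transform preserves slice mass, so $\sm(S)=1$ and $\norm{T^-\wedge[L^v\cap\Omega]}=d_k^{-1}\norm{S\wedge[h_k^{-1}(L^v\cap\Omega)]}=q/d_k$. By Proposition-Definition~\ref{propdef:non-ramified} applied to $h_k$, every disk in the decomposition of $T^-\rest{\Omega}$ has degree divisible by $q$. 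Lemma~\ref{lem:critical_mass} then gives $\mu_c^-(\Omega)\geq (q-1)/d_k\geq 1/d_k$. With your choice $A=d_kM(f)+\log\frac34$ this yields $\int_\Omega G_f\,\mu_c^-\geq A/d_k=M(f)-\frac1{d_k}\log\frac43\geq M(f)-\frac1{\min(d_i)}\log\frac43$, which is the claimed estimate.

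Working with $h_k$ also removes a second unsupported claim in your Step~3, namely that $f\colon\bb_1\to\bb_2$ is unramified over $\fr^v\bb_2$. Lemma~\ref{lem:folding} controls only the $h_k$-images of horizontal lines, whereas $f(L)=h_k(g(L))$. The curve $g(L)\cap\bb_1$ may have vertical tangencies, and near them the term $a_k\,dy$ can cancel $p_k'(x)\,dx$. So~\eqref{eq:p't} does not transfer, and nothing in the lemma prevents $f(L)$ from being tangent to the fibers of $\varphi_f$ near $\fr^v\bb_2$.
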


\begin{rem} 
In the case of Hénon maps we can use $f\inv$ to obtain 
an estimate without any bound on $\abs{a}$ 
(see \S~\ref{subsub:comments}), but for compositions of Hénon maps 
this trick cannot be used,  so it 
makes sense to consider the possibility for the  $a_i$ to be large. Furthermore,  
our method breaks down if the assumption on the   $a_i$ is dropped: see 
Example~\ref{eg:fafinva} below for the discussion of a specific example.  
To compare with the discussion of \S~\ref{subsub:comments}, we thus have found 
 a large region in the space of compositions of Hénon maps where the Lyapunov exponents of $\mu_f$ can be understood, but for $k\geq 2$, 
a part of the parameter space remains in the shadow. 
\end{rem}

The main part of the proof is already contained in that of 
Proposition~\ref{pro:lyapunov_henon}, that  we will apply to a factor $h_i$ from the composition that maximizes $M(p_i)$. 
We first need to get good  estimates for $G_f$ and $\varphi_f$. 
(Again no effort is made   to optimize the constants.)

\begin{pro}\label{pro:estimate_varphi_multi}
Let $f = h_k\circ \cdots \circ h_1$ be as above, with $\abs{a_i}\leq R_f^{d_i-1}$ for every $i$. Then $\varphi_f$ is well-defined in the domain defined by  $\abs{y}\leq \abs{x}$ and $\abs{x}\geq  12d R_f$,  and in this domain we have
  $$\abs{\frac{\varphi_f(x,y)}{x} - 1} \leq   2 \frac{R_f}{  \abs{x}} \text{ and } \abs{G_f(x,y) - \log \abs{x}}\leq     2  \frac{R_f}{\abs{x}} .$$
\end{pro}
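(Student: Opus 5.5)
The plan is to mimic the two-step proof of Proposition~\ref{pro:estimate_varphi}, applying the factor-wise estimate to each $h_i$ and chaining $k$ factors. Step~2 of that proof only used $\abs{\theta}\le 1/6$ along the orbit, the growth $\abs{x_k}\geq 20^k\abs{x}$, and a bound $\abs{\theta(x,y)}\le C R_f/\abs{x}$. So the task is to re-establish these three facts for
\[
\theta(x,y) = \frac{\pi_1\circ f(x,y)}{x^d}-1,
\]
where $f=h_k\circ\cdots\circ h_1$ and $d=\prod d_i$.

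\textbf{Step 1: a single factor.} Write $(x^{(0)},y^{(0)})=(x,y)$ and $(x^{(i)},y^{(i)})=h_i(x^{(i-1)},y^{(i-1)})$. Since $R_{p_i}\le R_f$ and $\abs{a_i}\le R_f^{d_i-1}$, the computation~\eqref{eq:theta} applies verbatim to $h_i$ with $R_f$ in place of $R_{p_i}$. Here we use that Lemma~\ref{lem:basic_estimate_on_p/zd} remains valid with $R_p$ replaced by any larger $R$, because its hypotheses and conclusion are monotone in $R$. This gives: if $\abs{y^{(i-1)}}\le\abs{x^{(i-1)}}$ and $\abs{x^{(i-1)}}\geq 12 d_i R_f$, then
\[
x^{(i)}=(x^{(i-1)})^{d_i}(1+\theta_i), \qquad \abs{\theta_i}\le \lrpar{2+\tfrac{1}{6d_i}}\frac{R_f}{\abs{x^{(i-1)}}}\le \frac{1}{11}.
\]
In particular $\abs{x^{(i)}}\geq 10 d_i\abs{x^{(i-1)}}$, and $y^{(i)}=x^{(i-1)}$. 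The domain condition therefore propagates: $\abs{y^{(i)}}\le \abs{x^{(i)}}$, and $\abs{x^{(i)}}\geq \abs{x}\geq 12dR_f\geq 12 d_{i+1}R_f$. This last inequality is why the threshold is $12dR_f$ rather than $12\max(d_i)R_f$; it is harmless.

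\textbf{Step 2: the composition.} Composing the $k$ factors,
\[
\pi_1 f(x,y) = x^{d}\prod_{i=1}^k (1+\theta_i)^{d_{i+1}\cdots d_k}.
\]
Since $\abs{x^{(i-1)}}\geq (10\min d_j)^{i-1}\abs{x}$, we get $\abs{\theta_i}\le 3R_f/(20^{i-1}\abs{x})$. Using~\eqref{eq:elementary_inequalityI} and~\eqref{eq:elementary_inequalityII}, we estimate $\log(1+\theta)=\sum_i d_{i+1}\cdots d_k\log(1+\theta_i)$. The first term has the large exponent $d_2\cdots d_k$, and this is the main obstacle: the crude bound gives $\abs{\theta}\lesssim (d/d_1)R_f/\abs{x}$, which is not small enough for a $2R_f/\abs{x}$ conclusion.

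The remedy is to avoid bounding $\theta$ for $f$ and instead run the infinite product of Step~2 directly over the sequence of \emph{individual} factor iterates. That is, define $\varphi_f$ as the limit of $(x^{(m)})^{1/(d_1\cdots d_m)}$ along the concatenated sequence $h_1,h_2,\dots,h_k,h_1,\dots$, with partial degree $D_m=d_1\cdots d_m$ (indices cyclic). This limit agrees with $\lim (\pi_1 f^n)^{1/d^n}$ along the subsequence $m=nk$. Then
\[
\frac{\varphi_f(x,y)}{x}=\prod_{m\geq 1}(1+\theta_m)^{1/D_m},
\]
and $\abs{\theta_m}^{\,}/D_m\le \frac{1}{d_1}\lrpar{2+\frac1{6d_1}}\frac{R_f}{(20)^{m-1}\abs{x}}\cdot\frac{1}{D_{m-1}}$. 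This is exactly the same geometric-series structure as in~\eqref{eq:varphin3}, so one recovers $\abs{\varphi_f/x-1}\le \frac85 R_f/\abs{x}$. Well-definedness follows because $\abs{\theta_m}\le 1/11$ along the whole orbit, and the branch of root is chosen so that $\varphi_f\sim x$.

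\textbf{Step 3: the Green function.} The estimate on $G_f$ follows exactly as in~\eqref{eq:varphin4}, since $(8/5)R_f/\abs{x}\le 1/(7d)$ and $8\alpha/5<2$.
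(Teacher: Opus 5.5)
Your proof is correct, but at the decisive step it takes a different route from the paper.

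\textbf{The paper's route.} The paper reuses Step~2 of Proposition~\ref{pro:estimate_varphi} verbatim for the iterates of $f$. It uses the factorization only to bound the composite quantity $\theta=x_1/x^d-1$, aiming at \eqref{eq:upper_bound_on_theta_composition}. It writes $1+\theta=(1+\theta'_k)\prod_{i<k}(1+\theta'_i)^{d_{i+1}\cdots d_k}$, then asserts $\abs{\theta}\le\abs{\theta'_k}\prod_{i<k}(1+\abs{\theta'_i})^d$, and concludes $\abs{\theta}\le R_f/\abs{x}$.

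\textbf{Your route.} You never estimate $\theta$ for $f$. Instead you run the infinite product along the concatenated orbit under $h_1,\dots,h_k,h_1,\dots$. The error $\theta_m$ of the $m$-th elementary step then only enters through $D_m^{-1}\log(1+\theta_m)$.

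\textbf{On the exponent $d_2\cdots d_k$.} Your worry here is justified. The paper's displayed inequality drops the term $\prod_{i<k}(1+\theta'_i)^{d_{i+1}\cdots d_k}-1$, whose size is roughly $(d/d_1)\abs{\theta'_1}$. The intermediate bound really fails, even though the proposition itself is true. Take $h_1(x,y)=(y+x^2,x)$ and $h_2(x,y)=(y+x^5,x)$, so $R_f=1$ and $d=10$; at $(x,y)=(120,120)$ one gets $\theta=(1+\frac{1}{120})^5-1+120^{-9}>\frac{4}{120}$.

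Conversely, you are too pessimistic when you say that a bound of order $(d/d_1)R_f/\abs{x}$ on $\theta$ could not give the conclusion. Step~2 only involves $\frac1d\log(1+\theta)=\sum_{i=1}^k\frac{1}{d_1\cdots d_i}\log(1+\theta'_i)$ (for the appropriate branch), so the division by $d$ absorbs the large exponent. Your factor-by-factor product makes exactly this observation explicit, and so gives a clean repair of the paper's argument. The first term $\frac{1}{d_1}(2+\frac{1}{6d_1})\frac{R_f}{\abs{x}}$ dominates, so the constant $\frac85$ of~\eqref{eq:varphin3} is indeed recovered.

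\textbf{One slip.} For $m\geq 2$, the prefactor in your bound on $\abs{\theta_m}/D_m$ should be $\frac{1}{d_m}(2+\frac{1}{6d_m})$ rather than $\frac{1}{d_1}(2+\frac{1}{6d_1})$. Since these terms also carry the factor $1/(20^{m-1}D_{m-1})$, the sum is unaffected.
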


 
\begin{proof}
As before we denote by $V_0^+$ the domain 
 $\set{\abs{x}>12d R_f, \ \abs{y}<\abs{x}}$, and we  
define $ (x_1, y_1) = f(x,y)  $ and  $  \theta(x,y) =  {x_1}/{x^d} - 1$. 
Thanks to  Proposition~\ref{pro:estimate_varphi} we may 
 assume $k\geq 2$, hence  $d\geq 4$.

Suppose that we can prove 
\begin{equation}
\label{eq:upper_bound_on_theta_composition}
\abs{  \theta(x,y)}\leq 4\frac{R_f}{\abs{x}} \quad \forall (x,y)\in V_0^+,
\end{equation}
so in particular, $\abs{\theta}\leq 1/6$  in $V_0^+$. Then, we can  follow the proof of Proposition~\ref{pro:estimate_varphi}.
First, an immediate adaptation of the bound~\eqref{eq:10d} (using $d\geq 4$)
shows that  $V_0^+$  is stable under $f$. 
Then, the second step of the proof shows that in $V_0^+$  the two estimates
$\abs{\varphi_f(x,y) - x}\leq 2   {R_f}$ and $\abs{G_f(x,y)   - \log\abs{x}}\leq 2 \frac{R_f}{\abs{x}}$
hold.

To obtain the bound~\eqref{eq:upper_bound_on_theta_composition}, we set $(x'_0, y'_0)  = (x,y)$ and 
$(x'_i, y'_i) = h_i\circ\cdots \circ h_1 (x,y)$ for $1\leq i\leq k$, so that 
$(x'_k, y'_k) = (x_1, y_1)$. Since  we assume that 
$\abs{x}\geq 12dR_f$, we have $\abs{x}\geq 12d_iR_{h_i}$ for every $i$. 
Define 
\begin{equation}
\theta'_{i+1}   = \frac{x'_{i+1}}{(x'_i)^{d_{i+1}}} - 1 = \frac{\pi_1\circ h_{i+1} (x'_i, y'_i)}{(x'_i)^{d_{i+1}}} - 1
\end{equation}
and write 
\begin{align}\label{eq:theta1}
\notag x_1 = x'_k &= \frac{x'_k}{(x'_{k-1})^{d_k}}\lrpar{\frac{x'_{k-1}}{(x'_{k-2})^{d_{k-1}}}}^{d_k}\cdots 
\lrpar{\frac{x'_1}{(x'_0)^{d_1}}}^{d_k\cdots d_2} x^d\\ 
  &=  x^d (1+\theta'_k) \prod_{i=1}^{k-1} \lrpar{1+ \theta'_{i}}^{d_k\cdots d_{i+1}}. 
\end{align}
Since $\abs{x}\geq 12dR_f\geq 12 d_1R_{h_1}$, 
reasoning as in~\eqref{eq:theta} we get $\abs{\theta'_1}\leq 1/11$ and then we obtain $\abs{x'_1}\geq   10d \abs{x}$. 
This gives also $\abs{y'_1}  =     \abs{x}\leq \abs{x'_1}$. 
Thus we can iterate this reasoning and  we infer that 
$\abs{x'_{i+1}}
\geq 10d \abs{x'_i}$ and  $\abs{y'_{i+1}} < \abs{x'_{i+1}}$, hence 
$\abs{x'_i} \geq (10d)^i \abs{x}$ for $1\leq i\leq k$. Again by~\eqref{eq:theta} and the bound 
$R_f\geq R_{h_i}$, for $i\geq 1$ we get
\begin{equation}
\label{eq:theta2}
\abs{\theta'_{i+1}}
\leq \left( 2+\frac{1}{6d} \right)  \frac{R_f}{\abs{x'_i}} 
\leq \frac{25}{12(10d)^i } \frac{R_f}{\abs{x} } 
\leq\frac{2}{(10d)^{i+1}}, 
\end{equation} 
because $\abs{x}\geq 12dR_f$.
Combining~\eqref{eq:theta1} and~\eqref{eq:theta2} we obtain  
\begin{equation*}
\abs{\theta (x,y)} =\abs{ \frac{x_1}{x^d} - 1} \leq   
\abs{\theta_k'}  \prod_{i=1}^{k-1} \lrpar{1+\abs{\theta'_i}}^d \leq  
\frac{25}{12 (10d)^{k-1}}  \frac{R_f}{\abs{x}}  \prod_{i=1}^{k-1} \lrpar{1+\frac{2}{(10d)^i}}^d  .
\end{equation*}
Using the bound~\eqref{eq:elementary_inequalityII} and $d\geq 4$, one checks that the last product on the right hand side is bounded by $2$. 
Since $k\geq 2$ we reach the bound $\abs{\theta (x,y)}\leq  \frac{R_f}{\abs{x}}$. 
This proves~\eqref{eq:upper_bound_on_theta_composition}, hence also the proposition.
\end{proof}

\begin{proof}[Proof of Proposition~\ref{pro:lyapunov_multi}] 
Since $R_{h_i}\leq R_f$ for every $i$, by Lemma~\ref{lem:crossed} and Remark~\ref{rem:largerR}, 
$h_i$ is a Hénon-like map in 
$\bb_1 = \disk(0, 10R_f)^2$.
Furthermore, the estimates used to obtain  Lemma~\ref{lem:crossed}
and   Proposition~\ref{pro:estimate_varphi_multi}
 show that a point leaving $\bb_1$ under $h_i$ will never re-enter under a composition of the $h_j$, so any composition $h_{i_n}\circ\cdots \circ h_{i_1}$ is proper in $\bb_1$ in the sense of \S~\ref{par:iteration_henon_like}. 

Proposition~\ref{pro:slope} does not depend on the fact that $f$ is a Hénon map (rather than a composition of Hénon maps), 
so it holds verbatim in our setting, with  the same numerical constants. 
Then  the proof of  the upper bound for $\chi^+(\mu_f)$ is identical to the corresponding one in Proposition~\ref{pro:lyapunov_henon}.

After  a cyclic permutation of the $h_i$ (which does not affect the Lyapunov exponents)
we may assume that $R_f = R_{h_k}$. Let $c$ be a critical point  of $p_k$ such that 
$G_{p_k}(c) =  M(f)$ and $v=p_k(c)$. 

\begin{lem}
\label{lem:folding2}
Assume   that $\abs{a_k}\leq     R_f^{d_k-1}/(400d_k)$. 
Then if $M(f)$ is  large enough  there exists $s\in [1, 2]$ such     
that  $\bb_2:= \varphi_f\inv (\disk(v, sR_f^{d_k}/8)) \cap \pi_2\inv \disk(0, 10 R_f)$ 
is a vertical sub-bidisk in $\bb_3:=\disk(0, 6^{d_k}R_f^{d_k})\times \disk(0, 10R_f)$ 
 in  the $(\varphi_f, y)$ coordinates, 
and $h_k$ realizes a crossed mapping of degree $d$ from $\bb_1$ to $\bb_2$ that is unramified over $\fr^v\bb_2$, and admits a solenoidal component 
 (relative to the projection $\varphi_f$).   
\end{lem}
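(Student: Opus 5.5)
The plan is to repeat the proof of Lemma~\ref{lem:folding} almost verbatim, with $p=p_k$, $a=a_k$, $R=R_f=R_{h_k}$ and $d$ replaced by $d_k$ wherever it refers to the degree of the last factor. The only genuinely new inputs are the estimates on $\varphi_f$ for the composition $f$, and these are supplied by Proposition~\ref{pro:estimate_varphi_multi}. The key observation is that the crossed-map and folding properties concern only the single factor $h_k$ acting from $\bb_1$. The composition $f$ enters only through the coordinate $\varphi_f$ used to define $\bb_2$. Proposition~\ref{pro:slope} holds verbatim for compositions, as noted above, so the change of coordinates $\Phi=(\varphi_f,y)$ is controlled exactly as before: $\abs{\varphi_f-x}<2R_f$, and the slope of the $\varphi_f$-fibers is at most $5R_f/\abs{x}$.

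\textbf{Step 1: the bidisk $\bb_2'$.} I would take the critical value $v=p_k(c)$ and the $d_k-1$ disks $\Delta_j$ traced by the critical values of $x\mapsto p_k(x)+a_ky$ as $\abs{y}\leq 10R$. Their radii are at most $10R\abs{a_k}\leq R^{d_k}/(40d_k)$. Pigeonholing over $2d_k$ annuli around $v$ in $[R^{d_k}/8,R^{d_k}/4]$ gives a free annulus $A_{k^\varstar}$, and I would set $\bb_2'=\disk(v,s^\varstar R^{d_k}/8)\times\disk(0,10R)$. The lower bound $\abs{p_k'(t)}\geq d_kc_{d_k}^{d_k-1}R^{d_k-1}$ over the collar $A'$ follows as in~\eqref{eq:p't}, using that the critical points lie in $\disk(0,2R_{p_k})$. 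Since $\bb_2'$ is a genuine bidisk, Remark~\ref{rem:non_ramified} then gives that $h_k\colon\bb_1\to\bb_2'$ is unramified. That $h_k$ is crossed from $\bb_1$ to $\bb_3$ is Lemma~\ref{lem:crossed} applied to $h_k$, with degree $d_k$. I read the ``degree $d$'' in the statement as this degree, namely $\deg p_k$.

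\textbf{Step 2: passing to the $(\varphi_f,y)$ coordinates.} I would set $s=s^\varstar-1/(100d_k)$ and $\bb_2=\Phi\inv(\disk(v,sR^{d_k}/8)\times\disk(0,10R))$. Here one must check that this polydisk lies in the image of $\Phi$ described in Proposition~\ref{pro:slope}, namely $\abs{x}>\max(\abs{y},12dR)+6R$. This is the one point needing care: the threshold now involves the total degree $d$, not $d_k$, while $\abs{v}$ is of order $R^{d_k}$. Since $\abs{v}\geq R^{d_k}/2$ roughly by Lemma~\ref{lem:basic_estimates_bottcher} (because $G_{p_k}(v)=d_kM$), it suffices that $R^{d_k}\gg dR$, i.e.\ $R^{d_k-1}\geq C d$. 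This holds once $M(f)$ is large, because $d_k\geq2$. The rest of Step 2 then follows as before. The sandwich $\disk(v,(s^\varstar-2/(100d_k))R^{d_k}/8)\times\disk(0,10R)\subset\bb_2\subset\bb_2'$ holds because $4R\ll R^{d_k}/(800d_k)$. The fiber slope in the collar is at most $10/R^{d_k-1}$, which is dominated by $d_kc_{d_k}^{d_k-1}R^{d_k-1}$, and this gives condition~(i). Condition~(ii) follows from the maximum principle applied to $\varphi_f$, which is defined near $\bb_2$, exactly as before.

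\textbf{Step 3: the solenoidal component.} I would take the component $U$ of $p_k\inv(\disk(v,s^\varstar R^{d_k}/8))$ containing $c$; then $p_k\colon U\to\disk(v,s^\varstar R^{d_k}/8)$ has degree $q\geq2$. Because there are no critical points of $\varphi_f\circ h_k(\cdot,0)$ near $\fr U$, this map has the same degree $q$ over $\disk(v,sR^{d_k}/8)$. The component $\Omega$ of $h_k(\bb_1)\cap\bb_2$ containing $h_k(U\times\{0\})\cap\bb_2$ therefore has degree $q$ by Proposition~\ref{propdef:non-ramified}. I expect the main obstacle to be the size bookkeeping in Step 2, namely the mismatch between the scale $12dR$ coming from $f$ and the scale $R^{d_k}$ coming from $h_k$. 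The rest is a transcription of the earlier argument.
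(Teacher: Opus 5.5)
Your proposal is correct and follows essentially the same route as the paper. The paper proves this lemma by running Lemma~\ref{lem:folding} for $h_k$ with Step~1 unchanged and, in Step~2, using the estimates for $\varphi_f$ (Propositions~\ref{pro:estimate_varphi_multi} and~\ref{pro:slope}) to bound the slope of the fibers of $\varphi_f$ by $10R_f^{1-d_k}$ where $\abs{x}\geq R_f^{d_k}/2$. Your extra bookkeeping is accurate and only makes explicit what the paper leaves implicit: you check that $R_f^{d_k}$ dominates the threshold $12dR_f$ coming from the total degree (using $d_k\geq 2$), and you read the crossed-mapping degree as $d_k=\deg p_k$ rather than $d$.
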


\begin{proof}
This is Lemma~\ref{lem:folding} applied to $h_k$, 
with the only difference that for the vertical projection in $\bb_2$ we use $\varphi_f$ and not $\varphi_{h_k}$.
So for the proof we can apply Step 1 to $h_k$ without modification, 
and in Step 2 we just incorporate the fact that  since  $\abs{x}\geq R_f^{d_k}/2$ in a neighborhood of $\bb_2$,   
the   slope  of the vertical fibers $\set{\varphi_f = \cst}$ is  bounded by $10 R_f^{1-d_k}$   and we proceed exactly in the same way. 
\end{proof}

\begin{lem} Under the assumptions of Lemma~\ref{lem:folding2}, 
$f(\bb_1)\cap \bb_1$ is disconnected, 
the Julia set $J_f$ is disconnected, 
and $f$ is unstably disconnected. 
\end{lem}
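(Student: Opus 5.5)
The plan is to transpose the proof of Lemma~\ref{lem:disconnected} to the composition $f = h_k\circ\cdots\circ h_1$, with $h_k$ the factor realizing $M(f)$. We work with $f\inv(\bb_1)\cap\bb_1$ and show that it has several vertical components. By the proof of Proposition~\ref{pro:lyapunov_multi}, every $h_i$ is a Hénon-like map in $\bb_1 = \disk(0,10R_f)^2$ and all compositions are proper. Hence $f\rest{\bb_1} = h_k\rest{\bb_1}\circ\cdots\circ h_1\rest{\bb_1}$, and $f\inv(\bb_1)\cap\bb_1$ is the set of points whose partial orbits $h_i\circ\cdots\circ h_1(z)$ all stay in $\bb_1$.

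\textbf{Key step (splitting in the last factor).} Since $M(p_k) = M(f)$ is large, the sublevel set $\set{G_{p_k}<M(f)}$ has at least two components $U$, each contained in $\disk(0,2R_f)$ by~\eqref{eq:bh-inclusion}. The polynomial $p_k$ maps $\fr U$ onto $\set{G_{p_k}=d_kM(f)}$. We repeat the estimates of Lemma~\ref{lem:crossed} for $h_k$, using $\abs{a_k}\leq R_f^{d_k-1}$ and Remark~\ref{rem:largerR}. This shows that $h_k$ is a crossed mapping from $U\times\disk(0,10R_f)$ to $\bb_1$. Consequently $h_k\inv(\bb_1)\cap\bb_1$ has at least two vertical components $W_U\subset U\times\disk(0,10R_f)$, which are disjoint for distinct $U$.

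\textbf{Pulling back.} The map $F = h_{k-1}\circ\cdots\circ h_1$ is a composition of Hénon-like maps in $\bb_1$, hence a Hénon-like map of degree $d_1\cdots d_{k-1}$. The preimage under $F$ of a vertical open set $W_U\subset\bb_1$ is a nonempty vertical subset of $\bb_1$: it meets every horizontal line, by the crossed-mapping property as in the proof of Proposition~\ref{propdef:non-ramified}. Preimages of the disjoint sets $W_U$ are disjoint, so $f\inv(\bb_1)\cap\bb_1 = F\inv\lrpar{\bigcup_U W_U}\cap\bb_1$ is disconnected. Applying $f$ gives the claim for $f(\bb_1)\cap\bb_1$. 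We must check that $F\inv(W_U)$ is not empty, which should follow from the fact that vertical objects meet horizontal lines. This is the step I expect to be the main obstacle, since the pullback through $F$ must be verified to preserve both the vertical character and the disjointness of the pieces.

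\textbf{Julia set and unstable disconnectedness.} Disconnectedness of $J_f$ follows exactly as in Lemma~\ref{lem:disconnected}. A component $\Omega$ of $f(\bb_1)\cap\bb_1$ is horizontal, and for a vertical line $L^v$ meeting it, $f\inv(\Omega\cap L^v)$ is a vertical submanifold of $\bb_1$. This submanifold meets $T^-$, so by $f_*T^- = dT^-$ and the horizontality of $T^-$ in $\bb_1$ (which holds since $f$ is Hénon-like there), $\mu_f = T^+\wedge T^-$ charges each $\Omega$. Hence $\jstar$ meets several disjoint components, and $J_f$ is disconnected.

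For unstable disconnectedness, if $\abs{\jac f}\leq 1$ we invoke~\cite[Thm 0.2]{bs6}, which says a map with disconnected $J$ and $\abs{\jac}\leq 1$ is unstably disconnected. If $\abs{\jac f}>1$, unstable disconnectedness is automatic by~\cite[Cor. 7.4]{bs6}; in that case we also note that $f\inv$ has disconnected Julia set, since $J$ is the same for $f$ and $f\inv$.
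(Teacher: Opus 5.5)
Your argument is correct, but it takes a different route from the paper. The paper gets the splitting of $h_k(\bb_1)\cap\bb_1$ from Lemma~\ref{lem:disconnected} applied to $h_k$, and transfers it to $f$ using currents. Properness of the composition gives $\el=\el_k\circ(\el_{k-1}\cdots\el_1)$, so $T^-\rest{\bb_1}$ is the $h_k$-graph transform of the horizontal current $S=\el_{k-1}\cdots\el_1(T^-\rest{\bb_1})$. Running the argument of Lemma~\ref{lem:disconnected} with $S$ then shows that $\mu_f$ charges every component of $h_k(\bb_1)\cap\bb_1$. Since $\jstar_f\subset f(\bb_1)\cap\bb_1\subset h_k(\bb_1)\cap\bb_1$, both disconnectedness statements follow at once.

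You instead separate topology from measure. The horizontal curves $F(L^h)\cap\bb_1$ play the role of $S$ in showing that $f\inv(\bb_1)\cap\bb_1$ splits, and you then run the current argument for $f$ itself as a degree-$d$ Hénon-like map. The paper's version gets the nonemptiness of the pieces for free from the current $S$. Yours avoids the factorization of the graph transform and makes the splitting of $f\inv(\bb_1)\cap\bb_1$ explicit.

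Three details need tightening.

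\textbf{The step you flagged.} An arbitrary vertical open set need not meet $F(\bb_1)$; a small ball is a counterexample. What works is that $W_U$ contains the vertical submanifold $h_k\inv(L^v)\cap(U\times\disk(0,10R_f))$ of $\bb_1$, which has positive degree by the crossed-mapping property of $h_k$. It must meet the horizontal curve $F(L^h)\cap\bb_1$, and properness guarantees that the corresponding points of $L^h$ have their partial orbits in $\bb_1$.

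\textbf{The hypothesis on $a_k$.} The estimates on $\fr U\times\disk(0,10R_f)$ need the hypothesis $\abs{a_k}\leq R_f^{d_k-1}/(400d_k)$ of Lemma~\ref{lem:folding2}, not merely $\abs{a_k}\leq R_f^{d_k-1}$. For $x\in\fr U$ one only knows $\abs{p_k(x)}\geq R_f^{d_k}-2R_f$, which $\abs{a_k y}$ could exceed under the weaker bound.

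\textbf{The covering claim.} The same estimate, applied wherever $G_{p_k}(x)\geq M(f)$, gives $h_k\inv(\bb_1)\cap\bb_1=\bigcup_U W_U$. You use this implicitly when you write $f\inv(\bb_1)\cap\bb_1$ as the pull-back of $\bigcup_U W_U$.
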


\begin{proof}
Indeed, we observe that Lemma~\ref{lem:disconnected} can be  applied to  $h_k$, so 
$h_k(\bb_1)\cap \bb_1$ is disconnected. Let 
$\el_i = d_i\inv 1_{\bb_1}(h_i)_*$ be the graph transform operator for currents 
associated to the Hénon-like map $h_i$ in $\bb_1$. 
The fact that  the $h_i$ are composed properly guarantees that 
$\el_k\cdots \el_1  = \el =  d\inv 1_{\bb_1} f_*$, in particular 
$\el_k(\el_{k-1}\cdots \el_1 (T^-\rest{\bb_1})) = T^-\rest{\bb_1}$ (where $T^- = T^-_f$), and also 
 \begin{equation}\label{eq:hkstar}
 d_k\inv (h_k)_* \lrpar{\el_{k-1}\cdots \el_1 (T^-\rest{\bb_1})}\rest{\bb_3}  = T^-\rest{\bb_3}.
 \end{equation}

Arguing as in Lemma~\ref{lem:disconnected}, we infer that 
$T^- = \el_k(\el_{k-1}\cdots \el_1 (T^-\rest{\bb_1}))$ admits a component of positive mass in every component of $h_k(\bb_1)\cap \bb_1$, which must thus intersect $T^+$, thus  $J_f^\varstar$ intersects every component of $h_k(\bb_1)\cap \bb_1$ and we are done.
\end{proof}
 
 Since $f$ is unstably disconnected, $T^-\rest{\bb_3}$ admits  a decomposition into 
an integral of horizontal disks of finite degree in   $\bb_3$, so by restriction 
this holds in $\bb_2$ as well. 
By Lemma~\ref{lem:folding2}, 
 $h_k(\bb_1)\cap \bb_2$ admits a solenoidal component $\Omega$ of some degree $q\geq 2$, 
 so the degree of any horizontal submanifold in $\Omega$ is a multiple of $q$. 
 We claim that the slice mass of $T^-\rest{\Omega}$ equals $q/d_k$. 
 For this, we take a vertical fiber $L$ of $\varphi_f$ in $\bb_2$, and  exactly as in the argument 
 around Equation~\eqref{eq:Lomega}, we see that the mass of $T^-\wedge [L\cap \Omega]$ is independent of $L$, and that $h_k\inv(L\cap \Omega)$ is a vertical manifold in $\bb_1$ of degree $q$. Using the identity~\eqref{eq:hkstar},  
  we get
\begin{align}
\norm{T^-\wedge [L\cap\Omega]} &= \norm{h_k^*(T^-\wedge [L\cap \Omega])}\\
\notag &= \norm{\lrpar{\unsur{d_k}\el_{k-1}\cdots \el_1 (T^-\rest{\bb_1}) \wedge [h_k\inv(L\cap \Omega)]}} = \frac{q}{d_k},
\end{align}
and our claim is proved.  
Then, as in~\eqref{eq:muc}, the critical mass of $\Omega$ is $\geq 1/d_k$. 

For the final estimate we argue as in Proposition~\ref{pro:lyapunov_henon}: since for large $M(f)$ 
\begin{equation}
\Omega\subset \set {\frac34 R_f^{d_k}\leq \abs{\varphi_f}\leq \frac54R_f^{d_k} } \subset
\set{A\leq  G_f<  dA} 
\end{equation}
with 
$ A = d_k M(f)+  \log\frac34$,    by using 
 $\mu_c(\Omega)\geq 1/d_k$    we get 
 \begin{align} \label{eq:lower_lyap_multi}
\chi^+(\mu_f) \geq   
\log d + \int_{\Omega} G_f\mu_c^- 
\geq \log d+ \frac{A}{d_k}  
\geq \log d +M(f) + \unsur{d_k} \log\frac34,
\end{align}
thereby finishing the proof.  
 \end{proof}

\begin{eg}\label{eg:fafinva} 
A typical situation where the assumptions  of Proposition~\ref{pro:lyapunov_multi}  
are not satisfied   is the family 
$f_a(x,y) = (y/a+p(x),x )\circ (ay+p(x), x)$, when $p$ is fixed and $a\in \C^*$ goes to infinity. 
Here, the Jacobian is constant equal to $1$ but the multi-Jacobian diverges.
We take $p(x) = x^2$  and study 
 the regime where $\abs{a}$ is large. We claim that if   $R  \geq 1$ is such that 
 $f_a$ is a Hénon-like map of degree $d^2$ in the bidisk $\bb= \disk(0, R)^2$, then 
 $f(\bb)\cap \bb$ is connected. Indeed, look at the image of the horizontal line $L=\set{y=0}$.
  If we parameterize $L$ by $ t\mapsto (t, 0)$, $t\in \disk(0, R)$, then, $f_a(L)$  
 is parameterized by $t\mapsto f_a(t, 0) = (t/a+t^4, t^2)$. The critical points of $\pi_1\circ f_a(t, 0)$ are the cube roots $(4a)^{-1/3}$ so the critical values are of the form $ca^{-4/3}$ and they are all close to 0, from which we conclude that $f_a(L)\cap \bb$ is connected. 
 Arguing as in Lemma~\ref{lem:disconnected}, we see that $f_a(L)\cap \bb$ intersects 
 any component of $f_a(\bb)\cap \bb$, so $f_a(\bb)\cap \bb$ is connected. 
In particular, for large $a$,  we do not know how to estimate the Lyapunov exponents of $\mu_{f_a}$, 
nor if its Julia set can be connected. 
\end{eg}

\subsection{Consequences} \label{subs:comments_corollaries}

\begin{proof}[Proof of Theorem~\ref{mthm:lyapunov}]  
Let $B = B(\mathbf d, \mathbf a)$ be such that the estimate of Proposition~\ref{pro:lyapunov_multi} holds for 
$M(f)\geq B$.  
Lemma~\ref{lem:M} guarantees that  $\set{f\in \mathcal H^k_{\mathbf d, \mathbf a}\; ; \;  1\leq M(f)\leq B}$  
is compact, so  $\chi^+(\mu_f)$ is   bounded from above  on that subset, and 
$\chi^+(\mu_f)\geq \log d$ for every $f$ by~\cite{bs3}. 
The result follows. 
\end{proof}

Let us now  explain how Theorem~\ref{mthm:lyapunov} implies 
   Theorem~\ref{mthm:stable}. 
Recall that `stable' in Theorem~\ref{mthm:stable} means stable of type (III). 
The following lemma should be compared to Lemma~\ref{lem:jacobian}. 

\begin{lem}\label{lem:jacobian2}
Let $(f_\lambda)_{\lambda\in \Lambda}$ be an  algebraic family  of loxodromic automorphisms, with $\Lambda$ irreducible. If $(f_\lambda)_{\lambda\in \Lambda}$ is stable of type (III),
 then  $\jac(f_\lambda)$ and the Lyapunov exponents of $\mu_{f_\lambda}$ are   constant on $\Lambda$.
\end{lem}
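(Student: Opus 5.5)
The plan is to follow the proof of Lemma~\ref{lem:jacobian}, replacing the ``all saddles persist'' input by the positive upper density of the persistent saddle set and the reinforced equidistribution of Theorem~\ref{thm:equidistribution_reinforced}. As there, we reduce to $\dim\Lambda=1$ and $\Lambda$ smooth by desingularizing and restricting to curves through pairs of points. Let $\mathrm P\subset\SPer(f_{\lambda_0})$ be a set of positive upper density whose points persist as saddles over $\Lambda$. For a fixed period $n$, $\mathrm P_n$ is finite, so after passing to a finite cover $\Lambda'_n\to\Lambda$ (still a Riemann surface of finite type, since continuation of periodic points is algebraic over $\widehat\Fix_n$) every $z\in\mathrm P_n$ has a global continuation $z_\lambda$. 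Along it, $\lambda\mapsto\lambda^u(z_\lambda)$ is holomorphic with values in $\C\setminus\overline\disk$, and $\lambda\mapsto\lambda^s(z_\lambda)$ takes values in $\disk^*$. Since $\Lambda'_n$ is a punctured compact Riemann surface, bounded holomorphic functions extend across the punctures and are therefore constant; applying this to $1/\lambda^u$ and to $\lambda^s$ shows both multipliers are constant. Choosing any single $z$, the product $\lambda^u\lambda^s=\jac(f_\lambda)^n$ is constant, so $\jac(f_\lambda)$ is constant on $\Lambda$.

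For the Lyapunov exponents, we first use $\chi^+(\mu_{f_\lambda})+\chi^-(\mu_{f_\lambda})=\log\abs{\jac(f_\lambda)}$, which reduces the claim to showing that $\chi^+(\mu_{f_\lambda})$ is constant. Fix $\lambda_1\in\Lambda$. Apply Theorem~\ref{thm:equidistribution_reinforced} to $f_{\lambda_0}$: the set $\SPer^+(f_{\lambda_0})$ has density $1$, so $\mathrm P\cap\SPer^+(f_{\lambda_0})$ still has positive upper density. Hence there are $n_j\to\infty$ and $z^{(j)}\in\mathrm P_{n_j}\cap\SPer^+(f_{\lambda_0})$ with $\chi^u(z^{(j)})\to\chi^+(\mu_{f_{\lambda_0}})$. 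Now follow these points to $\lambda_1$ along a path. By the previous paragraph, their continuations $z^{(j)}_{\lambda_1}$ are saddles of exact period $n_j$ with the same unstable multipliers, so $\chi^u(z^{(j)}_{\lambda_1})=\chi^u(z^{(j)})$.

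Two small points need checking here. The exact period must be preserved: a saddle cannot collide with a point of lower period, because a multiplier would have to become a root of unity. Distinct points must stay distinct, which holds by injectivity of analytic continuation along the path. It follows that the image set $\mathrm P'=\{z_{\lambda_1}\}$ has the same counting function as $\mathrm P$, hence positive upper density for $f_{\lambda_1}$.

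The main obstacle is to conclude $\chi^u(z^{(j)}_{\lambda_1})\to\chi^+(\mu_{f_{\lambda_1}})$, since the sequence was selected using $f_{\lambda_0}$ and not $f_{\lambda_1}$. The first attempt is symmetric: apply Theorem~\ref{thm:equidistribution_reinforced} at $\lambda_1$ as well. The points of $\mathrm P'$ lying outside $\SPer^+(f_{\lambda_1})$ form a set of density $0$, so we can take a common subsequence of periods along which some $z\in\mathrm P_{n}$ has its continuation at $\lambda_1$ in $\SPer^+(f_{\lambda_1})$ and $z$ itself in $\SPer^+(f_{\lambda_0})$. This works because both exceptional sets have density zero while $\mathrm P_n$ has at least $\epsilon d^n$ elements for infinitely many $n$. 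Along that subsequence, $\chi^+(\mu_{f_{\lambda_0}})=\lim\chi^u(z)=\lim\chi^u(z_{\lambda_1})=\chi^+(\mu_{f_{\lambda_1}})$. Since $\lambda_1$ is arbitrary, both Lyapunov exponents are constant on $\Lambda$.
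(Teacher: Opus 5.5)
Your proof is correct and follows the same route as the paper. Saddle multipliers, and hence the Jacobian, are constant exactly as in Lemma~\ref{lem:jacobian}, and a density count gives, for infinitely many periods, a persistent saddle in $\SPer^+(f_{\lambda_0})$ whose continuation lies in $\SPer^+(f_{\lambda_1})$, which forces $\chi^+(\mu_{f_{\lambda_0}})=\chi^+(\mu_{f_{\lambda_1}})$. Your checks on preservation of exact period and injectivity of the continuation make explicit what the paper's one-line claim that $\mathrm P_\lambda$ meets $\SPer^+(f_\lambda)$ leaves implicit.
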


\begin{proof}
By assumption, for some $f_0\in \Lambda$, there exists a set $\mathrm P$ of saddle points of $f_0$
of positive upper density 
which can be continued holomorphically over $\Lambda$ as a set of saddle points $P_\lambda$. As in Lemma~\ref{lem:jacobian}, for each of these saddle points, the stable and unstable multipliers are constant,
 so the Jacobian is constant. For any $\lambda\in \Lambda$, $\mathrm P_\lambda$ 
intersects the set $\SPer^+(f_\lambda)$ of Theorem~\ref{thm:equidistribution_reinforced}, thus  $\chi^+(\mu_{f_\lambda}) = \chi^+(\mu_{f_0})$, as announced. Since the Jacobian is constant, we conclude that  $\chi^-(\mu_{f_\lambda})$ is  constant as well.
\end{proof}

\begin{proof}[Proof of Theorem~\ref{mthm:stable}]
Let $\mathcal H$ be either of $\mathcal H^1_d$ or $\mathcal H^k_{\mathbf d,\mathbf a}$ for $k\geq 2$ and let $\Lambda\subset \mathcal H$ be a stable algebraic family.
By Lemma~\ref{lem:jacobian2}, $\lambda \mapsto \chi^+(\mu_{f_\lambda}) $ is constant on every irreducible component of $\Lambda$, 
and when
$k = 1$, $\lambda\mapsto \jac(f_\lambda)  = -a_\lambda$ is   constant as well. 
Thus in both cases, Theorem~\ref{mthm:lyapunov} and Lemma~\ref{lem:M} imply that 
$\Lambda$ is bounded, so it is finite.
\end{proof}

\begin{rem}
Thanks to Proposition~\ref{pro:lyapunov_multi} we could replace 
$\mathcal H^k_{\mathbf{d}, \mathbf{a}}$ in  Theorem~\ref{mthm:stable} by any algebraic family $f_\lambda$ of compositions of Hénon mappings such that the multi-Jacobian satisfies $\abs{a_{i,\lambda}}   = o(R_{f_\lambda}^{d_i-1})$ for every $i$.  
\end{rem}

The following result is the counterpart of Theorem~\ref{thm:traces} for unstable multipliers, and 
implies the   statements of  Theorems~\ref{mthm:henon_multipliers} and~\ref{mthm:multi-Jacobian} relative to the unstable multiplier  spectrum. 
 
\begin{thm}
\label{thm:u_multipliers}
Let $\mathcal H$ be $\mathcal H^1_d$ for some $d\geq 2$ or 
$\mathcal H^k_{\mathbf d,\mathbf a}$ for some $\mathbf a\in (\C^*)^k$ and some $\mathbf d\in \N^k_{\geq 2}$. 
For every $f_0 \in \mathcal H$, the isospectral subset 
 $\Sigma^u  = \set{f \in \mathcal H\; ; \; \SMult^u(f) = \SMult^u(f_0)}$ is  finite.  
\end{thm}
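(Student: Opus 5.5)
Proposition~\ref{pro:multipliers} together with Theorem~\ref{mthm:stable} already shows that $\Sigma^u$ is discrete in $\mathcal H$. It remains to prove that $\Sigma^u$ is bounded: a bounded discrete subset of the affine variety $\mathcal H$ that is also closed in it is finite. Closedness is the one point to watch, so I aim instead for the slightly stronger statement that $\Sigma^u$ is contained in a compact set $\{M\le B\}$ and that every $f\in\Sigma^u$ is isolated in $\Sigma^u\cap\{M\le B\}$. I will get this by running the discreteness argument of Proposition~\ref{pro:multipliers} at accumulation points as well. That argument only used that $f_0$ satisfies the spectral constraints and is non-isolated, so it should apply equally at a limit point of $\Sigma^u$.

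\textbf{Step 1: the spectrum determines the Jacobian and $\chi^+$.} For $f\in\Sigma^u$, the unstable multiplier spectrum coincides with that of $f_0$. By Theorem~\ref{thm:equidistribution_reinforced}, for typical saddle points $p_n\in\SPer_n(f)$ we have $\chi^u(p_n)\to\chi^+(\mu_f)$. Since the sets $\SMult^u_n(f)$ and $\SMult^u_n(f_0)$ coincide and both have cardinality $\sim d^n$, I would extract $\chi^+(\mu_f)$ from the spectrum alone, for instance as the value around which asymptotically almost all of the numbers $\frac1n\log\abs{\lambda}$, $\lambda\in\SMult^u_n$, cluster. This gives $\chi^+(\mu_f)=\chi^+(\mu_{f_0})$. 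In the case $\mathcal H^k_{\mathbf d,\mathbf a}$ nothing more is needed, because the multi-Jacobian is fixed.

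\textbf{Step 1 (Hénon case $k=1$): recovering $\jac(f)$.} Here I also need $\abs{\jac(f)}$, or at least a bound on it. I would recover $\chi^-(\mu_f)$ through the relation $\chi^+(\mu_f)+\chi^-(\mu_f)=\log\abs{\jac(f)}$; the difficulty is that only unstable multipliers are given. One option is to apply the same clustering argument to $f^{-1}$, whose unstable multipliers are $1/\lambda^s$; but $\lambda^s=\jac^n/\lambda^u$ already involves the Jacobian, so this is circular. The alternative I would try first is the density argument: if $\abs{\jac}$ varied, the counting functions $\#\{\lambda\in\SMult^u_n;\abs{\lambda}>1\}$ would be constrained. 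Honestly, though, I expect the cleanest route to be the following. Fix the Jacobian modulus a priori is not possible, so instead bound $\abs a$ using $\chi^+(\mu_f)\ge\log\abs{a}$ (from $\chi^-\le-\log d<0$). This gives $\abs a\le e^{\chi^+(\mu_{f_0})}$, so $\abs a$ stays bounded along $\Sigma^u$. Symmetrically, I need $\abs a$ bounded below. I would argue that if $\abs a\to0$, then $f$ tends toward the dissipative regime in which Proposition~\ref{pro:lyapunov_henon} still applies, since the hypothesis $\abs a\le R_f^{d-1}/(400d)$ is then automatic. Consequently $M(f)$ is bounded by Proposition~\ref{pro:lyapunov_henon}, while $\abs a\to 0$ alone does not diverge in $\mathcal H^1_d$ only if $a$ stays away from $0$. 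This lower bound on $\abs a$ is the main obstacle. To overcome it I would use the unstable multipliers directly: as $a\to0$ with $M$ bounded, $f$ converges to the one-dimensional polynomial $p$, and the unstable multipliers converge to the multipliers of $p$. The spectrum is constant along $\Sigma^u$, so any limit point lying in $\{a=0\}$ would itself have to carry the same spectrum; discreteness near such a degenerate point would then be derived by a McMullen-type argument for $p$ (triviality of stable families of polynomials, as recalled in \S\ref{par:intro_divergence_of_Lyap}).

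\textbf{Step 2: bounding $M$ and concluding.} Once $\abs a$ lies in a compact subset of $\C^*$ and $\chi^+$ is constant, Proposition~\ref{pro:lyapunov_henon} (or Theorem~\ref{mthm:lyapunov} together with Proposition~\ref{pro:lyapunov_multi} for compositions) bounds $M(f)$, so $\Sigma^u$ is relatively compact by Lemma~\ref{lem:M}. For finiteness, I take an accumulation point $f_\infty$. By continuity of persistent saddles, its unstable multiplier sets $\SMult^u_n(f_\infty)$ are contained in finite sets $E_n$, namely those of $f_0$ together with limits involving multipliers of modulus $1$. Proposition~\ref{pro:multipliers} applied at $f_\infty$ then forbids accumulation, which gives finiteness.
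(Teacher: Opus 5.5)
Your main line is the paper's. Discreteness comes from Proposition~\ref{pro:multipliers} and Theorem~\ref{mthm:stable}. You then bound $\abs{\jac(f)}$ from above: your bound $\log\abs{a}\leq\chi^+(\mu_f)$ is equivalent to the paper's $\abs{\jac(f)}^n<\abs{\lambda^u}\leq\norm{Df_0}_{\jstar_0}^n$. Finally you bound $\chi^+(\mu_f)$ and use Proposition~\ref{pro:lyapunov_henon} or Theorem~\ref{mthm:lyapunov} to bound $M(f)$.

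Two places can be simplified:
\begin{enumerate}[(i)]
\item The inclusion $\SMult^u_n(f)\subset E_n$ already gives $\chi^+(\mu_f)\leq\log\norm{Df_0}_{\jstar_0}$ via Theorem~\ref{thm:equidistribution_reinforced}, so no clustering argument with multiplicities is needed.
\item For closedness, persistence of saddles shows directly that $S=\set{f\in\mathcal H\; ; \;\SMult^u_n(f)\subset E_n \ \forall n}$ is closed in $\mathcal H$. It contains $\Sigma^u$ and is discrete by Proposition~\ref{pro:multipliers}; the sets $E_n$ themselves suffice, and no ``limits of modulus $1$'' enter. Your intermediate goal ``each $f$ is isolated in $\Sigma^u\cap\set{M\leq B}$'' is just discreteness, and $\set{M\leq B}$ is not compact in $\mathcal H^1_d$.
\end{enumerate}

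The genuine gap is the lower bound on $\abs{a}$ in $\mathcal H^1_d\simeq\C^*\times\C^{d-1}$, on which your Step~2 is explicitly conditioned. You are right that upper bounds on $\abs{\jac(f)}$ and $M(f)$ do not by themselves give relative compactness there. The paper's proof is silent on this point too: it bounds $\abs{\jac(f)}$ only from above and then declares $\Sigma^u$ bounded.

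Your proposed fix does not close the gap. A McMullen-type argument for polynomials only shows that the possible limit points $(0,p_\infty)$ are few, since the repelling multipliers of $p_\infty$ must lie in the $E_n$. It says nothing about whether the discrete set $S$ accumulates at such a point from $\set{a\neq 0}$. Moreover, at $a=0$ none of the tools behind Proposition~\ref{pro:multipliers} are available: equidistribution, Theorem~\ref{thm:equidistribution_reinforced}, and the stability theory all require an automorphism.

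What you actually need is the following. Take a neighborhood $U$ of $(0,p_\infty)$ in $\C\times\C^{d-1}$ and an analytic set $A\subset U$ containing $\overline{S}\cap U$, with $A\setminus\set{a=0}\subset S$ and $A\cap\set{a=0}$ countable (this is where the polynomial result enters). Then $A$ is a countable analytic set, hence discrete, so $(0,p_\infty)$ cannot be an accumulation point.

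If $p_\infty$ is hyperbolic this can be arranged. Every repelling cycle $x$ of $p_\infty$ continues as a saddle over a fixed $U$, up to and including $a=0$, with analytic unstable multiplier $\lambda^u_x$. Attracting cycles stay attracting. Taking $x$ over repelling points of exact period $n$, the set $A=\bigcap_n\bigcap_x\bigcup_{e\in E_n}\set{\lambda^u_x=e}$ works.

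For non-hyperbolic $p_\infty$ this is harder: near-neutral cycles may turn into saddles, and persistence domains shrink with the period. You would have to redo the density argument of Proposition~\ref{pro:multipliers} at a non-invertible parameter, and your proposal contains nothing of this kind.
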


\begin{proof}
Let $E_n =  \SMult^u_n(f_0)$. 
Proposition~\ref{pro:multipliers} and 
Theorem~\ref{mthm:stable} imply that $\Sigma^u$ is a discrete subset of $\mathcal H$. To prove that 
$\Sigma^u$ is finite it is enough  to show that it is bounded. For this, we observe that if 
$\abs{\jac(f_0)}\geq1$, then for any saddle point $p$ of exact period $n$ we have 
$\abs{\lambda^u(p)}\geq  \abs{\jac(f_0)}^n / \abs{\lambda^s(p)} > \abs{\jac(f_0)}^n$. 
We also have $\abs{\lambda^u(p)}\leq (\norm{Df_0}_{\jstar_0}^n$, where $\jstar_0:=\jstar(f_0)$.
 Thus $E_n$ is contained in the annulus $\{\min(1,\abs{\jac(f_0)})^n \leq \abs{z} \leq (\norm{Df_0}_{\jstar_0})^n\}$. 
 For any $f\in \Sigma^u$, the same reasoning applies, therefore 
any $z\in E_n$ satisfies $\abs{z}\geq \min(1, \abs{\jac(f)})^n$, from which we deduce that 
$\abs{\jac(f)}\leq \norm{Df_0}_{\jstar_0}$. (Note that this is automatic in $\mathcal H^k_{\mathbf d,\mathbf a}$ since there, the Jacobian is fixed.) 
In addition  for 
$f\in \Sigma^u$ the Lyapunov exponents of saddle points are bounded by
 $\log \norm{Df_0}_{\jstar_0}$, so $\chi^u(f)\leq \log \norm{Df_0}_{\jstar_0}$.
 By  Proposition~\ref{pro:lyapunov_henon} (for $\mathcal H^1_d$) and 
 Proposition~\ref{pro:lyapunov_multi} or 
 Theorem~\ref{mthm:lyapunov}   (for $\mathcal H^k_{\mathbf d,\mathbf a}$) 
we get a uniform bound on $M(f)$.
Together with the bound on the Jacobian (resp. multi-Jacobian), this entails that 
$\Sigma^u$ is bounded, as was to be shown. 
\end{proof}

We can also complete the proof of Theorem~\ref{mthm:C1}. 

\begin{pro}\label{pro:C1_finite}
The set $F$ defined in Theorem~\ref{thm:C1} is finite. 
\end{pro}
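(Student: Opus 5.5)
By Theorem~\ref{thm:C1}, the set $F$ is discrete in $\mathcal H$. It therefore suffices to show that $F$ is bounded, since a bounded discrete subset of the affine variety $\mathcal H$ is finite. The argument follows the proof of Theorem~\ref{thm:u_multipliers}, with one extra observation.

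If $f\in F$, the $C^1$ conjugacy $\varphi$ sends each saddle point $z_0$ of $f_0$ of period $n$ to a saddle point $\varphi(z_0)$ of $f$ of the same period. Its unstable multiplier is $\lambda^u(z_0)$ or $\overline{\lambda^u(z_0)}$, so in either case it has the same modulus. Moreover $\varphi$ is a bijection between the sets of saddle points, because $\varphi\inv$ is also a $C^1$ conjugacy, and saddle points lie in $\jstar$, which is contained in the domain of $\varphi$. Hence $\{\abs{\lambda^u(z)} \; ; \; z\in \SPer_n(f)\} = \{\abs{\lambda^u(z_0)} \; ; \; z_0\in\SPer_n(f_0)\}$ for every $n$.

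Now apply the reinforced equidistribution theorem (Theorem~\ref{thm:equidistribution_reinforced}) to $f$. Along a sequence $p_n\in \SPer^+(f)$ we get $\chi^u(p_n)\to\chi^+(\mu_f)$. Each $\chi^u(p_n)$ is also the exponent of some saddle point of $f_0$, so it is at most $\log\norm{Df_0}_{\jstar_0}$. This gives the uniform bound $\chi^+(\mu_f)\le \log\norm{Df_0}_{\jstar_0}$. In fact we can get equality: $\varphi$ conjugates $\mu_{f_0}$ to $\mu_f$ and preserves the norms of $Df$ up to the bounded $C^1$ distortion, so $\chi^+(\mu_f)=\chi^+(\mu_{f_0})$. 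For the plan, the upper bound is all that is needed.

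The Jacobian is bounded in the same way. In $\mathcal H^k_{\mathbf d,\mathbf a}$ it is fixed, so there is nothing to do. In $\mathcal H^1_d$, the product $\abs{\lambda^u\lambda^s}=\abs{\jac(f)}^n$ is determined by the moduli of the two multipliers of a saddle point, and the conjugacy preserves both moduli (up to complex conjugation of each multiplier). Therefore $\abs{\jac(f)}=\abs{\jac(f_0)}$.

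The remaining step is to invoke the Lyapunov lower bounds. For $\mathcal H^k_{\mathbf d,\mathbf a}$, Theorem~\ref{mthm:lyapunov} gives $C_1M(f)\le\chi^+(\mu_f)$ whenever $M(f)\ge1$, so $M(f)$ is bounded on $F$, and Lemma~\ref{lem:M} then shows $F$ is bounded.

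For $\mathcal H^1_d$, the modulus $\abs{a}$ is fixed, so the hypothesis $\abs{a}\le R_f^{d-1}/(400d)$ of Proposition~\ref{pro:lyapunov_henon} holds once $M(f)$ is large. Then \eqref{eq:lyap_estimate_inf} bounds $M(f)$. Together with the boundedness of $a$ this makes $F$ bounded.

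The main delicacy is the $\mathcal H^1_d$ case, where $a$ itself may vary along $F$, though only on a circle of fixed modulus. We must make sure this does not escape properness, and it does not: $\abs{a}=\abs{\jac(f_0)}$ is bounded and also bounded away from $0$, so bounding $M(f)$ is enough for $F$ to be bounded.
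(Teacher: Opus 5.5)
Your proof is correct and follows essentially the same route as the paper. You get discreteness from Theorem~\ref{thm:C1}, then show $F$ is bounded by bounding the Jacobian and $\chi^+(\mu_f)\le\log\norm{Df_0}_{\jstar_0}$, and you turn the latter into a bound on $M(f)$ via Proposition~\ref{pro:lyapunov_henon}, Theorem~\ref{mthm:lyapunov} and Lemma~\ref{lem:M}, exactly as in the proof of Theorem~\ref{thm:u_multipliers}. Your use of the stable multipliers to get $\abs{\jac(f)}=\abs{\jac(f_0)}$ slightly sharpens the paper's upper bound on the Jacobian, and it makes explicit that $\abs{a}$ stays bounded away from $0$ in $\mathcal H^1_d$, a point the paper leaves implicit.
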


\begin{proof}
Here we define $E_n$ as in Equation~\eqref{eq:En_C1}. Then as in the proof of Theorem~\ref{thm:u_multipliers}, 
if $f$ is $C^1$ conjugate to $f_0$ on a neighborhood of $\jstar$, 
we obtain uniform upper bounds on $\abs{\jac(f)}$ and  $M(f)$, 
so $F$ is a bounded subset of  $\mathcal H$ and we are done.  
\end{proof}

Define the connectedness locus ${\mathcal C}_d \subset \mathcal H_{d}$ (resp.\  ${\mathcal C}^k_{\mathbf d, \mathbf a} \subset \mathcal H^k_{\mathbf d, \mathbf a}$)  to be the set of automorphisms with connected Julia set. By \cite[Cor. 2.2]{connex}, ${\mathcal C}_d $ and ${\mathcal C}^k_{\mathbf d, \mathbf a}$ are closed. 

\begin{cor}\label{cor:connectedness}
The connectedness locus ${\mathcal C}^k_{\mathbf d, \mathbf a} \subset \mathcal H^k_{\mathbf d, \mathbf a}$ is compact
for any $k\geq 1$, $\mathbf d\in (\N_{\geq 2})^k$ and $\mathbf{a}\in (\C^*)^k$. 
\end{cor}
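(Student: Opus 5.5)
Since ${\mathcal C}^k_{\mathbf d, \mathbf a}$ is closed by \cite[Cor. 2.2]{connex}, compactness amounts to boundedness in $\mathcal H^k_{\mathbf d, \mathbf a}$. By Lemma~\ref{lem:M}(1), a sequence with fixed multi-Jacobian $\mathbf a$ diverges in $\mathcal H^k_{\mathbf d, \mathbf a}$ if and only if $M(f_n)\to\infty$. It is therefore enough to find a constant $B=B(\mathbf d,\mathbf a)$ such that every $f\in\mathcal H^k_{\mathbf d, \mathbf a}$ with $M(f)\geq B$ has a disconnected Julia set.

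Fix such an $f$ and write $f=h_k\circ\cdots\circ h_1$. The moduli $\abs{a_i}$ are fixed, so for $M(f)$ large the hypotheses $\abs{a_i}\leq R_f^{d_i-1}/(400d_i)$ of Proposition~\ref{pro:lyapunov_multi} hold automatically, because $R_f=e^{M(f)}\to\infty$. After a cyclic permutation of the factors, which is a conjugacy and so does not affect connectedness of $J_f$, we may assume that $R_f=R_{h_k}$. For $k=1$ we then apply Lemma~\ref{lem:disconnected} directly. For $k\geq 2$ we apply the lemma stated after Lemma~\ref{lem:folding2}. In both cases the conclusion is that $J_f$ is disconnected once $M(f)$ is sufficiently large. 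The threshold depends only on $\mathbf d$ and $\mathbf a$; Remark~\ref{rem:bound_M} gives an explicit version for $k=1$.

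Combining these, ${\mathcal C}^k_{\mathbf d, \mathbf a}\subset\set{M(f)<B}$, which is bounded by Lemma~\ref{lem:M}, and closedness then gives compactness. One point needs care: ``disconnected'' in those lemmas should mean $J_f$ (or $J^\varstar_f$) in the sense relevant to the definition of the connectedness locus. This is addressed by the lemmas' proofs, which show that $J^\varstar_f$ meets several components of $f(\bb_1)\cap\bb_1$, so $J^\varstar_f$ is disconnected. By \cite{bs6}, connectedness of $J$, of $J^\varstar$, and of $K^\pm$ are related, and the lemmas also yield unstable disconnectedness, which rules out connectedness. I expect this last identification to be the only delicate step; everything else is already contained in the lemmas of \S\ref{subs:lyapunov_henon} and \S\ref{subs:lyapunov_multi}.
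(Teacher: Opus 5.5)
Your proof is correct, but it takes a different route from the paper's. The paper argues through the Lyapunov exponent. When $\abs{\jac(f)}\leq 1$, it invokes Theorems 0.2 and 7.3 of \cite{bs6} ($J_f$ is connected if and only if $\chi^+(\mu_f)=\log d$) together with Theorem~\ref{mthm:lyapunov}, which forces $\chi^+(\mu_f)>\log d$ once $M(f)$ is large. When $\abs{\jac(f)}>1$, it replaces $f$ by $f^{-1}$, renormalized as in the proof of Lemma~\ref{lem:M}(3) into a dissipative composition with a fixed multi-Jacobian $\mathbf b$, and transfers the bound on $M$ back through Lemma~\ref{lem:M}. You instead use directly the topological statement that sits inside the proof of the Lyapunov lower bound. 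Lemma~\ref{lem:disconnected} and its analogue following Lemma~\ref{lem:folding2} already assert that $J_f$ is disconnected as soon as $\abs{a_i}\leq R_f^{d_i-1}/(400d_i)$ and $M(f)$ is large, and with $\mathbf a$ fixed these hypotheses hold automatically. Your version is shorter and treats all values of $\abs{\jac(f)}$ at once: it needs neither the characterization of connectivity from \cite{bs6}, nor the Bedford--Smillie critical-measure formula, nor the inversion trick. The paper's version, in exchange, presents the corollary as a consequence of the main estimate $\chi^+(\mu_f)\geq C_1M(f)$. This parallels the one-variable picture, where connectedness of $J_p$ means $M(p)=0$, i.e.\ minimal Lyapunov exponent.

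One side remark in your last paragraph is false and should be dropped. Unstable disconnectedness does not rule out connectedness when $\abs{\jac(f)}>1$: every such map is unstably disconnected \cite[Cor.~7.4]{bs6}, whereas $(x,y)\mapsto(ay+x^2,x)$ has connected Julia set for $\abs{a}$ large. Nothing in your argument depends on it. The set $J_f$ is $f$-invariant and contained in $K_f\subset\bb_1$. Hence it lies in $U=f(\bb_1)\cap\bb_1$, resp.\ in $U=h_k(\bb_1)\cap\bb_1$ when $k\geq 2$ by properness of the composition. The proofs of the two lemmas show that $U$ has at least two components, each of which meets $J^\varstar_f\subset J_f$. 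So $J_f$ itself is disconnected, with no appeal to \cite{bs6}.
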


An example like the family $f_a(x,y) = (ay+x^2, x)$
shows that   this result  does not hold if the multi-Jacobian when not fixed: indeed 
since $f_{a\inv}$ is conjugate to $f_a\inv$, $J(f_a)$ is connected for large $\abs{a}$ 
\begin{vlongue}
(see \S~\ref{subs:eg_fa})
\end{vlongue}
.

\begin{proof}
The connectedness locus ${\mathcal C}^k_{\mathbf d, \mathbf a}$  is closed,
so we only need to show that it is bounded. If  
$\abs{ {\jac(f)}} = \prod_{i=1}^k\abs{a_i}\leq 1$, then by Theorems 0.2 and 7.3 in~\cite{bs6}, 
$J_f$ is connected if and only if $\chi^+(\mu_f)   = \log d$, so the boundedness  follows immediately from Theorem~\ref{mthm:lyapunov}. 
If $\prod_{i=1}^k\abs{a_i}> 1$,  we  reduce to the dissipative case by considering  $f\inv$.  
If $f\in {\mathcal C}^k_{\mathbf d, \mathbf a}$, the proof of Lemma~\ref{lem:M} implies 
that $f\inv$ is conjugate to $g\in  {\mathcal C}^k_{\mathbf {d}^\vee, \mathbf b}$, where 
$\mathbf {d}^\vee = (d_1, \cdots , d_k)$ and $\mathbf b$ is fixed.  
The argument for  the dissipative case implies that $M(g)$ is bounded, hence so is $M(f)$ (again by Lemma~\ref{lem:M}), and we are done. 
\end{proof}

\begin{vlongue}
\appendix 

\section{Maps with few multipliers} 

\begin{defi} A loxodromic automorphism $f$  is \emph{exceptional}  if there exists 
a complex number $\kappa$ such that  $\lambda^u(p) = \kappa^{n}$
 for every  $p\in \SPer_n(f)$ and every $n\geq 1$. 
\end{defi}

This concept arises naturally in many rigidity issues 
(see e.g.~\cite{BK1} or~\cite[\S 2]{rigidity} for our context). 
We saw in Example~\ref{eg:charac_p} that analogues of 
exceptional automorphisms exist in positive characteristic, but 
we are not aware of any example in characteristic $0$. Partial non-existence results for exceptional 
automorphisms have been  obtained in \cite[Prop. 6.1]{BK1} and~\cite[Prop. 8.9]{rigidity}.


\begin{thm}
If $(f_\lambda)$ is an algebraic family of loxodromic polynomial automorphisms of $\C^2$ in which any stable irreducible algebraic family is trivial, then the set of exceptional maps is discrete in $\Lambda$. 
\end{thm}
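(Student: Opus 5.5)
The plan is to adapt the proof of Proposition~\ref{pro:multipliers} (unstable multiplier part), replacing the condition ``$\SMult^u_n(f)\subset E_n$'' by ``$\lambda^u(z)=\kappa^n$ for all $z\in\SPer_n(f)$'', where now $\kappa=\kappa(f)$ is allowed to vary with $f$. Fix an exceptional $f_0\in\Lambda$ with constant $\kappa_0$, and suppose by contradiction that $f_0$ is an accumulation point of the set $\mathcal E$ of exceptional maps. The first step is to observe that $\kappa$ is determined by the dynamics and is locally almost rigid. Theorem~\ref{thm:equidistribution_reinforced} gives $\log\abs{\kappa(f)}=\chi^+(\mu_f)$. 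Moreover, since every saddle point of period $n$ has $\lambda^u=\kappa^n$, comparing a fixed saddle point $z$ of period $m$ with its persistence gives $\kappa(f)^m=\lambda^u(z_f)$ near $f_0$. So near $f_0$ the function $\kappa$ is a branch of an $m$-th root of a holomorphic function. Taking two saddle points of coprime periods $m,m'$ then shows that $\kappa$ itself extends holomorphically on the germ of $\mathcal E$ at $f_0$.

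Next I define, exactly as in that proof, the local analytic sets
\[
\Sigma_n=\set{f\in V_n\; ; \; \lambda^u(z_f)=\lambda^u(z'_f)\ \text{for all } z,z'\in\SPer_n(f_0)}\cap \set{f\; ; \;\lambda^u(z_f)^{m}=\lambda^u(w_f)^{n}\ \text{for } z\in\SPer_n(f_0),\ w\in \SPer_m(f_0)},
\]
that is, the conditions ``all unstable multipliers in period $n$ agree, and are compatible with $\kappa^n$''. These conditions are algebraic in the eigenvalue data of $Df^n$, so each $\Sigma_n$ is the germ at $f_0$ of a union of components of an algebraic set. These sets contain $\mathcal E\cap V_n$ and are decreasing, so their germs stabilize for $n\geq n_1$; in particular the limit germ has positive dimension. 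Working on a neighborhood $\mathcal N$ where the Jacobian is controlled, I then run the argument verbatim. Points of $\SPer^+(f_0)$ have unstable eigenvalue of modulus $\abs{\kappa}^n$ with $\log\abs\kappa$ close to $\chi^+(\mu_{f_0})>\log^+\abs{\jac(f_0)}$, so along the germ they remain saddles. This yields a stable family of type (II), and then, using (II)$\Rightarrow$(VI) and the constancy relation $\lambda^u=\kappa^n$ with $\abs\kappa>1$, a stable family of type (I) with no exceptional substantiality issue.

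By Lemma~\ref{lem:jacobian}, unstable multipliers are constant on an irreducible type (I) stable family, hence $\kappa$ is constant there. Thus the component is locally contained in a set of the form $\set{\SMult^u_n(f)\subset\set{\kappa_0^n}}$. The globalization step of Proposition~\ref{pro:multipliers} (analytic continuation of the local analytic component to an algebraic component of some $W'$) then produces a positive-dimensional stable irreducible algebraic subfamily, contradicting the hypothesis. A quicker route that I would try first: show that $\kappa$ is locally constant on $\mathcal E$ by the argument above, and then apply Proposition~\ref{pro:multipliers} directly with $E_n=\set{\kappa_0^n}$, since near $f_0$ every exceptional map has $\SMult^u_n\subset\set{\kappa_0^n}$.

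The main obstacle is precisely this local constancy of $\kappa$ along $\mathcal E$: a priori $\mathcal E$ is not analytic and $\kappa$ could vary continuously along it. I plan to handle it by showing that the germ of $\mathcal E$ lies in the analytic set defined by $\lambda^u(z_f)^{m'}=\lambda^u(w_f)^{m}$, on which the holomorphic function $\kappa$ is defined. On a stable type (II) subfamily, $\abs{\kappa}=e^{\chi^+}$ is constant by Lemma~\ref{lem:jacobian2}, and a holomorphic function of constant modulus is constant.
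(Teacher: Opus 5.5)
There is a genuine gap, and it is exactly the step you flag as the main obstacle: the constancy of $\kappa$. Your holomorphic extension of $\kappa$ via two coprime periods is fine, but both of your routes then get constancy from Lemma~\ref{lem:jacobian} or Lemma~\ref{lem:jacobian2}, and these lemmas concern \emph{irreducible algebraic} families. The proof of Lemma~\ref{lem:jacobian} uses that, after desingularization and a finite cover, the parameter space is a Riemann surface of finite type. This is what lets the bounded function $1/\lambda^u$ extend across the punctures to a compact Riemann surface, forcing it to be constant. Lemma~\ref{lem:jacobian2} deduces the constancy of $\chi^+$ from that of the multipliers, so it inherits the same restriction.

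Where you invoke them, you only have the germ $S$ at $f_0$ of a local analytic set, and there the conclusion is false in general. A small ball of hyperbolic (e.g.\ horseshoe) parameters in $\mathcal H^1_d$ is stable of type (I), yet the multipliers of its saddle points genuinely move. So neither ``$\kappa$ is constant on the type (I) germ'' nor ``$\abs{\kappa}=e^{\chi^+}$ is constant on a type (II) subfamily'' is justified. No purely local argument can supply this: given Proposition~\ref{pro:multipliers}, local constancy of $\kappa$ on $\mathcal E$ is essentially equivalent to the statement you are proving. Without it you cannot use $E_n=\set{\kappa_0^n}$. Your globalization step also has nothing to continue, since your $\Sigma_n$ are built from the continuations of the saddles of $f_0$ and the locally defined $\kappa$, not as subsets of $\Lambda$.

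The way out, which is the paper's proof, is to drop $\kappa$ and keep only intrinsic multiplicative relations. Set
\[
\widetilde{\mathcal E}_n=\set{f\in\Lambda\; ; \; \lambda^u(p)^n=\lambda^u(q)^m \ \text{for all } m\leq n,\ p\in\SPer_m(f),\ q\in\SPer_n(f)}
\]
and $\widetilde{\mathcal E}=\bigcap_n\widetilde{\mathcal E}_n\supset\mathcal E$. These sets are defined globally, with no base point. At an accumulation point their germs stabilize at some $n_1$. Type (I) stability is then immediate, with no need for $\SPer^+$, equidistribution or Jacobian control. For $z$ of period $n\geq n_1$ and $w$ of period $n_1$, the relation gives $\abs{\lambda^u(z)}=\abs{\lambda^u(w)}^{n/n_1}>1$. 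Taking determinants, it also gives $\abs{\lambda^s(z)}=\abs{\lambda^s(w)}^{n/n_1}<1$, so $z$ stays a saddle as long as $w$ does.

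Hence $\widetilde{\mathcal E}$ coincides near $f_0$ with $\widetilde{\mathcal E}_{n_1}$. It is an analytic set that is locally a union of local components of an algebraic variety. By analytic continuation, any positive-dimensional component of it is a stable irreducible algebraic family, which contradicts the hypothesis directly. The hypothesis itself does the job you wanted Lemma~\ref{lem:jacobian} to do, and constancy of $\kappa$ is never needed.
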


\begin{proof}
Let $\mathcal E\subset \Lambda$ be the set of exceptional maps. Put 
$$\widetilde {\mathcal E}_n = \set{\lambda\in \Lambda\, ; \,  \forall m\leq n, \, \forall p\in  \SPer_m, \, 
\forall q\in  \SPer_n, \, (\lambda^u(p))^n = (\lambda^u(q))^m} $$
and $\widetilde {\mathcal E}= \bigcap_{n\geq 1}\widetilde {\mathcal E}_n$,  
so that $\widetilde {\mathcal E}$ contains $\mathcal E$. Assume by way of contradiction that 
$\widetilde {\mathcal E}$ is not discrete and pick an accumulation point 
$f_0$. For  $n\geq 1$, let $V_n$ be a neighborhood of $f_0$ in which all saddle points of period at most $n$ persist.  
Then $\widetilde {\mathcal E}_n\cap V_n$ is an analytic subset of positive dimension
 in $V_n$; more precisely, it is a 
  union of components of the intersection of $V_n$ with some algebraic subvariety of $\Lambda$.  Reducing $V_n$ if necessary we may assume that (i) $\widetilde {\mathcal E}_n\cap V_n$ admits a unique connected component containing $f_0$ and (ii)  $V_{n+1}\subset V_n$. Let $n_1$ be such that the germ of 
   $\widetilde {\mathcal E}_n$ at $f_0$ is constant for $n\geq n_1$. 
   
We claim that  $\widetilde {\mathcal E}_{n_1}\cap V_{n_1}$ is a stable family of type (I).  Indeed,  if 
$n\geq n_1$ and $f\in \widetilde {\mathcal E}_n\cap V_n$, then 
for every $p = p(f)\in \SPer_n(f)$ and every $q= q(f)\in \SPer_{n_1}(f)$, we have a 
relation of the form $\lambda^u(p)^{n_1} = \lambda^u(q)^n$. Since $f$ has constant Jacobian, we have $\lambda^s(p)^{n_1} = \lambda^s(q)^n$ as well. 
Now, $q$ can be 
continued  holomorphically as a saddle along  any path in
$\widetilde {\mathcal E}_{n_1}\cap V_{n_1}$, and  
the persistence of the relations $\lambda^u(p)^{n_1} = \lambda^u(q)^n$ and 
$\lambda^s(p)^{n_1} = \lambda^s(q)^n$ show that $p$ can be followed 
as a saddle point along that path. Therefore $\widetilde {\mathcal E}_{n_1}\cap V_{n_1}$ is 
a stable family, as claimed. 

By definition $\widetilde {\mathcal E} \subset \widetilde {\mathcal E}_{n_1}$, 
and we have shown that all relations defining $\widetilde {\mathcal E}$ persist along $\widetilde {\mathcal E}_{n_1}\cap V_{n_1}$, 
thus $\widetilde {\mathcal E} \cap V_{n_1} =\widetilde {\mathcal E}_{n_1}\cap V_{n_1}$.
This reasoning shows that $\widetilde {\mathcal E}$ is an analytic subset in $\Lambda$, which 
is locally a union of local components of an algebraic variety. Thus, by analytic continuation, we conclude that any global analytic component of $\widetilde {\mathcal E}$ is a 
stable algebraic family, and the existence of a positive dimensional component contradicts our assumption. Therefore $\widetilde {\mathcal E}$ is a discrete set.
\end{proof}

\begin{rem}~
\begin{enumerate}[\rm (1)]
\item The proof shows that the definition of exceptional maps can be extended by asking that 
$\lambda^u(p) \in   \kappa^{n}F_n$, where $F_n$ is a  finite set of roots of unity. 
 (In~\cite[\S 2]{rigidity} we are interested in the situation where $\lambda^u(p) \in  \pm d^n$.)
\item It is enough to assume that the exceptional identity $\lambda^u(p) = \kappa^{n}$ holds for all saddle points 
along some subsequence $(n_j)$. The proof is the same except that we get a stable family of type (III). 
\item Contrary to the case of given unstable multipliers 
(Theorem~\ref{mthm:henon_multipliers} and~\ref{mthm:multi-Jacobian}) we cannot run the argument 
of Theorem~\ref{thm:u_multipliers} to conclude that the exceptional discrete set is finite. 
\end{enumerate}
\end{rem}
\end{vlongue}

 \bibliographystyle{plain}
\bibliography{bib-multipliers}

@article{favre-gong,
	author = {Favre, Charles and Gong, Chen},
	date = {2025/06/02},
	doi = {10.1007/s42543-025-00100-7},
	isbn = {2524-7182},
	journal = {Peking Mathematical Journal},
	title = {Non-Archimedean Techniques and Dynamical Degenerations},
	url = {https://doi.org/10.1007/s42543-025-00100-7},
	year = {2025}}

@unpublished{bera-verma,
	author = {Bera, Sayani and Verma, Kaushal},
	howpublished = {Preprint, {arXiv}:2601.07681 [math.{CV}] (2026)},
	note = {arXiv:2601.07681},
	title = {Rigidity of the escaping set of polynomial automorphisms of $\mathbb {C}^2$},
	url = {https://arxiv.org/abs/2601.07681},
	year = {2026}}

@incollection{zehnder,
	author = {Zehnder, Eduard},
	booktitle = {Geom. {Topol}., {III}. {Lat}. {Am}. {Sch}. {Math}., {Proc}., {Rio} de {Janeiro} 1976},
	doi = {10.1007/bfb0085385},
	number = {597},
	pages = {855-866},
	series = {Lecture Notes in Math.},
	title = {A simple proof of a generalization of a theorem by {C}. {L}. {Siegel}},
	year = {1977}}

@article{Wermer:1958,
	author = {Wermer, John},
	doi = {10.2307/1970155},
	fjournal = {Annals of Mathematics. Second Series},
	issn = {0003-486X},
	journal = {Ann. Math. (2)},
	language = {English},
	pages = {550--561},
	title = {The hull of a curve in $\mathbb {C}^n$},
	volume = {68},
	year = {1958},
	zbl = {0084.33402},
	zbmath = {3138457}}

@article{Wermer:1980,
	author = {Wermer, John},
	journal = {Mathematische Annalen},
	pages = {193-194},
	title = {On a Domain Equivalent to the Bidisk.},
	url = {http://eudml.org/doc/163377},
	volume = {248},
	year = {1980}}

@article{ishii:non_planar,
	author = {Ishii, Yutaka},
	doi = {10.1016/j.aim.2007.11.025},
	fjournal = {Advances in Mathematics},
	issn = {0001-8708},
	journal = {Adv. Math.},
	language = {English},
	number = {2},
	pages = {417--464},
	title = {Hyperbolic polynomial diffeomorphisms of $\mathbb {C}^2$. {I}: {A} non-planar map},
	volume = {218},
	year = {2008},
	zbl = {1136.37028},
	zbmath = {5268152}}

@article{milnor:quadratic,
	author = {Milnor, John},
	fjournal = {Experimental Mathematics},
	issn = {1058-6458,1944-950X},
	journal = {Experiment. Math.},
	mrclass = {58F20 (30D05 32S70 58F23)},
	mrnumber = {1246482},
	mrreviewer = {J.\ S.\ Joel},
	note = {With an appendix by the author and Lei Tan},
	number = {1},
	pages = {37--83},
	title = {Geometry and dynamics of quadratic rational maps},
	url = {http://projecteuclid.org/euclid.em/1062620736},
	volume = {2},
	year = {1993}}

@article{milnor:two_critical,
	author = {Milnor, John},
	fjournal = {Experimental Mathematics},
	issn = {1058-6458,1944-950X},
	journal = {Experiment. Math.},
	mrclass = {37F50 (37F20)},
	mrnumber = {1806289},
	mrreviewer = {Peter\ Ha\"issinsky},
	number = {4},
	pages = {481--522},
	title = {On rational maps with two critical points},
	url = {http://projecteuclid.org/euclid.em/1045759519},
	volume = {9},
	year = {2000}}

@book{wilkinson:book,
	author = {Wilkinson, Amie},
	publisher = {available at \url{https://www.math.uchicago.edu/%7Ewilkinso/papers/papers.html}},
	title = {Geometry, Dynamics, and Rigidity},
	year = {2025}}

@article{henonl,
	author = {Dujardin, Romain},
	fjournal = {American Journal of Mathematics},
	issn = {0002-9327,1080-6377},
	journal = {Amer. J. Math.},
	mrclass = {32H50 (37A35 37F10)},
	mrnumber = {2045508},
	mrreviewer = {Charles\ Favre},
	number = {2},
	pages = {439--472},
	title = {H\'enon-like mappings in {$\Bbb C^2$}},
	url = {http://muse.jhu.edu/journals/american_journal_of_mathematics/v126/126.2dujardin.pdf},
	volume = {126},
	year = {2004}}

@incollection{HOV2,
	author = {Hubbard, John H. and Oberste-Vorth, Ralph W.},
	booktitle = {Real and complex dynamical systems ({H}iller\o d, 1993)},
	isbn = {0-7923-3521-X},
	mrclass = {58F23 (30D05 32H50)},
	mrnumber = {1351520},
	mrreviewer = {Eric\ Bedford},
	pages = {89--132},
	publisher = {Kluwer Acad. Publ., Dordrecht},
	series = {NATO Adv. Sci. Inst. Ser. C: Math. Phys. Sci.},
	title = {H\'{e}non mappings in the complex domain. {II}. {P}rojective and inductive limits of polynomials},
	volume = {464},
	year = {1995}}

@article{morgan-shalen,
	author = {Morgan, John W. and Shalen, Peter B.},
	doi = {10.2307/1971082},
	fjournal = {Annals of Mathematics. Second Series},
	issn = {0003-486X,1939-8980},
	journal = {Ann. of Math. (2)},
	mrclass = {57N10 (14M99 32G15)},
	mrnumber = {769158},
	mrreviewer = {Christopher\ W.\ Stark},
	number = {3},
	pages = {401--476},
	title = {Valuations, trees, and degenerations of hyperbolic structures. {I}},
	url = {https://doi.org/10.2307/1971082},
	volume = {120},
	year = {1984}}

@article{degenerations_SL2,
	author = {Dujardin, Romain and Favre, Charles},
	doi = {10.5802/ahl.24},
	fjournal = {Annales Henri Lebesgue},
	issn = {2644-9463},
	journal = {Ann. H. Lebesgue},
	mrclass = {37H15 (37F30 37P50 60B15 60B20)},
	mrnumber = {4015916},
	mrreviewer = {Tomoki\ Mihara},
	pages = {515--565},
	title = {Degenerations of {${\rm SL}(2,\Bbb C)$} representations and {L}yapunov exponents},
	url = {https://doi.org/10.5802/ahl.24},
	volume = {2},
	year = {2019}}

@article{luo:2022,
	author = {Luo, Yusheng},
	doi = {10.1215/00127094-2022-0056},
	fjournal = {Duke Mathematical Journal},
	issn = {0012-7094,1547-7398},
	journal = {Duke Math. J.},
	mrclass = {37F20 (14G22 37F32 37P50)},
	mrnumber = {4491710},
	number = {14},
	pages = {2943--3001},
	title = {Trees, length spectra for rational maps via barycentric extensions, and {B}erkovich spaces},
	url = {https://doi.org/10.1215/00127094-2022-0056},
	volume = {171},
	year = {2022}}

@unpublished{favre-rivera:2025,
	author = {Charles Favre and Juan Rivera-Letelier},
	note = {arXiv:2504.20280},
	title = {Rigidit\'e, expansion et entropie en dynamique non-archim\'edienne},
	year = {2025}}

@article{favre:2020,
	author = {Favre, Charles},
	doi = {10.1017/S147474801800035X},
	journal = {Journal of the Institute of Mathematics of Jussieu},
	number = {4},
	pages = {1141--1183},
	title = {DEGENERATION OF ENDOMORPHISMS OF THE COMPLEX PROJECTIVE SPACE IN THE HYBRID SPACE},
	volume = {19},
	year = {2020}}

@unpublished{favre:2025,
	author = {Favre, Charles},
	note = {arxiv:2504.20284},
	title = {Blow-up of multipliers in meromorphic families of rational maps},
	year = {2025}}

@article{przytycki,
	author = {Przytycki, Feliks},
	doi = {10.1007/BF01388554},
	fjournal = {Inventiones Mathematicae},
	issn = {0020-9910,1432-1297},
	journal = {Invent. Math.},
	mrclass = {30D05 (58F11 58F12)},
	mrnumber = {784535},
	mrreviewer = {Robert\ L.\ Devaney},
	number = {1},
	pages = {161--179},
	title = {Hausdorff dimension of harmonic measure on the boundary of an attractive basin for a holomorphic map},
	url = {https://doi.org/10.1007/BF01388554},
	volume = {80},
	year = {1985}}

@article{manning,
	author = {Manning, Anthony},
	doi = {10.2307/2007044},
	fjournal = {Annals of Mathematics. Second Series},
	issn = {0003-486X,1939-8980},
	journal = {Ann. of Math. (2)},
	mrclass = {58F11 (28A75 30D05 54H20 58F13)},
	mrnumber = {740898},
	mrreviewer = {R.\ L.\ Adler},
	number = {2},
	pages = {425--430},
	title = {The dimension of the maximal measure for a polynomial map},
	url = {https://doi.org/10.2307/2007044},
	volume = {119},
	year = {1984}}

@article{bs3,
	author = {Bedford, Eric and Smillie, John},
	doi = {10.1007/BF01934331},
	fjournal = {Mathematische Annalen},
	issn = {0025-5831,1432-1807},
	journal = {Math. Ann.},
	mrclass = {32H50 (32C30 58F11 58F23)},
	mrnumber = {1188127},
	mrreviewer = {Yasuichiro\ Nishimura},
	number = {3},
	pages = {395--420},
	title = {Polynomial diffeomorphisms of {$\bold C^2$}. {III}. {E}rgodicity, exponents and entropy of the equilibrium measure},
	url = {https://doi.org/10.1007/BF01934331},
	volume = {294},
	year = {1992}}

@article{lyapunov,
	author = {Dujardin, Romain},
	doi = {10.1017/S0143385706001052},
	fjournal = {Ergodic Theory and Dynamical Systems},
	issn = {0143-3857,1469-4417},
	journal = {Ergodic Theory Dynam. Systems},
	mrclass = {37F10 (37A35 37D99)},
	mrnumber = {2342968},
	mrreviewer = {Laura\ G.\ DeMarco},
	number = {4},
	pages = {1111--1133},
	title = {Continuity of {L}yapunov exponents for polynomial automorphisms of {$\Bbb C^2$}},
	url = {https://doi.org/10.1017/S0143385706001052},
	volume = {27},
	year = {2007}}

@article{branner-hubbard1,
	author = {Branner, Bodil and Hubbard, John H.},
	doi = {10.1007/BF02392275},
	fjournal = {Acta Mathematica},
	issn = {0001-5962,1871-2509},
	journal = {Acta Math.},
	mrclass = {30D05 (58F08)},
	mrnumber = {945011},
	mrreviewer = {I.\ N.\ Baker},
	number = {3-4},
	pages = {143--206},
	title = {The iteration of cubic polynomials. {I}. {T}he global topology of parameter space},
	url = {https://doi.org/10.1007/BF02392275},
	volume = {160},
	year = {1988}}

@article{demarco-mcmullen,
	author = {DeMarco, Laura G. and McMullen, Curtis T.},
	doi = {10.24033/asens.2070},
	fjournal = {Annales Scientifiques de l'\'Ecole Normale Sup\'erieure. Quatri\`eme S\'erie},
	issn = {0012-9593,1873-2151},
	journal = {Ann. Sci. \'Ec. Norm. Sup\'er. (4)},
	mrclass = {37F10 (37F20 37F50)},
	mrnumber = {2482442},
	mrreviewer = {Charles\ Favre},
	number = {3},
	pages = {337--382},
	title = {Trees and the dynamics of polynomials},
	url = {https://doi.org/10.24033/asens.2070},
	volume = {41},
	year = {2008}}

@unpublished{rigidity,
	author = {Cantat, Serge and Dujardin, Romain},
	note = {arXiv:2411.10339},
	title = {Some rigidity results for polynomial automorphisms of $\mathbb{C}^2$},
	year = {2024}}

@book{silverman:book,
	author = {Silverman, Joseph H.},
	doi = {10.1007/978-0-387-69904-2},
	isbn = {978-0-387-69903-5},
	mrclass = {11-02 (11-01 11G05 11G07 11G50 37-02 37F10)},
	mrnumber = {2316407},
	mrreviewer = {Thomas\ Ward},
	pages = {x+511},
	publisher = {Springer, New York},
	series = {Graduate Texts in Mathematics},
	title = {The arithmetic of dynamical systems},
	url = {https://doi.org/10.1007/978-0-387-69904-2},
	volume = {241},
	year = {2007}}

@unpublished{huguin:polynomials,
	author = {Huguin, Valentin},
	note = {arXiv:2412.19335},
	title = {Moduli spaces of polynomial maps and multipliers at small cycles},
	year = {2024}}

@article{mcm:algorithms,
	author = {McMullen, Curt},
	doi = {10.2307/1971408},
	fjournal = {Annals of Mathematics. Second Series},
	issn = {0003-486X,1939-8980},
	journal = {Ann. of Math. (2)},
	mrclass = {58F08 (58C15 65H05)},
	mrnumber = {890160},
	mrreviewer = {Michael\ Hurley},
	number = {3},
	pages = {467--493},
	title = {Families of rational maps and iterative root-finding algorithms},
	url = {https://doi.org/10.2307/1971408},
	volume = {125},
	year = {1987}}

@article{ji-xie:injective,
	author = {Ji, Zhuchao and Xie, Junyi},
	journal = {J. Eur. Math. Soc. (JEMS)},
	note = {10.4171/JEMS/1750},
	title = {The multiplier spectrum morphism is generically injective},
	year = {2025}}

@article{conjugate,
	author = {Cantat, Serge and Dujardin, Romain},
	journal = {Bull. Lond. Math. Soc.},
	number = {12},
	pages = {3745-3751},
	title = {Holomorphically conjugate polynomial automorphisms of $\mathbb{C}^2$ are polynomially conjugate},
	volume = {56},
	year = {2024}}

@article{friedland-milnor,
	author = {Friedland, Shmuel and Milnor, John},
	doi = {10.1017/S014338570000482X},
	fjournal = {Ergodic Theory and Dynamical Systems},
	issn = {0143-3857},
	journal = {Ergodic Theory Dynam. Systems},
	mrclass = {58F99},
	mrnumber = {991490},
	mrreviewer = {M. Lyubich},
	number = {1},
	pages = {67--99},
	title = {Dynamical properties of plane polynomial automorphisms},
	url = {https://doi.org/10.1017/S014338570000482X},
	volume = {9},
	year = {1989}}

@article{bls2,
	author = {Bedford, Eric and Lyubich, Mikhail and Smillie, John},
	doi = {10.1007/BF01232671},
	fjournal = {Inventiones Mathematicae},
	issn = {0020-9910,1432-1297},
	journal = {Invent. Math.},
	mrclass = {32H50 (58F11 58F23)},
	mrnumber = {1240639},
	mrreviewer = {Gregery\ T.\ Buzzard},
	number = {2},
	pages = {277--288},
	title = {Distribution of periodic points of polynomial diffeomorphisms of {$\bold C^2$}},
	url = {https://doi.org/10.1007/BF01232671},
	volume = {114},
	year = {1993}}

@incollection{milnor:lattes,
	author = {Milnor, John},
	booktitle = {Dynamics on the {R}iemann sphere},
	doi = {10.4171/011-1/1},
	isbn = {3-03719-011-6},
	mrclass = {37F10 (30D05 37-02)},
	mrnumber = {2348953},
	mrreviewer = {Feliks\ Przytycki},
	pages = {9--43},
	publisher = {Eur. Math. Soc., Z\"{u}rich},
	title = {On {L}att\`es maps},
	url = {https://doi.org/10.4171/011-1/1},
	year = {2006}}

@article{ji-xie:multipliers,
	author = {Ji, Zhuchao and Xie, Junyi},
	doi = {10.1017/fmp.2023.12},
	fjournal = {Forum of Mathematics. Pi},
	issn = {2050-5086},
	journal = {Forum Math. Pi},
	mrclass = {37C29},
	mrnumber = {4585467},
	pages = {Paper No. e11, 37},
	title = {Homoclinic orbits, multiplier spectrum and rigidity theorems in complex dynamics},
	url = {https://doi.org/10.1017/fmp.2023.12},
	volume = {11},
	year = {2023}}

@article{berger-dujardin,
	author = {Berger, Pierre and Dujardin, Romain},
	doi = {10.24033/asens.2324},
	fjournal = {Annales Scientifiques de l'\'{E}cole Normale Sup\'{e}rieure. Quatri\`eme S\'{e}rie},
	issn = {0012-9593},
	journal = {Ann. Sci. \'{E}c. Norm. Sup\'{e}r. (4)},
	mrclass = {32H50 (37F44 37F80)},
	mrnumber = {3993324},
	mrreviewer = {Eric Bedford},
	number = {2},
	pages = {449--477},
	title = {On stability and hyperbolicity for polynomial automorphisms of {$\Bbb C^2$}},
	url = {https://doi.org/10.24033/asens.2324},
	volume = {50},
	year = {2017}}

@article{bs5,
	author = {Bedford, Eric and Smillie, John},
	doi = {10.1007/BF02921791},
	fjournal = {The Journal of Geometric Analysis},
	issn = {1050-6926},
	journal = {J. Geom. Anal.},
	mrclass = {32H50 (32C30 37A99 37F10)},
	mrnumber = {1707733},
	mrreviewer = {Dan Coman},
	number = {3},
	pages = {349--383},
	title = {Polynomial diffeomorphisms of {${\bf C}^2$}. {V}. {C}ritical points and {L}yapunov exponents},
	url = {https://doi.org/10.1007/BF02921791},
	volume = {8},
	year = {1998}}

@article{BK1,
	author = {Bedford, Eric and Kim, Kyounghee},
	doi = {10.1007/s12220-017-9796-1},
	fjournal = {Journal of Geometric Analysis},
	issn = {1050-6926,1559-002X},
	journal = {J. Geom. Anal.},
	mrclass = {37F10 (32H50)},
	mrnumber = {3708006},
	mrreviewer = {Romain\ Dujardin},
	number = {4},
	pages = {3085--3098},
	title = {No smooth {J}ulia sets for polynomial diffeomorphisms of {$\Bbb C^2$} with positive entropy},
	url = {https://doi.org/10.1007/s12220-017-9796-1},
	volume = {27},
	year = {2017}}

@incollection{connex,
	author = {Dujardin, Romain},
	booktitle = {Complex dynamics},
	doi = {10.1090/conm/396/07394},
	mrclass = {37F50 (37F10 37F15)},
	mrnumber = {2209087},
	mrreviewer = {Peter Ha\"{\i}ssinsky},
	pages = {63--84},
	publisher = {Amer. Math. Soc., Providence, RI},
	series = {Contemp. Math.},
	title = {Some remarks on the connectivity of {J}ulia sets for 2-dimensional diffeomorphisms},
	url = {https://doi.org/10.1090/conm/396/07394},
	volume = {396},
	year = {2006}}

@article{tangencies,
	author = {Dujardin, Romain and Lyubich, Mikhail},
	doi = {10.1007/s00222-014-0535-y},
	fjournal = {Inventiones Mathematicae},
	issn = {0020-9910},
	journal = {Invent. Math.},
	mrclass = {37F45},
	mrnumber = {3338008},
	mrreviewer = {Matteo Ruggiero},
	number = {2},
	pages = {439--511},
	title = {Stability and bifurcations for dissipative polynomial automorphisms of {$\Bbb{C}^2$}},
	url = {https://doi.org/10.1007/s00222-014-0535-y},
	volume = {200},
	year = {2015}}

@article{bs6,
	author = {Bedford, Eric and Smillie, John},
	doi = {10.2307/121006},
	fjournal = {Annals of Mathematics. Second Series},
	issn = {0003-486X},
	journal = {Ann. of Math. (2)},
	mrclass = {32H50 (37F10 37F50)},
	mrnumber = {1668567},
	mrreviewer = {Marco Abate},
	number = {2},
	pages = {695--735},
	title = {Polynomial diffeomorphisms of {${\bf C}^2$}. {VI}. {C}onnectivity of {$J$}},
	url = {https://doi.org/10.2307/121006},
	volume = {148},
	year = {1998}}

@article{bls,
	author = {Bedford, Eric and Lyubich, Mikhail and Smillie, John},
	doi = {10.1007/BF01232426},
	fjournal = {Inventiones Mathematicae},
	issn = {0020-9910},
	journal = {Invent. Math.},
	mrclass = {32H50 (32C30 58F11 58F23)},
	mrnumber = {1207478},
	mrreviewer = {Yasuichiro Nishimura},
	number = {1},
	pages = {77--125},
	title = {Polynomial diffeomorphisms of {${\bf C}^2$}. {IV}. {T}he measure of maximal entropy and laminar currents},
	url = {https://doi.org/10.1007/BF01232426},
	volume = {112},
	year = {1993}}
\end{document}